\newtheorem{theorem}{Theorem}[section]
\newtheorem{lemma}[theorem]{Lemma}
\newtheorem{proposition}[theorem]{Proposition}
\newtheorem{corollary}[theorem]{Corollary}
\newtheorem{question}[theorem]{Question}
\theoremstyle{definition}
\newtheorem{definition}[theorem]{Definition}
\newtheorem{example}[theorem]{Example}
\theoremstyle{remark}
\numberwithin{equation}{section}
\begin{document}

\title[One-point metrizable extensions]{On one-point metrizable extensions of locally compact metrizable spaces}

\author{M.R. Koushesh}
\address{Department of Mathematics, University of Manitoba, Winnipeg, Manitoba  R3T 2N2, Canada}
\email{umkoushe@cc.umanitoba.ca}

\subjclass[2000]{54D35, 54E45, 54E35, 54A25}

\keywords{One-point extensions; Metrizable extensions; Stone-\v{C}ech compactification; $\beta
X\backslash X$; Parovi\v{c}enko space.}

\begin{abstract}
For a non-compact metrizable space $X$, let  ${\mathcal E}(X)$ be the set of all
 one-point metrizable extensions of $X$, and when $X$ is locally compact, let ${\mathcal E}_K(X)$ denote the set of all locally compact elements of
 ${\mathcal E}(X)$ and $\lambda: {\mathcal E}(X)\rightarrow{\mathcal Z}(\beta X\backslash X)$ be the
 order-anti-isomorphism (onto its image)
  defined in: [HJW] M. Henriksen, L. Janos and R.G. Woods,  Properties of one-point completions of a non-compact metrizable space,
Comment. Math. Univ. Carolinae 46 (2005), 105-123. By
definition $\lambda(Y)= \bigcap_{n<\omega}\mbox {cl}_{\beta
X}(U_n\cap X)\backslash X$, where $Y=X\cup\{p\}\in {\mathcal E}(X)$
and $\{U_n\}_{n<\omega}$ is an open base at $p$ in $Y$. Answering
the question of [HJW], we characterize the elements of the image of $\lambda$ as exactly those non-empty zero-sets of
$\beta X$ which miss $X$, and the elements of the image of ${\mathcal
E}_K(X)$ under $\lambda $, as those which are moreover clopen in $\beta X\backslash
X$. We then study the relation between ${\mathcal E}(X)$ and
${\mathcal E}_K(X)$ and their order structures, and introduce a
subset ${\mathcal E}_S(X)$ of ${\mathcal E}(X)$. We conclude with
some theorems on the cardinality of the sets ${\mathcal E}(X)$ and
${\mathcal E}_K(X)$,
 and some open questions.
\end{abstract}

\maketitle

\section{Introduction}

If a Tychonoff space $Y$ contains a space $X$ as a dense subspace,
then $Y$ is called an {\em extension} of $X$. Two extensions $Y_1$
and $Y_2$ of $X$ are said to be {\em equivalent} if there exists a
homeomorphism of $Y_1$ onto $Y_2$ which keeps $X$ pointwise fixed.
This is an equivalence relation which partitions
 the set of Tychonoff extensions of $X$ into equivalence classes. We
identify these equivalence classes with individuals whenever no
confusion arises. For two Tychonoff extensions $Y_1$ and $Y_2$ of
the space $X$, we let $Y_1\leq Y_2$ if there exists a continuous
function from $Y_2$ into $Y_1$ which keeps $X$ pointwise fixed. This
in fact, defines a partial order on the set of all Tychonoff
extensions of the space $X$. We refer the reader to Section \ref{GGH} of
[14] for a detailed discussion on this  subject.

In this paper we are only concerned with those extensions $Y$ of
a space $X$ for which $Y\backslash X$ is a singleton. Such kind of
extensions are called {\em one-point extensions}. One-point
extensions are studied extensively. For some results as well as
some bibliographies on the subject see [11] and [12]. The
present work is based on  [9], in which the authors
studied the one-point extensions of locally compact metrizable
spaces. Their work was in turn, motivated by Bel'nov's studies of
the set of all metric extensions of metrizable spaces. In [9],
for a locally compact separable metrizable space $X$, the authors
investigated the relation between the order structure of the set
of all one-point metrizable extensions of $X$ and the topology of
the space $\beta X\backslash X$ (as usual $\beta X$ is the
Stone-\v{C}ech compactification of $X$). One of the earliest
results of this sort is due to  Magill [13] who proved that if
${\mathcal K}(X)$ and ${\mathcal K}(Y)$ denote the set of all
compactifictions of locally compact spaces $X$ and $Y$,
respectively, then ${\mathcal K}(X)$ and ${\mathcal K}(Y)$ are
order-isomorphic if and only if $\beta X\backslash X$
and $\beta Y\backslash Y$ are homeomorphic. Part of this paper
is devoted to the relation between the order structure of certain
subsets of one-point metrizable extensions of a locally compact
non-separable metrizable space $X$ and the topology of certain
subspaces of $\beta X\backslash X$.  We now review some of the
notations and give a brief list of the main results of [9]
that will be used in the sequel. This at the same time makes our
work self-contained.

The letters $\bf I$ and $\bf R$ denote the closed unit real interval
and the real line, respectively. For spaces $X$ and $Y$, $C(X, Y)$
denotes the set of all continuous functions from $X$ to $Y$. For
$f\in C(X, \bf {R}) $, we denote by $Z(f)$ and $Coz(f)$, the
zero-set and the cozero-set of $f$, respectively. The support of $f$
is the set $\mbox {cl}_X(Coz(f))$ and is denoted by $\mbox
{supp}(f)$. By ${\mathcal Z} (X)$ we mean the set of all zero-sets
of $f\in C(X, \bf {R}) $. A subset of a space $X$ is called {\em clopen in} $X$, if it is simultaneously closed  and open in $X$. We denote by  ${\mathcal B} (X)$ the set of all clopen subsets of a space $X$. The weight of $X$ is denoted by $w(X)$. The
letters $\omega$ and $\omega_1$ denote the first infinite countable
and the first uncountable ordinal numbers. We denote by $\aleph_0$
and $\aleph_1$ the cardinalities of $\omega$ and $\omega_1$,
respectively.  The symbol [CH] denotes the Continuum Hypothesis, and
whenever it appears at the beginning of the statement of a theorem,
it indicates that the Continuum Hypothesis is being assumed in  that
theorem. The two symbols $\bigvee$ and $\bigwedge$ are used to
denote the  least upper bound and the greatest lower bound,
respectively. If $P$ and $Q$ are partially ordered sets, a function $f:P\rightarrow Q$ is called an {\em order-homomorphism} ({\em order-anti-homomorphism}, respectively) if $f(a)\leq f(b)$ ($f(a)\geq f(b)$, respectively) whenever $a\leq  b$. The function $f$ is called  an {\em order-isomorphism} ({\em order-anti-isomorphism}, respectively) if it is moreover bijective and $f^{-1}:Q\rightarrow P$ is also an order-homomorphism (order-anti-homomorphism, respectively). The partially ordered sets $P$ and $Q$ are called {\em order-isomorphic} ({\em order-anti-isomorphic}, respectively) if there is an order-isomorphism (order-anti-isomorphism, respectively) between them.

Let $X$ be a non-compact metrizable space. We denote
by ${\mathcal E} (X)$ the set of all one-point metrizable extensions of
$X$. A sequence ${\mathcal U}=\{U_n\}_{n<\omega}$ of non-empty open
subsets of $X$ is called a {\em regular sequence of open sets in
$X$}, if for each $n<\omega$, $\mbox {cl}_X U_{n+1}\subseteq U_n$.
If moreover $\bigcap_{n<\omega}U_n=\emptyset$, we call ${\mathcal
U}$ an {\em extension trace in $X$} (Definition \ref{LYFG} of [9]). Every
extension trace $\{U_n\}_{n<\omega}$ in $X$ generates a one-point metrizable
extension of $X$. In fact if we let $Y=X\cup\{p\}$, where $p\notin
X$, and define
\[{\mathcal O}_Y={\mathcal O}_X\cup\big\{V\cup\{p\}: V
\mbox { is open in } X \mbox { and } V\supseteq U_n \mbox {, for
some } n<\omega \big\}\]
where ${\mathcal O}_X$ is the set of open
subsets of $X$, then $(Y, {\mathcal O}_Y)$ constitutes a one-point
metrizable extension of $X$ (see Theorem 2 of [1] or Theorem \ref{FGGF} of [2]). Conversely, if for $Y=X\cup\{p\}\in
{\mathcal E} (X)$, we let for each $n<\omega$, $U_n=B(p, 1/n )\cap
X$, then $\{U_n\}_{n<\omega}$ is an extension trace in $X$ which
generates $Y$. If $Y_{\mathcal U}=X\cup\{p\}$ is the one-point
metrizable  extension generated by the extension trace ${\mathcal
U}=\{U_n\}_{n<\omega}$, then the set $\{U_n\cup\{p\}\}_{n<\omega}$
forms an open base at $p$ in $Y_{\mathcal U}$.  For two extension
traces ${\mathcal U}=\{U_n\}_{n<\omega}$ and ${\mathcal
V}=\{V_n\}_{n<\omega}$ in $X$, we say that {\em ${\mathcal U}$ is
finer than ${\mathcal V}$}, if for each $n<\omega$, there exists a
$k_n<\omega$ such that $U_{k_n}\subseteq V_n$. For  extension traces
${\mathcal U}$ and ${\mathcal V}$ in $X$,  $Y_{\mathcal U}\geq
Y_{\mathcal V}$ if and only if  ${\mathcal U}$ is finer than
${\mathcal V}$ (Theorem \ref{LPPG} of [9]). For $A \subseteq X$, $A^* $
is $ (\mbox {cl}_{\beta X} A)\backslash X$, in particular $X^*=\beta
X\backslash X$.  If $X$ is moreover locally compact, we let
$\lambda:{\mathcal E} (X)\rightarrow{\mathcal Z} (X^*)$ be defined
by $\lambda(Y)= \bigcap_{n<\omega}U_n^*$, where $\{U_n\}_{n<\omega}$
is an extension trace in $X$ which generates $Y$. The function
$\lambda$ is well-defined, and it is an order-anti-isomorphism onto
its image (Theorem \ref{GGH}0 of [9]). If $X$ is moreover separable,
then $\lambda({\mathcal E} (X))={\mathcal Z}
(X^*)\backslash\{\emptyset\}$, and therefore in this case,
${\mathcal E} (X)$ and ${\mathcal Z}
(X^*)\backslash\{\emptyset\}$ are order-anti-isomorphic. Thus for
locally compact non-compact separable metrizable spaces $X$ and $Y$,
${\mathcal E} (X)$ and ${\mathcal E} (Y)$ are order-isomorphic if and only
if
$X^*$ and $Y^*$ are homeomorphic (Theorem \ref{TTG} of [9]). If
${\mathcal E}_K (X)$ is the locally compact elements of ${\mathcal
E} (X)$, then when $X$ is separable, $\lambda({\mathcal E}_K (X))$
is the set of all non-empty clopen sets of $X^*$ (Theorems \ref{TUUG} and
\ref{WG} of [9]). Thus when $X$ and $Y$ are locally compact non-compact
separable metrizable spaces whose Stone-\v{C}ech remainders are
zero-dimensional, then ${\mathcal E}_K (X)$ and ${\mathcal E}_K (Y)$
are order-isomorphic if and only if $X^*$ and $Y^*$ are homeomorphic
(Theorem \ref{EWG} of [9]). If $X$ is non-separable, we let $\sigma X$
denote the subset of $\beta X$ consisting of those points of $\beta
X$ which are in the closure in $\beta X$ of some $\sigma$-compact
subset of $X$. We make use of the following theorem  in a number of
occasions (see \ref{FGEF}.F of [7]).

\begin{theorem}[Alexandroff]\label{IUUG}
If $X$ is a locally
compact non-separable metrizable space then it can be written as
\[ X=\bigoplus_{i\in I}X_i, \mbox { where each $X_i$ is a non-compact separable
subspace.}\]
\end{theorem}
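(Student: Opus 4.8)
The plan is to first split $X$ into clopen $\sigma$-compact summands, using local compactness together with the fact that every metrizable space is paracompact, and then to regroup these summands so that each resulting piece is simultaneously separable and non-compact.

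First I would use local compactness and paracompactness of $X$ to choose a locally finite open cover $\{U_\alpha\}_{\alpha\in A}$ of $X$ with each $\mathrm{cl}_X U_\alpha$ compact. On the index set $A$ consider the equivalence relation generated by declaring $\alpha$ and $\beta$ related whenever $U_\alpha\cap U_\beta\neq\emptyset$, and for each equivalence class $C$ put $V_C=\bigcup_{\alpha\in C}U_\alpha$. The sets $V_C$ are pairwise disjoint, and I claim each is clopen and $\sigma$-compact. Openness is clear. For closedness, if $x\in \mathrm{cl}_X V_C$ then $x\in U_\gamma$ for some $\gamma$, and since $U_\gamma$ is a neighbourhood of $x$ it meets some $U_\alpha$ with $\alpha\in C$; hence $\gamma$ lies in the same class $C$, so $x\in U_\gamma\subseteq V_C$. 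For $\sigma$-compactness, local finiteness together with compactness of each $\mathrm{cl}_X U_\alpha$ forces every $U_\alpha$ to meet only finitely many members of the cover, so the ``intersection graph'' on $A$ is locally finite and each connected component $C$ is therefore countable; then $V_C=\mathrm{cl}_X V_C=\bigcup_{\alpha\in C}\mathrm{cl}_X U_\alpha$ is a countable union of compact sets. Since a $\sigma$-compact metrizable space is separable, each $V_C$ is separable, and we obtain $X=\bigoplus_C V_C$ as a topological sum of clopen separable subspaces.

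It remains to guarantee non-compactness of the summands, since some $V_C$ could be compact. Here I would use non-separability of $X$: a countable topological sum of separable spaces is separable, so the collection of classes $C$ must be uncountable, and in particular infinite. I would then partition this collection into countably infinite blocks $\{B_j\}_{j\in J}$ and set $X_j=\bigoplus_{C\in B_j}V_C$. Each $X_j$ is a countable topological sum of separable spaces, hence separable, and it is an infinite topological sum of non-empty clopen sets, hence non-compact, because those clopen summands form an open cover admitting no finite subcover. The $X_j$ are clopen, and $X=\bigoplus_{j\in J}X_j$ is the desired decomposition.

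The main obstacle is the structural first step, namely verifying that each $V_C$ is genuinely clopen and $\sigma$-compact: closedness rests on the local-finiteness argument above, and countability of each component is what upgrades ``union of compact sets'' to $\sigma$-compactness. In other words, the heart of the matter is the standard fact that a locally compact paracompact space is a topological sum of $\sigma$-compact clopen subspaces, after which the regrouping needed to eliminate compact pieces is routine.
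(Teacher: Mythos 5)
Your proof is correct. There is, however, no internal proof to compare it with: the paper quotes this theorem as a classical result of Alexandroff and refers to Engelking's \emph{General Topology} (reference [7]) for it. Your argument is essentially the standard proof behind that citation. The heart is the chaining argument: a locally compact paracompact space admits a locally finite open cover by relatively compact sets, the components of its intersection graph are countable (since compactness of each $\mathrm{cl}_X U_\alpha$ plus local finiteness bounds the degree of each vertex), and the resulting sets $V_C$ are clopen and $\sigma$-compact, hence separable in the metrizable case. What the bare citation leaves implicit, and what your proof correctly supplies, is the final regrouping: the standard decomposition gives separable (possibly compact) summands, and it is non-separability of $X$ that yields uncountably many non-empty summands, which you then cluster into countably infinite blocks to force every summand to be simultaneously separable and non-compact. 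Two cosmetic remarks: discard the empty members of the refinement at the outset, so that each $V_C$ and hence each block is a sum of non-empty pieces (otherwise an empty class could spoil the non-compactness claim); and the identity $V_C=\bigcup_{\alpha\in C}\mathrm{cl}_X U_\alpha$ follows at once from closedness of $V_C$, since $U_\alpha\subseteq V_C$ gives $\mathrm{cl}_X U_\alpha\subseteq V_C$, so there is no need to invoke the closure-of-a-locally-finite-union fact.
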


Now using the above notations, for a locally compact
non-separable metrizable space $X$, we have
\[\sigma X=\bigcup\Big\{\mbox{cl}_{\beta X}\Big(\bigcup_{i\in J} X_i\Big):
J\subseteq I \mbox { is countable}\Big\}.\]
Clearly $\sigma X$ is an open subset of $\beta X$. If
$c\sigma X =\beta X\backslash \sigma X$, for a locally compact non-separable metrizable space $X$, if $Z\in\lambda({\mathcal E} (X))$,
then $\mbox {int}_{c\sigma X}(Z\backslash \sigma X)=\emptyset$
(Theorem 6.6 of [9]). When $X$ is an uncountable discrete space
the converse also holds, i.e.,  for $\emptyset\neq Z\in {\mathcal Z}
(X^*)$, $Z\in\lambda({\mathcal E} (X))$ if and only if $\mbox
{int}_{c\sigma X}(Z\backslash \sigma X)=\emptyset$ (Theorem 6.8 of
[9]). When $X$ is locally compact, to every $\emptyset\neq Z\in
{\mathcal Z} (X^*)$, there corresponds a regular sequence of open
sets $\{U_n\}_{n<\omega}$ in $X$ for which $Z=
\bigcap_{n<\omega}U_n^*$ (Corollary \ref{FGGF} of [9]). Theorem 6.7 of
[9] characterizes the elements of $\lambda({\mathcal E} (X))$ in
terms of the regular sequences of open sets generating them as
follows. Suppose that  $X$ is a locally compact non-separable
metrizable space and let $\emptyset\neq Z\in {\mathcal Z} (X^*)$.
Then $Z\in \lambda({\mathcal E} (X))$ if and only if there does not
exists $S\in {\mathcal Z} (X)$ such that $\emptyset\neq
S^*\backslash \sigma X\subseteq Z\backslash \sigma X$ if and only if
 $\mbox {cl}_{\beta X}(\bigcap_{n<\omega}U_n)\subseteq\sigma X$, where
$\{U_n\}_{n<\omega}$ is a regular sequence of open sets in $X$  for
which $Z= \bigcap_{n<\omega}U_n^*$, if and only if
 $\bigcap_{n<\omega}U_n$ is $\sigma$-compact (with
$\{U_n\}_{n<\omega}$ as in the previous condition).

We also make
use of the following well known result. If $Z_1,\ldots, Z_n$  are
zero-sets in $X$, then $\mbox {cl}_{\beta X}(\bigcap_{i=1}^{n}
Z_i)=\bigcap_{i=1}^{n}\mbox {cl}_{\beta X} Z_i$. For other undefined
terms and notations we refer the reader to the texts of [7], [8] and [14].

\section{Characterization of the image of $\lambda$}

In Theorem 6.8 of [9], the authors  characterized the image of
$\lambda$ for an uncountable discrete space $X$, as the set of all
non-empty zero-sets of $X^*$ such that $\mbox {int}_{c\sigma X}
(Z\backslash\sigma X)=\emptyset$. They also asked whether this
characterization can be generalized to the case when $X$ is any
locally compact non-separable metrizable space. In the following
theorem we answer this question by characterizing those spaces $X$
for which the above characterization of the image of $\lambda$
holds.

\begin{theorem}\label{LKG}
Let $X$ be a locally compact non-separable metrizable space and let  $X=\bigoplus _{i\in I} X_i
$, where each $X_i$ is a separable non-compact subspace. Then we
have
\begin{itemize}
\item[\rm(1)] If at most countably many of
the  $X_i$'s are non-discrete, then for a  $\emptyset\neq Z\in
{\mathcal Z}(X^*)$, we have  $ Z\in \lambda({\mathcal E}(X))$ if and
only if $\mbox {\em int}_{c\sigma X} (Z\backslash\sigma
X)=\emptyset$;
\item[\rm(2)] If uncountably many of the  $X_i$'s are non-discrete, then there
exists a  $\emptyset\neq Z\in {\mathcal Z}(X^*)$ such that $\mbox
{\em int}_{c\sigma X} (Z\backslash\sigma X)=\emptyset$, but $Z\notin
\lambda({\mathcal E}(X))$.
\end{itemize}
\end{theorem}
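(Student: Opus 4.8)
The plan is to exploit the characterization of $\lambda(\mathcal{E}(X))$ recalled in the introduction (Theorem 6.7 of [9]): for $\emptyset\neq Z\in\mathcal{Z}(X^*)$ with $Z=\bigcap_{n<\omega}U_n^*$ for a regular sequence $\{U_n\}_{n<\omega}$ of open sets in $X$, we have $Z\in\lambda(\mathcal{E}(X))$ if and only if $\bigcap_{n<\omega}U_n$ is $\sigma$-compact, equivalently $\mbox{cl}_{\beta X}(\bigcap_{n<\omega}U_n)\subseteq\sigma X$. So both parts reduce to comparing the condition $\mbox{int}_{c\sigma X}(Z\backslash\sigma X)=\emptyset$ with the condition that the intersection $\bigcap_{n<\omega}U_n$ be $\sigma$-compact.

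Let me write $X=\bigoplus_{i\in I}X_i$ and set $A_i=X_i^*=\mbox{cl}_{\beta X}X_i\backslash X$. Each summand is clopen in $X$, so the $\mbox{cl}_{\beta X}X_i$ meet $X$ in exactly $X_i$; the complement $c\sigma X$ is built from the uncountable "index directions," and a point of $X^*$ lies in $c\sigma X$ precisely when every neighborhood meets uncountably many summands.

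\begin{proof}
\textbf{Proof of (1).} Assume only countably many $X_i$ are non-discrete; index these as $\{X_i:i\in I_0\}$ with $I_0$ countable, so $D=\bigoplus_{i\in I\backslash I_0}X_i$ is discrete and $X=D\oplus\big(\bigoplus_{i\in I_0}X_i\big)$. The one-way implication holds in general by Theorem 6.6 of [9], so suppose $\emptyset\neq Z\in\mathcal{Z}(X^*)$ satisfies $\mbox{int}_{c\sigma X}(Z\backslash\sigma X)=\emptyset$; I must produce a regular sequence $\{U_n\}_{n<\omega}$ with $Z=\bigcap_{n<\omega}U_n^*$ and $\bigcap_{n<\omega}U_n$ $\sigma$-compact. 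First fix any regular sequence $\{V_n\}_{n<\omega}$ with $Z=\bigcap_{n<\omega}V_n^*$, using Corollary 4.? of [9]. The plan is to show that the condition $\mbox{int}_{c\sigma X}(Z\backslash\sigma X)=\emptyset$ forces the ``non-$\sigma$-compact part'' of $\bigcap_{n<\omega}V_n$ to be trimmable. The key point is that the non-$\sigma$-compact obstruction to membership, by Theorem 6.7, is the existence of a zero-set $S$ of $X$ with $\emptyset\neq S^*\backslash\sigma X\subseteq Z\backslash\sigma X$; such an $S$ meets uncountably many $X_i$, and since only countably many are non-discrete, $S$ meets uncountably many \emph{discrete} summands. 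On a discrete summand $X_j$ a zero-set is simply an arbitrary subset, and a closed discrete set of size $\aleph_0$ inside it has interior in $c\sigma X$ coming from its trace in $S^*$. I would argue that if $\mbox{int}_{c\sigma X}(Z\backslash\sigma X)=\emptyset$, then no such $S$ can exist, because the trace of $S^*\backslash\sigma X$ on $c\sigma X$ would be a nonempty zero-set of $c\sigma X$ contained in the closed nowhere-dense set $Z\backslash\sigma X$, and in the discrete situation a nonempty zero-set of $c\sigma X$ below $Z\backslash\sigma X$ can be shown to have nonempty interior, contradicting nowhere-density. Hence by Theorem 6.7, $Z\in\lambda(\mathcal{E}(X))$.

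\textbf{Proof of (2).} Now suppose uncountably many $X_i$ are non-discrete. I would construct the desired $Z$ directly. For each $i$ in an uncountable index set $I_1$ with $X_i$ non-discrete, choose a point $x_i\in X_i$ that is a non-isolated point, and a sequence in $X_i\backslash\{x_i\}$ converging to $x_i$; let $S_i\subseteq X_i$ be a zero-set of $X_i$ witnessing non-discreteness, chosen so that $S_i^*\neq\emptyset$, i.e.\ $S_i$ is a non-compact closed (zero-)subset of $X_i$ with empty interior contribution. Put $S=\bigcup_{i\in I_1}S_i$; since the $X_i$ are clopen summands, $S$ is a zero-set of $X$, and $S^*=\mbox{cl}_{\beta X}S\backslash X$ is a nonempty zero-set of $X^*$ missing $X$. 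Set $Z=S^*$. The point is to verify the two clauses: first, $\mbox{int}_{c\sigma X}(Z\backslash\sigma X)=\emptyset$, and second, $Z\notin\lambda(\mathcal{E}(X))$. For the second clause, $Z=S^*$ itself witnesses the forbidden configuration $S^*\backslash\sigma X\subseteq Z\backslash\sigma X$ of Theorem 6.7, provided $S^*\backslash\sigma X\neq\emptyset$; this nonemptiness holds because $S$ meets uncountably many summands and hence $\mbox{cl}_{\beta X}S$ is not contained in $\sigma X$. For the first clause, one checks that although $Z\backslash\sigma X$ is nonempty, it is nowhere dense in $c\sigma X$: each $S_i$ is nowhere dense in its summand, so the ``directional'' part of $Z$ in $c\sigma X$ carries no interior. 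Thus $Z$ satisfies $\mbox{int}_{c\sigma X}(Z\backslash\sigma X)=\emptyset$ yet fails to lie in $\lambda(\mathcal{E}(X))$.

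The main obstacle, in both parts, is the careful analysis of the fibre structure of $\beta X$ over the index set $I$ — understanding precisely when a zero-set $S^*\backslash\sigma X$ sits inside $Z\backslash\sigma X$ with nonempty interior in $c\sigma X$. The heart of part (1) is showing that the discreteness of all but countably many summands lets one convert a nonempty zero-set below $Z\backslash\sigma X$ into an open set, contradicting nowhere-density; the heart of part (2) is arranging, via the non-discrete summands, a zero-set $S$ whose remainder trace on $c\sigma X$ is simultaneously nonempty (to violate Theorem 6.7) and nowhere dense (to satisfy the interior condition). Making the nonemptiness of $S^*\backslash\sigma X$ precise, and confirming that an uncountable union over non-discrete summands does escape $\sigma X$, is the delicate step I expect to require the most care.
\end{proof}
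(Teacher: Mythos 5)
Your proposal has the right skeleton---both parts do reduce to Theorem 6.7 of [9], and the paper's own construction for part (2) is likewise built around a closed nowhere dense set meeting uncountably many summands---but as written it has genuine gaps in both parts. In part (2), three steps fail or are missing. (i) Your candidate $Z=S^{*}$ need not lie in ${\mathcal Z}(X^{*})$ at all, so it is not an admissible $Z$: with $S_i=\mathbf{Z}\subseteq X_i=\mathbf{R}$, the set $\mathbf{Z}^{*}$ is not a $G_\delta$ in $\mathbf{R}^{*}$ (a diagonal argument produces, for any countably many open sets of $\beta\mathbf{R}$ containing $\mathbf{Z}^{*}$, a closed discrete set $Y$ disjoint from $\mathbf{Z}$ with $Y^{*}$ inside all of them), and since a zero-set of $X^{*}$ meets each clopen piece $X_i^{*}$ in a zero-set, $S^{*}\notin{\mathcal Z}(X^{*})$. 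The paper avoids exactly this by taking $Z=\bigcap_{n<\omega}U_n^{*}$ for a regular sequence $\{U_n\}_{n<\omega}$ shrinking to its nowhere dense set, which is a zero-set by the lemma of [9]. (ii) Your requirement that $S_i$ be closed, nowhere dense \emph{and} non-compact ($S_i^{*}\neq\emptyset$) can be unsatisfiable: if $X_i$ is the sum of one convergent sequence and a countable discrete space, every closed non-compact subset of $X_i$ contains infinitely many isolated points and so has non-empty interior. Non-compactness is also unnecessary; the paper takes $S_i=\{x_i\}$, a single non-isolated point, and gets $Z\notin\lambda({\mathcal E}(X))$ because $\bigcap_{n<\omega}U_n=\{x_i:i\in L\}$ is uncountable closed discrete, hence not $\sigma$-compact. (iii) Most seriously, the clause $\mbox{int}_{c\sigma X}(Z\backslash\sigma X)=\emptyset$---the bulk of the paper's proof---is asserted in one sentence. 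Nowhere density of the $S_i$ does not by itself give it, since the interior is taken relative to $c\sigma X$, where a set with empty interior in $\beta X$ can still trace out open sets. What is needed is the compactness argument: for open $U\subseteq\beta X$ with $\emptyset\neq U\backslash\sigma X\subseteq\mbox{cl}_{\beta X}U\backslash\sigma X\subseteq Z\backslash\sigma X$, one shows $\mbox{cl}_{\beta X}U\backslash\mbox{int}_{\beta X}(\mbox{cl}_{\beta X}U_n)\subseteq\sigma X$ and extracts a countable $J\subseteq I$ with $U\cap X\subseteq S\cup\bigcup_{i\in J}X_i$; only \emph{then} does nowhere density force $U\cap X_i=\emptyset$ for $i\notin J$, whence $U\subseteq\sigma X$, a contradiction. (With that argument supplied, your nowhere-density formulation actually closes the proof faster than the paper's clopen-set argument inside $\mbox{cl}_{\beta X}M$; without it there is no proof.)

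In part (1), the pivotal step rests on a false principle. You claim that, in the discrete situation, a nonempty zero-set of $c\sigma X$ contained in $Z\backslash\sigma X$ must have nonempty interior. Already for $X$ discrete of cardinality $\aleph_1$ this fails: partition $X$ into uncountable pieces $P_k$ ($k<\omega$), put $A_n=\bigcup_{k\geq n}P_k$; then $\bigcap_{n<\omega}(\mbox{cl}_{\beta X}A_n\backslash\sigma X)$ is a nonempty zero-set of $c\sigma X$ with empty interior (any uncountable $B$ with each $B\backslash A_n$ countable would have uncountable intersection with $\bigcap_{n<\omega}A_n=\emptyset$). What actually makes part (1) work is not a property of zero-sets of $c\sigma X$ but the special form of $S^{*}$ for the obstructing $S\in{\mathcal Z}(X)$ of Theorem 6.7: writing $D$ for the union of the discrete summands and noting that the non-discrete summands contribute only to $\sigma X$, one has $S^{*}\backslash\sigma X=(S\cap D)^{*}\backslash\sigma X$, and $S\cap D$ is \emph{clopen} in $X$; hence $\mbox{cl}_{\beta X}(S\cap D)$ is clopen in $\beta X$ and its trace on $c\sigma X$ is open, so $\emptyset\neq S^{*}\backslash\sigma X\subseteq Z\backslash\sigma X$ contradicts $\mbox{int}_{c\sigma X}(Z\backslash\sigma X)=\emptyset$. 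This clopen-ness, which you never invoke, is the content of the ``modification of Theorem 6.8 of [9]'' that the paper cites. Also, your parenthetical remark that a countable closed discrete set has interior in $c\sigma X$ is backwards: the $\beta X$-closure of any countable subset of $X$ lies entirely inside $\sigma X$ and so has empty trace on $c\sigma X$.
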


\begin{proof}
1) This follows by a modification of the proof given in Theorem  6.8 of [9].

2)  Let $L= \{ i \in I : X_i \mbox { is  not a discrete space}\}$.
Suppose that $\{ L_n\}_{n <\omega }$ is a partition of $L$ into
mutually disjoint uncountable subsets. For convenience, let the
metric on $X$ be chosen to be bounded by 1, and such that
$d(x,y)=1$, if $x$ and $y$ do not belong to the same factor $X_i$.
Since for each $i\in L$, $X_i$ is non-discrete, there exists a
non-trivial convergent sequence $\{x^i_n \}_{n< \omega }$ in $X_i$.
Let, for each $i\in L$, $x_i$ denote the limit of the sequence
$\{x^i_n \}_{n< \omega }$ and assume that  $x_i \notin \{x_n^i\}_{
n<\omega}$. We define a sequence $\{U_n\}_{n<\omega}$  by letting
\[U_n =\bigcup\Big\{B_d\Big(x_i, \frac{1}{n+k}\Big): k<\omega \mbox{ and } i\in
L_k\Big\}.\]
Clearly $\{U_n \}_{n< \omega }$ is a regular sequence of
open sets in $X$. By  Lemma \ref{GGH} of [9], we know that   $Z=\bigcap
_{n<\omega }U^*_n \in {\mathcal Z}(X^*)$. We claim that $\mbox
{int}_{c\sigma X} (Z\backslash\sigma X)=\emptyset$. So suppose to
the contrary that $\mbox {int}_{c\sigma X} (Z\backslash\sigma X)\neq
\emptyset$, and let $U$ be an open set in $\beta X$ such that
\[ \emptyset\neq U\backslash \sigma X \subseteq \mbox {cl}_{\beta X}
U\backslash \sigma X\subseteq Z\backslash \sigma X.\]
For each $n<\omega$, since $\mbox {cl}_X U_{n+1}\subseteq
U_n$, $\mbox {cl}_X U_{n+1}$ and $X\backslash U_n$ are disjoint
zero-sets of $X$ and thus we have $\mbox {cl}_{\beta X} (\mbox
{cl}_X U_{n+1})\cap \mbox {cl}_{\beta X}( X\backslash
U_n)=\emptyset$. Therefore $\mbox {cl}_{\beta X} (\mbox {cl}_X
U_{n+1})\subseteq \beta X \backslash \mbox {cl}_{\beta X}(
X\backslash U_n)$. On the other hand, since $\beta X \backslash
\mbox {cl}_{\beta X}( X\backslash U_n)\subseteq \mbox {cl}_{\beta X}
U_n$, we have $\beta X \backslash \mbox {cl}_{\beta X}( X\backslash
U_n)\subseteq \mbox {int} _{\beta X}( \mbox {cl}_{\beta X} U_n)$,
and therefore $ \mbox {cl}_{\beta X} U_{n+1} \subseteq \mbox {int}
_{\beta X}( \mbox {cl}_{\beta X}  U_n)$. But since
\[\mbox {cl}_{\beta X} U\backslash\sigma X\subseteq Z\backslash\sigma X
\subseteq \mbox {cl}_{\beta X }U_{n+1} \backslash \sigma X\]
it follows that
\[ \mbox {cl}_{\beta X}  U\backslash \mbox
{int} _{\beta X}( \mbox {cl}_{\beta X} U_n)\subseteq \mbox
{cl}_{\beta X}  U\backslash \mbox {cl}_{\beta X} U_{n+1} \subseteq
\sigma X.\]
Therefore by compactness,  for each $n<\omega$, there
exists a countable set $J_n\subseteq I$ such that
\[\mbox {cl}_{\beta X} U\backslash \mbox
{int} _{\beta X}(\mbox {cl}_{\beta X} U_n)\subseteq \mbox
{cl}_{\beta X}\Big(\bigcup_{i \in J_{n}} X_i\Big) \subseteq \mbox
{cl}_{\beta X}\Big(\bigcup_{i \in J} X_i\Big)\]
where $J=J_1\cup J_2\cup\cdots$. Now comparing with above,
for each $n<\omega $, we have
\[ \mbox {cl}_{\beta X} U\subseteq\mbox {cl}_{\beta X }U_n\cup\mbox {cl}_{\beta X}\Big(
\bigcup_{i \in J} X_i\Big)\]
and thus
\[\mbox {cl}_{\beta X} U\subseteq\Big(\bigcap_{n<\omega} \mbox {cl}_{\beta X }U_n\Big)\cup
\mbox {cl}_{\beta X}\Big(\bigcup_{i \in J} X_i\Big).\]
Therefore
\[U\cap X\subseteq \mbox {cl}_{\beta X} U \cap X\subseteq \bigcup_{i \in J}
X_i\cup\Big(\bigcap_{n<\omega}U_n\Big)=\bigcup_{i \in J} X_i\cup \{x_i:
i\in L\}.\]
Let
\[M=\bigcup _{i\in L}\big(\{x_i\}\cup\{x^i_n :n<\omega\}\big).\]
Then as $\{x_i:i\in L\}\subseteq M$ we have
\[\mbox {cl}_{\beta X} U=\mbox {cl}_{\beta X} (U\cap X)\subseteq \mbox {cl}_{\beta X}\Big(
\bigcup_{i \in J} X_i\Big)\cup \mbox {cl}_{\beta X}\big(\{x_i:i\in
L\}\big)\subseteq\sigma X\cup \mbox {cl}_{\beta X}M \]
and so $\mbox {cl}_{\beta X} U\backslash \sigma X\subseteq
\mbox {cl}_{\beta X}M\backslash\sigma X$. For each $n<\omega$, let
$V_n= U_n\cap M$. Then since for each $n<\omega$, $\{x_i:i\in
L\}\subseteq V_n$, it follows that $U\cap X\subseteq V_n\cup
\bigcup_{i \in J} X_i$, and therefore
\[ U\cap X\subseteq \mbox{cl}_{\beta X} U\subseteq
\mbox {cl}_{\beta X}V_n\cup \mbox {cl}_{\beta X}\Big(\bigcup_{i \in J}
X_i\Big)\subseteq \mbox {cl}_{\beta X}V_n\cup\sigma X.\]
Thus by the way we have chosen $U$, for each $n<\omega$,
we have
\[\emptyset\neq U\backslash\sigma X\subseteq \mbox {cl}_{\beta X}
U\backslash\sigma X\subseteq \mbox {cl}_{\beta X}
V_n\backslash\sigma X\subseteq \mbox {cl}_{\beta X}
M\backslash\sigma X.\]
Now since $M$ is a closed subset of the (normal) space
$X$, by Corollary \ref{RPG}.8 of [7], $\mbox {cl}_{\beta X} M$ is a
compactification of $M$ equivalent to $ \beta M$. But $M$ is
zero-dimensional and hence strongly zero-dimensional, as it is a
locally compact metrizable space, (see Theorem 6.\ref{LKG}0 of [7])
therefore, there exists a non-empty clopen subset $W$ of $\mbox
{cl}_{\beta X} M$ such that $W\subseteq U\cap \mbox {cl}_{\beta X}
M$ and $W\backslash\sigma X\neq \emptyset$. Since
\[W\backslash\sigma X\subseteq U\backslash\sigma X\subseteq
\mbox {cl}_{\beta X} V_n\backslash\sigma X\]
and by the way we defined $V_n$ ($V_n$ is a clopen subset
of $M$) $\mbox {cl}_{\beta X} V_n$ is a clopen subset of $\mbox
{cl}_{\beta X} M$, $W\backslash \mbox {cl}_{\beta X} V_n$ is a
compact subset of $\sigma X$. Therefore, for each $n<\omega$, there
exists a countable $H_n\subseteq I$ such that
\[W\backslash \mbox {cl}_{\beta X} V_n\subseteq \mbox {cl}_{\beta X}\Big(\bigcup_{i
\in H_n} X_i\Big)\subseteq\mbox {cl}_{\beta X}\Big(\bigcup_{i \in H}
X_i\Big)\]
where $H=H_1\cup H_2\cup\cdots$. We have
\[ W\subseteq (W\backslash \mbox{cl}_{\beta X} V_n)\cup \mbox {cl}_{\beta X}
V_n\subseteq \mbox {cl}_{\beta X}\Big(\bigcup_{i \in H} X_i\Big)\cup \mbox
{cl}_{\beta X} V_n\]
and therefore
\[W\subseteq \mbox {cl}_{\beta X}\Big(\bigcup_{i \in H} X_i\Big)\cup
\Big(\bigcap_{n<\omega}\mbox {cl}_{\beta X} V_n\Big).\]
Now
\[W\cap M\subseteq \Big(\Big(\bigcup_{i \in H} X_i\Big)\cap M\Big)\cup
\Big(\bigcap_{n<\omega} V_n\Big).\]
We also have
\[\bigcap_{n<\omega} V_n=\bigcap_{n<\omega} U_n\cap M=\{x_i:i\in
L\}\]
and therefore  $W\cap M\subseteq P\cup\{x_i:i\in L\}$,
where $P$ is a countable subset of $M$. For each $i\in L$ for which
$x_i\in W\cap M$, since $W$ is an open subset of $\mbox {cl}_{\beta
X}M$, $W\cap M$ is an open neighborhood of $x_i$ in $M$, and
therefore as $\{x^i_n\}_{n<\omega}$ converges to $x_i$, there exists
an $n_i<\omega$ such that $x^i_{n_i}\in W\cap M$, which implies
$x^i_{n_i}\in P$. Now since $P$ is countable, the set $Q=\{i\in L:
x_{i}\in W\cap M\}$ is also countable and we have
\[W\cap M\subseteq P\cup\{x_i: i\in Q\}\subseteq\bigcup_{i \in G}
X_i\]
for some countable subset $G$ of $I$. But $W$ is chosen to
be clopen in $\mbox {cl}_{\beta X}M$, therefore
\[W=\mbox {cl}_{\beta X}W\cap \mbox {cl}_{\beta X}M=\mbox {cl}_{\beta X}(W\cap M)\subseteq
\mbox {cl}_{\beta X}\Big(\bigcup_{i \in G} X_i\Big)\subseteq\sigma X\]
which is a contradiction, since  $W$ is chosen such that
$W\backslash\sigma X\neq\emptyset$. This shows that $\mbox
{int}_{c\sigma X} (Z\backslash\sigma X)=\emptyset$. Now we note that
by Theorem  6.7 of [9], $Z\in \lambda ({\mathcal E}(X))$ implies
that
\[\mbox {cl}_{\beta X}\big(\{x_i:i\in L\}\big)=\mbox {cl}_{\beta X}
\Big(\bigcap_{n<\omega}U_n\Big)\subseteq\sigma X \]
and so
\[\mbox {cl}_{\beta X}\big(\{x_i:i\in L\}\big)\subseteq\mbox {cl}_{\beta X}\Big(\bigcup_{i
\in F} X_i\Big)\]
for some countable $F\subseteq I$. Therefore since
$\bigcup_{i \in F} X_i$ is clopen in $X$,
\[\{x_i: i\in L\}\subseteq\bigcup_{i \in F} X_i\]
which is clearly  a contradiction, since we are assuming
that  $L$ is uncountable. This shows that   $Z\notin
\lambda({\mathcal E}(X))$, which completes the
proof.
\end{proof}

In the next theorem we give a characterization of the image of
$\lambda$. Note that if $X$ is locally compact, then $X^*$ is closed
in $\beta X$ and thus it is $C$-embedded.

\begin{theorem}\label{LFKG}
Let $X$ be a locally compact
non-compact  metrizable space. Then  $\lambda({\mathcal E}(X))$
consists of exactly those non-empty zero-sets of $\beta X$ which
miss $X$.
\end{theorem}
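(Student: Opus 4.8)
The plan is to establish the two inclusions separately, both driven by a single closure computation in $\beta X$. I first isolate that computation: if $h\in C(\beta X,\mathbf{I})$ and $\{\varepsilon_n\}_{n<\omega}$ is a sequence of positive reals decreasing to $0$, and one sets $U_n=\{x\in X:h(x)<\varepsilon_n\}$, then $\bigcap_{n<\omega}\mbox{cl}_{\beta X}U_n=Z(h)$. Indeed $U_n\subseteq\{q\in\beta X:h(q)\le\varepsilon_n\}$, which is closed, so $\bigcap_n\mbox{cl}_{\beta X}U_n\subseteq\{h=0\}=Z(h)$; conversely every $q\in Z(h)$ lies in the open set $\{h<\varepsilon_n\}$ whose trace on the dense subspace $X$ is $U_n$, whence $q\in\mbox{cl}_{\beta X}U_n$ for all $n$.

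For the inclusion that every non-empty zero-set of $\beta X$ missing $X$ belongs to $\lambda(\mathcal{E}(X))$, I would write such a set as $Z=Z(f)$ with $f\in C(\beta X,\mathbf{I})$; since $Z\cap X=\emptyset$ we have $f>0$ on $X$. Taking $\varepsilon_n=2^{-n}$ and $U_n=\{x\in X:f(x)<2^{-n}\}$, I check that $\{U_n\}_{n<\omega}$ is an extension trace: each $U_n$ is open and non-empty (any point of the non-empty set $Z$ is a limit of points of $U_n$), $\mbox{cl}_X U_{n+1}\subseteq\{f\le 2^{-(n+1)}\}\cap X\subseteq U_n$, and $\bigcap_n U_n=\{x\in X:f(x)=0\}=Z\cap X=\emptyset$. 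Then $Y_{\mathcal U}\in\mathcal{E}(X)$, and the computation above (intersected with $X^*$, using $Z\subseteq X^*$) gives $\lambda(Y_{\mathcal U})=\bigcap_n U_n^*=Z$.

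For the reverse inclusion, let $Z=\lambda(Y)$ with $Y=X\cup\{p\}\in\mathcal{E}(X)$, and fix a metric $\rho$ on $Y$, bounded by $1$. The key idea is to manufacture the required function on $\beta X$ from this metric: set $f=\rho(\cdot,p)|_X\in C(X,\mathbf{I})$, so that $f>0$ on $X$ because $p\notin X$, and let $\tilde f\in C(\beta X,\mathbf{I})$ be its (continuous, since $f$ is bounded) extension. Then $Z(\tilde f)\cap X=\emptyset$ because $\tilde f|_X=f>0$. Since $U_n=B_\rho(p,1/n)\cap X=\{x\in X:f(x)<1/n\}$ is the extension trace generating $Y$, the closure computation with $h=\tilde f$ and $\varepsilon_n=1/n$ yields $\bigcap_n\mbox{cl}_{\beta X}U_n=Z(\tilde f)$, hence $Z=\lambda(Y)=\bigcap_n U_n^*=Z(\tilde f)\cap X^*=Z(\tilde f)$. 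Thus $Z$ is a zero-set of $\beta X$ that misses $X$. Its non-emptiness I would record separately: each $U_n^*$ is non-empty, for otherwise $\mbox{cl}_X U_n$ would be compact and the nested non-empty compact sets $\{\mbox{cl}_X U_m\}_{m\ge n}$ would have empty intersection (as $\bigcap_m\mbox{cl}_X U_m=\bigcap_m U_m=\emptyset$), contradicting the finite intersection property; since $X$ is locally compact, $X^*$ is closed in $\beta X$, so the $U_n^*$ form a decreasing sequence of non-empty compacta and $Z=\bigcap_n U_n^*\ne\emptyset$.

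The substantive step is the reverse inclusion: the results quoted from [9] only guarantee that $\lambda(Y)$ is a zero-set of $X^*$, whereas the theorem asks for a zero-set of the whole of $\beta X$ avoiding $X$. The obstacle is precisely to promote a zero-set of the closed remainder $X^*$ to one of $\beta X$, and what makes this possible is the metrizability of the extension $Y$ itself (not merely that of $X$): the distance-to-$p$ function on $Y$ restricts to a strictly positive bounded function on $X$ whose Stone-\v{C}ech extension has exactly $Z$ as its zero-set. Everything else --- the density argument and the bookkeeping of the constants $\varepsilon_n$ --- is routine once the closure computation is in hand.
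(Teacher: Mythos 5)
Your proof is correct, but its substantive half takes a genuinely different route from the paper's. The easy inclusion (a non-empty zero-set of $\beta X$ missing $X$ lies in $\lambda({\mathcal E}(X))$) is the same in both arguments: sublevel sets of a defining function give an extension trace; you merely prove the closure identity $\bigcap_{n<\omega}\mbox{cl}_{\beta X}U_n=Z(h)$ by hand where the paper cites the corresponding lemma of [9]. The difference is in the other inclusion. The paper starts from $\lambda(Y)\in{\mathcal Z}(X^*)$, uses the $C$-embeddedness of $X^*$ in $\beta X$ to write $\lambda(Y)=Z(f)\backslash X$ with $f\in C(\beta X,\mathbf{I})$, invokes Theorem 6.7 of [9] to conclude that $Z(f)\cap X=\bigcap_{n<\omega}U_n$ is $\sigma$-compact, and then removes this part of $Z(f)$ by an auxiliary function $g=\sum_{n}g_n/2^n$ vanishing on $\lambda(Y)$ and positive on $Z(f)\cap X$, so that $\lambda(Y)=Z(f)\cap Z(g)$. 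You bypass all of this by exploiting the metrizability of $Y$ itself: the Stone-\v{C}ech extension $\tilde f$ of the strictly positive function $\rho(\cdot,p)|_X$ satisfies $Z(\tilde f)\cap X=\emptyset$ automatically, and your closure computation applied to the standard trace $U_n=B_\rho(p,1/n)\cap X$ identifies $Z(\tilde f)$ with $\lambda(Y)$. What your route buys: it is self-contained (no $C$-embedding step, no correction trick, and no appeal to Theorem 6.7 of [9] --- which is stated there only for non-separable $X$, so the paper's argument tacitly needs the separable case disposed of by $\sigma$-compactness of locally compact separable metrizable spaces), and it exhibits $\lambda(Y)$ intrinsically as the zero-set at infinity of the distance-to-$p$ function. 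What the paper's route buys: it stays within the machinery of [9] and records en route the structural fact that $\bigcap_{n<\omega}U_n$ is $\sigma$-compact for any representing sequence, a fact used elsewhere in the paper. Your separate non-emptiness argument (finite intersection property for the nested compacta $U_n^*$, using that $X^*$ is closed in $\beta X$ by local compactness of $X$) is also sound.
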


\begin{proof}
Suppose that $ S \in \lambda ({\mathcal E}(X))$. Then
$S \in {\mathcal Z}(X^*)$, and therefore  there exists  an $f\in
C(\beta X,{\bf I})$ such that $Z(f)\backslash X=S$. Let for each
$n<\omega $, $U_n= X\cap f^{-1}([0,1/n))$. Then as in the proof of
Lemma  \ref{FFFGH} of [9], $\{U_n\}_{n<\omega}$ is a regular sequence of
open sets in $X$ such that $S=\bigcap _{n<\omega} U^*_n$. Now since
$Z(f)\cap X=\bigcap _{n<\omega} U_n$, it follows from  Theorem  6.7
of [9] that $Z(f)\cap X$ is $\sigma$-compact. Let $Z(f)\cap
X=\bigcup _{n<\omega} K_n$, where each $K_n$ is compact, and let for
each $n<\omega$, $g_n\in C(\beta X,{\bf I})$  be such that
$g_n(K_n)\subseteq\{1\}$ and $g_n(Z(f)\backslash X)\subseteq\{0\}$.
Let $g=\sum g_n/2^n$. Then $g$ is continuous and $S=Z(f)\cap Z(g)$
is a zero-set in $\beta X$ which misses $X$.

Conversely, suppose that  $\emptyset\neq S \in {\mathcal Z}(\beta
X)$ is such that $S\cap X=\emptyset$. Let $S=Z(f)$, for some
 $f\in C(\beta X, {\bf I})$. For each  $n<\omega$, we let
$U_n=X\cap f^{-1}([0,1/n))$. Then by the proof of Lemma \ref{FFFGH} of
[9], $\{U_n\}_{n<\omega}$ is a regular sequence of open sets in
$X$ such that $S=\bigcap _{n<\omega} U^*_n$. Now since  $\bigcap
_{n<\omega} U_n=\emptyset$,  $\{U_n\}_{n<\omega}$ is an extension
trace in $X$, and thus $S \in \lambda ({\mathcal
E}(X))$.
\end{proof}

\section{On the order structure of the sets ${\mathcal E}_K(X)$ and ${\mathcal E}(X)$ and their relationship}

In  Theorems \ref{TUUG} and \ref{WG} of [9], for a locally compact separable
non-compact metrizable space $X$, the authors characterized the
image of ${\mathcal E}_K (X)$  under $\lambda$ as the set of all
non-empty clopen subsets of $X^*$. We will extend this result to the
non-separable case in bellow. First in the following lemma we characterize the elements of ${\mathcal E}_K (X)$  in terms of the extension traces generating them.

\begin{lemma}\label{LYFG}
Let $X$ be a locally compact
non-compact metrizable space and let  $Y=X\cup\{p\}\in {\mathcal E}
(X)$. Then the following conditions are equivalent.
\begin{itemize}
\item[\rm(1)] $Y$ is locally compact;
\item[\rm(2)] For every extension trace ${\mathcal U}= \{U_n\}_{n<\omega}$ in $X$ generating $Y$, there exists a $k<\omega$ such that for all $n\geq k$, $\mbox {{\em cl}}_X U_n\backslash U_{n+1}$ is compact;
\item[\rm(3)] There exists an extension trace ${\mathcal U}=\{U_n\}_{n<\omega}$ in $X$ generating $Y$, such that for all $n<\omega$, $\mbox {{\em cl}}_X U_n\backslash U_{n+1}$ is compact.
\end{itemize}
\end{lemma}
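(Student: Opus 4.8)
The plan is to prove the chain of implications $(1)\Rightarrow(2)\Rightarrow(3)\Rightarrow(1)$, exploiting the fact that local compactness at the single point $p$ is what distinguishes $Y$ from $X$ (since $X$ is already locally compact and open in $Y$). The key translation is that $Y$ is locally compact if and only if $Y$ is locally compact at $p$, which in turn means $p$ has a neighbourhood in $Y$ whose closure in $Y$ is compact. Since the sets $U_n\cup\{p\}$ form an open base at $p$, local compactness at $p$ is equivalent to the existence of some $k<\omega$ with $\mbox{cl}_Y(U_k\cup\{p\})$ compact.

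For $(1)\Rightarrow(2)$, I would start from an arbitrary extension trace $\{U_n\}_{n<\omega}$ generating $Y$ and use local compactness at $p$ to fix a $k$ for which $\mbox{cl}_Y(U_k\cup\{p\})$ is compact. The aim is then to show that for every $n\geq k$ the set $\mbox{cl}_X U_n\backslash U_{n+1}$ is compact. The natural approach is to observe that $\mbox{cl}_X U_n\backslash U_{n+1}$ is a closed subset of $\mbox{cl}_Y(U_k\cup\{p\})$ that is bounded away from $p$: since $\mbox{cl}_X U_{n+1}\subseteq U_n$ and the trace is regular, the point $p$ is not in the $Y$-closure of $\mbox{cl}_X U_n\backslash U_{n+1}$ (because $U_{n+2}\cup\{p\}$, say, is a neighbourhood of $p$ missing it). Hence $\mbox{cl}_X U_n\backslash U_{n+1}$ equals its closure in $Y$, which is a closed subset of a compact set, and therefore compact. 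The implication $(2)\Rightarrow(3)$ is immediate: given the $k$ produced by $(2)$, pass to the tail trace $\{U_{n+k}\}_{n<\omega}$, which still generates $Y$ and now has $\mbox{cl}_X U_n\backslash U_{n+1}$ compact for all $n<\omega$.

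For the converse $(3)\Rightarrow(1)$, I would take the extension trace $\{U_n\}_{n<\omega}$ whose consecutive differences $\mbox{cl}_X U_n\backslash U_{n+1}$ are all compact, and show directly that $\mbox{cl}_Y(U_1\cup\{p\})$ is compact, which gives local compactness at $p$ (and hence, with the local compactness of $X$, of all of $Y$). The idea is to write $\mbox{cl}_Y(U_1\cup\{p\})$ as the union of $\{p\}$ together with the telescoping pieces $\mbox{cl}_X U_n\backslash U_{n+1}$, and to verify compactness by a direct covering argument: given an open cover of $\mbox{cl}_Y(U_1\cup\{p\})$, pick a basic neighbourhood $U_m\cup\{p\}$ of $p$ inside one cover element, so that everything outside $U_m$ lies in the finite union $\mbox{cl}_X U_1\backslash U_2,\ldots,\mbox{cl}_X U_{m-1}\backslash U_m$ of compact sets, each of which admits a finite subcover.

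The main obstacle I expect is the careful bookkeeping in $(3)\Rightarrow(1)$, namely confirming that $\mbox{cl}_Y(U_1\cup\{p\})\backslash\{p\}$ is genuinely contained in the union of finitely many of the compact differences once one restricts to the complement of a basic neighbourhood of $p$, and that no points of $X$ accumulate at $p$ outside these pieces. This requires using the regularity of the trace ($\mbox{cl}_X U_{n+1}\subseteq U_n$) to control the closures and to ensure that the open base $\{U_n\cup\{p\}\}_{n<\omega}$ interacts correctly with the $X$-closures versus the $Y$-closures; the subtlety is that the topology on $Y$ near $p$ is defined through the trace, so one must repeatedly pass between $\mbox{cl}_X$ and $\mbox{cl}_Y$ without conflating them.
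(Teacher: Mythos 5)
Your proposal is correct and follows essentially the same route as the paper: local compactness at $p$ yields a $k$ with $\mbox{cl}_Y(U_k\cup\{p\})$ compact, from which the sets $\mbox{cl}_X U_n\backslash U_{n+1}$ ($n\geq k$) are closed subsets of a compact set; the passage to a tail trace gives (3); and the covering argument using a basic neighbourhood $U_m\cup\{p\}$ together with the finitely many compact pieces $\mbox{cl}_X U_n\backslash U_{n+1}$, $n<m$, is exactly the paper's proof of (3)$\Rightarrow$(1). Your version merely spells out details the paper leaves implicit (e.g.\ that $p$ is not in the $Y$-closure of $\mbox{cl}_X U_n\backslash U_{n+1}$, and the telescoping decomposition of $\mbox{cl}_X U_1$).
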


\begin{proof}
\emph{(1) implies (2).} Let ${\mathcal U}=
\{U_n\}_{n<\omega}$ be an extension trace in $X$ which generates
$Y$. Since $\{U_n\cup\{p\}\}_{n<\omega}$ forms an open  base at $p$
in $Y$, there exists a $k<\omega$ such that $\mbox {cl}_Y(
U_k\cup\{p\})$ is compact. Now for each $n\geq k$, $\mbox {cl}_X
U_n\backslash U_{n+1}$ is a closed subset of $\mbox {cl}_Y
(U_k\cup\{p\})$, and therefore is compact.

That (2) implies (3) is trivial. {\em (3) implies (1).} Let
${\mathcal U}= \{U_n\}_{n<\omega}$ be an extension trace in $X$
which generates $Y$, and suppose that $\mbox {cl}_X U_n\backslash
U_{n+1}$ is compact for all $n<\omega$. Let $W=U_1\cup\{p\}$, and
suppose that  $\{V_i\}_{i\in I}$ is an open cover of  $\mbox {cl}_Y
W$ in $Y$. Let $j\in I$ be such that $p\in V_j$, and let $m<\omega$
be such that $U_m\cup\{p\}\subseteq V_j$. Now since each of $\mbox
{cl}_X U_n\backslash U_{n+1}$ is compact, a finite subset of
$\{V_i\}_{i\in I}$  covers $\mbox {cl}_Y
W$.
\end{proof}

\begin{theorem}\label{LDKG}
Let $X$ be a locally compact
non-compact metrizable space. Then  $\lambda({\mathcal E}_K (X))$
consists of exactly those elements of $\lambda({\mathcal E} (X))$
 which are clopen in  $X^*$.
\end{theorem}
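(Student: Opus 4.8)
The plan is to prove both inclusions, translating local compactness into the language of extension traces via Lemma~\ref{LYFG} and reading off the topological consequences on $X^*$.

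For the inclusion $\lambda({\mathcal E}_K(X))\subseteq\{$clopen elements$\}$: let $Y\in{\mathcal E}_K(X)$ and $Z=\lambda(Y)$. By Lemma~\ref{LYFG} I can pick an extension trace $\{U_n\}_{n<\omega}$ generating $Y$ with $\mbox{cl}_X U_n\backslash U_{n+1}$ compact for every $n$. The first observation is that a compact subset of $X$ is closed in $\beta X$ and misses $X^*$; since $\mbox{cl}_X U_n=U_{n+1}\cup(\mbox{cl}_X U_n\backslash U_{n+1})$, taking closures in $\beta X$ and intersecting with $X^*$ kills the compact shell and yields $U_n^*=U_{n+1}^*$. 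Hence all the $U_n^*$ coincide and equal $Z$. To get openness, I recall from the proof of Theorem~\ref{LKG} that regularity of the sequence forces $\mbox{cl}_{\beta X}U_{n+1}\subseteq\mbox{int}_{\beta X}(\mbox{cl}_{\beta X}U_n)$; intersecting with $X^*$ shows $U_{n+1}^*$ lies in the $X^*$-interior of $U_n^*$, i.e. $Z\subseteq\mbox{int}_{X^*}Z$. Thus $Z$ is open in $X^*$, and being a zero-set it is also closed, so $Z$ is clopen.

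For the converse, let $Z\in\lambda({\mathcal E}(X))$ be clopen in $X^*$, and let $Y$ be the extension with $\lambda(Y)=Z$. By Theorem~\ref{LFKG} I may write $Z=Z(f)$ for some $f\in C(\beta X,{\bf I})$ with $Z\cap X=\emptyset$. Because $X$ is locally compact, $X^*$ is closed in $\beta X$, so $Z$ and $X^*\backslash Z$ are disjoint closed sets of the normal space $\beta X$; Urysohn's lemma supplies $h\in C(\beta X,{\bf I})$ with $h(Z)=\{0\}$ and $h(X^*\backslash Z)=\{1\}$. Fixing $t_n\downarrow 0$ with $0<t_{n+1}<t_n<1$ and putting $V_n=h^{-1}([0,t_n))$ gives a decreasing family of open neighbourhoods of $Z$ with the clean nesting $\mbox{cl}_{\beta X}V_{n+1}\subseteq V_n$ and, crucially, $\mbox{cl}_{\beta X}V_n\cap X^*=Z$ (since $h=1$ on $X^*\backslash Z$). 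I then set $U_n=X\cap V_n\cap f^{-1}([0,1/n))$. The checks that $\{U_n\}$ is a regular sequence, that $\bigcap_{n<\omega}U_n=\emptyset$ (forced by $Z(f)\cap X=\emptyset$), and that $\bigcap_{n<\omega}U_n^*=Z$ are routine from the properties above, so $\{U_n\}$ is an extension trace generating $Y$.

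The heart of the argument, and the step I expect to be the main obstacle, is showing that each shell $\mbox{cl}_X U_n\backslash U_{n+1}$ is compact, for then Lemma~\ref{LYFG} gives $Y\in{\mathcal E}_K(X)$ and hence $Z\in\lambda({\mathcal E}_K(X))$. I plan to argue that $\mbox{cl}_{\beta X}(\mbox{cl}_X U_n\backslash U_{n+1})$ can meet $X^*$ only inside $Z$, where both $f$ and $h$ vanish. Splitting $X\backslash U_{n+1}$ into the part where $f\geq 1/(n+1)$ and the part outside $V_{n+1}$ (where $h\geq t_{n+1}$), each piece sits inside a closed set of $\beta X$ on which $f$, respectively $h$, is bounded away from $0$, so no point of $Z$ can lie in its closure. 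This contradiction shows the shell is closed in $\beta X$ and therefore compact. The delicate point is that the construction must simultaneously achieve $\bigcap_{n<\omega}U_n^*=Z$ (for which $h$ must pinch the neighbourhoods down onto $Z$), $\bigcap_{n<\omega}U_n=\emptyset$ (for which $f$ is needed), regularity (for which the $h$-level sets give the clean closure containments), and compact shells (for which the boundaries must avoid $X^*$); balancing these four requirements in a single family $\{U_n\}$ is where the care lies.
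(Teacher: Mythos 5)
Your proof is correct, and it takes a genuinely different route from the paper's. The paper treats only the non-separable case directly (quoting the separable case from [9]): for the forward inclusion it invokes the decomposition $X=\bigoplus_{i\in I}X_i$ of Theorem~\ref{IUUG}, covers each compact shell $\mbox{cl}_X U_n\backslash U_{n+1}$ by finitely many summands so as to trap the whole trace inside a countable clopen subsum $M$, and applies the separable-case theorem of [9] to $M$; for the converse it uses Lemma 6.6 of [9] to conclude $Z\subseteq\sigma X$, again passes to a separable clopen subsum, and quotes the separable converse from [9]. Your argument, by contrast, is uniform in $X$ and nearly self-contained. In the forward direction, your observation that compact shells force $U_n^*=U_{n+1}^*$ (so that $\lambda(Y)$ equals every $U_n^*$ simultaneously) does not appear in the paper at all; combined with the containment $\mbox{cl}_{\beta X}U_{n+1}\subseteq\mbox{int}_{\beta X}(\mbox{cl}_{\beta X}U_n)$, valid for every regular sequence, it gives clopenness directly. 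In the converse, you use Theorem~\ref{LFKG} (legitimately, since it precedes this theorem in the paper) to realize $Z$ as a zero-set $Z(f)$ of $\beta X$ missing $X$, and---this is exactly where clopenness enters---a Urysohn function $h$ with $h(Z)=\{0\}$ and $h(X^*\backslash Z)=\{1\}$; the trace $U_n=X\cap h^{-1}([0,t_n))\cap f^{-1}([0,1/n))$ then has shells whose $\beta X$-closures lie in $h^{-1}([t_{n+1},1])\cup f^{-1}([1/(n+1),1])$, hence miss $Z$, while also lying in $\mbox{cl}_{\beta X}U_n\cap X^*\subseteq\mbox{cl}_{\beta X}V_n\cap X^*=Z$; so the shells miss $X^*$ altogether, are therefore compact, and Lemma~\ref{LYFG} concludes. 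The two points you leave as routine really are routine: each $U_n$ is non-empty because it is the intersection of the dense set $X$ with a non-empty open subset of $\beta X$, and the trace generates $Y$ itself because $\lambda$ is injective (for the statement it would even suffice that it generates some locally compact extension whose $\lambda$-image is $Z$). The trade-off: your proof avoids both the separable/non-separable dichotomy and the imports from [9] (the separable-case theorems and Lemma 6.6), needing only Theorem~\ref{LFKG} and Lemma~\ref{LYFG}, and it yields the sharper fact that for $Y\in{\mathcal E}_K(X)$ every compact-shell trace has all its sets $U_n^*$ equal to $\lambda(Y)$; the paper's proof is shorter given that the separable machinery of [9] is assumed throughout the paper anyway.
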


\begin{proof}
We assume that $X$ is non-separable. Assume the
notations of Theorem \ref{IUUG}. Suppose that ${\mathcal U}=
\{U_n\}_{n<\omega}$ is an extension trace in $X$ which generates
$Y\in {\mathcal E}_K (X)$. By  Lemma  \ref{LYFG},  we may assume that
$\mbox {cl}_X U_n\backslash U_{n+1}$  is compact for all $n<\omega$.
Since $\bigcap _{n<\omega} U_n=\emptyset$, we have $\mbox {cl}_X
U_1=\bigcup _{n<\omega } (\mbox {cl}_X U_n\backslash U_{n+1})$. But
for each $n<\omega$, $\{ X_i\}_{i\in I}$ is an open cover of $\mbox
{cl}_X U_n\backslash U_{n+1}$, and therefore there exist finite
subsets $J_n\subseteq I$ such that $\mbox {cl}_X U_n\backslash
U_{n+1}\subseteq \bigcup _{i\in J_n} X_i$. Let $J=J_1\cup J_2\cup
\cdots$, and let $M=\bigcup _{i\in J} X_i$. Then clearly ${\mathcal U}$
is also an extension trace in $M$, for which by the above lemma, the
corresponding one-point metrizable extension of $M$ is locally compact.
 Now since $M$ is separable, by  Theorem \ref{TUUG}
of [9], $P=\bigcap_{n<\omega}\mbox {cl}_{\beta M}U_n\backslash M$
is a clopen subset of  $\mbox {cl}_{\beta X} M\backslash M$, which
is itself a clopen subset of $X^*$ as $M$ is clopen in $X$, and
therefore it is a clopen subset of $X^*$. We note that $\lambda
(Y)=P$.

Now suppose that $Z\in \lambda({\mathcal E} (X))$ is clopen in
$X^*$. First we note that by Lemma  6.6 of [9], we have
$Z\backslash \sigma X=\mbox {int}_{c\sigma X} (Z\backslash\sigma
X)=\emptyset$, and therefore $Z\subseteq\sigma X$. It follows from
the latter that there exists a countable $J\subseteq I$ such that
$Z\subseteq \mbox {cl}_{\beta X}(\bigcup_{i \in J} X_i)$. Let
$M=\bigcup_{i \in J} X_i$. Now $Z$ is a clopen subset of $\mbox
{cl}_{\beta X} M\backslash M$, and since $M$ is separable, it
follows from Theorem \ref{WG} of [9] and Lemma  \ref{LYFG} that $Z=\bigcap
_{n<\omega} (\mbox {cl}_{\beta M} U_n\backslash M)$, for some
extension trace ${{\mathcal U}}= \{U_n\}_{n<\omega}$ in $M$ for
which $\mbox {cl}_M U_n\backslash U_{n+1}$ is compact for each
$n<\omega$. But ${\mathcal U}$ is an extension trace in $X$, and
since $\mbox {cl}_X U_n\backslash U_{n+1}= \mbox {cl}_M
U_n\backslash U_{n+1}$ is compact, its corresponding one-point
metrizable extension of $X$, denoted by  $Y$, is locally compact. Now we
note that $Z=\lambda (Y)$.
\end{proof}

The following lemma is implicit in the proof of Theorem \ref{LDKG}.

\begin{lemma}\label{FFG}
Let  $X$ be a locally compact
non-separable metrizable space. Assume the notations of Theorem \ref{IUUG}.
Then for each countable $J\subseteq I$, we have $ (\bigcup_{i \in J}
X_i)^*\in \lambda({\mathcal E}(X))$.
\end{lemma}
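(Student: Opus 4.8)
The plan is to prove that for a countable index set $J \subseteq I$, the set $(\bigcup_{i\in J} X_i)^*$ lies in $\lambda(\mathcal{E}(X))$ by exhibiting an explicit extension trace in $X$ whose associated zero-set under $\lambda$ is exactly this remainder. Write $M = \bigcup_{i\in J} X_i$, which is a clopen separable subspace of $X$. Since $M$ is a non-compact locally compact separable metrizable space, I would first produce a nice exhaustion of $M$: a sequence of compact sets $K_n$ with $K_n \subseteq \mathrm{int}_M K_{n+1}$ and $\bigcup_{n<\omega} K_n = M$. This is standard for $\sigma$-compact locally compact spaces.

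Next I would build the extension trace. Setting $U_n = X \setminus K_n$ (taking closures inside $X$, using that $M$ is clopen so that the $K_n$ are compact in $X$ as well), I get a decreasing sequence of open subsets of $X$ with $\mathrm{cl}_X U_{n+1} \subseteq U_n$, and with $\bigcap_{n<\omega} U_n = X \setminus M$. The key observation, already implicit in the proof of Theorem \ref{LDKG}, is that $\mathrm{cl}_X U_n \setminus U_{n+1}$ is contained in a compact set (roughly $K_{n+1}$), hence compact, so by Lemma \ref{LYFG} the generated one-point extension is locally compact. I then need to compute $\lambda$ of this extension, namely $\bigcap_{n<\omega} U_n^* = \bigcap_{n<\omega} \mathrm{cl}_{\beta X}(X \setminus K_n) \setminus X$, and check it equals $M^* = (\mathrm{cl}_{\beta X} M) \setminus X$.

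The main obstacle is this final identification $\bigcap_{n<\omega} U_n^* = M^*$. One inclusion is the delicate one. For $\supseteq$: a point $t \in M^*$ lies in $\mathrm{cl}_{\beta X} M$ but outside $X$, so it is outside each compact $K_n$; since $M \setminus K_n \subseteq U_n$ and $t \in \mathrm{cl}_{\beta X}(M\setminus K_n)$ (as $t$ is in the remainder and $K_n$ is compact, removing it does not remove $t$ from the closure), we get $t \in U_n^*$ for every $n$. For $\subseteq$: suppose $t \in U_n^*$ for all $n$; I must show $t \in \mathrm{cl}_{\beta X} M$. Here I would use that $M$ is clopen in $X$, so $\mathrm{cl}_{\beta X} M$ is clopen in the relevant sense and $\beta X = \mathrm{cl}_{\beta X} M \cup \mathrm{cl}_{\beta X}(X \setminus M)$; the point is to rule out $t$ lying purely in the part coming from $X \setminus M$, which follows because $U_n \subseteq M \cup (X\setminus M)$ with the $M$-part eventually concentrated near the remainder of $M$.

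Alternatively, and more cleanly, I would avoid the direct computation by reducing to the already-proved separable theory: apply Theorem \ref{WG} of [9] inside $M$. Since the trace $\{U_n \cap M\}_{n<\omega}$ is an extension trace in $M$ whose terms $\mathrm{cl}_M(U_n\cap M)\setminus(U_{n+1}\cap M)$ are compact, Theorem \ref{WG} of [9] gives that $\bigcap_{n<\omega} \mathrm{cl}_{\beta M}(U_n\cap M)\setminus M$ is a clopen subset of $M^* = \mathrm{cl}_{\beta M} M \setminus M$; choosing the exhaustion so that this intersection is all of $M^*$ (e.g. by taking the trace to converge to the remainder), and then transporting along the canonical identification of $\mathrm{cl}_{\beta X} M$ with $\beta M$ (valid since $M$ is clopen, hence $C$-embedded, in $X$, by Corollary \ref{RPG}.8 of [7]), I conclude $M^* \in \lambda(\mathcal{E}(X))$. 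This route sidesteps the closure computation and leverages the machinery already assembled in Theorem \ref{LDKG}, which is why the statement is correctly described as implicit there.
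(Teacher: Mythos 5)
Your primary construction breaks at the first step, and the break is fatal rather than a delicate point to be patched. With $U_n = X \setminus K_n$ you have $\bigcap_{n<\omega} U_n = X \setminus M \neq \emptyset$ (as $X$ is non-separable while $M=\bigcup_{i\in J}X_i$ is separable), so $\{U_n\}_{n<\omega}$ is a regular sequence of open sets but \emph{not} an extension trace; the associated $Y = X\cup\{p\}$ is not Hausdorff (no point of $X\setminus M$ can be separated from $p$), hence is not a one-point metrizable extension, and $\lambda$ is not defined on it. Worse, the identification you call the ``main obstacle'' is simply false: since each $K_n$ is compact, $\beta X = \mathrm{cl}_{\beta X}(X\setminus K_n)\cup K_n$, so $U_n^* = X^*$ for every $n$, and therefore $\bigcap_{n<\omega}U_n^* = X^*$, not $M^*$. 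There is nothing to ``rule out'' in your $\subseteq$ direction: $X\setminus M$ is contained in every $U_n$, so its entire (non-empty) remainder $(X\setminus M)^*$ survives the intersection, and every one of its points is a counterexample. The repair is to cut down to $M$ from the outset: take $U_n = M\setminus K_n$. Then $\bigcap_{n<\omega}U_n=\emptyset$, $\mathrm{cl}_X U_{n+1}\subseteq U_n$, each $\mathrm{cl}_X U_n\setminus U_{n+1}\subseteq K_{n+1}$ is compact, and, by exactly the observation you made in your $\supseteq$ direction (deleting a compact set does not change the closure at the remainder), $U_n^* = M^*$ for every $n$; so the generated extension $Y$ is locally compact by Lemma \ref{LYFG} and $\lambda(Y)=M^*$.

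Your alternative route is sound and is in substance the paper's own argument: the paper gives no separate proof, stating only that the lemma is implicit in the proof of Theorem \ref{LDKG}, where one works inside the clopen separable piece $M$, invokes the separable theory of [9] (Theorem \ref{WG} of [9]) together with Lemma \ref{LYFG} to produce an extension trace in $M$ with compact ``annuli'' realizing a prescribed non-empty clopen subset of $\beta M\setminus M$ (here, all of it), and then observes that an extension trace in the clopen subspace $M$ is an extension trace in $X$ whose $\lambda$-value, computed in $\beta X$, is the same set under the identification $\mathrm{cl}_{\beta X}M\simeq\beta M$. One correction there: no ``choice of exhaustion so that the intersection is all of $M^*$'' is needed, since for \emph{any} compact exhaustion $\{K_n\}_{n<\omega}$ one automatically has $\mathrm{cl}_{\beta M}(M\setminus K_n)\supseteq \beta M\setminus M$, so the intersection equals $M^*$ regardless; making that automatic fact explicit (it is the same compactness observation as above) closes the only vague step in that route.
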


\begin{lemma}\label{FFYG}
Suppose that $X$ is a locally
compact non-compact metrizable space and let $Z\in \lambda({\mathcal
E}(X))$. If $S\in {\mathcal Z}(X^*)$ is such that $\emptyset\neq
S\subseteq Z$, then  $S\in \lambda({\mathcal E}(X))$.
\end{lemma}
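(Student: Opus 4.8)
The plan is to reduce everything to the characterization furnished by Theorem \ref{LFKG}. Since $Z\in\lambda({\mathcal E}(X))$, that theorem tells us $Z$ is a non-empty zero-set of $\beta X$ with $Z\cap X=\emptyset$; in particular $Z\subseteq X^*$. Consequently $\emptyset\neq S\subseteq Z\subseteq X^*$, so $S$ is non-empty and misses $X$ automatically. By the same theorem (its converse direction), it therefore suffices to show that $S$ is a \emph{zero-set of $\beta X$}, not merely of $X^*$. This is the only nontrivial point, and the whole proof will revolve around producing such a representation.

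First I would choose a representing function for $S$ on the remainder. Writing $S=Z(h)$ for some $h\in C(X^*,{\bf I})$ (replacing $h$ by $\min\{\lvert h\rvert,1\}$ if necessary to land in ${\bf I}$ without changing its zero-set), I would use the fact noted just before Theorem \ref{LFKG} that, $X$ being locally compact, $X^*$ is closed in $\beta X$ and hence $C$-embedded. Thus $h$ extends to some $H\in C(\beta X,{\bf I})$ with $H|_{X^*}=h$, so that $Z(H)\cap X^*=Z(h)=S$.

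The key step is then to cut $Z(H)$ down to $S$ by intersecting with $Z$. Because $Z\subseteq X^*$, on $Z$ the function $H$ agrees with $h$, whence
\[
Z(H)\cap Z=\{x\in Z:H(x)=0\}=Z(h)\cap Z=S\cap Z=S,
\]
the last equality holding since $S\subseteq Z$. As an intersection of two zero-sets of $\beta X$, $S=Z(H)\cap Z$ is a zero-set of $\beta X$. Being non-empty and disjoint from $X$, it lies in $\lambda({\mathcal E}(X))$ by Theorem \ref{LFKG}, completing the argument.

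The point that I expect to be the real obstacle, and the reason the hypothesis $S\subseteq Z$ is indispensable, is that a zero-set of $X^*$ need not be a zero-set of $\beta X$: the remainder $X^*$ itself is generally not a zero-set of $\beta X$, so one cannot promote $S$ directly. The containment of $S$ inside the genuine $\beta X$–zero-set $Z$ is exactly what allows the intersection $Z(H)\cap Z$ to recover $S$ while remaining a zero-set of $\beta X$, and this is the mechanism that makes the lemma go through.
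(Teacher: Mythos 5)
Your proof is correct and is essentially the paper's own argument: the paper likewise writes $S=T\backslash X$ for some $T\in{\mathcal Z}(\beta X)$ (using that $X^*$ is $C$-embedded in $\beta X$), observes that $S=Z\cap T$ is then a non-empty zero-set of $\beta X$ missing $X$, and invokes Theorem \ref{LFKG}. You have merely made explicit the extension step and the computation $Z(H)\cap Z=S$ that the paper leaves implicit.
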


\begin{proof}
Let $T\in{\mathcal Z}(\beta X)$ be such that
$S=T\backslash X$. Now $S=Z\cap T$ misses $X$, and thus  by Theorem \ref{LFKG} $S\in
\lambda({\mathcal E}(X))$.
\end{proof}

The following theorem gives another characterization of the image of ${\mathcal E}_K (X)$ under $\lambda$.

\begin{theorem}\label{LPPG}
Let  $X$ be a locally compact
non-compact metrizable space. Then $\lambda ({\mathcal E}_K (X))$
consists of exactly those non-empty zero-sets of $X^*$ which are of
the form $X^*\backslash \mbox {\em cl}_{\beta X} (Z(f))$, where
$f\in C(X, \mathbf{I})$ is of $\sigma$-compact support.
\end{theorem}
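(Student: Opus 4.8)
The plan is to prove Theorem~\ref{LPPG} by combining the characterization of $\lambda({\mathcal E}_K(X))$ as the clopen elements of $\lambda({\mathcal E}(X))$ (Theorem~\ref{LDKG}) with the characterization of $\lambda({\mathcal E}(X))$ as the non-empty zero-sets of $\beta X$ missing $X$ (Theorem~\ref{LFKG}). The key observation is that a set of the form $X^*\backslash\mbox{cl}_{\beta X}(Z(f))$, for $f\in C(X,\mathbf{I})$, is automatically open in $X^*$, so the main work is to show that requiring $f$ to have $\sigma$-compact support is exactly what forces this set to \emph{also} be a zero-set of $X^*$ lying in $\lambda({\mathcal E}(X))$ and to be closed in $X^*$, i.e.\ clopen.

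First I would prove the forward inclusion: given $S\in\lambda({\mathcal E}_K(X))$, by Theorem~\ref{LDKG} $S$ is a non-empty clopen subset of $X^*$, and by Theorem~\ref{LFKG} $S=Z(h)\backslash X$ for some $h\in C(\beta X,\mathbf{I})$. Since $S$ is clopen in $X^*$, its complement $X^*\backslash S$ is also a zero-set of $X^*$, hence (as $X^*$ is $C$-embedded in $\beta X$) extends to $Z(g)$ for some $g\in C(\beta X,\mathbf{I})$, and I can arrange $g$ and $h$ to witness the clopen partition. The task is then to manufacture a single $f\in C(X,\mathbf{I})$ of $\sigma$-compact support with $S=X^*\backslash\mbox{cl}_{\beta X}(Z(f))$. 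Here I would exploit that $S\in\lambda({\mathcal E}(X))$ comes from an extension trace $\{U_n\}_{n<\omega}$ with $\mbox{cl}_X U_n\backslash U_{n+1}$ compact (Lemma~\ref{LYFG}), so the relevant cozero region accumulating only on $S$ has $\sigma$-compact support; concretely, taking $f$ supported on $X\backslash U_1$ (or a suitable Urysohn function separating the compact pieces from the trace) gives $\mbox{cl}_{\beta X}(Z(f))=X\cup(X^*\backslash S)$, whence $X^*\backslash\mbox{cl}_{\beta X}(Z(f))=S$, and $\mbox{supp}(f)$ is $\sigma$-compact by the compactness of the differences.

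For the converse I would start with $f\in C(X,\mathbf{I})$ of $\sigma$-compact support and set $S=X^*\backslash\mbox{cl}_{\beta X}(Z(f))$, assumed non-empty. Writing $\mbox{supp}(f)=\bigcup_{n<\omega}K_n$ with each $K_n$ compact, I would use the $K_n$ to build a function $g\in C(\beta X,\mathbf{I})$ with $g(K_n)\subseteq\{1\}$ for each $n$ and $g$ vanishing on $S$, mimicking the $g=\sum g_n/2^n$ construction in the proof of Theorem~\ref{LFKG}. This exhibits $S$ as a zero-set of $\beta X$ missing $X$, so $S\in\lambda({\mathcal E}(X))$ by Theorem~\ref{LFKG}; simultaneously $S$ is open in $X^*$ (being the complement of a closed set) and closed in $X^*$ (being a zero-set), hence clopen, so $S\in\lambda({\mathcal E}_K(X))$ by Theorem~\ref{LDKG}.

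The main obstacle I anticipate is the bookkeeping around the support condition: the phrase ``$\sigma$-compact support'' must be matched precisely to the clopenness of $S$ in $X^*$, and it is easy to conflate $\mbox{cl}_{\beta X}(Z(f))\cap X^*$ with $Z(f)^*$. The delicate point is that $\mbox{cl}_{\beta X}(Z(f))$ need not equal $\mbox{cl}_{\beta X}(\mbox{supp}(f))$, and one must verify that the $\sigma$-compactness of $\mbox{supp}(f)$ lets $\mbox{cl}_{\beta X}(\mbox{supp}(f))$ miss $S$ in $X^*$ so that $S$ is genuinely separated from $X$ by a zero-set of $\beta X$. Getting the Urysohn functions to agree on the overlaps of the compacta $K_n$ while keeping the resulting function continuous on all of $\beta X$ is the technical heart, and it is exactly where the earlier summation trick from Theorem~\ref{LFKG} should be reused.
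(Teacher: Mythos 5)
Your overall plan (route everything through Theorems~\ref{LDKG} and~\ref{LFKG} plus Lemma~\ref{LYFG}) is workable, but both directions as written contain genuine errors. In the forward direction your construction is inverted. If $f$ is supported on $X\backslash U_1$, then $U_1\subseteq Z(f)$, hence $\mbox{cl}_{\beta X}Z(f)\supseteq U_1^*\supseteq\bigcap_{n<\omega}U_n^*=S$, so $X^*\backslash \mbox{cl}_{\beta X}Z(f)$ is \emph{disjoint} from $S$ rather than equal to it (and such an $f$ has support inside $X\backslash U_1$, which has no reason to be $\sigma$-compact). Moreover no $f$ at all can satisfy your displayed identity $\mbox{cl}_{\beta X}(Z(f))=X\cup(X^*\backslash S)$: the left side is closed in $\beta X$, while the right side equals $\beta X\backslash S$, which is never closed since $X$ is dense and $\emptyset\neq S\subseteq X^*$; what is needed (and all that is possible) is $\mbox{cl}_{\beta X}Z(f)\cap X^*=X^*\backslash S$. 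The correct choice is the mirror image of yours: take $Z(f)=X\backslash U_1$ (a zero-set, $X$ being metric), so $\mbox{supp}(f)=\mbox{cl}_XU_1=\bigcup_{n<\omega}(\mbox{cl}_XU_n\backslash U_{n+1})$ is $\sigma$-compact by Lemma~\ref{LYFG}. Then $\mbox{cl}_XU_2$ and $X\backslash U_1$ are disjoint closed subsets of $X$, so their closures in $\beta X$ are disjoint, giving $S\cap\mbox{cl}_{\beta X}Z(f)=\emptyset$; and compactness of the differences forces $U_1^*=U_n^*$ for all $n$ (because $\mbox{cl}_XU_1\backslash U_n$ is compact), hence $S=U_1^*$ and $X^*=(\mbox{cl}_XU_1)^*\cup(X\backslash U_1)^*=S\cup Z(f)^*$, which yields $S=X^*\backslash\mbox{cl}_{\beta X}Z(f)$. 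That intermediate fact $S=U_1^*$ is absent from your sketch and is exactly what gives the nontrivial inclusion $X^*\backslash\mbox{cl}_{\beta X}Z(f)\subseteq S$. (The paper instead writes the compact open set $S$ as a finite union of basic open sets $X^*\backslash\mbox{cl}_{\beta X}Z_i$ and proves $\sigma$-compactness of the support by a separate direct-sum argument; your route, once corrected, is actually shorter.)

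The converse has a more serious gap: you never use the hypothesis that $S$ is a zero-set of $X^*$, and without it the assertion you are trying to prove is \emph{false}, so no amount of care with Urysohn functions can rescue it. Take $X=\mathbf{R}$ and $f\in C(X,\mathbf{I})$ with $Coz(f)=\bigcup_{n<\omega}(2n,2n+1)$; every closed subset of $\mathbf{R}$ is $\sigma$-compact, so the support condition holds, and $S=X^*\backslash\mbox{cl}_{\beta X}Z(f)$ is non-empty (it contains $D^*$ for $D=\{2n+\frac{1}{2}:n<\omega\}$, since $D$ lies at distance $\frac{1}{2}$ from $Z(f)$, so $\mbox{cl}_{\beta X}D\cap\mbox{cl}_{\beta X}Z(f)=\emptyset$). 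But $S$ is a non-empty proper open subset of $[0,\infty)^*$ (proper because $\{2n+\frac{3}{2}:n<\omega\}^*\subseteq Z(f)^*$), and $[0,\infty)^*$ is a continuum; hence $S$ is not closed in $X^*$, so it is not a zero-set of $X^*$ and, by Theorem~\ref{LDKG}, not in $\lambda({\mathcal E}_K(X))$. Accordingly, your claim that the function $g$ ``exhibits $S$ as a zero-set of $\beta X$ missing $X$'' cannot be correct: $g$ is constrained only on $\bigcup_nK_n$ and on $S$, so $Z(g)$ merely \emph{contains} $S$ and nothing forces $Z(g)\cap X=\emptyset$ or $Z(g)\cap X^*=S$. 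The repair must begin by invoking the closedness hypothesis: $S$ is then clopen, hence compact, and disjoint from the compact set $\mbox{cl}_{\beta X}Z(f)$, so there is $h\in C(\beta X,\mathbf{I})$ with $h(S)=\{0\}$ and $h(\mbox{cl}_{\beta X}Z(f))=\{1\}$; since $X^*=S\cup Z(f)^*$ one gets $Z(h)\cap X^*=S$ and $Z(h)\cap X\subseteq Coz(f)\subseteq\bigcup_nK_n$, and only now does your $g=\sum g_n/2^n$ trick give $S=Z(h)\cap Z(g)\in{\mathcal Z}(\beta X)$ missing $X$, after which Theorems~\ref{LFKG} and~\ref{LDKG} finish. (The paper bypasses all of this, noting $S\subseteq\mbox{cl}_{\beta X}(\mbox{supp}(f))\subseteq\sigma X$ and reducing to the separable case.) Finally, your closing ``delicate point'' is backwards: since $\beta X=\mbox{cl}_{\beta X}Z(f)\cup\mbox{cl}_{\beta X}Coz(f)$, the inclusion $S\subseteq\mbox{cl}_{\beta X}(\mbox{supp}(f))$ is automatic; it is not an obstacle but the very reason the support hypothesis forces $S$ into $\sigma X$.
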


\begin{proof}
Suppose that $S\in\lambda({\mathcal E}_K (X))$. Then
since $\{\mbox {cl}_{\beta X} Z\backslash X:Z\in {\mathcal Z}(X)\}$
forms a base for closed subsets of $X^*$, there exists a collection
$\mathcal{C}$ of zero-sets of $X$ such that $S=\bigcup \{
X^*\backslash \mbox {cl}_{\beta X} Z:Z\in \mathcal{C}\}$. Since $S$
is compact, there exists a finite number of zero-sets $Z_1,\ldots,Z_n$
such that $S=\bigcup_{i=1}^n ( X^*\backslash \mbox {cl}_{\beta X}
Z_i)=X^*\backslash \mbox {cl}_{\beta X} Z$, where $Z=Z_1\cap\cdots\cap
Z_n\in {\mathcal Z}(X)$. Let $Z=Z(f)$, for some $f\in C(X,
\mathbf{I})$. If $X$ is separable, then trivially $\mbox {supp}(f)$
is $\sigma$-compact. So suppose that $X$ is non-separable and assume
the notations of Theorem \ref{IUUG}. Let $L=\{i\in I: Coz (f)\cap X_i\neq
\emptyset\}$. Then there exists a zero-set $T\in {\mathcal Z}(X)$
such that $T\subseteq Coz(f)$ and for each $i\in L$, $T\cap
X_i\neq\emptyset$. Now since $T\cap Z(f)=\emptyset$, we have $\mbox
{cl}_{\beta X}T\cap \mbox {cl}_{\beta X}Z(f)=\emptyset$, and
therefore $\mbox {cl}_{\beta X}T\subseteq \beta X\backslash\ \mbox
{cl}_{\beta X}Z(f)$. But $S$ is a clopen subset of $X^*$, and so by
Lemma 6.6 of [9], $S\subseteq\sigma X$. This implies that $\mbox
{cl}_{\beta X}T\subseteq \mbox {cl}_{\beta X}(\bigcup _{i\in
J}X_i)$, for some countable $J\subseteq I$. It follows that
$T\subseteq \bigcup _{i\in J}X_i$, and therefore $L$ is countable.
Now clearly $\mbox {supp}(f)$, being a closed subset of the
separable space $\bigcup _{i\in J}X_i$, is $\sigma$-compact (see
\ref{FPG}.C of [7]).

Conversely,
suppose that $\emptyset\neq S\in {\mathcal Z}(X^*)$ is of the form
$X^*\backslash \mbox {cl}_{\beta X} (Z(f))$, for some   $f\in
C(X,\mathbf{I})$ of $\sigma$-compact support. If $X$ is separable
then clearly $S$, being a clopen subset of $X^*$, is in $\lambda
({\mathcal E}_K (X))$. Suppose that  $X$ is non-separable. Then by
definition of $\sigma X$, since $\mbox {supp}(f)$ is
$\sigma$-compact, we have $ S\subseteq \mbox {cl}_{\beta X} (\mbox
{supp}(f))\subseteq\sigma X$, and therefore $S\subseteq \mbox
{cl}_{\beta X}(\bigcup _{i\in J}X_i)$, for some countable
$J\subseteq I$.
 Thus  $S\in\lambda ({\mathcal E}_K
(X))$.
\end{proof}

In the following theorem, assuming [CH], we give a  purely order-theoretic
description of ${\mathcal E}_K(X)$ as a subset of ${\mathcal E}(X)$.

\begin{theorem}\label{RPG} {\em [CH]}
Let  $X$ be a locally
compact non-separable metrizable space. For a set ${\mathcal
F}\subseteq {\mathcal E}(X)$ consider the following conditions.
\begin{itemize}
\item[\rm(1)] For each $A\in\mathcal{F}$, $|\{ Y\in {\mathcal E}(X): Y>A\}|\leq \aleph _1;$
\item[\rm(2)] If $A\in {\mathcal E}(X)$ is such that $|\{ Y\in {\mathcal E}(X): Y>A\}|\leq\aleph _1$, then there exists a $B\in {\mathcal F}$ such that $B<A$;
\item[\rm(3)] For each $A, B\in\mathcal{F}$ such that $A< B$, there exists a $C\in\mathcal{F}$ such that $B\wedge C=A$ and $B$ and $C$ have no common upper bound in ${\mathcal E}(X)$.
\end{itemize}
Then the set ${\mathcal E}_K(X)$ is the largest (with
respect to set-theoretic  inclusion) subset of ${\mathcal E}(X)$
satisfying the above three conditions.
\end{theorem}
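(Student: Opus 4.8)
The plan is to transport the whole statement through the order-anti-isomorphism $\lambda$ and argue entirely with the non-empty zero-sets of $\beta X$ that miss $X$, which by Theorem~\ref{LFKG} are precisely the elements of $\lambda(\mathcal{E}(X))$. Under $\lambda$ the relation $Y>A$ becomes $\lambda(Y)\subsetneq\lambda(A)$, and by Lemma~\ref{FFYG} the set $\{Y\in\mathcal{E}(X):Y>A\}$ is in bijection with the collection of all non-empty zero-sets of $X^*$ properly contained in $\lambda(A)$. Since $\lambda(\mathcal{E}(X))$ is closed under finite unions (Theorem~\ref{LFKG}) and under non-empty zero-set subsets (Lemma~\ref{FFYG}), meets are carried to unions and ``no common upper bound'' to disjointness: for $B,C\in\mathcal{E}(X)$ one has $B\wedge C=A$ iff $\lambda(B)\cup\lambda(C)=\lambda(A)$, and $B,C$ have no common upper bound iff $\lambda(B)\cap\lambda(C)=\emptyset$. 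Hence condition (3) says exactly that for $A,B\in\mathcal{F}$ with $\lambda(B)\subsetneq\lambda(A)$ there is $C\in\mathcal{F}$ with $\lambda(A)=\lambda(B)\sqcup\lambda(C)$; as $\lambda(B)$ and $\lambda(C)$ are then complementary closed subsets of $\lambda(A)$, this forces $\lambda(B)$ to be clopen in $\lambda(A)$. The pivotal preliminary is a cardinality dichotomy, which I will call Claim A: \emph{for $A\in\mathcal{E}(X)$, $|\{Y>A\}|\le\aleph_1$ if and only if $\lambda(A)\subseteq\sigma X$.} Its forward half is easy: if $\lambda(A)\subseteq\sigma X$ then $\lambda(A)\subseteq M^*$ with $M=\bigcup_{i\in J}X_i$ for some countable $J\subseteq I$, and since $M$ is separable $C(M^*)$ has cardinality at most $2^{\aleph_0}$, so under [CH] there are at most $\aleph_1$ zero-sets of $X^*$ inside the clopen set $M^*$, hence at most $\aleph_1$ inside $\lambda(A)$.

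Granting Claim A, I would first check that $\mathcal{E}_K(X)$ satisfies the three conditions. For (1): if $A\in\mathcal{E}_K(X)$ then $\lambda(A)$ is clopen in $X^*$ (Theorem~\ref{LDKG}), so $\lambda(A)\subseteq\sigma X$ (Lemma~6.6 of [9]), and the forward half of Claim A gives $|\{Y>A\}|\le\aleph_1$. For (2): given $A'$ with $|\{Y>A'\}|\le\aleph_1$, the converse half of Claim A yields $\lambda(A')\subseteq M^*$ for a countable union of factors $M$; enlarging $M$ by one additional factor $X_j$ produces $M'$ with $(M')^*\supsetneq\lambda(A')$ and $(M')^*$ clopen in $X^*$, so $(M')^*\in\lambda(\mathcal{E}_K(X))$ by Lemma~\ref{FFG} and Theorem~\ref{LDKG}, and $B=\lambda^{-1}((M')^*)\in\mathcal{E}_K(X)$ satisfies $B<A'$. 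For (3): if $A,B\in\mathcal{E}_K(X)$ with $\lambda(B)\subsetneq\lambda(A)$, then $\lambda(A)\setminus\lambda(B)$ is a non-empty clopen subset of $X^*$, and being a zero-set of $X^*$ inside $\lambda(A)$ it lies in $\lambda(\mathcal{E}(X))$ by Lemma~\ref{FFYG}; it is clopen, so it equals $\lambda(C)$ for some $C\in\mathcal{E}_K(X)$, and this $C$ witnesses (3).

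For the maximality it remains to show that any $\mathcal{F}$ satisfying (1)--(3) is contained in $\mathcal{E}_K(X)$. Fix $A\in\mathcal{F}$. By (1) and the converse half of Claim A, $\lambda(A)\subseteq\sigma X$, so $\lambda(A)\subseteq M^*$ for some countable union of factors $M$; enlarging $M$ by one factor, we may assume $\lambda(A)\subsetneq M^*$. Since $M^*\subseteq\sigma X$, the forward half of Claim A shows that $A_M=\lambda^{-1}(M^*)$, which exists by Lemma~\ref{FFG}, satisfies the hypothesis of (2); thus (2) provides $B\in\mathcal{F}$ with $B<A_M$, that is $\lambda(B)\supsetneq M^*\supseteq\lambda(A)$, so $B<A$ with $B,A\in\mathcal{F}$. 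Applying (3) to this pair yields $R\in\mathcal{F}$ with $A\wedge R=B$ and $A,R$ having no common upper bound, which under $\lambda$ means $\lambda(A)\cup\lambda(R)=\lambda(B)$ and $\lambda(A)\cap\lambda(R)=\emptyset$; hence $\lambda(A)$ is clopen in $\lambda(B)$. As $M^*$ is clopen in $X^*$ and $M^*\subseteq\lambda(B)$, the set $M^*$ is clopen in $\lambda(B)$, so $\lambda(A)\subseteq M^*$ is clopen in $M^*$ and therefore clopen in $X^*$; by Theorem~\ref{LDKG}, $A\in\mathcal{E}_K(X)$.

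The hard part will be the converse half of Claim A: if $\lambda(A)\not\subseteq\sigma X$, i.e. $\lambda(A)$ meets $c\sigma X$, then more than $\aleph_1$ non-empty zero-sets of $X^*$ are contained in $\lambda(A)$. Here I would exploit that a point of $\lambda(A)\cap c\sigma X$ cannot be captured by the closure of any countable subfamily of the $X_i$, together with the strong zero-dimensionality of the locally compact metrizable skeleton of the $X_i$ (as used in the proof of Theorem~\ref{LKG}), to manufacture an uncountable, suitably independent family of clopen-in-$X^*$ pieces each meeting $\lambda(A)$; by Lemma~\ref{FFYG} every non-empty zero-set these generate inside $\lambda(A)$ again lies in $\lambda(\mathcal{E}(X))$, so the combinations furnish more than $\aleph_1$ distinct elements below $A$. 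The delicate point, and the place where the machinery of Theorem~\ref{LKG} and of Theorem~6.7 of [9] must be deployed with care, is to arrange that these combinations are genuinely distinct zero-sets of $X^*$ lying inside $\lambda(A)$ rather than merely closed sets, so that the count indeed exceeds $\aleph_1$.
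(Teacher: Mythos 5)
Your reduction through $\lambda$ is sound, and three of the four pieces of your argument are complete and correct: the translation of meets into unions and of ``no common upper bound'' into disjointness (legitimate by Theorem~\ref{LFKG} and Lemma~\ref{FFYG}), the verification that ${\mathcal E}_K(X)$ satisfies (1) and (3), and the maximality argument, which is in fact slightly cleaner than the paper's own (the paper reaches $\lambda(A)=\lambda(B)\backslash\lambda(D)$ through an auxiliary separating function $f$ and an intermediate element $E$, whereas you get clopenness of $\lambda(A)$ in $\lambda(B)$ directly from $\lambda(A)\cup\lambda(R)=\lambda(B)$, $\lambda(A)\cap\lambda(R)=\emptyset$, and then pass through the clopen set $M^*$). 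But the entire proof hinges on the converse half of your Claim A --- that $|\{Y\in{\mathcal E}(X):Y>A\}|\leq\aleph_1$ forces $\lambda(A)\subseteq\sigma X$ --- and this you do not prove; you explicitly defer it as ``the hard part'' and offer only a strategy. That implication is used twice (to verify condition (2) for ${\mathcal E}_K(X)$ and again in the maximality step), and it is precisely the core of the paper's proof, so what you have is a genuine gap, not a complete argument.

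The paper settles that implication combinatorially, not by counting zero-sets abstractly: take an extension trace $\{V_n\}_{n<\omega}$ generating $A$, put $H_n=\{i\in I:V_n\cap X_i\neq\emptyset\}$, note each $H_n$ is uncountable when $\lambda(A)\not\subseteq\sigma X$, and split into two cases according to whether $\bigcap_{n<\omega}H_n$ is uncountable or countable. In the first case, the traces $\{(\bigcup_{i\in L}X_i)\cap V_n\}_{n<\omega}$, for $L$ ranging over the non-empty subsets of a set $K\subseteq\bigcap_{n<\omega}H_n$ of size $\aleph_1$, are pairwise non-equivalent and finer than $\{V_n\}_{n<\omega}$, giving $2^{\aleph_1}>\aleph_1$ elements above $A$; in the second case one must first extract a subsequence with $H_{k_n}\backslash H_{k_{n+1}}$ uncountable and assemble traces from factors chosen across these differences. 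Your sketch does not engage with this second case, which is the real difficulty: when $\bigcap_{n<\omega}H_n$ is countable, a fixed union of factors need not meet every $V_n$, so the ``independent family of clopen pieces'' cannot be produced naively. Moreover, your appeal to ``strong zero-dimensionality of the locally compact metrizable skeleton of the $X_i$'' is off target: Theorem~\ref{RPG} makes no zero-dimensionality assumption, and none is available or needed --- the clopen sets that actually do the job are remainders $(\bigcup_{i\in L}X_i)^*$ of unions of whole factors, clopen for free since each $\bigcup_{i\in L}X_i$ is clopen in $X$. The $H_n$ case analysis can indeed be run inside your framework (the resulting clopen sets meet $\lambda(A)$, and Lemma~\ref{FFYG} converts the intersections into more than $\aleph_1$ distinct non-empty zero-sets inside $\lambda(A)$, i.e.\ elements of ${\mathcal E}(X)$ above $A$), but until that construction is carried out your proof is incomplete at its crux.
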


\begin{proof}
First
we verify that ${\mathcal E}_K(X)$ satisfies the above conditions.
To show  that condition (1) is satisfied, let $A \in {\mathcal
E}_K(X)$. Then we have $\lambda (A)\subseteq\sigma X$ (see Lemma 6.6
of [9]) and therefore, assuming the notations of Theorem \ref{IUUG},
$\lambda (A)\subseteq \mbox {cl}_{\beta X} M$, where $M=\bigcup
_{i\in G} X_i$, for some countable $G\subseteq I$.  Now if $Y\in
{\mathcal E}(X)$ is such that $Y>A$, then $\lambda (Y)$ is a
zero-set in $\mbox {cl}_{\beta X} M$. But $| {\mathcal Z}(\mbox
{cl}_{\beta X} M)|\leq \aleph _1$, as $M$ is separable, and thus
condition  (1) holds.

Now we
show that ${\mathcal E}_K(X)$ satisfies condition (2). So suppose
that $A\in {\mathcal E}(X)$ is such that $|\{ Y\in {\mathcal E}(X):
Y>A\}|\leq \aleph _1$. First we show that $\lambda
(A)\subseteq\sigma X$. Suppose the contrary, and let ${\mathcal
V}=\{V_n\}_{n<\omega}$ be an extension trace in $X$ which generates
$A$. For each $n<\omega$, let $H_n= \{i\in I : V_n \cap
X_i\neq\emptyset\}$.  Then since we are assuming that  $\lambda (A)
\backslash \sigma X \neq \emptyset$, each $H_n$ is an  uncountable
subset of $I$. We consider the following two cases.

{\em Case 1)} Suppose that  $\bigcap_{n<\omega}H_n$ is uncountable.
Let $K\subseteq\bigcap_{n<\omega}H_n$, with $|K|=\aleph _1$. For
each non-empty $L\subseteq K$ and each $n<\omega$ let
\[ W^n_L=\Big(\bigcup_{i\in L}X_i\Big)\cap V_n.\]
Then each ${\mathcal W}_L=\{W^n_L\}_{n<\omega}$ is an
extension trace in $X$ finer than $\mathcal{V}$, and ${\mathcal
W}_{L_1}$ and ${\mathcal W}_{L_2}$ are  non-equivalent for distinct
non-empty $L_1, L_2\subseteq K$. But this is a contradiction, as by
our assumption the number of these extension traces cannot be
greater than $\aleph _1$.

{\em Case 2)} Suppose that  $\bigcap_{n<\omega}H_n$ is countable. We
define a sequence  $\{k_n\}_{n<\omega}$ of positive integers as
follows. Let $k_1=1$. Then since $H_1\supseteq H_2\supseteq\cdots$ and
$H_{k_1}$ is uncountable, arguing inductively there exists a
sequence $k_1<k_2<\cdots$  with $H_{k_n}\backslash H_{k_{n+1}}$ being
uncountable for each $n<\omega$. We may assume that  $H_n\backslash
H_{n+1}$ is uncountable for each $n<\omega$. Suppose that $|K|=
\aleph _1$, and let for each $n<\omega$, $K_n\subseteq H_n\backslash
H_{n+1}$ be such  $|K_n|= \aleph _1$. We use $K$ as an index set to
(faithfully) index the elements of $K_n$.
 Thus  $K_n=\{k_i^n:i\in K\}$. For each non-empty $L\subseteq K$ and each $n<\omega$ let
$L_n=\{k_i^n:i\in L\}$, and define
\[ W^n_L=\bigcup\{V_n\cap X_i : i\in L_n\cup L_{n+1}\cup\cdots\}.\]
Then each ${\mathcal W}_L=\{W^n_L\}_{n<\omega}$ is an
extension trace in $X$ finer than $\mathcal{V}$, and they are
non-equivalent for distinct non-empty $L_1, L_2\subseteq K$. But
this is again a contradiction.

Therefore  $\lambda (A)\subseteq\sigma X$ and we can assume that
$\lambda (A)\subseteq P^*$ properly, where $P=\bigcup_{i\in H}X_i$,
and $H\subseteq I$ is countable. Let $\lambda (B)= P^*$. Then $B\in
{\mathcal E}_K(X)$ and $B<A$. Thus ${\mathcal E}_K(X)$ satisfies
condition  (2).

Next, to show that ${\mathcal E}_K(X)$ satisfies condition  (3),
suppose that $A, B\in {\mathcal E}_K(X)$ are such that $A<B$.  Let
$C\in {\mathcal E}_K(X)$ be such that $\lambda (C)=\lambda
(A)\backslash\lambda (B)$. Then clearly $B\wedge C=A$ and thus
condition (3) holds for ${\mathcal E}_K(X)$.

Now suppose that a set ${\mathcal F}\subseteq {\mathcal E}(X)$
satisfies conditions (1)-(3). Let $A\in\mathcal{F}$. Then by
condition (1), $|\{ Y\in {\mathcal E}(X): Y>A\}|\leq \aleph _1$.
Arguing as above we have  $\lambda (A)\subseteq\sigma X$. Let
$\lambda (A)\subseteq Q^*$, where $Q=\bigcup_{i\in J}X_i$ and
$J\subseteq I$ is countable. Let $B\in {\mathcal E}(X)$ be such that
$\lambda (B)= Q^*$. Then since $|\{ Y\in{\mathcal E}(X): Y>B\}|\leq
\aleph _1$, using condition (2), there exists a $C\in\mathcal{F}$
such that $C<B$. Therefore $C< A$, and so by condition (3), there
exists a $D\in\mathcal{F}$ such that $A\wedge D=C$ and $A$ and $D$
have no common upper bound in ${\mathcal E}(X)$. Therefore $\lambda
(A)\cap\lambda (D)=\emptyset$. Suppose that
$x\in\lambda(B)\backslash (\lambda(A)\cup\lambda(D))$. Let $f\in
C(\beta X,\textbf{I})$ be such that $f(x)=1$ and
$f(\lambda(A)\cup\lambda(D))=\{0\}$. Let $S=Z(f)\cap\lambda (C)$.
Then since $C\leq A$, $S\neq\emptyset$, and therefore $S=\lambda
(E)$, for some $E\in{\mathcal E}(X)$.  Clearly since
 $\lambda(A)\subseteq S$, we have  $E\leq
A$. But $\lambda(D)\subseteq Z(f)$ and $C\leq D$, therefore
$\lambda(D)\subseteq S$, and thus  $E\leq D$. This combined with
$E\leq A$ implies  that $E\leq C$. But $x\in
\lambda(B)\subseteq\lambda(C)$ and $x\notin S$. This contradiction
shows that $\lambda(B)\backslash\lambda(A)\subseteq\lambda(D)$.
Finally, we note  that by the above inclusion
$\lambda(B)\backslash\lambda(D)\subseteq\lambda(A)$, and conversely,
if $x\in\lambda(A)$, then since $B\leq A$, and
$\lambda(A)\cap\lambda(D)=\emptyset$, we have
$x\in\lambda(B)\backslash\lambda(D)$. Therefore
$\lambda(A)=\lambda(B)\backslash\lambda(D)$, and thus $\lambda(A)$
is clopen in $X^*$. This shows that $A\in{\mathcal E}_K(X)$, and
therefore ${\mathcal F}\subseteq {\mathcal E}_K(X)$, which together
with the first part of the proof, establishes the
theorem.
\end{proof}

\begin{theorem}\label{TTRPG}
Let  $X$ be a locally compact
non-compact metrizable space. Then $ {\mathcal E}_K(X)$ and $
{\mathcal E}(X) $ are never order-isomorphic.
\end{theorem}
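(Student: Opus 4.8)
The plan is to show $\mathcal{E}_K(X)$ and $\mathcal{E}(X)$ cannot be order-isomorphic by exhibiting an order-theoretic property that one possesses and the other lacks. Since both are characterized (via $\lambda$) as subfamilies of $\mathcal{Z}(X^*)$ ordered by reverse inclusion—with $\lambda(\mathcal{E}(X))$ consisting of all non-empty zero-sets of $\beta X$ missing $X$, and $\lambda(\mathcal{E}_K(X))$ of exactly the clopen elements of $X^*$ among these—the natural strategy is to find a structural feature of the clopen lattice that distinguishes it from the full zero-set lattice. The most promising candidate is \emph{complementation}: in $\mathcal{E}_K(X)$, for any $A<B$, condition (3) of Theorem~\ref{RPG} (established in its proof) produces a $C$ with $B\wedge C=A$ and $B,C$ having no common upper bound, reflecting that clopen sets have complements. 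I expect $\mathcal{E}(X)$ to fail an analogous complementation property at some point, and the proof should locate a specific order-configuration realizable in one poset but not the other.

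\textbf{First} I would set up the correspondence: fix the order-anti-isomorphism $\lambda$, so that an order-isomorphism $\mathcal{E}_K(X)\to\mathcal{E}(X)$ would yield an order-\emph{isomorphism} between $\lambda(\mathcal{E}_K(X))$ (non-empty clopen subsets of $X^*$, ordered by inclusion) and $\lambda(\mathcal{E}(X))$ (non-empty zero-sets of $\beta X$ missing $X$, i.e.\ non-empty $Z\in\mathcal{Z}(X^*)$ with $\mathrm{cl}_{\beta X}(\bigcap U_n)$ $\sigma$-compact). The two families are then compared purely as posets. \textbf{Next} I would isolate an order-theoretic invariant. A clean one is the behavior of \emph{maximal elements} or of the top element: $X^*$ itself is the largest element of $\lambda(\mathcal{E}(X))$ (it contains every other member), and one checks whether it lies in $\lambda(\mathcal{E}_K(X))$—it does precisely when $X^*$ is clopen in itself, which is always true, but $X^*\in\lambda(\mathcal{E}_K(X))$ requires $X^*=\lambda(Y)$ for $Y$ locally compact, equivalently $Y=$ the one-point compactification-type extension, which forces $\mathrm{cl}_{\beta X}X=\beta X$ to have $X^*$ clopen. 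So I would instead compare the \emph{atoms} or the divisibility structure below a fixed element.

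\textbf{The cleanest route} is via a \emph{complementation} dichotomy made precise. Say an element $T$ of a poset with a least-upper-bound-respecting structure is \emph{complemented relative to $B$} if there is $C$ with meet equal to the bottom-type element and no common upper bound with $B$. In $\lambda(\mathcal{E}_K(X))$, every pair nested as $A\subsetneq B$ admits, by the clopen structure, a set $C=B\setminus A$ (clopen, non-empty, disjoint from $A$) giving $A=B\cap C$ with $B\cup C$ not a zero-set missing $X$ that lies above both unless it exceeds $X^*$—this realizes condition (3). In $\lambda(\mathcal{E}(X))$ I would exhibit a nested pair $A\subsetneq B$ of zero-sets missing $X$ for which \emph{no} such complementary $C$ exists, because a general zero-set $Z$ need not have a complement whose closure is again a qualifying zero-set; concretely, take $B$ a zero-set and $A$ a proper zero-subset whose set-theoretic complement $B\setminus A$ in $B$ has empty interior in $B$ (a nowhere-dense zero-set pair), so that any $C\subseteq X^*$ disjoint from $A$ and meeting $B$ is forced to overlap $B$ in a way that admits a common upper bound. \textbf{The main obstacle} I anticipate is verifying that this failure of relative complementation is genuinely \emph{order-theoretic}—that is, preserved under any abstract order-isomorphism—rather than an artifact of the concrete realization; I would address this by phrasing the distinguishing property entirely in terms of $\wedge$, $<$, and the existence/non-existence of common upper bounds (exactly as condition (3) is phrased), so that it transfers across isomorphisms, and then showing $\lambda(\mathcal{E}(X))$ contains a witnessing nested pair violating it while every nested pair in $\lambda(\mathcal{E}_K(X))$ satisfies it, forcing any isomorphism to carry a violator to a non-violator, a contradiction.
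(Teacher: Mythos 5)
Your strategy---separating ${\mathcal E}_K(X)$ from ${\mathcal E}(X)$ by the relative-complementation property of condition (3) of Theorem \ref{RPG}---is workable, and it is genuinely different from the paper's proof, which splits into a separable case (where the isomorphism is shown to force every non-empty zero-set of $X^*$ to be clopen, so that $X^*$ would be a finite $P$-space) and a non-separable case (an increasing-sequence/supremum argument). Moreover, the transfer across an abstract order-isomorphism, which you single out as the main obstacle, is actually the unproblematic part: by Theorem \ref{LFKG} and Lemma \ref{FFYG}, in ${\mathcal E}(X)$ the meet of $B$ and $C$ always exists with $\lambda(B\wedge C)=\lambda(B)\cup\lambda(C)$, and $B$, $C$ have a common upper bound if and only if $\lambda(B)\cap\lambda(C)\neq\emptyset$; hence for $A<B$ a witness $C$ can exist only if $\lambda(C)=\lambda(A)\backslash\lambda(B)$, so only if this difference is a zero-set, in particular closed, i.e.\ only if $\lambda(B)$ is relatively open in $\lambda(A)$. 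In ${\mathcal E}_K(X)$ the clopen difference always supplies the witness, by Lemma \ref{FFYG} together with Theorem \ref{LDKG}. (Two slips along the way: under the anti-isomorphism $\lambda$, meets of extensions correspond to \emph{unions} of zero-sets, so your ``$A=B\cap C$'' has the duality backwards; and $X^*$ is the largest element of $\lambda({\mathcal E}(X))$ only when $X$ is separable, cf.\ Theorem \ref{IPG}.)

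The genuine gap is the existence of your witnessing pair in $\lambda({\mathcal E}(X))$. You prescribe $A\subsetneq B$ with $B\backslash A$ of empty interior in $B$ (i.e.\ $A$ dense in $B$), which you also call a ``nowhere-dense zero-set pair''; these two readings are incompatible, and under either one the prescribed pair can fail to exist. Take $X=\omega$, so that $\lambda({\mathcal E}(X))={\mathcal Z}(\omega^*)\backslash\{\emptyset\}$: zero-sets of $\omega^*$ are regular-closed (the fact from [6] cited in the proof of Theorem \ref{IPY}), so a proper zero-subset dense in a zero-set $B$ would equal $B$; and $\omega^*$ is an almost $P$-space (every non-empty $G_\delta$ has non-empty interior, Theorem 1.2.5 of [15]), so $B=\omega^*$ has no non-empty nowhere dense zero-subset either. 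What your argument actually needs is much weaker and always available: a pair in which $\lambda(B)$ is simply \emph{not relatively open} in $\lambda(A)$. Such a pair exists for every $X$ in the theorem, because every $Z\in\lambda({\mathcal E}(X))$ is infinite (choosing $x_n\in U_n$ for an extension trace $\{U_n\}_{n<\omega}$ with $Z=\bigcap_{n<\omega}U_n^*$ yields an infinite closed discrete $D\subseteq X$, which is $C$-embedded in $X$, with $\omega^*\simeq D^*\subseteq Z$), and an infinite compact space cannot have all its non-empty zero-sets open: it would then be a pseudocompact $P$-space, hence finite, by Proposition 1.65 of [16]---the very fact the paper itself invokes. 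Taking $\lambda(B)$ to be a non-open non-empty zero-subset of $\lambda(A)$ (it lies in $\lambda({\mathcal E}(X))$ by Lemma \ref{FFYG}), the set $\lambda(A)\backslash\lambda(B)$ is not closed, hence not a zero-set, and no witness $C$ exists. With that replacement your proof goes through; as written, the witness step fails.
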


\begin{proof}
{\em Case 1)} Suppose that $X$ is separable. Suppose
that ${\mathcal E}_K(X)$ is order-isomorphic  to ${\mathcal E}(X)$,
and let $\psi : \lambda({\mathcal
E}_K(X))\rightarrow\lambda({\mathcal E}(X))$ denote an
order-isomorphism. First we show that $ \lambda({\mathcal
E}_K(X))=\lambda({\mathcal E}(X))$, from which it follows that every
non-empty zero-set of $X^*$ is clopen in $X^*$, and therefore $X^*$
is a $P$-space. By Proposition 1.65 of [16] every pseudocompact
$P$-space is finite, thus $X^*$ is finite. By 4C of [16], the
Stone-\v{C}ech remainder of a non-pseudocompact space has at least
$2^{2^{\aleph_0}}$ points. Therefore $X$ is pseudocompact and being
metrizable it is compact. But this is a contradiction. Now let
$X^*\neq Z\in \lambda({\mathcal E}(X))$. Let $A=X^*\backslash
\psi^{-1}(Z)$, where $A\in \lambda({\mathcal E}_K(X))$. If
$\psi(A)\cap Z\neq\emptyset$, then there exists a $B\in
\lambda({\mathcal E}_K(X))$ such that $\psi(B)=\psi(A)\cap Z$. But
such a $B$ necessarily has non-empty intersection with one of
$\psi^{-1}(Z)$ or $A$. Now since $\psi$ is an order-isomorphism, it
is easy to see that in either case we get a contradiction. Therefore
$\psi(A)\cap Z=\emptyset$. If $\psi(A)\cup Z\neq X^*$, then  there
exists an $\emptyset\neq H\in {\mathcal Z}(X^*)$ with $H\cap
(\psi(A)\cup Z)=\emptyset$. Let $G\in \lambda({\mathcal E}_K(X))$ be
such that $\psi(G)=H$, then again we get a contradiction, as $G$
intersects one  of $\psi^{-1}(Z)$ or $A$. Therefore $\psi(A)\cup Z=
X^*$, and thus $Z=X^*\backslash\psi(A)$, i.e.,
$Z\in\lambda({\mathcal E}_K(X))$.

{\em Case 2)} Suppose that $X$ is non-separable. Suppose to the
contrary that $ {\mathcal E}_K(X) $ and ${\mathcal E}(X) $ are
order-isomorphic and let  $\phi: {\mathcal E}_K(X) \rightarrow
{\mathcal E}(X) $ denote an order-isomorphism. Since $X$ is
non-separable, there exists a sequence $\{Y_n \}_{n<\omega}$ in $
{\mathcal E}(X)$ such that $Y_1<Y_2<\cdots$. Consider ${\mathcal
F}=\{\lambda(Y_n )\}_{n<\omega}$. Then ${\mathcal F}$ has the
f.i.p., and therefore $Z=\bigcap{\mathcal F}\in \lambda({\mathcal
E}(X) )$. Let $Z=\lambda(Y)$, for some $Y\in {\mathcal E}(X)$.
Clearly $Y=\bigvee_{n<\omega}Y_n$. Let for each $n<\omega$, $\phi
(S_n)=Y_n$ and $\phi (S)=Y$. Then $S_1<S_2<\cdots<S$. For each
$n<\omega$, let $\lambda(S_n )\backslash\lambda(S)=\lambda(T_n )$,
for some $T_n\in {\mathcal E}_K(X)$. Now since the sequence
$\{S_n\}_{n<\omega}$ is increasing, the sequence $\{\lambda(T_n
)\}_{n<\omega}$  and equivalently the sequence
$\{\phi(T_n)\}_{n<\omega}$ is  also increasing, and thus
$\bigvee_{n<\omega}\phi(T_n)\in {\mathcal E}(X)$. Let $T\in
{\mathcal E}_K(X)$ be such that
$\phi(T)=\bigvee_{n<\omega}\phi(T_n)$. Let  $A\in{\mathcal E}_K(X)$
be  such that $\lambda(A)=\lambda(S)\cup\lambda(T)$. Now for each
$n<\omega$,  $T\geq T_n$, and  thus $
\lambda(A)\subseteq\lambda(S)\cup\lambda(T_n)=\lambda(S_n)$, i.e.,
for each $n<\omega$,  we have $A\geq S_n$, or equivalently,
$\phi(A)\geq \phi(S_n)=Y_n$. Therefore $\phi(A)\geq Y=  \phi(S)$ and
$A\geq S$. Thus $T\geq S$. But $T\geq T_1$, which is a contradiction
as $\lambda(T_1)\cap
\lambda(S)=\emptyset$.
\end{proof}

\begin{lemma}\label{FPG}
Let $X$ be  a locally compact
non-separable  metrizable space. If $\emptyset\neq Z\in {\mathcal
Z}(\beta X)$ then $Z\cap \sigma X\neq \emptyset$.
\end{lemma}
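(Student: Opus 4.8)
The plan is to produce a single point lying simultaneously in $Z$ and in $\sigma X$, obtained as a cluster point of a carefully chosen sequence. Write $Z=Z(f)$ for some $f\in C(\beta X,\mathbf{I})$, and adopt the decomposition $X=\bigoplus_{i\in I}X_i$ of Theorem \ref{IUUG}. Each factor $X_i$, being locally compact separable metrizable, is $\sigma$-compact, so $\mbox{cl}_{\beta X}X_i\subseteq\sigma X$ and hence $X\subseteq\sigma X$. Consequently, if $Z\cap X\neq\emptyset$ the conclusion is immediate, and I may assume $Z\cap X=\emptyset$, i.e. $Z\subseteq X^*$. For each $n<\omega$ set $U_n=X\cap f^{-1}([0,1/n))$; exactly as in the proof of Theorem \ref{LFKG}, $\{U_n\}_{n<\omega}$ is a regular sequence of open sets in $X$ with $Z=\bigcap_{n<\omega}U_n^*$, and since $Z$ misses $X$ this intersection equals $\bigcap_{n<\omega}\mbox{cl}_{\beta X}U_n$. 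In particular the $U_n$ are decreasing, and since $Z\subseteq\mbox{cl}_{\beta X}U_n$ is non-empty, each $U_n$ is non-empty.

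The core idea is to trap a sequence inside a countable subfamily of the factors while keeping it inside the nested closures defining $Z$. For each $n<\omega$ choose a point $x_n\in U_n$, and let $J=\{i\in I:x_n\in X_i\mbox{ for some }n<\omega\}$. Then $J$ is countable, and with $M=\bigcup_{i\in J}X_i$ we have $\{x_n:n<\omega\}\subseteq M$, so that $\mbox{cl}_{\beta X}\{x_n:n<\omega\}\subseteq\mbox{cl}_{\beta X}M\subseteq\sigma X$ by the very definition of $\sigma X$. By compactness of $\beta X$ the sequence $\{x_n\}_{n<\omega}$ has a cluster point $q$, and this $q$ automatically satisfies $q\in\mbox{cl}_{\beta X}\{x_n:n<\omega\}\subseteq\sigma X$.

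It then remains to verify $q\in Z$, and here I would exploit the monotonicity of the sequence. For each fixed $m<\omega$ the tail $\{x_n:n\geq m\}$ is contained in $U_m$, because $U_n\subseteq U_m$ for $n\geq m$; since $q$ is a cluster point, every neighborhood of $q$ meets this tail, whence $q\in\mbox{cl}_{\beta X}\{x_n:n\geq m\}\subseteq\mbox{cl}_{\beta X}U_m$. As $m$ is arbitrary, $q\in\bigcap_{m<\omega}\mbox{cl}_{\beta X}U_m=Z$. Combining this with $q\in\sigma X$ gives $q\in Z\cap\sigma X$, so $Z\cap\sigma X\neq\emptyset$, as required.

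I expect the only genuine subtlety to be the reconciliation of the two membership demands on the single point $q$: staying inside $\sigma X$ forces the sequence to range over only countably many factors, whereas landing in $Z=\bigcap_{m}\mbox{cl}_{\beta X}U_m$ requires control over every tail at once. Both are secured simultaneously by drawing the $x_n$ from the nested sets $U_n$ and passing to one cluster point, which thereby inherits closure membership in each $\mbox{cl}_{\beta X}U_m$ and in $\mbox{cl}_{\beta X}M$ together; notably, no appeal to separability of $X$ itself is made, which is essential since $X$ is assumed non-separable.
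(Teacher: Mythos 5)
Your proof is correct, but it takes a genuinely different route from the paper's. You argue by direct construction: after reducing to the case $Z\subseteq X^*$ (legitimate, since every factor $X_i$ is $\sigma$-compact and hence $X\subseteq\sigma X$), you write $Z=\bigcap_{n<\omega}\mbox{cl}_{\beta X}U_n$ with $U_n=X\cap f^{-1}([0,1/n))$, choose $x_n\in U_n$, note that the $x_n$ lie in only countably many factors so that every cluster point of the sequence lies in $\mbox{cl}_{\beta X}\big(\bigcup_{i\in J}X_i\big)\subseteq\sigma X$, and then use the nestedness of the $U_n$ to place such a cluster point (which exists by compactness of $\beta X$) in every $\mbox{cl}_{\beta X}U_m$, hence in $Z$. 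The paper instead extracts a structural fact from the same ``countably many factors'' observation: any sequence in $\sigma X$ lies in a compact set $\mbox{cl}_{\beta X}\big(\bigcup_{i\in J}X_i\big)$, so $\sigma X$ is countably compact, hence pseudocompact, whence $\upsilon(\sigma X)=\beta(\sigma X)=\beta X$; the lemma then follows from the general fact that a non-empty zero-set of $\upsilon T$ meets $T$ (the cited lemma of [14]). Your approach buys self-containedness: it needs only density of $X$ in $\beta X$, compactness, and the definition of $\sigma X$, with no appeal to realcompactifications or pseudocompactness. The paper's approach buys brevity (given the cited machinery) together with a reusable intermediate result --- the countable compactness of $\sigma X$ and the identification $\beta X=\upsilon(\sigma X)$ --- which is of independent interest. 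In essence, your cluster-point argument re-proves, in the special case needed, the realcompactification fact that the paper simply quotes.
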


\begin{proof}
Suppose that $\{x_n\}_{n<\omega}$ is an infinite
sequence in $\sigma X$. Then using the notations of Theorem \ref{IUUG},
there exists a countable $J\subseteq I$ such that
$\{x_n\}_{n<\omega}\subseteq \mbox {cl}_{\beta X}(\bigcup_{i\in J}
X_i)$, and therefore it has a limit point in $\sigma X$. Thus
$\sigma X$ is countably compact and therefore, pseudocompact, and
$\upsilon (\sigma X)=\beta (\sigma X)=\beta X$. The result now
follows as for any Tychonoff space $T$, any non-empty zero-set of
$\upsilon T$ intersects $T$ (see Lemma \ref{EOYG}(f) of
[14]).
\end{proof}

\begin{lemma}\label{OEFPG}
Let  $X$ be  a locally compact
non-separable  metrizable space. If $\emptyset\neq Z\in {\mathcal
Z}(X^*)$ then $Z\cap \sigma X\neq \emptyset$.
\end{lemma}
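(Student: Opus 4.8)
The plan is to reduce the statement to the preceding Lemma \ref{FPG}, which already handles non-empty zero-sets of $\beta X$. The only gap between the two statements is that here $Z$ is merely a zero-set of $X^*$ rather than of $\beta X$, so the whole point is to manufacture from $Z$ a genuine zero-set of $\beta X$ to which Lemma \ref{FPG} applies. I argue by contradiction, assuming $Z\cap\sigma X=\emptyset$.

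First I would write $Z=Z(g)$ for some $g\in C(X^*,\mathbf{I})$. Since $X$ is locally compact, $X^*$ is closed in $\beta X$ and hence $C$-embedded (as noted just before Theorem \ref{LFKG}), so $g$ extends to some $\tilde g\in C(\beta X,\mathbf{I})$. Then $Z(\tilde g)$ is a zero-set of $\beta X$ with $Z(\tilde g)\cap X^*=Z$, and $Z(\tilde g)\cap X=A$, where $A$ is a closed subset of $X$. I also record that $X^*$ is compact, being closed in $\beta X$, so its closed subset $Z$ is compact as well.

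The crux is to trim the unwanted part $A$ off of $Z(\tilde g)$. Because $X\subseteq\sigma X$ and $Z(\tilde g)\cap X^*=Z$, the assumption $Z\cap\sigma X=\emptyset$ forces $Z(\tilde g)\cap\sigma X=A$. Now I invoke the key property established in the proof of Lemma \ref{FPG}, namely that $\sigma X$ is countably compact. Since $A=Z(\tilde g)\cap\sigma X$ is relatively closed in $\sigma X$, it is countably compact; and as a subspace of the metrizable space $X$ it is metrizable, so countable compactness upgrades to genuine compactness. This is the step I expect to be the heart of the matter: the hypothesis $Z\cap\sigma X=\emptyset$ is precisely what confines the leftover set $A$ to $X$, where metrizability is available to promote countable compactness to compactness.

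Finally, $A$ and $Z$ are disjoint compact subsets of the normal space $\beta X$, so Urysohn's lemma supplies $h\in C(\beta X,\mathbf{I})$ with $h(A)\subseteq\{1\}$ and $h(Z)\subseteq\{0\}$. Setting $G=\tilde g+h$, both summands being non-negative gives $Z(G)=Z(\tilde g)\cap Z(h)=Z$, which exhibits $Z$ as a non-empty zero-set of $\beta X$ that misses $X$. Lemma \ref{FPG} then yields $Z\cap\sigma X\neq\emptyset$, contradicting the assumption and completing the proof.
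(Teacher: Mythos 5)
Your proof is correct, and while it reduces to Lemma \ref{FPG} by contradiction just as the paper does, the mechanism is genuinely different. The paper keeps the whole extended zero-set $S=Z(\tilde g)$ and shrinks it from the outside: writing $X=\bigoplus_{i\in I}X_i$ as in Theorem \ref{IUUG}, it shows that under the hypothesis $S\cap(\sigma X\backslash X)=\emptyset$ the trace $S\cap X$ can meet only finitely many summands $X_i$; the clopen set $\mbox{cl}_{\beta X}(\bigcup_{i\in L}X_i)$ then cuts all of $\sigma X$ off of $S$, producing a zero-set $Z(f)\cap S$ of $\beta X$ containing $Z$ but missing $\sigma X$ entirely, which Lemma \ref{FPG} (in contrapositive) forces to be empty --- contradicting $Z\neq\emptyset$. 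You instead work on the trace itself: you show $A=Z(\tilde g)\cap X$ is compact, because the hypothesis makes $A$ relatively closed in the countably compact space $\sigma X$ while metrizability of $X$ upgrades countable compactness to compactness, and then you excise $A$ by Urysohn's lemma so that $Z$ itself becomes a zero-set of $\beta X$ missing $X$, to which Lemma \ref{FPG} applies directly. Your excision step is essentially the same trick the paper uses in the first half of Theorem \ref{LFKG} (where a $\sigma$-compact trace is killed by the sum $\sum g_n/2^n$); here compactness lets a single function do the job. The trade-off: the paper's argument uses only the \emph{statement} of Lemma \ref{FPG} together with the summand combinatorics that recur throughout the paper, whereas yours is shorter and avoids the decomposition of $X$ altogether, at the mild cost of reaching into the \emph{proof} of Lemma \ref{FPG} for the countable compactness of $\sigma X$ --- a dependence that is harmless, since that fact is established there and could in any case be reproved in two lines. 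Note also that compactness of $A$ is doing double duty in your argument: it is needed not only to confine $A$ but also to guarantee that $A$ is closed in $\beta X$ (closedness in $X$ alone would not suffice), which is exactly what makes the Urysohn separation legitimate; you handled this correctly.
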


\begin{proof}
Let $S\in Z(\beta X)$ be such that $Z=S\backslash X$.
By the above lemma $S\cap \sigma X\neq\emptyset$. Suppose that
$S\cap (\sigma X\backslash X)=\emptyset$. Then $S\cap \sigma X=S\cap
X$. Assume the notations of Theorem \ref{IUUG} and let $L=\{i\in I :S\cap
X_i\neq\emptyset\}$. Since $S\cap (\sigma X\backslash X)=\emptyset$,
it follows that $L$ is finite. We define a function $f:\beta
X\rightarrow{\bf I}$ such that it equals to 1 on $\mbox {cl}_{\beta
X}(\bigcup_{i\in L} X_i) $, and it is 0 otherwise. Clearly $f$ is
continuous. Since $Z(f)\cap S\in {\mathcal Z}(\beta X)$ misses
$\sigma X$, by the above lemma, $Z(f)\cap S=\emptyset$. But since
$\beta X\backslash \sigma X\subseteq Z(f)$, we have $Z=S\cap(\beta
X\backslash \sigma X)\subseteq S\cap Z(f)=\emptyset$, which is a
contradiction. Therefore $Z\cap (\sigma X\backslash X)=S\cap (\sigma
X\backslash X)\neq\emptyset$.
\end{proof}

\begin{lemma}\label{FGLG}
Let $X$ be  a locally compact
non-separable  metrizable space and let  $ S, T\in {\mathcal
Z}(X^*)$. If $S\cap \sigma X\subseteq T\cap \sigma X$ then
$S\subseteq T$.
\end{lemma}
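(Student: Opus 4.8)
The plan is to prove the contrapositive-style density argument: to show $S \subseteq T$, it suffices to verify that $S \setminus T = \emptyset$. The natural object to study is the zero-set difference $S \setminus T$. Since $T \in {\mathcal Z}(X^*)$, its complement $X^* \setminus T$ is a cozero-set, and the intersection of a zero-set with a cozero-set need not be a zero-set; so the first technical point is to extract from $S$ and $T$ an honest zero-set of $X^*$ that captures the part of $S$ lying outside $T$. I would argue as follows. Write $S = Z(f)$ and $T = Z(g)$ for $f, g \in C(X^*, {\bf I})$ (recall $X^*$ is $C$-embedded in $\beta X$ since $X$ is locally compact). Suppose toward a contradiction that $S \setminus T \neq \emptyset$, i.e. there is a point $p \in S$ with $g(p) > 0$.

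The key step is to produce a non-empty zero-set of $X^*$ sitting inside $S$ but missing $T$, so that Lemma~\ref{OEFPG} can be applied. Concretely, pick a rational $r$ with $0 < r < g(p)$ and consider
\[
R = Z(f) \cap \{ x \in X^* : g(x) \geq r \}.
\]
Since $\{ g \geq r \}$ is a zero-set of $X^*$ (it equals $Z\big((r - g)^{+}\big)$, the zero-set of the continuous function $\max\{r - g, 0\}$) and the intersection of two zero-sets is a zero-set, we have $R \in {\mathcal Z}(X^*)$. Moreover $p \in R$, so $R \neq \emptyset$, and $R \cap T = \emptyset$ because $g \geq r > 0$ on $R$ forces $R$ to avoid $Z(g) = T$. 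Thus $R$ is a non-empty zero-set of $X^*$ contained in $S$ and disjoint from $T$.

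Now I would invoke the preceding lemmas to derive the contradiction. By Lemma~\ref{OEFPG}, every non-empty zero-set of $X^*$ meets $\sigma X$, so $R \cap \sigma X \neq \emptyset$; pick $q \in R \cap \sigma X$. Then $q \in S \cap \sigma X$, and by the hypothesis $S \cap \sigma X \subseteq T \cap \sigma X$ we obtain $q \in T$. But $R \cap T = \emptyset$ and $q \in R$, a contradiction. Hence $S \setminus T = \emptyset$, which is exactly $S \subseteq T$.

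The step I expect to require the most care is the construction of the zero-set $R$: one must ensure that $R$ is genuinely a zero-set (not merely the intersection of a zero-set with an open set) and that it is non-empty and disjoint from $T$ simultaneously. The device of replacing the cozero-set $\{g > 0\}$ by the closed zero-set $\{g \geq r\}$ for a strictly positive threshold $r$ is what makes this work, and it relies on $X^*$ being (completely regular and) closed in $\beta X$ so that the relevant functions are genuinely continuous on $X^*$. Once $R$ is in hand, the contradiction is immediate from Lemma~\ref{OEFPG} together with the hypothesized inclusion on the $\sigma X$ level, so no further estimates are needed.
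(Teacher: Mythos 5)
Your proposal is correct and follows essentially the same approach as the paper: both argue by contradiction, manufacture a non-empty zero-set of $X^*$ contained in $S$ and disjoint from $T$, and then apply Lemma \ref{OEFPG} to get a point of that zero-set in $\sigma X$, contradicting the hypothesis $S\cap\sigma X\subseteq T\cap\sigma X$. The only (cosmetic) difference is in how the separating zero-set is built — the paper takes a Urysohn-type function $f\in C(\beta X,{\bf I})$ with $f(x)=0$ and $f(T)=\{1\}$ and uses $Z(f)\cap S$, whereas you truncate the defining function $g$ of $T$ at a positive level $r$ and use $Z(f)\cap\{g\geq r\}$.
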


\begin{proof}
Suppose that $S\backslash T\neq\emptyset$. Let $x\in
S\backslash T$ and let $f\in C(\beta X,{\bf I})$  be such that
$f(x)=0$ and $f(T)=\{1\}$. Then $Z(f)\cap S\in {\mathcal Z}(X^*)$ is
non-empty and therefore by Lemma \ref{OEFPG}, $Z(f)\cap S\cap \sigma
X\neq\emptyset$. But this is impossible as  we have $Z(f)\cap S\cap
\sigma X\subseteq Z(f)\cap
T=\emptyset$.
\end{proof}

\begin{theorem}\label{IPG}
Let  $X$ be a non-compact
metrizable space. Then $ {\mathcal E}(X)$ has a  minimum if and only
if $X$ is locally compact and separable.
\end{theorem}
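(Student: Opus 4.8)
The plan is to prove both implications through the correspondence between $Y\in{\mathcal E}(X)$ and the extension traces generating it, using the criterion of [9] that $Y_{\mathcal U}\geq Y_{\mathcal V}$ exactly when ${\mathcal U}$ is finer than ${\mathcal V}$. Thus $Y_0=Y_{\mathcal V}\in{\mathcal E}(X)$ is the minimum of ${\mathcal E}(X)$ if and only if every extension trace ${\mathcal U}=\{U_n\}_{n<\omega}$ in $X$ is finer than ${\mathcal V}=\{V_n\}_{n<\omega}$, that is, for every such ${\mathcal U}$ and every $n<\omega$ there is a $k<\omega$ with $U_k\subseteq V_n$. The whole argument then reduces to a single compactness dichotomy on the closed sets $X\backslash V_n$.

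For necessity, suppose $Y_0=X\cup\{p_0\}$ is the minimum, generated by ${\mathcal V}=\{V_n\}_{n<\omega}$. I first claim each $X\backslash V_n$ is compact. If some $F=X\backslash V_n$ were non-compact, then, $X$ being metrizable, $F$ would contain a sequence with no convergent subsequence, which after thinning gives a countable closed discrete set $D=\{a_m\}_{m<\omega}\subseteq F$. Choosing pairwise disjoint balls $B(a_m,\rho_m)$ with $\rho_m\to 0$ and setting $U_m=\bigcup_{j\geq m}B(a_j,\rho_j)$, after shrinking the radii so that $\mbox{cl}_X U_{m+1}\subseteq U_m$, produces a regular sequence with $\bigcap_{m<\omega}U_m=\emptyset$, hence an extension trace ${\mathcal U}$. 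Since $a_m\in U_m\backslash V_n$ for every $m$, no $U_k$ lies in $V_n$, so ${\mathcal U}$ is not finer than ${\mathcal V}$ and $Y_{\mathcal U}\not\geq Y_0$, contradicting minimality. Hence every $X\backslash V_n$ is compact. Consequently $X=\bigcup_{n<\omega}(X\backslash V_n)$ (as $\bigcap_{n<\omega}V_n=\emptyset$) is $\sigma$-compact, and a routine cover argument shows $Y_0$ is compact: any open set containing $p_0$ contains some $V_k\cup\{p_0\}$, whose complement in $Y_0$ is contained in the compact set $X\backslash V_k$. Being compact and metrizable, $Y_0$ is separable, so its subspace $X$ is separable; and $X=Y_0\backslash\{p_0\}$ is open in the compact Hausdorff space $Y_0$ (since $\{p_0\}$ is closed), hence locally compact.

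For sufficiency, let $X$ be locally compact and separable; then $X$ is $\sigma$-compact, so one may choose compacta $K_n$ with $K_n\subseteq\mbox{int}_X K_{n+1}$ and $\bigcup_{n<\omega}K_n=X$, and put $V_n=X\backslash K_n$. Then $\{V_n\}_{n<\omega}$ is an extension trace, regularity following from $\mbox{cl}_X V_{n+1}\subseteq X\backslash\mbox{int}_X K_{n+1}\subseteq V_n$ and $\bigcap_{n<\omega}V_n=\emptyset$; it generates some $Y_0\in{\mathcal E}(X)$, namely the one-point compactification of $X$. To see that $Y_0$ is the minimum, take any extension trace ${\mathcal U}=\{U_n\}_{n<\omega}$ and any $n<\omega$. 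The sets $\mbox{cl}_X U_k\cap K_n$ form a decreasing sequence of compacta whose intersection is $K_n\cap\bigcap_{k<\omega}\mbox{cl}_X U_k=K_n\cap\bigcap_{k<\omega}U_k=\emptyset$, so some $\mbox{cl}_X U_k\cap K_n=\emptyset$, i.e. $U_k\subseteq V_n$. Thus ${\mathcal U}$ is finer than ${\mathcal V}$, whence $Y_{\mathcal U}\geq Y_0$, and $Y_0$ is the minimum.

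The main obstacle is the necessity direction, and within it the metric construction of the escaping extension trace ${\mathcal U}$: one must extract a genuinely closed discrete sequence from a non-compact closed subset of $X$ and then calibrate the radii $\rho_m$ so that the resulting $\{U_m\}_{m<\omega}$ is simultaneously regular, of empty intersection, and retains each $a_m\in U_m$. Once this trace is available, both the contradiction with minimality and the ensuing compactness of $Y_0$ are immediate, and the deduction of local compactness and separability of $X$ is formal.
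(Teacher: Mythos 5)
Your proof is correct, but it takes a genuinely different route from the paper's. The paper proves necessity in two separate steps: first it establishes local compactness by contradiction, building from a discrete family of open sets near a point of non-local-compactness an extension trace that fails to be finer than the trace of the minimum (an idea close in spirit to your escaping trace, and likewise resting on the finer-than criterion of [9]); it then proves separability by an entirely different, Stone--\v{C}ech-theoretic argument, which only becomes available \emph{after} local compactness is known: the minimum of ${\mathcal E}(X)$ gives a maximum $S$ of $\lambda({\mathcal E}(X))$, and the decomposition $X=\bigoplus_{i\in I}X_i$ of Theorem~\ref{IUUG}, Lemma~\ref{FFG} and Lemma~\ref{FGLG} force $S=X^*$, impossible since a non-separable $X$ is not $\sigma$-compact. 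You instead stay entirely inside $X$ and never touch $\beta X$, $\sigma X$ or the decomposition: you show that every complement $X\backslash V_n$ of a trace generating the minimum must be compact (else a countable closed discrete set in a non-compact $X\backslash V_n$ yields a trace that is not finer than ${\mathcal V}$), whence the minimum is compact, i.e.\ is the one-point compactification, and both separability and local compactness of $X$ fall out formally. Your argument is more elementary and unified, and it yields the sharper intermediate conclusion that the minimum, when it exists, is the one-point compactification; what the paper's argument buys is that it runs on machinery already built for its main results (the map $\lambda$ and the $\sigma X$ lemmas), at the cost of having to treat local compactness and separability sequentially. The one point your write-up compresses is the calibration of the radii so that $\mbox{cl}_X U_{m+1}\subseteq U_m$ while $\bigcap_{m<\omega}U_m=\emptyset$ and $a_m\in U_m$; this is routine (take pairwise disjoint balls with $\rho_j\to 0$, and at stage $m+1$ use strictly smaller radii than at stage $m$, noting that closed discreteness of $\{a_j\}_{j<\omega}$ prevents the closure of a tail union from acquiring points outside the union of the closed balls), and since you flag it explicitly it is a gloss, not a gap.
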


\begin{proof}
Suppose that $Y=X\cup\{p\}$ is the minimum in $
{\mathcal E}(X) $. If $X$ is not locally compact, then there exists
an $x\in X$  such that for every open neighborhood $U$ of $x$ in
$X$, $\mbox {cl}_X U$ is not compact. Let $U$ and $W$ be disjoint
open neighborhoods of $x$ and $p$ in $Y$, respectively. Since $\mbox
{cl}_X U$ is not compact, there exists a discrete sequence
$\{V_n\}_{n<\omega}$ of non-empty open (in  $\mbox {cl}_X U$)
subsets of $X$, which are  faithfully indexed. Consider $ {\mathcal
F}=\{V_n\cap U\}_{n<\omega}$. Then ${\mathcal F}$ is a discrete
sequence of non-empty open  subsets of $X$. For each $n<\omega$, let
$A_n$ be a non-empty open subset of $X$ such that $\mbox {cl}_X A_n
\subseteq V_n\cap U$. For each $n<\omega$ we form a sequence
$\{B_k^n \}_{k<\omega}$ of  non-empty open subset of $X$ such that
$A_n \supseteq B_1^n $, and $B_k^n\supseteq \mbox {cl}_X B_{k+1}^n$
for each $k<\omega$. Let $m<\omega$ be such that $B( p,
1/m)\subseteq W$, and for each $n<\omega$ define
\[C_n=B\Big(p, \frac{1}{m+n}\Big)\cap X \mbox { and }  E_n = \bigcup
\{B_n^k:k\geq n\}.\]
Let for each $n<\omega$, $D_n=C_n\cup E_n$. Then since
$\{B_n^k \}_{k<\omega}$ is discrete, we have  $\mbox {cl}_X
D_{n+1}\subseteq D_n$, and $\bigcap_{n<\omega}D_n=\emptyset$, i.e.,
${\mathcal D}=\{D_n\}_{n<\omega}$ forms an extension trace in $X$.
Since ${\mathcal C}=\{C_n\}_{n<\omega}$ is an extension trace in $X$
generating $Y$, by Theorem \ref{LPPG} of [9], ${\mathcal D}$ is finer
than ${\mathcal C}$, and therefore there exists a $k<\omega$ such
that $D_k\subseteq C_1$. But this is a contradiction, as
$C_1\subseteq W$ and $D_k\cap U\neq\emptyset$. Therefore $X$ is
locally compact.

Suppose that $X$ is not separable. Since $ {\mathcal E}(X) $ has a
minimum, $\lambda( {\mathcal E}(X))$ has a maximum. Let $S$ denote
the maximum of $\lambda({\mathcal E}(X))$. Assume the notations of
Theorem \ref{IUUG}. Now since for each countable $J\subseteq I$,
$(\bigcup_{i\in J}X_i)^*\subseteq S$, we have $\sigma X\backslash
X\subseteq S$, and therefore by Lemma \ref{FGLG} (with $S$ and $X^*$ being
the zero-sets) we have $S=X^*$, which is a contradiction, as $X$ is
not $\sigma$-compact (see 1B of [16]). Thus $X$ is locally compact
and separable.

The converse is clear, as in this case, the one-point
compactification of $X$ is the
minimum.
\end{proof}

By replacing ${\mathcal E}(X)$ by ${\mathcal E}_K(X)$ in the last
part of the above proof we  obtain the following result.

\begin{theorem}\label{IPOG}
Let $X$ be a locally compact
non-separable metrizable space. Then ${\mathcal E}_K(X)$ has no
minimum.
\end{theorem}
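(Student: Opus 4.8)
The plan is to mimic the argument given for Theorem~\ref{IPG} (the non-separable branch) but carried out entirely inside $\mathcal{E}_K(X)$. Suppose, toward a contradiction, that $\mathcal{E}_K(X)$ has a minimum element. Since $\lambda$ is an order-anti-isomorphism onto its image, this is equivalent to saying that $\lambda(\mathcal{E}_K(X))$ has a \emph{maximum} element, say $S$. By Theorem~\ref{LDKG}, $\lambda(\mathcal{E}_K(X))$ consists precisely of the clopen subsets of $X^*$ that lie in $\lambda(\mathcal{E}(X))$; in particular $S$ is a non-empty clopen subset of $X^*$, and $S$ contains every member of $\lambda(\mathcal{E}_K(X))$.

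The key step is to show that the assumption forces $S = X^*$. Assume the notations of Theorem~\ref{IUUG}, so $X = \bigoplus_{i\in I} X_i$ with each $X_i$ separable non-compact. For each countable $J \subseteq I$, Lemma~\ref{FFG} gives $(\bigcup_{i\in J} X_i)^* \in \lambda(\mathcal{E}(X))$, and since $\bigcup_{i\in J} X_i$ is clopen in $X$ its trace $(\bigcup_{i\in J} X_i)^*$ is clopen in $X^*$; moreover the corresponding extension is locally compact, so this set actually lies in $\lambda(\mathcal{E}_K(X))$. Because $S$ is the maximum, every such set is contained in $S$, and since these sets sweep out all of $\sigma X \backslash X$ as $J$ ranges over countable subsets, we obtain $\sigma X \backslash X \subseteq S$. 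Now I invoke Lemma~\ref{FGLG}: $S$ and $X^*$ are both zero-sets of $X^*$, and we have just shown $X^* \cap \sigma X = \sigma X \backslash X \subseteq S \cap \sigma X$ (trivially $S \cap \sigma X \subseteq X^* \cap \sigma X$ as well, but only the stated inclusion is needed), whence $X^* \subseteq S$ and so $S = X^*$.

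Finally I derive the contradiction exactly as in Theorem~\ref{IPG}: $S = X^* \in \lambda(\mathcal{E}_K(X))$ would mean that $X^*$ itself is the image under $\lambda$ of a locally compact one-point extension. But by Theorem~\ref{LFKG}, $X^* = \lambda(Y)$ for some $Y$ forces $X^*$ to be a zero-set of $\beta X$ missing $X$ whose generating extension trace has $\bigcap_n U_n = \emptyset$ with $\bigcap_n U_n$ being $\sigma$-compact — equivalently this would make $X$ itself $\sigma$-compact, contradicting the fact that a non-separable metrizable $X$ is not $\sigma$-compact (cf.\ 1B of [16], as used in Theorem~\ref{IPG}). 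This contradiction establishes that $\mathcal{E}_K(X)$ has no minimum.

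I expect the only genuinely delicate point to be the verification that $\sigma X \backslash X \subseteq S$: one must be careful that the sets $(\bigcup_{i\in J} X_i)^*$, over countable $J$, really do cover $\sigma X \backslash X$, which follows from the definition of $\sigma X$ as the union of $\mathrm{cl}_{\beta X}(\bigcup_{i\in J} X_i)$ over countable $J$. Everything else is a routine transcription of the maximum/minimum duality through $\lambda$ together with the cited lemmas.
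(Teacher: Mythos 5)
Your proof is correct and is essentially the paper's own argument: the paper proves Theorem~\ref{IPOG} precisely by replacing ${\mathcal E}(X)$ with ${\mathcal E}_K(X)$ in the last (non-separable) part of the proof of Theorem~\ref{IPG}, which is exactly the transcription you carried out, including the one extra verification this requires, namely that each $(\bigcup_{i\in J}X_i)^*$ for countable $J$ is clopen in $X^*$ and hence lies in $\lambda({\mathcal E}_K(X))$. The only blemish is the superfluous clause about $\bigcap_{n<\omega}U_n$ being $\sigma$-compact in your final step; the contradiction comes directly from $X^*$ being a zero-set of $\beta X$, which by 1B of [16] forces $X$ to be $\sigma$-compact, impossible for a non-separable metrizable space.
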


In the next result we show that when $X$ is a zero-dimensional locally compact metrizable space, ${\mathcal E}_K(X)$ is a cofinal subset of ${\mathcal E}(X)$.  For this   purpose  we need the following proposition, stated in
Lemma 1\ref{OYG} of [6].

\begin{proposition}\label{FPOG}
Let  $X$ be a locally compact space, let $F$ be a nowhere  dense subset of $X$, and let $Z$ be a non-empty zero-set  of $\beta X$ which misses $X$.  Then we have
\[\mbox{\em int}_{X^*}(Z\backslash\mbox{\em cl}_{\beta X}F)\neq
 \emptyset.\]
\end{proposition}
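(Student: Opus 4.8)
The plan is to produce an explicit cozero set of $\beta X$ whose trace on $X^*$ witnesses the nonempty interior. First I would make the harmless reduction to $F$ closed, replacing $F$ by $\mbox{cl}_X F$ (still nowhere dense, with the same closure in $\beta X$), so that $U:=X\setminus F$ is dense and open. Writing $Z=Z(f)$ with $f\in C(\beta X,\mathbf{I})$ and noting $f>0$ on $X$ (as $Z\cap X=\emptyset$), set $U_n=\{x\in X: f(x)<1/n\}$. These are decreasing open subsets of $X$ with $\bigcap_{n<\omega}U_n=\emptyset$, and, exactly as in the proof of Lemma \ref{FFFGH} of [9], $Z=\bigcap_{n<\omega}\mbox{cl}_{\beta X}U_n$. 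Two facts I would record for later use: each $U_n$ is non-relatively-compact (otherwise $\mbox{cl}_{\beta X}U_n=\mbox{cl}_X U_n\subseteq X$, contradicting $\emptyset\neq Z\subseteq\mbox{cl}_{\beta X}U_n$), and since $\mbox{cl}_X U_n\subseteq\{x\in X:f(x)\leq 1/n\}$ we have $\bigcap_{n<\omega}\mbox{cl}_X U_n\cap X=\{x\in X:f(x)=0\}=\emptyset$.

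Next I would build, by recursion on $n$, points $x_n$ and compact neighborhoods $K_n=\mbox{cl}_X W_n$ (with $W_n=\mbox{int}_X K_n\ni x_n$) such that
\[ K_n\subseteq U_n\setminus\big((K_1\cup\cdots\cup K_{n-1})\cup\mbox{cl}_X F\big).\]
The set on the right is open and nonempty: $U_n$ is not contained in the compact set $K_1\cup\cdots\cup K_{n-1}$, and the resulting nonempty open set cannot be swallowed by the nowhere dense $\mbox{cl}_X F$; local compactness then supplies the compact neighborhood $K_n$. By construction the $K_n$ are pairwise disjoint, and the family $\{K_n\}$ is locally finite: given $y\in X$, by the second recorded fact there is $m$ with $y\notin\mbox{cl}_X U_m$, so a neighborhood of $y$ misses $U_m\supseteq K_n$ for all $n\geq m$. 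Then I would pick Urysohn functions $h_n\in C(X,\mathbf{I})$ with $h_n(x_n)=1$ and $\mbox{supp}(h_n)\subseteq W_n$, and set $h=\sum_{n<\omega}h_n$. Local finiteness and disjointness of the supports make $h$ a well-defined element of $C(X,\mathbf{I})$ with $\mbox{supp}(h)=\bigcup_{n<\omega}\mbox{supp}(h_n)\subseteq\bigcup_{n<\omega}K_n\subseteq X\setminus\mbox{cl}_X F$; let $\beta h\in C(\beta X,\mathbf{I})$ be its extension.

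Finally I would verify that $\emptyset\neq\mathrm{Coz}(\beta h)\cap X^*\subseteq Z\setminus\mbox{cl}_{\beta X}F$, which gives the conclusion since $\mathrm{Coz}(\beta h)\cap X^*$ is open in $X^*$. For the inclusion into $Z$, fix $p\in\mathrm{Coz}(\beta h)\cap X^*$; then $p\in\mbox{cl}_{\beta X}\mathrm{Coz}(h)$, and splitting $\mathrm{Coz}(h)=A_n\cup B_n$ with $A_n=\bigcup_{k<n}\mathrm{Coz}(h_k)$ relatively compact (so $\mbox{cl}_{\beta X}A_n\subseteq X$) and $B_n=\bigcup_{k\geq n}\mathrm{Coz}(h_k)\subseteq U_n$, the point $p\in X^*$ must lie in $\mbox{cl}_{\beta X}B_n\subseteq\mbox{cl}_{\beta X}U_n$; as $n$ was arbitrary, $p\in\bigcap_{n<\omega}\mbox{cl}_{\beta X}U_n=Z$. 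For disjointness from $\mbox{cl}_{\beta X}F$, note $h=0$ on $\mbox{cl}_X F$, hence $\beta h=0$ on $\mbox{cl}_{\beta X}(\mbox{cl}_X F)=\mbox{cl}_{\beta X}F$, so $\mathrm{Coz}(\beta h)\cap\mbox{cl}_{\beta X}F=\emptyset$. Nonemptiness follows because $\{x_n:n<\omega\}$ is an infinite closed discrete subset of $X$ on which $h\equiv 1$; its closure meets $X^*$, and at any such point $\beta h=1>0$.

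The step I expect to be the main obstacle is securing local finiteness of $\{K_n\}$ together with their escape to the remainder, i.e. arranging that the $\beta X$-closure of $\bigcup W_n$ reaches $X^*$ but \emph{only inside} $Z$. This is precisely where the representation $U_n=\{f<1/n\}$ earns its keep: it forces $\bigcap_{n<\omega}\mbox{cl}_X U_n\cap X=\emptyset$, which is the exact ingredient that turns the nested, non-relatively-compact $U_n$ into a locally finite escaping family and channels the cozero set into $\bigcap_{n<\omega}\mbox{cl}_{\beta X}U_n=Z$; the role of nowhere density of $F$ is then only the routine guarantee that each recursion step has room to avoid $\mbox{cl}_X F$.
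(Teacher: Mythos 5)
Your argument is correct, but there is nothing in the paper to compare it against: the paper does not prove Proposition \ref{FPOG} at all, it simply quotes it from Lemma 1.4 of [6] (Comfort--Negrepontis) and uses it as a black box. What you have written is a complete, self-contained proof, and it is essentially the classical construction behind the cited lemma: represent $Z=Z(f)=\bigcap_{n<\omega}\mbox{cl}_{\beta X}U_n$ with $U_n=\{x\in X:f(x)<1/n\}$, use the two facts that no $U_n$ has compact closure and that $\bigcap_{n<\omega}\mbox{cl}_XU_n=\emptyset$ to recursively extract a pairwise disjoint, locally finite family of compact neighborhoods $K_n\subseteq U_n\setminus(\mbox{cl}_XF\cup K_1\cup\dots\cup K_{n-1})$, sum Urysohn functions supported in the $K_n$, and check that the cozero set of the extension traces a non-empty open subset of $X^*$ lying inside $Z\setminus\mbox{cl}_{\beta X}F$. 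All the individual steps check out: the reduction to $F$ closed is harmless; $U_n\neq\emptyset$ because $Z\neq\emptyset$ and $X$ is dense; the splitting $Coz(h)=A_n\cup B_n$ with $\mbox{cl}_{\beta X}A_n\subseteq X$ and $B_n\subseteq U_n$ correctly channels every remainder point of $Coz(\beta h)$ into $Z$; vanishing of $\beta h$ on $\mbox{cl}_{\beta X}F$ gives the disjointness; and the infinite closed discrete set $\{x_n:n<\omega\}$ on which $h\equiv 1$ guarantees $Coz(\beta h)$ meets $X^*$. What your write-up buys, relative to the paper, is that it makes the paper self-contained at this point (the proposition is the engine behind Theorems \ref{IPPOG}, \ref{IPIG}, \ref{IPY} and \ref{YIOPY}), and it makes visible exactly where each hypothesis enters: local compactness for the compact neighborhoods, nowhere density of $F$ only to keep room at each recursion step, and $Z\cap X=\emptyset$ for the two ``recorded facts'' that drive both the local finiteness and the containment in $Z$.
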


\begin{theorem}\label{IPPOG}
Let  $X$ be a zero-dimensional
locally compact non-separable metrizable space. Then for each $Y\in
{\mathcal E}(X)$, there exists an $S\in {\mathcal E}_K(X)$ such that
$S\geq Y$. In other words, ${\mathcal E}_K(X)$ is a cofinal subset
of ${\mathcal E}(X)$. Furthermore, there is no such greatest $S$,
(in fact there are at least $2^{\aleph_0}$
 mutually incomparable elements of ${\mathcal E}_K(X)$
greater than $Y$) and if $Y$ is not locally compact, there is no
such least $S$.
\end{theorem}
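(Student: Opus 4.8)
The plan is to transport everything through the order-anti-isomorphism $\lambda$: for $S\in{\mathcal E}_K(X)$ one has $S\geq Y$ exactly when $\lambda(S)\subseteq\lambda(Y)$, so, writing $Z=\lambda(Y)$, which is a non-empty zero-set of $\beta X$ missing $X$ by Theorem \ref{LFKG}, the whole problem reduces to the study of the non-empty clopen subsets of $X^*$ lying inside $Z$. Indeed, by Theorem \ref{LDKG} a member of $\lambda({\mathcal E}(X))$ lies in $\lambda({\mathcal E}_K(X))$ iff it is clopen in $X^*$, and by Lemma \ref{FFYG} every non-empty zero-set of $X^*$ contained in $Z$ already lies in $\lambda({\mathcal E}(X))$; hence any non-empty clopen $W\subseteq Z$ is automatically $\lambda(S)$ for some $S\in{\mathcal E}_K(X)$ with $S\geq Y$. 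Two preliminary facts will be used freely: since $X$ is a zero-dimensional (hence strongly zero-dimensional) locally compact metrizable space, $\beta X$ is zero-dimensional and therefore so is its closed subspace $X^*$; and $X$, being locally compact, zero-dimensional and Hausdorff, has a base of compact open sets. The cofinality assertion is then immediate: applying Proposition \ref{FPOG} with $F=\emptyset$ gives $\mbox{int}_{X^*}Z\neq\emptyset$, and zero-dimensionality of $X^*$ lets me pick a non-empty clopen $W\subseteq\mbox{int}_{X^*}Z\subseteq Z$, so that $\lambda^{-1}(W)\in{\mathcal E}_K(X)$ is $\geq Y$.

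For the $2^{\aleph_0}$ incomparable elements I would fix an extension trace $\{U_n\}_{n<\omega}$ generating $Y$. Since $\bigcap_nU_n=\emptyset$ while each $U_n\neq\emptyset$, infinitely many shells $U_n\setminus\mbox{cl}_XU_{n+1}$ are non-empty, and inside them the compact-open base lets me choose non-empty compact open sets $C_k\subseteq U_{m_k}\setminus\mbox{cl}_XU_{m_k+1}$ with $m_k$ strictly increasing. The family $\{C_k\}_{k<\omega}$ is locally finite, so for every $A\subseteq\omega$ the union $\bigcup_{k\in A}C_k$ is clopen in $X$; when $A$ is infinite this union is closed and non-compact and lies, apart from finitely many compact pieces, in each $U_n$, so its trace is a non-empty clopen subset of $X^*$ contained in $Z$. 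Fixing an almost disjoint family $\{A_\xi\}_{\xi<2^{\aleph_0}}$ of infinite subsets of $\omega$, the finiteness of $A_\xi\cap A_\eta$ makes the intersection of $\bigcup_{k\in A_\xi}C_k$ and $\bigcup_{k\in A_\eta}C_k$ compact, whence, since $\mbox{cl}_{\beta X}$ commutes with finite intersections of zero-sets, the two traces are disjoint. This produces $2^{\aleph_0}$ pairwise disjoint, hence pairwise incomparable, non-empty clopen subsets of $Z$, each properly contained in $Z$, i.e. $2^{\aleph_0}$ mutually incomparable elements of ${\mathcal E}_K(X)$ strictly above $Y$. Running the same construction inside an arbitrary non-empty clopen $W\subseteq Z$ (with an extension trace generating $\lambda^{-1}(W)$) yields a non-empty clopen $W'\subsetneq W$; thus no valid clopen subset of $Z$ is minimal and $\{S\in{\mathcal E}_K(X):S\geq Y\}$ has no greatest element.

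Finally, suppose $Y$ is not locally compact, so that $Z$ is not clopen in $X^*$, and suppose toward a contradiction that a least such $S$ existed, equivalently that there were a largest non-empty clopen $W_0\subseteq Z$ belonging to $\lambda({\mathcal E}_K(X))$. Then $Z\setminus W_0\neq\emptyset$, for otherwise $Z=W_0$ would be clopen. Because $W_0$ is clopen in $X^*$ and $X^*$ is $C$-embedded in $\beta X$, the set $X^*\setminus W_0$ is a zero-set of $X^*$, so $Z\setminus W_0=Z\cap(X^*\setminus W_0)$ is a non-empty zero-set of $\beta X$ missing $X$. Applying Proposition \ref{FPOG} once more with $F=\emptyset$ and using the zero-dimensionality of $X^*$ as in the cofinality step, I obtain a non-empty clopen $W'\subseteq Z\setminus W_0$; then $W_0\cup W'$ is a clopen subset of $Z$ strictly larger than $W_0$ and still in $\lambda({\mathcal E}_K(X))$, contradicting the maximality of $W_0$. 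Hence no least $S$ exists.

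I expect the genuine obstacle to be the second step, namely staying inside ${\mathcal E}_K(X)$, that is, producing honestly clopen subsets of $X^*$ rather than mere zero-sets. A naive Cantor tree of nested clopen sets would deliver only zero-set branches (nested intersections), giving incomparable elements of ${\mathcal E}(X)$ but not of ${\mathcal E}_K(X)$; it is precisely the almost-disjoint/compact-open device that keeps the traces clopen. The points needing care are the local finiteness of $\{C_k\}$, so that arbitrary subunions are clopen in $X$; the non-emptiness of each trace, so that $\bigcup_{k\in A}C_k$ is non-compact; and the identity $\mbox{cl}_{\beta X}(\bigcap)=\bigcap\mbox{cl}_{\beta X}$ for the finitely many overlapping $C_k$, which is what turns almost-disjointness of the index sets into genuine disjointness of the traces.
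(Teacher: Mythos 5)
Your proof is correct and takes essentially the same route as the paper's: both reduce, via the order-anti-isomorphism $\lambda$, to producing non-empty clopen subsets of $X^*$ inside $Z=\lambda(Y)$, both obtain cofinality and the non-existence of a least element from Proposition \ref{FPOG} combined with (strong) zero-dimensionality, and both build the $2^{\aleph_0}$ incomparable elements from an almost disjoint family of infinite subsets of $\omega$ indexing unions of a discrete/locally finite family of compact open sets, whose traces are pairwise disjoint clopen subsets of $Z$. The only cosmetic difference is placement: the paper plants its discrete family inside $V\cap X$ for a clopen $V\subseteq\beta X$ found in the cofinality step, whereas you plant your compact open sets $C_k$ in the shells $U_{m_k}\setminus \mbox{cl}_X U_{m_k+1}$ of an extension trace generating $Y$.
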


\begin{proof}
Suppose that $Y\in {\mathcal E}(X)$, and let
$Z=\lambda(Y)$. By Proposition \ref{FPOG}  we have $\mbox
{int}_{X^*}Z\neq\emptyset$. Now since $X$ is strongly
zero-dimensional, (see Theorem 6.\ref{LKG}0 of [7]) there exists a clopen
subset $V$ of $ \beta X$ such that $\emptyset\neq V\backslash
X\subseteq Z$. Let $S\in {\mathcal E}_K(X)$ be such that $\lambda
(S)=V\cap Z=V\backslash X$. Then $S\geq Y$. Now since $V\cap X$ is
non-compact, there exists a discrete family $\{U_n\}_{n<\omega}$ of
non-empty open subsets of $V\cap X$. Since $X$ is locally compact
and zero-dimensional,  we may assume that each  $U_n$ is  compact.
Let $\omega=\bigcup_{t<2^{\aleph_0}} N_t$ be a partition of $\omega$
into almost disjoint  infinite sets. Let for $ t<2^{\aleph_0}$,
$A_t=\bigcup_{n\in N_t} U_n$. Then each $A_t$ is a  clopen subsets
of $ X$ and $\mbox {cl}_{\beta X} A_{s}\cap \mbox {cl}_{\beta X}
A_{t}\subseteq X$, for $s\neq t$. Let $S_t\in {\mathcal E}_K(X)$ be
such that $\lambda(S_t)=\mbox {cl}_{\beta X} A_t\cap
Z=A_t^*\subseteq V\backslash X$. Clearly $S_t>S$, for each
$t<2^{\aleph_0}$, and they are mutually incomparable for $s\neq t$.

Now suppose that  $Y$ is not locally compact. Then $Z$ is not clopen
in $X^*$ and therefore $V\backslash X\neq Z$. By Proposition \ref{FPOG} we
have  $\mbox {int}_{X^*}(Z\backslash V)\neq\emptyset$. Let $U$ be a
clopen subset of  $\beta X$ such that $\emptyset\neq U\backslash
X\subseteq Z\backslash V$. Then $(U\cup V)\backslash X\subseteq Z$
is clopen in $X^*$ and properly contains $\lambda
(S)$.
\end{proof}

\begin{theorem}\label{IPIG}
Let  $X$ be a zero-dimensional
locally compact non-compact  metrizable space and let  $S,T\in
{\mathcal E}(X)$. Then $S\geq T$ if and only if for every $Y\in
{\mathcal E}_K(X)$, if $Y\geq S$ then $Y\geq T$.
\end{theorem}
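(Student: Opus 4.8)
The plan is to transport the whole statement through the order-anti-isomorphism $\lambda$. Since $\lambda$ reverses order, $S\geq T$ is equivalent to $\lambda(S)\subseteq\lambda(T)$, and for $Y\in{\mathcal E}_K(X)$ the implication ``$Y\geq S$ implies $Y\geq T$'' becomes ``$\lambda(Y)\subseteq\lambda(S)$ implies $\lambda(Y)\subseteq\lambda(T)$''. In these terms the statement reads: $\lambda(S)\subseteq\lambda(T)$ if and only if every $\lambda(Y)$ with $Y\in{\mathcal E}_K(X)$ that lies inside $\lambda(S)$ also lies inside $\lambda(T)$. The forward direction is immediate from transitivity of inclusion, so the whole content is the converse, which I would prove by contraposition: assuming $\lambda(S)\not\subseteq\lambda(T)$, I must manufacture some $Y\in{\mathcal E}_K(X)$ with $\lambda(Y)\subseteq\lambda(S)$ but $\lambda(Y)\not\subseteq\lambda(T)$.

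So fix $x\in\lambda(S)\backslash\lambda(T)$. By Theorem \ref{LFKG} both $\lambda(S)$ and $\lambda(T)$ are non-empty zero-sets of $\beta X$ missing $X$; write $\lambda(T)=Z(g)$ with $g\in C(\beta X,\mathbf{I})$, so that $c=g(x)>0$. The first step is to build a single zero-set that isolates $x$ from $\lambda(T)$ while staying inside $\lambda(S)$: set
\[ D=\lambda(S)\cap\{y\in\beta X: g(y)\geq c/2\}. \]
This is a zero-set of $\beta X$ (a finite intersection of zero-sets), it misses $X$ because $\lambda(S)$ does, it contains $x$ and hence is non-empty, and it is disjoint from $\lambda(T)=Z(g)$ since $g\geq c/2>0$ on it.

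The second step extracts a clopen piece. Applying Proposition \ref{FPOG} to the non-empty zero-set $D$ (with $F=\emptyset$) gives $\mbox{int}_{X^*}D\neq\emptyset$. Here I would invoke the hypothesis that $X$ is zero-dimensional: being locally compact metrizable it is strongly zero-dimensional, so $\beta X$, and therefore its subspace $X^*$, has a base of clopen sets. Hence I can choose a non-empty set $C_0$, clopen in $X^*$, with $C_0\subseteq\mbox{int}_{X^*}D\subseteq D$.

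Finally I would certify that $C_0$ is realized by a locally compact extension. Since $C_0$ is clopen in $X^*$ it is in particular a non-empty zero-set of $X^*$, and $C_0\subseteq D\subseteq\lambda(S)\in\lambda({\mathcal E}(X))$, so Lemma \ref{FFYG} places $C_0$ in $\lambda({\mathcal E}(X))$; being clopen in $X^*$, Theorem \ref{LDKG} then puts $C_0$ in $\lambda({\mathcal E}_K(X))$. Choosing $Y\in{\mathcal E}_K(X)$ with $\lambda(Y)=C_0$, we get $\lambda(Y)\subseteq\lambda(S)$, i.e. $Y\geq S$, while $\lambda(Y)=C_0\subseteq D$ is non-empty and disjoint from $\lambda(T)$, so $\lambda(Y)\not\subseteq\lambda(T)$, i.e. $Y\not\geq T$. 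This contradicts the hypothesis and completes the proof. The main obstacle is the middle step: one cannot simply take a clopen neighbourhood of $x$ inside $\lambda(S)$, since $x$ may lie on the boundary of $\lambda(S)$ and no such neighbourhood need exist. It is essential to first pass to the zero-set $D$, which separates $x$ from $\lambda(T)$ and has non-empty $X^*$-interior, and only then apply zero-dimensionality; this is precisely where the hypothesis on $X$ is used.
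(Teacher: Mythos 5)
Your proof is correct and follows essentially the same route as the paper's: fix a point of $\lambda(S)\backslash\lambda(T)$, separate it from $\lambda(T)$ by a zero-set of $\beta X$ meeting $\lambda(S)$, apply Proposition \ref{FPOG} to get non-empty interior in $X^*$, and use strong zero-dimensionality to extract a non-empty clopen set that is $\lambda(Y)$ for some $Y\in{\mathcal E}_K(X)$ with $Y\geq S$ but $Y\not\geq T$. The only cosmetic difference is that you justify the membership of the clopen set in $\lambda({\mathcal E}_K(X))$ explicitly via Lemma \ref{FFYG} and Theorem \ref{LDKG}, a step the paper asserts without comment.
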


\begin{proof}
One implication  is trivial. Suppose that  for every
$Y\in {\mathcal E}_K(X)$,
 $Y\geq S$ implies $Y\geq T$. If $\lambda (S)\backslash \lambda
 (T)\neq \emptyset$, then there exists an $A\in {\mathcal Z}(\beta X)$ such that $A\cap\lambda
 (S)\neq \emptyset$ and $A\cap\lambda
 (T)=\emptyset$. Now  by Proposition \ref{FPOG}, we have  $\mbox
{int}_{X^*}(A\cap\lambda
 (S))\neq\emptyset$, and thus  there exists a clopen subset $V$ of $\beta X$ such that $\emptyset\neq V\backslash X\subseteq A\cap\lambda
 (S)$. Let $Y\in {\mathcal E}_K(X)$ be such that $ \lambda (Y)=V\backslash X$. Then  $Y\geq S$,  and therefore by our assumptions  $Y\geq
 T$. But $\lambda (Y)\cap\lambda (T)=\emptyset$, which is a
 contradiction.  Thus $\lambda (S)\backslash \lambda
 (T)=\emptyset$  and $S\geq T$.
\end{proof}

\begin{corollary}\label{OIPIG}
Let $X$ be a zero-dimensional
locally compact non-compact  metrizable space. Then for any
$S\in{\mathcal E}(X)$ we have
\[ S=\bigwedge\big\{Y\in{\mathcal E}_K(X):Y\geq S\big\}.\]
\end{corollary}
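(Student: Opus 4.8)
The plan is to show this corollary follows almost immediately from Theorem~\ref{IPIG} together with Theorem~\ref{IPPOG}. The statement asserts that every $S\in{\mathcal E}(X)$ equals the greatest lower bound of the collection $\{Y\in{\mathcal E}_K(X):Y\geq S\}$ in the partially ordered set ${\mathcal E}(X)$. There are two things to verify: first that $S$ is indeed a lower bound of this collection, and second that it is the \emph{greatest} such lower bound.

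That $S$ is a lower bound is trivial by definition, since every $Y$ in the collection satisfies $Y\geq S$. The substance is in showing $S$ is the greatest lower bound. First I would observe that the collection $\{Y\in{\mathcal E}_K(X):Y\geq S\}$ is non-empty: this is exactly the cofinality statement of Theorem~\ref{IPPOG}, which guarantees the existence of at least one (in fact at least $2^{\aleph_0}$) elements of ${\mathcal E}_K(X)$ dominating $S$. So the infimum is being taken over a non-empty set and the question of whether it exists and equals $S$ is meaningful.

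Next I would let $T\in{\mathcal E}(X)$ be an arbitrary lower bound of the collection, so that $T\leq Y$ for every $Y\in{\mathcal E}_K(X)$ with $Y\geq S$. The goal is to deduce $T\leq S$, since this is precisely the condition making $S$ the \emph{greatest} lower bound. Rephrasing the hypothesis on $T$: for every $Y\in{\mathcal E}_K(X)$, if $Y\geq S$ then $Y\geq T$. But this is verbatim the right-hand condition of Theorem~\ref{IPIG} (with the roles of $S$ and $T$ there matched to $S$ and $T$ here). Applying that theorem directly yields $S\geq T$, which is exactly what we need. Hence $S$ dominates every lower bound, so $S=\bigwedge\{Y\in{\mathcal E}_K(X):Y\geq S\}$.

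I do not anticipate a genuine obstacle here, as the corollary is essentially a restatement of Theorem~\ref{IPIG} in the language of greatest lower bounds; the only care required is to correctly match the quantifier structure of Theorem~\ref{IPIG} to the definition of infimum and to invoke Theorem~\ref{IPPOG} to ensure the index set is non-empty so that the infimum is not vacuous. One subtlety worth a sentence is confirming that the order on ${\mathcal E}(X)$ is the one under which Theorem~\ref{IPIG} was stated, so that the directions of the inequalities align; but since both results are phrased in the same partial order on ${\mathcal E}(X)$, this is immediate.
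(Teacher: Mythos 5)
Your proof is correct and matches the paper's intended derivation: the corollary appears immediately after Theorem \ref{IPIG} with no written proof, and your translation of ``every lower bound $T$ of $\{Y\in{\mathcal E}_K(X):Y\geq S\}$ satisfies $T\leq S$'' into the right-hand condition of Theorem \ref{IPIG} is exactly that derivation. The only caveat is that Theorem \ref{IPPOG}, which you cite for non-emptiness of the set, is stated only for non-separable $X$, while the corollary also covers separable $X$; in that case non-emptiness follows by the same argument as the first paragraph of the proof of Theorem \ref{IPPOG} (Proposition \ref{FPOG} plus strong zero-dimensionality of $X$, neither of which needs non-separability), so nothing essential is lost.
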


In the next two theorems we investigate the question of existence of
greatest lower bounds and least upper bounds for arbitrary subsets
of ${\mathcal E}_K(X)$ and ${\mathcal E}(X)$.

\begin{lemma}\label{FGRLG}
Let  $X$ be a zero-dimensional
locally compact non-compact   metrizable space and let
$\emptyset\neq Z\in {\mathcal Z}(X^*)$. Then the following
conditions are equivalent.
\begin{itemize}
\item[\rm(1)] $Z\in \lambda({\mathcal E}(X))$;
\item[\rm(2)] There exists an extension trace $\{V_n\}_{n<\omega}$ in $X$, consisting of clopen subsets of $X$, such that $Z=\bigcap_{n<\omega}V_n^*$.
\end{itemize}
\end{lemma}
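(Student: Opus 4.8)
The plan is to prove the equivalence in both directions, with the nontrivial implication being $(1)\Rightarrow(2)$. The direction $(2)\Rightarrow(1)$ should be immediate: if $\{V_n\}_{n<\omega}$ is an extension trace in $X$ (so in particular $\bigcap_{n<\omega}V_n=\emptyset$) then by the definition of $\lambda$ recalled in the introduction, $Z=\bigcap_{n<\omega}V_n^*=\lambda(Y_{\mathcal V})\in\lambda({\mathcal E}(X))$, where $Y_{\mathcal V}$ is the one-point extension generated by the trace. So the substance of the lemma is upgrading an \emph{arbitrary} extension trace that realizes $Z$ to one consisting of clopen sets, using zero-dimensionality.

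For $(1)\Rightarrow(2)$, I would start from the fact (Theorem~6.7 of [9], as quoted in the introduction) that $Z\in\lambda({\mathcal E}(X))$ gives a regular sequence of open sets $\{U_n\}_{n<\omega}$ with $Z=\bigcap_{n<\omega}U_n^*$ and $\bigcap_{n<\omega}U_n$ $\sigma$-compact; in fact since $Z\in\lambda({\mathcal E}(X))$ we may take $\{U_n\}$ to be an extension trace, so $\bigcap_{n<\omega}U_n=\emptyset$. The idea is then to interpolate clopen sets between the successive terms. Because $X$ is zero-dimensional, locally compact, and metrizable, it is strongly zero-dimensional (Theorem~6.2.10 of [7], cited in the paper), and between the disjoint closed set $\operatorname{cl}_X U_{n+1}$ and the closed set $X\backslash U_n$ one can separate by a clopen set: I would choose clopen $V_n$ with $\operatorname{cl}_X U_{n+1}\subseteq V_n\subseteq U_n$. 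Passing to a subsequence if necessary to guarantee the nesting $\operatorname{cl}_X V_{n+1}\subseteq V_n$ (which is automatic once $V_n\subseteq U_n$ and $\operatorname{cl}_X U_{n+1}\subseteq V_n$, since then $\operatorname{cl}_X V_{n+1}\subseteq \operatorname{cl}_X U_{n+1}\subseteq V_n$ because $V_{n+1}$ is clopen hence closed), the sequence $\{V_n\}_{n<\omega}$ is a regular sequence of clopen sets.

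The two things left to check are that $\{V_n\}$ is genuinely an extension trace, i.e.\ $\bigcap_{n<\omega}V_n=\emptyset$, and that it realizes the same $Z$, i.e.\ $\bigcap_{n<\omega}V_n^*=Z$. The emptiness of the intersection follows from the squeeze $U_{n+1}\subseteq V_n\subseteq U_n$, which forces $\bigcap_{n<\omega}V_n=\bigcap_{n<\omega}U_n=\emptyset$. For the zero-set remainder, the same containments give $U_{n+1}^*\subseteq V_n^*\subseteq U_n^*$ for every $n$, whence $\bigcap_{n<\omega}V_n^*=\bigcap_{n<\omega}U_n^*=Z$. I expect the main obstacle to be the separation step itself: I must verify that strong zero-dimensionality of $X$ really yields a clopen set sitting between $\operatorname{cl}_X U_{n+1}$ and $U_n$, which amounts to separating the two disjoint zero-sets $\operatorname{cl}_X U_{n+1}$ and $X\backslash U_n$ by a clopen partition. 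This is exactly what the characterization of strong zero-dimensionality for (normal) metrizable spaces provides via Theorem~6.2.10 of [7], so the obstacle is conceptual bookkeeping rather than a genuine difficulty, and the rest is the routine squeezing argument above.
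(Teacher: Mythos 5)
Your proof is correct and follows essentially the same route as the paper's: both start from an extension trace $\{U_n\}_{n<\omega}$ realizing $Z$, use strong zero-dimensionality of $X$ (via Theorem 6.2.10 of [7]) to interpolate clopen sets $V_n$ with $\operatorname{cl}_X U_{n+1}\subseteq V_n\subseteq U_n$, and conclude by the squeezing argument that $\{V_n\}_{n<\omega}$ is an extension trace with $\bigcap_{n<\omega}V_n^*=Z$. The only difference is that you spell out the routine verifications (nesting, empty intersection, equality of the remainders) that the paper leaves as ``clearly,'' which is harmless.
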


\begin{proof}
That (2) implies (1) is trivial. {\em (1) implies
(2).} Let
 $\{U_n\}_{n<\omega}$  be an extension trace in $X$ such that
$Z=\bigcap_{n<\omega}U_n^*$. Since $X$ is strongly zero-dimensional,
(see Theorem 6.\ref{LKG}0 of [7]) and for each $n<\omega$, $\mbox
{cl}_XU_{n+1}$ and $X\backslash U_n$ are  completely separated in
$X$, by Lemma 6.\ref{LFKG} of [7],
 there exists a clopen subset $V_n$ of $X$ such that
$\mbox {cl}_XU_{n+1}\subseteq V_n\subseteq U_n$. Clearly now
$\{V_n\}_{n<\omega}$ forms an extension trace in $X$ which satisfies
our requirements.
\end{proof}

A Boolean algebra is said to be {\em Cantor separable} if no
strictly increasing sequence has a least upper bound (see \ref{LFKG}0 of
[16]). Proposition \ref{LFKG}2 of [16] states that the Boolean algebra of
clopen subsets of a totally disconnected compact space without
isolated points, in which every zero-set is regular-closed, is
Cantor separable. This will be used in the following theorem.

\begin{theorem}\label{IPY}
Let $X$ be a  locally compact
non-compact  metrizable space. Then the following hold.
\begin{itemize}
\item[\rm(1)] For $Y_1, Y_2\in {\mathcal E}_K(X)$,  $Y_1, Y_2$ may not have any common upper bound in ${\mathcal E}_K(X)$;
\item[\rm(2)] For any $Y_1,\ldots, Y_n\in {\mathcal E}_K(X)$ which have a common upper bound in ${\mathcal E}_K(X)$, $\bigvee_{i=1}^n Y_i$ exists in ${\mathcal E}_K(X)$;
\item[\rm(3)] For a sequence $\{Y_n\}_{n<\omega}$ in ${\mathcal E}_K(X)$ which
has an upper bound in ${\mathcal E}_K(X)$, $\bigvee_{n<\omega} Y_n$
may not  exists in ${\mathcal E}_K(X)$. In fact, if we assume $X$ to
be moreover zero-dimensional, then for any sequence
$\{Y_n\}_{n<\omega}$ in ${\mathcal E}_K(X)$ with $Y_1<Y_2<\cdots$,
$\{Y_n\}_{n<\omega}$ has an upper bound in ${\mathcal E}_K(X)$ but
$\bigvee_{n<\omega} Y_n$ does not  exists in ${\mathcal E}_K(X)$;
\item[\rm(4)] For any $Y_1,\ldots,Y_n\in
{\mathcal E}_K(X)$, $\bigwedge_{i=1}^n Y_i$ exists in ${\mathcal
E}_K(X)$;
\item[\rm(5)] For any sequence
$\{Y_n\}_{n<\omega}$ in ${\mathcal E}_K(X)$, $\{Y_n\}_{n<\omega}$
has a lower bound in ${\mathcal E}_K(X)$;
\item[\rm(6)] For a  sequence $\{Y_n\}_{n<\omega}$ in ${\mathcal E}_K(X)$,
$\bigwedge_{n<\omega} Y_n$ may not  exists in ${\mathcal E}_K(X)$.
In fact, if  we assume $X$  to be moreover  zero-dimensional, then
for any sequence $\{Y_n\}_{n<\omega}$ in ${\mathcal E}_K(X)$ such
that $Y_1>Y_2>\cdots$, $\bigwedge_{n<\omega} Y_n$ does not  exist in
${\mathcal E}_K(X)$;
\item[\rm(7)] An uncountable family  of elements of  ${\mathcal E}_K(X)$ may
not have any   common lower bound in ${\mathcal E}_K(X)$. In fact,
if we assume $X$ to be moreover non-separable and  zero-dimensional,
then there exists a subset of ${\mathcal E}_K(X)$ of cardinality
$\aleph_1$,  with no common lower bound in ${\mathcal E}_K(X)$.
\end{itemize}
\end{theorem}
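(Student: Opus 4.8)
The plan is to translate each of the seven assertions into the language of $\lambda({\mathcal E}_K(X))$, which by Theorem~\ref{LDKG} is exactly the set of non-empty clopen subsets of $X^*$, with the order reversed (so least upper bounds in ${\mathcal E}_K(X)$ correspond to greatest lower bounds among clopen subsets, i.e.\ to intersections when these remain clopen). Since $\lambda$ is an order-anti-isomorphism onto its image, a common upper bound of $Y_1,\dots,Y_n$ in ${\mathcal E}_K(X)$ corresponds to a non-empty clopen set contained in $\bigcap_i\lambda(Y_i)$, and $\bigvee Y_i$ (when it exists) corresponds to the largest such clopen set. I would set up this dictionary once at the start of the proof and then handle the clauses using finite Boolean operations on clopen sets together with the compactness of $X^*$.

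For the routine positive clauses I would argue as follows. For (2), if $Y_1,\dots,Y_n$ have a common upper bound then $C:=\bigcap_i\lambda(Y_i)$ is a non-empty clopen subset of $X^*$; being a non-empty zero-set of $\beta X$ missing $X$ (via Theorem~\ref{LFKG}) it lies in $\lambda({\mathcal E}(X))$, and being clopen it lies in $\lambda({\mathcal E}_K(X))$ by Theorem~\ref{LDKG}, so its $\lambda$-preimage is $\bigvee_{i=1}^n Y_i$. Clause (4) is the dual: $\bigcup_i\lambda(Y_i)$ is again non-empty clopen, hence in $\lambda({\mathcal E}_K(X))$, and gives $\bigwedge_{i=1}^n Y_i$. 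For (5), one produces a single non-empty clopen set meeting every $\lambda(Y_n)$; I would use Lemma~\ref{FGLG} and the pseudocompactness machinery of Lemmas~\ref{FPG}--\ref{OEFPG} (so that nested intersections of the $\lambda(Y_n)$ retain points in $\sigma X$) to exhibit a clopen lower-bounding set. The impossibility clauses (1) and (7) are witnessed by explicit constructions: for (1) take disjoint clopen zero-sets of $X^*$, and for (7), under the non-separable zero-dimensional hypothesis, use Theorem~\ref{IUUG} to split $X=\bigoplus_{i\in I}X_i$ and pick an $\aleph_1$-sized family of clopen sets $A_t^*$ whose intersection is empty, as in the incomparable-family construction of Theorem~\ref{IPPOG}.

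The genuinely substantive clauses are (3) and (6), the failure of countable suprema and infima. Here I would invoke the \emph{Cantor separability} result quoted just before the theorem: under the zero-dimensional hypothesis, the relevant clopen algebra on (a suitable piece of) $X^*$ is Cantor separable, meaning no strictly increasing sequence of clopen sets has a least upper bound in the algebra. Concretely, for (3) I would first use Theorem~\ref{IPPOG} (cofinality of ${\mathcal E}_K(X)$) to produce a strictly increasing sequence $Y_1<Y_2<\cdots$ in ${\mathcal E}_K(X)$ possessing an upper bound, translate to a strictly decreasing sequence of clopen sets, and then apply Cantor separability to the clopen algebra of the compact totally disconnected space $\bigcap_n\mbox{cl}_{\beta X}U_n$ (realized inside $X^*$) to conclude the infimum among clopen sets—and hence $\bigvee_{n<\omega}Y_n$ in ${\mathcal E}_K(X)$—does not exist. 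Clause (6) is the order-dual and follows by the same Cantor-separability input applied to an increasing clopen sequence.

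The main obstacle I anticipate is verifying the hypotheses of the Cantor-separability proposition, namely that the compact space whose clopen algebra we use is totally disconnected, has no isolated points, and has every zero-set regular-closed. Checking \emph{no isolated points} and especially \emph{every zero-set regular-closed} for the remainder-type spaces arising from $\bigcap_n U_n^*$ is delicate: it requires knowing that the piece of $X^*$ we land in behaves like a Parovi\v{c}enko-type space, which is where the zero-dimensional and metrizability assumptions, together with the structure of $\beta X\setminus X$, must be combined carefully. I would expect this verification—rather than the Boolean bookkeeping in clauses (1),(2),(4),(5),(7)—to absorb most of the real work.
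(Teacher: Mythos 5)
Your dictionary, clauses (1), (2), and the Cantor--separability plan for (6) are essentially sound, and for (6) your route coincides with the paper's: the paper verifies the hypotheses of the cited proposition (total disconnectedness, no isolated points, every zero-set regular-closed) for $X^*$ with $X$ separable zero-dimensional, and reduces the non-separable case to a separable clopen piece $M^*=(\bigcup_{i\in J}X_i)^*$, $J$ countable, via Lemma 6.6 of [9]. For (3), Cantor separability can also be made to work, but not where you aim it: in the algebra ${\mathcal B}\bigl(\bigcap_n U_n^*\bigr)$ every $\lambda(Y_n)$ restricts to the whole space, so there is no strictly increasing sequence to feed into the proposition; you would have to work in ${\mathcal B}(M^*)$ (or ${\mathcal B}(X^*)$ when $X$ is separable) with the complements $M^*\setminus\lambda(Y_n)$, and your worry about verifying the hypotheses for $\bigcap_n U_n^*$ dissolves once you realize the verification is only ever needed for $M^*$. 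The paper avoids Cantor separability in (3) altogether: if $Y=\bigvee_n Y_n$ existed, then $T=\bigcap_n\bigl(\lambda(Y_n)\setminus\lambda(Y)\bigr)$ is a non-empty zero-set of $\beta X$ missing $X$, Proposition \ref{FPOG} plus strong zero-dimensionality give a non-empty clopen $V\subseteq T$, and the element $B$ with $\lambda(B)=V\cup\lambda(Y)$ is an upper bound of $\{Y_n\}$, forcing $V\subseteq\lambda(B)\subseteq\lambda(Y)$, contradicting $V\cap\lambda(Y)=\emptyset$. (A minor remark on (4): clopen-ness alone does not put a set in $\lambda({\mathcal E}_K(X))$ when $X$ is non-separable --- $X^*$ itself is clopen but not in the image; Theorem \ref{LDKG} also requires membership in $\lambda({\mathcal E}(X))$, which finite unions of image elements do satisfy since each lies in $\sigma X$.)

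The genuine gaps are (5) and, above all, (7), and both stem from inverting the dictionary for lower bounds. Since $\lambda$ is an order-anti-isomorphism, $Y$ is a common lower bound of a family iff $\lambda(Y)$ \emph{contains} every $\lambda(Y_\alpha)$; your conditions --- ``a clopen set meeting every $\lambda(Y_n)$'' in (5), and ``an $\aleph_1$-sized family with empty intersection'' in (7) --- are the conditions relevant to upper bounds, not lower bounds. For (5) the correct (and short) argument is: each $\lambda(Y_n)\subseteq\sigma X$ by Lemma 6.6 of [9], hence by compactness all of them lie in a single $M^*$ with $M=\bigcup_{i\in J}X_i$, $J\subseteq I$ countable; $M^*$ is clopen and lies in $\lambda({\mathcal E}(X))$ (Lemma \ref{FFG}), hence in $\lambda({\mathcal E}_K(X))$ by Theorem \ref{LDKG}, and its preimage is a lower bound --- the pseudocompactness Lemmas \ref{FPG}--\ref{OEFPG} and \ref{FGLG} play no role. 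For (7) your construction fails outright: the disjoint sets $A_t^*$ borrowed from Theorem \ref{IPPOG} are all contained in the single image element $V\setminus X=\lambda(S)$, so $S$ \emph{is} a common lower bound of that family; pairwise disjointness only kills common upper bounds. In view of (5), any witnessing family for (7) must pit uncountability against exactly the fact used there, namely that every element of $\lambda({\mathcal E}_K(X))$ sits inside $(\bigcup_{i\in J}X_i)^*$ for some countable $J$. The paper's construction does this: it takes $\lambda(Y_k)=(\bigcup_{i\in J_k}X_i)^*$ for $\aleph_1$ pairwise disjoint countable sets $J_k$, and shows no single element of $\lambda({\mathcal E}_K(X))$ can contain them all (a putative lower bound is generated by a clopen extension trace with compact differences, so its first term $U_1$ is $\sigma$-compact, yet $U_1$ would have to contain $\aleph_1$ many disjoint non-compact clopen pieces $X_{i_k}$, contradicting Lindel\"ofness of closed subspaces of $\sigma$-compact spaces). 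Without this idea, clause (7) does not go through.
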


\begin{proof}
(1), (2) and (4) are straightforward. 3) Suppose that
$X$ is zero-dimensional and  let $\{Y_n\}_{n<\omega}$ be a sequence
in ${\mathcal E}_K(X)$ such that $Y_1<Y_2<\cdots$. Since  the sequence
$\{\lambda (Y_n)\}_{n<\omega}$ is decreasing, it has the f.i.p., and
therefore $S=\bigcap _{n<\omega}\lambda (Y_n)\in \lambda({\mathcal
E}(X))$. Now  Proposition \ref{FPOG} implies that $\mbox
{int}_{X^*}S\neq\emptyset$. But since $X$ is strongly
zero-dimensional (see Theorem 6.\ref{LKG}0 of [7]) there exists a
non-empty clopen subset $U$ of $X^*$ such that $U\subseteq S$. Let
$A\in {\mathcal E}_K (X)$ be such that $\lambda (A)=U$. Then clearly
$A$ is an upper bound for $\{Y_n\}_{n<\omega}$ in ${\mathcal
E}_K(X)$. Now suppose that  $\bigvee_{n<\omega} Y_n$ exists in
${\mathcal E}_K(X)$ and let $Y=\bigvee_{n<\omega} Y_n$. Consider the
family $\{\lambda (Y_n)\backslash \lambda (Y)\}_{n<\omega}$ of
non-empty decreasing clopen subsets of $X^*$. Let  $T=\bigcap
_{n<\omega}\lambda (Y_n)\backslash \lambda (Y)\neq\emptyset$. Then
by Proposition \ref{FPOG}, we have $\mbox {int}_{X^*}T\neq\emptyset$. Let
$V$ be a non-empty clopen subset of $X^*$ such that $V\subseteq T$.
Let $\lambda (B)=V\cup\lambda (Y)$, for some $B\in{\mathcal
E}_K(X)$. Then since for any $n<\omega$, $V\subseteq
T\subseteq\lambda (Y_n)$, $B$ is an upper bound for
$\{Y_n\}_{n<\omega}$, and therefore $B\geq Y$. Thus
 $V\subseteq \lambda (B)\subseteq\lambda (Y)$. But this is a
contradiction as $V\subseteq T \subseteq X^*\backslash\lambda (Y)$.

5) We assume  that $X$ is non-separable. Let $\{Y_n\}_{n<\omega}$ be
a sequence in ${\mathcal E}_K(X)$. Then for each $n<\omega$, by
Lemma 6.6 of [9], we have $\lambda (Y_n)\subseteq\sigma
X\backslash X$. Assuming the notations of Theorem \ref{IUUG}, it  follows
that there exists a countable $J\subseteq I$ such that for each
$n<\omega$, $\lambda(Y_n)\subseteq M^*$, where $M=\bigcup_{i\in
J}X_i$.  Let $\lambda(Y)= M^*$, for some $Y\in {\mathcal E}_K(X)$.
Then clearly $Y$ is a lower bound for the sequence
$\{Y_n\}_{n<\omega}$.

6) Suppose that  $X$ is  zero-dimensional and let
$\{Y_n\}_{n<\omega}$ be a sequence in ${\mathcal E}_K(X)$ with
$Y_1>Y_2>\cdots$. Suppose that $Y=\bigwedge_{n<\omega} Y_n$   exists in
${\mathcal E}_K(X)$. First we assume that  $X$ is separable and
verify that $X^*$ is a totally disconnected compact space without
isolated points in which every zero-set is regular-closed.

Clearly
$X^*$ is totally disconnected, as it is zero-dimensional (see
Theorem  6.\ref{LKG}0 of [7]). Since $X$ is Lindel\"{o}f, by \ref{FPG}.C of [7],
it is $\sigma$-compact. By  Remark 1\ref{GGH}7 of [6], the
Stone-\v{C}ech remainder of any zero-dimensional locally compact
$\sigma$-compact space has no isolated points. Therefore  $X^*$ does
not have any isolated points. Finally, $X$ being Lindel\"{o}f is
realcompact. By Theorem 1\ref{TQYG} of [6], any zero-set of the
Stone-\v{C}ech remainder of a locally compact realcompact space is
regular-closed, therefore, every zero-set in $X^*$ is
regular-closed. Now since $\lambda (Y_1)\subseteq\lambda
(Y_2)\subseteq\cdots\subseteq\lambda (Y)$ (properly), Proposition  \ref{LFKG}2
of [16] implies the existence of an $S\in{\mathcal B}(X^*)$ such that
$\lambda (Y_n)\subseteq S\subseteq\lambda (Y)$ (properly), for each
$n<\omega$. Let $A\in {\mathcal E}_K(X)$ be such that $\lambda
(A)=S$. Then clearly $A$ is a lower bound for the sequence
$\{Y_n\}_{n<\omega}$ but $A>Y$. This contradiction proves our
theorem in this case.

Now suppose that  $X$ is non-separable. By Lemma  6.6 of
[9], we have $\lambda(Y)\subseteq\sigma X\backslash X$ and
$\lambda(Y_n)\subseteq\sigma X\backslash X$, for any $n<\omega$.
Assume the notations of Theorem \ref{IUUG}. Then  $\lambda(Y)\subseteq M^*$
and $\lambda(Y_n)\subseteq M^*$, for any $n<\omega$, where
$M=\bigcup_{i\in J}X_i$ and $J\subseteq I$ is countable. Then since
$\mbox {cl}_{\beta X}M\simeq \beta M$ and $M$ is separable, the
problem reduces to the case we considered above.

7) Let $X$ be  zero-dimensional. Assume the notations of Theorem
\ref{IUUG}. Let  $J\subseteq I$ be such that $|J|=\aleph_1$, and let
$\{J_k: k<\omega_1\}$ be a partition of $J$ into mutually disjoint
subsets with $|J_k|=\aleph_0$, for any $k<\omega_1$. For any
$k<\omega_1$, let $Y_k\in {\mathcal E}_K(X)$ be such that $\lambda
(Y_k)=(\bigcup_{i\in J_k}X_i)^*$. We claim that the family
 ${\mathcal F}= \{Y_k\}_{k<\omega_1}$ has no lower bound in
${\mathcal E}_K(X)$. Suppose the  contrary, and let $Y\in {\mathcal
E}_K(X)$ be a lower bound for  ${\mathcal F}$. By Lemma  \ref{OIPIG}, there
exists an extension trace ${\mathcal U}= \{U_n\}_{n<\omega}$ in $X$
generating  $Y$, such that each $U_n$ is a clopen subsets of $X$. By
Lemma \ref{LYFG}, there exists an $m<\omega$ such that $U_n\backslash
U_{n+1}$ is compact, for all $n\geq m$. We may assume that  $m=1$.
Let $k<\omega_1$. Then
\[\Big(\bigcup_{i\in J_k}X_i\Big)^*=\lambda (Y_k)\subseteq\lambda
(Y)=\bigcap_{n<\omega}U_n^*\subseteq U_1^*\]
and therefore
\[\mbox {cl}_{\beta X}\Big(\bigcup_{i\in J_k}X_i\Big)\subseteq \mbox {cl}_{\beta X}U_1 \cup
X=\mbox {cl}_{\beta X}U_1 \cup \bigcup_{i\in I}X_i.\]
Now since $U_1$ is clopen in $X$, $\mbox {cl}_{\beta
X}U_1$ is clopen in $\beta X$, and therefore there exists a finite
set $L_k\subseteq I$ such that
\[\mbox {cl}_{\beta X}\Big(\bigcup_{i\in
J_k}X_i\Big)\subseteq \mbox {cl}_{\beta X}U_1 \cup \bigcup_{i\in
L_k}X_i\subseteq \mbox {cl}_{\beta X}\Big(U_1 \cup \bigcup_{i\in
L_k}X_i\Big).\]
But $U_1 \cup \bigcup_{i\in L_k}X_i$ is clopen in $X $,
and thus
\[\bigcup_{i\in J_k}X_i\subseteq U_1 \cup\bigcup_{i\in L_k}X_i.\]
Let for any $k<\omega_1$, $i_k\in J_k\backslash L_k$. Then
$X_{i_k}\subseteq U_1$, and therefore
$\bigcup\{X_{i_k}:k<\omega_1\}$ being a closed subset of the
$\sigma$-compact set $U_1=\bigcup_{n<\omega}(U_n\backslash U_{n+1})$
is Lindel\"{o}f. But this is clearly a contradiction. This completes
the proof.
\end{proof}

The following is a counterpart of the above theorem, which deals with the subsets of ${\mathcal E}(X)$.

\begin{theorem}\label{YIOPY}
Let $X$ be a  locally compact
non-compact  metrizable space. Then the following hold.
\begin{itemize}
\item[\rm(1)] For $Y_1, Y_2\in {\mathcal E}(X)$,  $Y_1, Y_2$ may not have any
common upper bound in ${\mathcal E}(X)$;
\item[\rm(2)] For any sequence $\{Y_n\}_{n<\omega}$ in ${\mathcal E}(X)$, if
$\{Y_n\}_{n<\omega}$ has an  upper bound in ${\mathcal E}(X)$, then
$\bigvee _{n<\omega}Y_n$ exists in  ${\mathcal E}(X)$;
\item[\rm(3)] For any $Y_1,\ldots, Y_n\in {\mathcal E}(X)$, $\bigwedge_{i=1}^n
Y_i$ exists in ${\mathcal E}(X)$;
\item[\rm(4)] A  sequence $\{Y_n\}_{n<\omega}$ in ${\mathcal E}(X)$ may not
have any lower bound in  ${\mathcal E}(X)$. In fact, if we moreover assume that  $w(X)\geq 2^{\aleph_0}$, then there exists a sequence
$\{Y_n\}_{n<\omega}$ in ${\mathcal E}(X)$ which does not have any
lower bound in ${\mathcal E}(X)$.
\item[\rm(5)] A  sequence $\{Y_n\}_{n<\omega}$ in ${\mathcal E}(X)$ which has a
lower bound in ${\mathcal E}(X)$, may not have a greatest lower
bound in ${\mathcal E}(X)$.
\end{itemize}
\end{theorem}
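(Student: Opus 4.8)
The plan is to treat each of the five claims separately, translating everything through the order-anti-isomorphism $\lambda$ into statements about zero-sets of $X^*$, so that ``$\bigvee$'' in ${\mathcal E}(X)$ becomes ``$\bigcap$'' in $\lambda({\mathcal E}(X))$ and ``$\bigwedge$'' becomes a smallest zero-set containing a given union. By Theorem~\ref{LFKG}, $\lambda({\mathcal E}(X))$ is exactly the family of non-empty zero-sets of $\beta X$ that miss $X$, and Lemma~\ref{FFYG} guarantees this family is downward closed (any non-empty zero-set of $X^*$ sitting inside a member is again a member); these two facts are the workhorses throughout.

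\emph{For (1) and (3),} I would argue directly on zero-sets. For (1) it suffices to exhibit $Y_1,Y_2$ with $\lambda(Y_1)\cap\lambda(Y_2)=\emptyset$, since a common upper bound $Y$ would force $\lambda(Y)\subseteq\lambda(Y_1)\cap\lambda(Y_2)$ with $\lambda(Y)$ non-empty; choosing two extension traces whose traces separate in $\beta X$ (e.g.\ supported on disjoint closed subsets of $X$ with disjoint closures) produces disjoint zero-sets. For (3), given $Y_1,\dots,Y_n$, the union $\lambda(Y_1)\cup\cdots\cup\lambda(Y_n)$ is a finite union of zero-sets of $\beta X$ missing $X$, hence itself a zero-set of $\beta X$ missing $X$, so by Theorem~\ref{LFKG} it equals $\lambda(Z)$ for some $Z\in{\mathcal E}(X)$; reversing $\lambda$ shows $Z=\bigwedge_{i=1}^n Y_i$.

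\emph{For (2),} the essential point is that arbitrary intersections of zero-sets of $\beta X$ need not be zero-sets, but \emph{countable} ones are. Given $\{Y_n\}_{n<\omega}$ with an upper bound $Y_0\in{\mathcal E}(X)$, the sets $\lambda(Y_n)$ form a countable decreasing (after replacing $Y_n$ by $Y_1\vee\cdots\vee Y_n$, which exists by (3) applied to the anti-dual or directly) family of zero-sets of $\beta X$ missing $X$, all containing the non-empty $\lambda(Y_0)$. A countable intersection of zero-sets of $\beta X$ is a zero-set of $\beta X$, it misses $X$, and it is non-empty because it contains $\lambda(Y_0)$; invoking Theorem~\ref{LFKG} gives it as $\lambda(Y)$, and $Y=\bigvee_{n<\omega}Y_n$.

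\emph{Parts (4) and (5) are where the real work lies.} For (5) the obstruction is exactly the failure of a \emph{smallest} zero-set above a countable union: a lower bound for $\{Y_n\}$ corresponds to a zero-set containing $\bigcup_n\lambda(Y_n)$, and a greatest lower bound would be a smallest such zero-set; one constructs a decreasing sequence whose $\lambda$-images have union not itself a zero-set and admit no minimal zero-set neighbourhood, mirroring the Cantor-separability phenomenon used in Theorem~\ref{IPY}(3),(6). For (4), a lower bound for $\{Y_n\}$ requires a single non-empty zero-set of $X^*$ contained in every $\lambda(Y_n)$, i.e.\ the decreasing zero-sets $\lambda(Y_n)$ must have non-empty intersection lying in $\lambda({\mathcal E}(X))$; when $w(X)\ge 2^{\aleph_0}$ one can engineer a decreasing sequence of zero-sets of $X^*$ whose intersection is empty (exploiting the large weight to realize $2^{\aleph_0}$-many independent coordinates and a sequence shrinking to $\emptyset$), so no common lower bound exists. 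I expect the main obstacle to be the explicit combinatorial construction in (4) and (5): producing decreasing extension traces whose $\beta X$-closures intersect trivially (for (4)) or whose closures have no minimal zero-set dominating them (for (5)) requires carefully exploiting the weight hypothesis and the geometry of $\sigma X$ versus $c\sigma X$, and verifying emptiness or non-minimality of the relevant intersection is the delicate step rather than any routine order-theoretic bookkeeping.
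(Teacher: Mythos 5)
Your translations through $\lambda$ for (1)--(3) are correct and are essentially what the paper dismisses as ``straightforward'': countable intersections of zero-sets of $\beta X$ missing $X$ handle upper bounds and joins, finite unions handle meets, with Theorem \ref{LFKG} and Lemma \ref{FFYG} doing the work. The failure is in part (4), and it is a genuine one. You write that a lower bound for $\{Y_n\}_{n<\omega}$ requires a non-empty zero-set \emph{contained in} every $\lambda(Y_n)$, and you propose to build a decreasing sequence of zero-sets of $X^*$ with empty intersection. This reverses the anti-isomorphism: since $W\leq Y_n$ if and only if $\lambda(W)\supseteq\lambda(Y_n)$, a lower bound is a member of $\lambda({\mathcal E}(X))$ \emph{containing} $\bigcup_{n<\omega}\lambda(Y_n)$ --- exactly the convention you state in your own opening paragraph and use correctly in (3) and (5). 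Worse, the construction you propose cannot exist: $X^*$ is compact, so a decreasing sequence of non-empty zero-sets of $X^*$ always has non-empty intersection; this is precisely the fact you invoked to prove (2), so your plan for (4) contradicts your own argument for (2). What (4) actually requires is a sequence $\{Y_n\}_{n<\omega}$ such that no non-empty zero-set of $\beta X$ missing $X$ contains every $\lambda(Y_n)$. The paper achieves this by indexing $2^{\aleph_0}$ of the summands $X_i$ of Theorem \ref{IUUG} by the set $\Delta$ of increasing, not eventually constant functions $\omega\rightarrow\omega$, setting $U^n_k=\bigcup\{X_f: f\in\Delta,\ f(k)\leq n\}$, and then, given a putative lower bound generated by a trace $\{U_n\}_{n<\omega}$, diagonally constructing $g\in\Delta$ with $X_g\subseteq U_n$ for all $n<\omega$, contradicting $\bigcap_{n<\omega}U_n=\emptyset$. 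Nothing in your sketch contains or suggests this (or any) diagonalization; ``a sequence shrinking to $\emptyset$'' is the wrong phenomenon altogether.

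For (5) your identification of the obstruction (no smallest member of $\lambda({\mathcal E}(X))$ containing the union) is correct, but you supply no construction and no non-minimality argument, only a gesture toward Cantor separability. The paper's proof is concrete and different: take $Y_1>Y_2>\cdots$ in ${\mathcal E}_K(X)$ with $X$ non-separable, get a lower bound from part (5) of Theorem \ref{IPY}, and, given a putative infimum $A$, note that $S=\bigcap_{n<\omega}\big(\lambda(A)\backslash\lambda(Y_n)\big)$ is a non-empty member of $\lambda({\mathcal E}(X))$; Proposition \ref{FPOG} gives $S$ non-empty interior in $X^*$, and removing a cozero neighbourhood of a point of that interior from $\lambda(A)$ yields a lower bound strictly below $A$ in the zero-set order, a contradiction. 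Since (4) and (5) carry all the real content of the theorem, and your treatment of them is respectively wrong and missing, the proposal does not establish the statement.
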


\begin{proof}
(1)-(3) are straightforward. 4)  Let $\Delta$ denote
the set of all increasing (i.e., $f(n)\leq f(n+1)$, for any
$n<\omega$) functions $f:\omega\rightarrow\omega$ which are not
eventually constant. We first check that $|\Delta|=2^{\aleph_0}$. To
show this, let for each $g\in \{0,1\}^\omega$ which is not
eventually constant, $f_g:\omega\rightarrow\omega$ be defined by $
f_g(n)=n+g(n)$, for any $n<\omega$. Then clearly since for distinct
$g,h\in\{0,1\}^\omega$, $f_g\neq f_h$, we have $|\Delta|\geq
2^{\aleph_0}$. It is clear that $|\Delta|\leq2^{\aleph_0}$. Assume
the notations of Theorem \ref{IUUG}. Since $w(X)=|I|\geq 2^{\aleph_0}$, for
simplicity we may assume that $I\supseteq\Delta$. For any
$n,k<\omega$, let
\[ U^n_k=\bigcup\big\{X_f:f\in \Delta \mbox { and } f(k)\leq n\big\}\]
and let ${\mathcal U}_n= \{U_k^n\}_{k<\omega}$. We verify
that ${\mathcal U}_n$ is an extension trace in $X$. By the way we
defined $U_k^n$ and since $f$ is increasing we have
$U_{k+1}^n\subseteq U_k^n$. Suppose that
$\bigcap_{k<\omega}U_k^n\neq\emptyset$, and let
$x\in\bigcap_{k<\omega}U_k^n$. Since for any ${k<\omega}$, $x\in
 U_k^n$, by definition of $U_k^n$, there exists an $f_k\in \Delta$
 such that $f_k(k)\leq n$ and $x\in X_{f_k}$. But since  the family $\{X_i\}_{i\in I}$ is  faithfully indexed
 and $X_i\cap X_j=\emptyset$, for distinct $i,j\in I$,  we obtain that
 $f_1=f_2=\ldots\equiv h$. Now for any ${k<\omega}$, we have  $h(k)=f_k(k)\leq
 n$,
 which implies $h$ to be eventually constant, which is a contradiction. Therefore $\bigcap_{k<\omega}U_k^n=\emptyset$
  and each ${\mathcal U}_n$ is an  extension traces in $X$.  Let for any
  ${n<\omega}$, $Y_n\in{\mathcal E}(X)$ be generated by ${\mathcal U}_n$. We claim that
  $\{Y_n\}_{n<\omega}$ has no lower bound in ${\mathcal E}(X)$. So
  suppose to the contrary that $Y\in {\mathcal E}(X)$ is such that $Y\leq Y_n$, for any
  $n<\omega$. Let ${\mathcal U}= \{U_n\}_{n<\omega}$ be an extension
  trace in $X$ which generates $Y$. Since for any $n<\omega$,
  we are assuming $Y\leq Y_n$, by   Theorem \ref{LPPG} of [9], ${\mathcal U}_n $ is finer than ${\mathcal
  U}$. For any $n,i<\omega$, let
  $k^n_i<\omega$ be such that $U^i_{k^n_i}\subseteq U_n$. We can
  also assume that $k^n_1<k^n_2<k^n_3<\cdots$, for any $n<\omega$. We
  define a function
$g:\omega\rightarrow\omega$ as follows.

Let $g(i)=1$ for $i=1,\ldots, k^1_{t_1}$, where $t_1=1$. Inductively
assume that for $n<\omega$, $t_1<\cdots<t_n$ are defined in such a way
that
\[ g(i)=m, \mbox { for } i=k^{m-1}_{t_{m-1}}+1,\ldots, k^m_{t_m}
\mbox { and  } m=1,\ldots,n.\]
Now since
$k^{n+1}_1<k^{n+1}_2<k^{n+1}_3<\cdots$, there exists a $t<\omega$ such
that $k_t^{n+1}>k^n_{t_n}$ and $t>t_n$. Let $t_{n+1}=t$ and define
\[g(i)=n+1, \mbox { for } i=k^n_{t_n}+1,\ldots, k^{n+1}_{t_{n+1}}.\]
Consider the function $g:\omega\rightarrow\omega$. Clearly $g\in
\Delta$ and since for any $n<\omega$, $g(k_{t_n}^n)=n$ we have
$X_g\subseteq U^n_{k_{t_n}^n}$. But since the sequence
$\{t_n\}_{n<\omega}$ is increasing, $t_n\geq n$, and therefore
$k_{t_n}^n\geq k_n^n$. Thus $U^n_{k_{t_n}^n}\subseteq U^n_{k_n^n}$,
which combined with the fact that $U^n_{k_n^n}\subseteq U_n$ implies
that $X_g\subseteq U_n$, for any $n<\omega$. But this is a
contradiction, as $\bigcap_{n<\omega} U_n=\emptyset$.

5) Let $X$   be non-separable and let $\{Y_n\}_{n<\omega}$ be a
sequence  in ${\mathcal E}_K(X)$ such that $Y_1>Y_2>\cdots$. By part
(5) of Theorem \ref{IPY}, $\{Y_n\}_{n<\omega}$  has a lower bound in
${\mathcal E}(X)$. Suppose that $A=\bigwedge_{n<\omega} Y_n$ exists
in ${\mathcal E}(X)$.
 Then since for each
$n<\omega$, $Y_n>A$, we have $\lambda(A)\backslash\lambda
(Y_n)\neq\emptyset$, and therefore
$S=\bigcap_{n<\omega}(\lambda(A)\backslash\lambda (Y_n))\in
\lambda({\mathcal E}(X))$. By Proposition \ref{FPOG}, there exists a
non-empty open subset $U$ of $X^*$ with $U\subseteq S$. Let $x\in U$
and let $f\in C(X^*, \textbf{I})$ be such that $f(x)=1$ and
$f(X^*\backslash U)\subseteq\{0\}$. Let $T= \lambda(A)\cap Z(f)$.
Clearly for each $n<\omega$, $\lambda(Y_n)\subseteq X^*\backslash
U\subseteq Z(f)$, and therefore $\lambda(Y_n)\subseteq T$. Let $Y\in
{\mathcal E}(X)$ be such that $\lambda(Y)=T$.  Then since for each
$n<\omega$, $Y\leq Y_n$, we have $Y\leq A$. But since $x\in U
\subseteq\lambda(A)$, this implies that  $x\in \lambda(Y)\subseteq
Z(f)$, which is a contradiction.
\end{proof}

Our final result of this section deals with the cardinalities of cofinal subsets of ${\mathcal E}(X)$.

\begin{theorem}\label{FGY}
Let  $X$ be a locally compact
non-separable  metrizable space and let $ {\mathcal F}\subseteq
{\mathcal E}(X)$. If for each $Y\in {\mathcal E}(X)$ there exists an
$A\in {\mathcal F}$ such that $A\leq Y$, ($A\geq Y$, respectively)
then $ {\mathcal F}$ is uncountable.
\end{theorem}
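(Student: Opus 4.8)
The plan is to establish that any cofinal (or coinitial) subset $\mathcal{F}$ of $\mathcal{E}(X)$ must be uncountable when $X$ is locally compact non-separable. The two cases — lower cofinal ($A \leq Y$) and upper cofinal ($A \geq Y$) — should be handled somewhat differently, but both hinge on producing, from a countable candidate family, a single element of $\mathcal{E}(X)$ that the family fails to dominate in the required direction. The natural strategy is a proof by contradiction: assume $\mathcal{F} = \{A_m\}_{m<\omega}$ is countable, and manufacture a $Y \in \mathcal{E}(X)$ that no $A_m$ can be related to as demanded.

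For the coinitial case (the hypothesis $A \leq Y$), I would translate to zero-sets via $\lambda$: the images $\{\lambda(A_m)\}_{m<\omega}$ would need to contain, for every $\emptyset \neq Z \in \lambda(\mathcal{E}(X))$, some $\lambda(A_m) \supseteq Z$ (since $A \leq Y$ means $\lambda(A) \supseteq \lambda(Y)$). Using the non-separability and the decomposition $X = \bigoplus_{i \in I} X_i$ from Theorem \ref{IUUG} with $I$ uncountable, I would aim to exhibit uncountably many pairwise "incomparable-from-below" zero-sets, i.e. a family whose members cannot each be covered by countably many $\lambda(A_m)$. Concretely, for each countable $J \subseteq I$ we have $(\bigcup_{i \in J} X_i)^* \in \lambda(\mathcal{E}(X))$ by Lemma \ref{FFG}, and each $\lambda(A_m)$, being the $\lambda$-image of a single extension, should meet only countably many factors in a controlled way; a counting/diagonalization across the uncountable index set $I$ then produces a target $Z$ that no single $\lambda(A_m)$ contains.

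For the cofinal case (the hypothesis $A \geq Y$), the dual phrasing says every $\lambda(Y)$ must contain some $\lambda(A_m)$. Here the obstruction to countability comes from building an extension trace whose image is a zero-set $Z$ so "thin" that it contains no $\lambda(A_m)$ from a prescribed countable list. I would reuse the diagonal construction already employed in the proof of Theorem \ref{YIOPY}(4): against a countable family one can build, using an uncountable-weight decomposition and a function-coding argument over $\Delta$ (the increasing non-eventually-constant functions $\omega \to \omega$, of which there are $2^{\aleph_0}$), an extension trace escaping all countably many prescribed ones. The key mechanism is that each $A_m$ corresponds to an extension trace with a fixed "finer-than" behavior, and a single diagonal trace can be arranged to avoid being finer than any of them simultaneously.

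The main obstacle I anticipate is the bookkeeping that converts "$\mathcal{F}$ is countable" into a concrete contradiction uniformly across both cases: one must pin down exactly how a single $\lambda(A_m)$ can interact with the uncountably many factors $X_i$, and ensure the diagonalization genuinely produces an element of $\mathcal{E}(X)$ (i.e. verify the constructed sequence is an honest extension trace with empty intersection, as was done carefully in Theorem \ref{YIOPY}(4)). The cleanest route may be to reduce both directions to the observation that $\lambda(\mathcal{E}(X))$, restricted to the zero-sets arising from countable sub-sums $\bigcup_{i \in J} X_i$, already contains a family of cardinality $|I| > \aleph_0$ that is order-theoretically "spread out," so no countable subfamily can be cofinal or coinitial among them; the remaining work is then to show a countable $\mathcal{F}$ cannot serve even after allowing arbitrary elements of $\mathcal{E}(X)$, not just those of this special form.
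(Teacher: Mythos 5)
Your plan for the coinitial case ($A\leq Y$) has the right skeleton but rests on a false supporting claim. You justify the diagonalization by saying each $\lambda(A_m)$ ``should meet only countably many factors in a controlled way''; this is not true. Take $X=\omega_1\times\omega$ discrete, $X_i=\{i\}\times\omega$, and let $A\in{\mathcal E}(X)$ be generated by the extension trace $U_n=\omega_1\times[n,\omega)$; then $\lambda(A)\supseteq X_i^*$ for \emph{every} $i<\omega_1$, so a single $\lambda(A_m)$ can meet (indeed contain) uncountably many $X_i^*$. What your diagonal actually needs is: for each $m$ there is a countable $J_m\subseteq I$ with $(\bigcup_{i\in J_m}X_i)^*\not\subseteq\lambda(A_m)$; then $J=\bigcup_{m}J_m$ and $Y$ with $\lambda(Y)=(\bigcup_{i\in J}X_i)^*$ (Lemma \ref{FFG}) defeats every $A_m$ at once. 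That fact is true but requires an argument you never supply: if it failed for some $m$, then $\sigma X\backslash X\subseteq\lambda(A_m)$, hence $\lambda(A_m)=X^*$ by Lemma \ref{FGLG}, and $X^*\in\lambda({\mathcal E}(X))$ is impossible because by Theorem \ref{LFKG} it would exhibit $X$ as a cozero-set of $\beta X$, hence $\sigma$-compact, hence separable. (Note also that containing $X_i^*$ for each $i\in J$ does not imply containing $(\bigcup_{i\in J}X_i)^*$, so no per-factor bookkeeping can substitute for this lemma.) Once repaired, your argument is valid and genuinely different from the paper's, which instead runs a Baire category argument in the compact space $c\sigma X$ (each $\lambda(Y_n)\backslash\sigma X$ is nowhere dense there by Lemma 6.6 of [9]), picks a point of $c\sigma X$ outside $\bigcup_{n}\lambda(Y_n)$, separates it from each $\lambda(Y_n)$ by zero-sets $Z_n$ of $\beta X$, and uses Lemma \ref{OEFPG} to show $\bigcap_{n}Z_n$ meets $\sigma X\backslash X$.

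For the cofinal case ($A\geq Y$) the proposal fails outright: the $\Delta$-diagonalization of Theorem \ref{YIOPY}(4) requires embedding $\Delta$, of size $2^{\aleph_0}$, into the index set $I$, i.e.\ it needs $w(X)\geq 2^{\aleph_0}$, whereas Theorem \ref{FGY} assumes only non-separability, so $w(X)\geq\aleph_1$, which under the negation of CH is strictly smaller. Moreover, that construction produces one specific sequence with no common lower bound, which is not the shape of statement needed here (you must defeat an \emph{arbitrary} countable family). In fact no such machinery is required; the correct argument is the ``spread out family'' you gesture at in your final paragraph but explicitly leave as ``remaining work'': for each $i\in I$ take $Y_i$ with $\lambda(Y_i)=X_i^*$; if $A_i\in{\mathcal F}$ satisfies $A_i\geq Y_i$, then $\emptyset\neq\lambda(A_i)\subseteq X_i^*$, and since the $X_i^*$ are pairwise disjoint the map $i\mapsto A_i$ is injective, whence $|{\mathcal F}|\geq|I|=w(X)>\aleph_0$. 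Disjointness already prevents one element of ${\mathcal F}$ from serving two indices, so arbitrary elements of ${\mathcal E}(X)$ are handled automatically. As submitted, then, neither half is a proof: the first has a key step asserted with an incorrect justification, and the second invokes a tool whose hypothesis is unavailable.
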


\begin{proof}
Suppose that for each $Y\in {\mathcal E}(X)$ there
exists an $A\in {\mathcal F}$ such that $A\leq Y$. Suppose that $
{\mathcal F}=\{Y_n\}_{n<\omega}$. Let for each $n<\omega$,
$S_n=\lambda(Y_n)$. Then by Lemma 6.6 of [9],  $\mbox
{int}_{c\sigma X}(S_n\backslash \sigma X)=\emptyset$. Now since
$c\sigma X$ is compact, by the Baire  Category Theorem we have
$\mbox {int}_{c\sigma X}(\bigcup_{n<\omega}S_n\backslash \sigma
X)=\emptyset$, and therefore $c\sigma
X\backslash\bigcup_{n<\omega}S_n\neq\emptyset$. Let $x\in c\sigma
X\backslash\bigcup_{n<\omega}S_n$. Then since for each $n<\omega$,
$x\notin S_n$, there exists a $Z_n\in{\mathcal Z}(\beta X)$ such
that $ x\in Z_n$ and $Z_n \cap S_n =\emptyset$. Let
$Z=\bigcap_{n<\omega}Z_n$. Then since $Z\backslash  X\in {\mathcal
Z}(X^*)$ is non-empty, by Lemma  \ref{OEFPG} we have $Z\cap(\sigma
X\backslash X)\neq\emptyset$. Therefore, using the notations of
Theorem \ref{IUUG},  for some countable $J\subseteq I$,
$T=Z\cap(\bigcup_{i\in J}X_i)^*\neq \emptyset$. Let $Y\in {\mathcal
E}(X)$ be such that $\lambda(Y)=T$. By assumption, $Y\geq Y_k$, for
some $k<\omega$. Therefore $S_k=\lambda(Y_k)\supseteq \lambda(Y)=T$.
But $T\cap S_k=\emptyset$, which is a contradiction.

To show the second part of the theorem, let for each $i\in I$,
$Y_i\in {\mathcal E}(X)$ be such that $\lambda(Y_i)=X_i^*$. Let
$A_i\in {\mathcal F}$ be such that $ A_i\geq Y_i$. Then clearly
since for $i\neq j$, $X_i^*\cap X_j^*=\emptyset$, we have
$|{\mathcal F}|\geq |\{A_i:i\in
I\}|=|I|=w(X)$.
\end{proof}

\section{The relationship between the order structure of the set  ${\mathcal E}_K(X)$ and the topology of subspaces of $\beta X\backslash X$}

In Theorem \ref{EWG} of [9] the authors proved that for locally compact
separable  metrizable spaces $X$ and $Y$ whose Stone-\v{C}ech
remainders are zero-dimensional,  ${\mathcal E}_K(X)$ and ${\mathcal
E}_K(Y)$ are order-isomorphic if and only if $X^*$ and $Y^*$ are
homeomorphic. In this section we generalize this result to the case
when $X$ and $Y$ are not separable.

\begin{theorem}\label{GGH}
Let  $X$ and $Y$ be
zero-dimensional locally compact non-separable  metrizable spaces
and let $\omega\sigma X=\sigma X \cup\{\Omega\}$ and $\omega\sigma
Y=\sigma Y \cup\{\Omega '\}$ be the one-point compactifications of
$\sigma X$ and $\sigma Y$, respectively. If ${\mathcal E}_K(X)$ and
${\mathcal E}_K(Y)$ are order-isomorphic then $\omega\sigma
X\backslash X$ and $\omega\sigma Y\backslash Y$ are homeomorphic.
\end{theorem}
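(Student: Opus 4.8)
The plan is to read off the topology of $\omega\sigma X\setminus X$ from the order structure of $\mathcal{E}_K(X)$ via Stone duality. Write $L_X=\sigma X\setminus X$. Since $X$ is strongly zero-dimensional (being zero-dimensional, locally compact and metrizable), $\beta X$ and hence $X^*$ are zero-dimensional; as $\sigma X$ is open in $\beta X$, the set $L_X=\sigma X\cap X^*$ is an open, hence locally compact, zero-dimensional subspace of the compact space $X^*$. Moreover, since $X$ is open in $\omega\sigma X$, the set $\omega\sigma X\setminus X=L_X\cup\{\Omega\}$ is compact, and a routine check of the neighbourhoods of $\Omega$ (using that $X^*$ is closed in $\beta X$, so that $K\cap X^*$ is compact for every compact $K\subseteq\sigma X$) shows that $\omega\sigma X\setminus X$ is precisely the one-point compactification of $L_X$. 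Thus $\omega\sigma X\setminus X$ is a compact zero-dimensional Hausdorff space, i.e.\ a Stone space, and is therefore homeomorphic to the Stone space of its Boolean algebra $\mathcal{B}(\omega\sigma X\setminus X)$ of clopen subsets.

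First I would identify $\mathcal{E}_K(X)$ concretely. By Theorem \ref{LDKG}, $\lambda(\mathcal{E}_K(X))$ consists of the non-empty clopen subsets of $X^*$ that lie in $\sigma X$ (the inclusion in $\sigma X$ coming from Lemma 6.6 of [9]); conversely every non-empty compact-open $C\subseteq L_X$ is clopen in $X^*$ (it is open in $L_X$, which is open in $X^*$, and compact, hence closed), lies in some $(\bigcup_{i\in J}X_i)^*$ with $J$ countable (cover the compact set $C$ by the pieces $\mbox{cl}_{\beta X}(\bigcup_{i\in J'}X_i)$, which are clopen in $\beta X$), and so by Lemma \ref{FFG} and Lemma \ref{FFYG} belongs to $\lambda(\mathcal{E}(X))$, whence to $\lambda(\mathcal{E}_K(X))$. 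Therefore $\lambda$ restricts to an order-anti-isomorphism of $\mathcal{E}_K(X)$ onto the poset $(\mathcal{K}_X,\subseteq)$ of non-empty compact-open subsets of $L_X$; equivalently, onto the non-empty clopen subsets of $\omega\sigma X\setminus X$ that omit $\Omega$.

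Next I would transport an order-isomorphism $\phi:\mathcal{E}_K(X)\to\mathcal{E}_K(Y)$ through the two $\lambda$'s. Since both $\lambda$'s reverse order, $\lambda_Y\circ\phi\circ\lambda_X^{-1}$ is an inclusion-preserving order-isomorphism $\mathcal{K}_X\to\mathcal{K}_Y$. The point is that all the generalized-Boolean-algebra structure of $\mathcal{K}_X\cup\{\emptyset\}$ is order-theoretic: the join $A\vee B=A\cup B$ always exists and is the least upper bound; two members are disjoint exactly when they have no common lower bound in $\mathcal{K}_X$; and for $A\subseteq B$ the relative complement $B\setminus A$ is the unique $C$ with $A\vee C=B$ that is disjoint from $A$. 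Hence the isomorphism $\mathcal{K}_X\to\mathcal{K}_Y$ (extended by $\emptyset\mapsto\emptyset$) is an isomorphism of generalized Boolean algebras. Adjoining a unit is canonical, and $\mathcal{B}(\omega\sigma X\setminus X)$ is exactly this unitization: it consists of the compact-open subsets of $L_X$ together with their complements (the clopen neighbourhoods of $\Omega$). Thus the isomorphism extends to a Boolean-algebra isomorphism $\mathcal{B}(\omega\sigma X\setminus X)\to\mathcal{B}(\omega\sigma Y\setminus Y)$, and Stone duality yields the desired homeomorphism $\omega\sigma X\setminus X\cong\omega\sigma Y\setminus Y$.

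The main obstacle is this third step: $\mathcal{E}_K(X)$ only ``sees'' the compact (small) clopen sets of $\omega\sigma X\setminus X$, so one must recover the large clopen sets---the neighbourhoods of $\Omega$---purely order-theoretically. This is precisely what the unitization of the generalized Boolean algebra $\mathcal{K}_X\cup\{\emptyset\}$ accomplishes, and the key verifications are that unions are order-determined least upper bounds, that disjointness is the absence of a common lower bound, and that relative complementation is order-definable; these must be checked with care because $\mathcal{K}_X$ has no least element and the intersection of two disjoint members does not exist in it. The remaining identifications (that $\omega\sigma X\setminus X$ is the one-point compactification of $L_X$, and that $\lambda(\mathcal{E}_K(X))=\mathcal{K}_X$) are routine given Theorem \ref{LDKG}, Lemma \ref{FFG} and Lemma \ref{FFYG}.
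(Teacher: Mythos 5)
Your proposal is correct and follows essentially the same route as the paper: identify $\lambda(\mathcal{E}_K(X))$ with the clopen subsets of $\omega\sigma X\setminus X$ missing $\Omega$, extend the transported map to the full clopen algebra by sending a clopen set containing $\Omega$ to the complement of the image of its complement, and finish with Stone duality. The only difference is packaging — the paper constructs this extension $G$ explicitly and checks order-preservation by a case analysis, whereas you obtain the same map abstractly as the unitization of an isomorphism of generalized Boolean algebras, the order-definability of joins, disjointness, and relative complements being exactly what the paper's case analysis verifies by hand.
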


\begin{proof}
Let $\phi:{\mathcal E}_K(X)\rightarrow{\mathcal
E}_K(Y)$ be  an order-isomorphism  and let $g=\lambda_Y \phi
\lambda_X^{-1}:\lambda_X({\mathcal
E}_K(X))\rightarrow\lambda_Y({\mathcal E}_K(Y))$.  We define a
function $ G:{\mathcal B}(\omega\sigma X\backslash
X)\rightarrow{\mathcal B}(\omega\sigma Y\backslash Y)$ between the
two Boolean algebras of clopen sets, and verify that it is an
order-isomorphism.

Set $G(\emptyset)=\emptyset$ and $G(\omega\sigma X\backslash
X)=\omega\sigma Y\backslash Y$. Let $U\in {\mathcal B}(\omega\sigma
X\backslash X)$. If $U\neq\emptyset$ and $\Omega\notin U$, then $U$
is an open subset of $\sigma X\backslash X$, and therefore an open
subset of $X^*$. Assuming  the notations of Theorem \ref{IUUG}, there
exists a countable $J\subseteq I$ such that $U\subseteq
(\bigcup_{i\in J}X_i)^*$, and thus $U\in \lambda_X ({\mathcal
E}_K(X))$. In this case we let $ G(U)= g(U)$. If $U\neq\omega\sigma
X\backslash X$ and $\Omega\in U$, then $(\omega\sigma X\backslash
X)\backslash U\in \lambda_X({\mathcal E}_K(X))$ and we let $
G(U)=(\omega\sigma Y\backslash Y)\backslash g((\omega\sigma
X\backslash X)\backslash U)$.

To show that $G$ is an
order-homomorphism,  let $U,V\in{\mathcal B}(\omega\sigma
X\backslash X)$ with $U\subseteq V$. We may assume that
$U\neq\emptyset$ and $V\neq\omega\sigma X\backslash X$. We consider
the following three cases.

{\em Case 1)} If $\Omega\notin V$, then clearly $G(U)=g(U)\subseteq
g(V)=G(V)$.

{\em Case 2)} Suppose that  $\Omega\notin U$ and $\Omega\in V$. If
$G(U)\backslash G(V)\neq\emptyset$ then $T=g(U)\cap g((\omega\sigma
X\backslash X)\backslash V)\neq\emptyset$ and therefore
$T\in\lambda_Y({\mathcal E}_K(Y))$. Let $S\in \lambda_X({\mathcal
E}_K(X))$ be such that $g(S)=T$. Then since $g$ is an
order-isomorphism, we have  $S\subseteq U\cap((\omega\sigma
X\backslash X)\backslash V)=\emptyset$, which is a contradiction.
Therefore $G(U)\subseteq G(V)$.

{\em Case 3)} If $\Omega\in U$, then since  $(\omega\sigma
X\backslash X)\backslash V\subseteq(\omega\sigma X\backslash
X)\backslash U$ we have
\[G(U)=(\omega\sigma Y\backslash Y)\backslash g\big((\omega\sigma
X\backslash X)\backslash U\big)\subseteq(\omega\sigma Y\backslash
Y)\backslash g\big((\omega\sigma X\backslash X)\backslash V\big)=G(V).\]
This shows that $G$ is an order-homomorphism.

To complete the proof we note that since $\phi^{-1}:{\mathcal
E}_K(Y)\rightarrow {\mathcal E}_K(X)$ is also an order-isomorphism,
if we denote $h=\lambda_X \phi^{-1}\lambda_Y^{-1}$, then arguing as
above, $h$ induces an order-homomorphism $H:{\mathcal
B}(\omega\sigma Y\backslash Y)\rightarrow{\mathcal B}(\omega\sigma
X\backslash X)$ which is easy to see that $H=G^{-1}$.

To see that $\omega\sigma X\backslash X$ is zero-dimensional, we note that since $X$ is zero-dimensional locally compact metrizable, it is strongly zero-dimensional (see Theorem 6.\ref{LKG}0 of [7]) i.e., $\beta X$ is zero-dimensional. Thus $\sigma X$ is also zero-dimensional. But the one-point compactification of a locally compact non-compact zero-dimensional space is again zero-dimensional, therefore $\omega\sigma X$ and thus $\omega\sigma X\backslash X$ is zero-dimensional. Similarly $\omega\sigma Y\backslash Y $ is also
zero-dimensional, and thus,  they  are homeomorphic by Stone
Duality.
\end{proof}

The following provides a  converse to the above theorem under some weight
restrictions. Note that here we are not assuming $X$ and $Y$ to be necessarily zero-dimensional.

\begin{theorem}\label{FFFGH}
Let $X$ and $Y$ be locally compact
non-separable  metrizable spaces. Suppose moreover, that at least
one of $X$ and $Y$ has weight greater than $2^{\aleph_0}$. Then if
$\omega\sigma X\backslash X$ and  $\omega\sigma Y\backslash Y$ are
homeomorphic, ${\mathcal E}_K(X)$ and ${\mathcal E}_K(Y)$ are
order-isomorphic.
\end{theorem}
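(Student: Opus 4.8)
The plan is to transport the given homeomorphism down to the level of clopen algebras and then invoke the identification of $\lambda({\mathcal E}_K(\cdot))$ with clopen sets that was already exploited in Theorem \ref{GGH}. Recall (from Theorem \ref{LDKG}, Lemma 6.6 of [9], and Lemmas \ref{FFG} and \ref{FFYG}, exactly as in the proof of Theorem \ref{GGH}, none of which uses zero-dimensionality) that for a locally compact non-separable metrizable space $X$ the family $\lambda_X({\mathcal E}_K(X))$ is precisely the collection of non-empty clopen subsets of $\omega\sigma X\backslash X$ that do not contain $\Omega$, and that $\lambda_X$ is an order-anti-isomorphism of ${\mathcal E}_K(X)$ onto this family ordered by inclusion. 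Thus it suffices to produce an inclusion-preserving bijection between the $\Omega$-free clopen sets of $\omega\sigma X\backslash X$ and the $\Omega'$-free clopen sets of $\omega\sigma Y\backslash Y$; composing it with $\lambda_X$ and $\lambda_Y^{-1}$ then yields the sought order-isomorphism ${\mathcal E}_K(X)\cong{\mathcal E}_K(Y)$.

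Let $\tau:\omega\sigma X\backslash X\to\omega\sigma Y\backslash Y$ be the given homeomorphism. Since $\tau$ carries clopen sets to clopen sets and preserves inclusion, the whole statement reduces to the single assertion $\tau(\Omega)=\Omega'$: granting it, a clopen set contains $\Omega$ iff its $\tau$-image contains $\Omega'$, so $\tau$ restricts to an inclusion-isomorphism between the $\Omega$-free and $\Omega'$-free clopen families, as required. To prove $\tau(\Omega)=\Omega'$ I would use the point character $\chi(\cdot)$ as a topological invariant, establishing two estimates: (i) every $p\in\sigma X\backslash X$ satisfies $\chi(p,\omega\sigma X\backslash X)\leq 2^{\aleph_0}$, and (ii) $\chi(\Omega,\omega\sigma X\backslash X)\geq w(X)$ (and symmetrically for $Y$).

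For (i), note that $p$ lies in $\mbox{cl}_{\beta X}M\backslash M=M^*$ for some $M=\bigcup_{i\in J}X_i$ with $J$ countable; since disjoint clopen subsets of $X$ have disjoint closures in $\beta X$, the set $\mbox{cl}_{\beta X}M$ is clopen in $\beta X$ and equals $\beta M$, so $M^*$ is a clopen neighborhood of $p$ in $\omega\sigma X\backslash X$ carrying the topology induced from $\beta M$, whence $\chi(p)\leq w(\beta M)\leq 2^{\aleph_0}$ as $M$ is separable. For (ii), suppose $\{N_\alpha\}_{\alpha<\kappa}$ were a neighborhood base at $\Omega$ with $\kappa<w(X)=|I|$. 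Each complement $C_\alpha=(\omega\sigma X\backslash X)\backslash N_\alpha$ is a compact subset of $\sigma X\backslash X$, hence is contained in the clopen piece $\mbox{cl}_{\beta X}(\bigcup_{i\in J_\alpha}X_i)$ for some countable $J_\alpha$; choosing $i^*\in I\backslash\bigcup_\alpha J_\alpha$ (possible since $|\bigcup_\alpha J_\alpha|\leq\kappa<|I|$) makes the non-empty clopen set $X_{i^*}^*$ disjoint from every $C_\alpha$, so no $N_\alpha$ can be contained in the clopen neighborhood $(\omega\sigma X\backslash X)\backslash X_{i^*}^*$ of $\Omega$ — a contradiction.

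Combining the two estimates finishes the proof. The hypothesis gives, say, $w(X)>2^{\aleph_0}$, so by (i) and (ii) the point $\Omega$ is the \emph{unique} point of $\omega\sigma X\backslash X$ of character exceeding $2^{\aleph_0}$, while by (i) applied to $Y$ every point of $\sigma Y\backslash Y$ has character at most $2^{\aleph_0}$. As character is preserved by $\tau$, the point $\tau(\Omega)$ has character greater than $2^{\aleph_0}$ and therefore cannot lie in $\sigma Y\backslash Y$, forcing $\tau(\Omega)=\Omega'$ (if instead $w(Y)>2^{\aleph_0}$, run the same argument on $\tau^{-1}$). The step I expect to be most delicate is estimate (ii): one must check that compact subsets of $\sigma X$ are genuinely confined to the countably-indexed pieces $\mbox{cl}_{\beta X}(\bigcup_{i\in J}X_i)$ and that these are clopen in $\beta X$, so that the index sets used by any small base cannot exhaust $I$ — this is exactly where the weight hypothesis $w(X)>2^{\aleph_0}$ enters.
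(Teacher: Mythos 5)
Your proposal is correct and follows essentially the same route as the paper: both reduce the theorem to showing that the given homeomorphism must carry $\Omega$ to $\Omega'$, and then transport clopen sets through $\lambda_X$ and $\lambda_Y$ (the identification of $\lambda(\mathcal{E}_K(\cdot))$ with the $\Omega$-free non-empty clopen sets being exactly the one used in Theorem \ref{GGH}, which indeed needs no zero-dimensionality). The only cosmetic difference is that you distinguish $\Omega$ via the character of points ($\chi(p)\leq 2^{\aleph_0}$ for $p\in\sigma X\backslash X$ versus $\chi(\Omega)\geq w(X)$), whereas the paper derives the same contradiction by comparing the weight of a neighborhood $W$ of $\Omega$ (which contains the $w(X)$-many disjoint open sets $X_i^*$) with the weight of $f(W)$, trapped in a separable piece $M^*$ of weight at most $2^{\aleph_0}$ — identical ingredients, identically deployed.
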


\begin{proof}
Without any loss of generality we may  assume that
$w(X)>2^{\aleph_0}$. Suppose that $f:(\sigma X\backslash
X)\cup\{\Omega\}\rightarrow(\sigma Y\backslash Y)\cup\{\Omega '\}$
is a homeomorphism. First we show that $f(\Omega)=\Omega '$.

Suppose that $f(\Omega)=p$, where $p\in \sigma Y\backslash Y$.
Suppose that $K$ is a countable subset of $J$ such that $p\in \mbox
{cl}_{\beta Y} (\bigcup_{i\in K} Y_i)$, where $Y=\bigoplus_{i\in J}
Y_i$, with each $Y_i$ being a separable non-compact subspace. Then
$V= (\bigcup_{i\in K} Y_i)^*$ is an open neighborhood of $p$ in
$(\sigma Y\backslash Y)\cup\{\Omega '\}$, and therefore, there
exists a neighborhood $W$ of $\Omega$ in $(\sigma X\backslash
X)\cup\{\Omega \}$ such that $f(W)\subseteq V$. Clearly we may
choose $W$ to be of the form
\[ W=\big((\sigma X\backslash
X)\cup\{\Omega\}\big)\backslash\mbox {cl}_{\beta X}\Big(\bigcup_{i\in L}
X_i\Big)\]
for some countable $L\subseteq I$ (with the notations of
Theorem \ref{IUUG}). Let $M=\bigcup_{i\in K} Y_i$. Then since $M$ is
separable we have
 $w(\mbox {cl}_{\beta Y} M)\leq 2^{\aleph_0}$. Now $\{X^*_i:
i\in I\backslash L\}$ is a collection of non-empty mutually disjoint
open subsets of $W$, and thus  $w(W)\geq |I|=w(X)>2^{\aleph_0}$. On
the other hand $w(W)=w(f(W))\leq w(\mbox {cl}_{\beta
Y}M)\leq2^{\aleph_0}$. This contradiction shows that
$f(\Omega)=\Omega '$. Therefore $\sigma X\backslash X$ is
homeomorphic to $\sigma Y\backslash Y$.

Now suppose that
$Z\in\lambda_X({\mathcal E}_K(X))$. Then by Lemma 6.6 of [9], we
have  $Z\subseteq \sigma X\backslash X$, and thus  there exists a
countable set $A\subseteq I$ such that $Z\subseteq \mbox {cl}_{\beta
X} P$, where $P=\bigcup_{i\in A} X_i$.  But since $P^*$ is clopen in
$\sigma X\backslash X$, $f(P^*)$ is clopen in $\sigma Y\backslash
Y$, and since it is also compact, there exists a countable set
$B\subseteq J$ such that $f(P^*)\subseteq Q^*$, where
$Q=\bigcup_{i\in B} Y_i$.  But since $Z$ is clopen in $X^*$, $f(Z)$
is clopen in $\sigma Y\backslash Y$, and as $f(Z)\subseteq Q^*$, it
is also clopen in $ Q^*$, and thus clopen in $Y^*$, i.e., $f(Z)\in
\lambda_Y ({\mathcal E}_K(Y))$. Now we define a function $F:
\lambda_X ({\mathcal E}_K(X))\rightarrow\lambda_Y ({\mathcal
E}_K(Y))$ by $F(Z)=f(Z)$. The function $F$ is clearly well-defined
and it is an  order-homomorphism. Since $f^{-1}$ is also a
homeomorphism which takes  $\Omega '$ to $\Omega$, arguing as above,
we can define a function $G: \lambda_Y ({\mathcal
E}_K(Y))\rightarrow\lambda_X ({\mathcal E}_K(X))$ by
$G(Z)=f^{-1}(Z)$, which is clearly the inverse of $F$. Thus $F$ is
an order-isomorphism.
\end{proof}

A compact zero-dimensional $F$-space of weight $2^{\aleph_0}$ in
which every non-empty $G_\delta$-set has infinite interior, is
called a {\em Parovi\v{c}enko space}. It is well known that under
 [CH] $\omega^*$ is the only Parovi\v{c}enko space (Parovi\v{c}enko Theorem, see Corollary
1.2.4 of [15]).

The following theorem is proved (assuming [CH]) in [5], for the
case when $X$ is a discrete space of cardinality $\aleph _1$.

\begin{theorem}\label{FGGF}
{\em [CH]} Let $X$ be a zero-dimensional locally compact metrizable space of weight $\aleph
_1$. Then we have
\[(\sigma X\backslash X)\cup \{\Omega\}\simeq \omega ^*.\]
\end{theorem}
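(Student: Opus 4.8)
The plan is to show that $(\sigma X\backslash X)\cup\{\Omega\}$ is a Parovi\v{c}enko space, and then invoke the Parovi\v{c}enko Theorem (quoted just above) to conclude that under [CH] it must be homeomorphic to $\omega^*$. Since $\omega\sigma X = (\sigma X)\cup\{\Omega\}$ is the one-point compactification of $\sigma X$, and $X$ is zero-dimensional locally compact metrizable of weight $\aleph_1$, the target space $\omega\sigma X\backslash X = (\sigma X\backslash X)\cup\{\Omega\}$ is compact. So the work reduces to verifying the four defining properties of a Parovi\v{c}enko space: compact, zero-dimensional, $F$-space, weight $2^{\aleph_0}$, and that every non-empty $G_\delta$-set has infinite interior.

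The first two properties come cheaply from the structure theory already assembled. By Theorem~\ref{IUUG} we may write $X = \bigoplus_{i\in I} X_i$ with each $X_i$ separable non-compact and $|I| = w(X) = \aleph_1$. Zero-dimensionality of $\omega\sigma X\backslash X$ follows exactly as in the proof of Theorem~\ref{GGH}: since $X$ is strongly zero-dimensional (Theorem 6.\ref{LKG}0 of [7]), $\beta X$ and hence $\sigma X$ are zero-dimensional, and the one-point compactification of a locally compact non-compact zero-dimensional space is again zero-dimensional. For the weight, I would argue that $w(\sigma X\backslash X) = 2^{\aleph_0}$: each $(\bigcup_{i\in J}X_i)^*$ for countable $J$ is the Stone-\v{C}ech remainder of a separable locally compact space and so has weight at most $2^{\aleph_0}$, while the presence of $\aleph_1 = 2^{\aleph_0}$ (under [CH]) disjoint clopen pieces $X_i^*$ forces the weight to be exactly $2^{\aleph_0}$; adjoining the single point $\Omega$ does not change this.

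The substantive properties are the $F$-space condition and the $G_\delta$-interior condition, and I expect the $F$-space property to be the main obstacle. For the $F$-space property one must show that disjoint cozero-sets of $(\sigma X\backslash X)\cup\{\Omega\}$ have disjoint closures. Away from $\Omega$ this should be handled by reducing to a single countable block $\mbox{cl}_{\beta X}(\bigcup_{i\in J}X_i) \simeq \beta M$ for a $\sigma$-compact (hence realcompact, Lindel\"of) separable $M$, where $M^*$ is known to be an $F$-space; the delicate point is controlling the behavior at the point $\Omega$ at infinity, where one must check that cozero-sets accumulating at $\Omega$ cannot have overlapping closures, using that any cozero-set is contained, up to $\Omega$, in a countable union of the $X_i^*$ together with compact remainders. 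For the $G_\delta$ condition, a non-empty $G_\delta$-set $G$ either avoids $\Omega$, in which case it sits inside some countable block $M^*$ and one applies the corresponding property there (every non-empty $G_\delta$ of $M^*$ has infinite interior, since $M^*$ behaves like $\omega^*$), or $G$ contains $\Omega$, in which case $\Omega$ has a neighborhood base of clopen sets $W$ whose complements are single blocks, and one locates infinite interior inside such a $W\cap G$. The hard part will be organizing these two regimes uniformly and pinning down the exact local models near $\Omega$; once the four Parovi\v{c}enko axioms are established, the final homeomorphism $(\sigma X\backslash X)\cup\{\Omega\}\simeq\omega^*$ is immediate from Parovi\v{c}enko's characterization under [CH].
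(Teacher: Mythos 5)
Your strategy coincides with the paper's: verify that $Y=(\sigma X\backslash X)\cup\{\Omega\}$ is a Parovi\v{c}enko space and apply the Parovi\v{c}enko Theorem under [CH], and your treatment of compactness, zero-dimensionality and weight is in substance what the paper does (it exhibits a single clopen base of size $\aleph_1$, made of clopen bases of the blocks $Q_L^*=(\bigcup_{i\in L}X_i)^*$, $L\subseteq I$ countable, together with the sets $Y\backslash \mbox{cl}_{\beta X}Q_L$, which settles both of those axioms at once). But the two substantive axioms are not actually proved in your proposal, and one of them rests on a false step. In the $G_\delta$ axiom you claim that a non-empty $G_\delta$-set $G$ with $\Omega\notin G$ ``sits inside some countable block $M^*$.'' That is false: $G=\sigma X\backslash X$ is open (hence $G_\delta$) and misses $\Omega$, yet it meets $X_i^*$ for every $i\in I$, so it lies in no countable block. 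What is true, and what the paper uses, is only that $G$ \emph{meets} some block $M^*$; since $M^*$ is clopen, $G\cap M^*$ is a non-empty $G_\delta$-set of $M^*\simeq \beta M\backslash M$ with $M$ locally compact and $\sigma$-compact, so it has infinite interior there by Theorem 1.2.5 of [15], and $M^*$ being open in $Y$, the interior of $G$ in $Y$ is infinite. (Your case $\Omega\in G$ is right in substance, but note the correct logical form: being a $G_\delta$ containing $\Omega$, $G$ \emph{contains} a whole basic neighborhood $Y\backslash \mbox{cl}_{\beta X}P$ of $\Omega$ with $P$ a countable block, and this in turn contains a block $Q^*$ with $Q$ disjoint from $P$, which contains a copy of $\omega^*$.)

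As for the $F$-space axiom, you flag it as the main obstacle and leave it unresolved (``controlling the behavior at the point $\Omega$''), but there is no delicacy at $\Omega$ once one notes the localization fact you almost state: every $\sigma$-compact subset of $\sigma X\backslash X$ lies in a \emph{single} clopen block $P^*$ (cover each compact piece by finitely many blocks and take the countable union), and a cozero-set of the compact space $Y$ that misses $\Omega$ is $\sigma$-compact; in particular its closure cannot reach $\Omega$, so ``cozero-sets accumulating at $\Omega$'' simply do not occur. The proof then runs: given disjoint cozero-sets $A,B$ of $Y$, at most one of them, say $A$, contains $\Omega$; then $Y\backslash A$ is a compact subset of $\sigma X\backslash X$, hence $B\subseteq Y\backslash A\subseteq M^*$ for one block $M^*$, and $A\cap M^*$ and $B$ are disjoint cozero-sets of $M^*\simeq\beta M\backslash M$, which is an $F$-space because $M$ is locally compact and $\sigma$-compact (1.62 of [16]); they are completely separated in $M^*$, hence in $Y$, as $M^*$ is clopen. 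If neither contains $\Omega$, both are $\sigma$-compact, so both lie in a common block and the same argument applies. With these two repairs your outline becomes precisely the paper's proof.
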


\begin{proof}
We verify the assumptions of the Parovi\v{c}enko
Theorem. Let $Y=(\sigma X\backslash X)\cup \{\Omega\}$. To show that
$Y$ is an $F$-space, let $A$ and $B$ be disjoint cozero-sets in $Y$.
Suppose first that $\Omega$ belong to one of $A$ and $B$. Without
any loss of generality we may assume that $\Omega\in A$. Then since
$Y\backslash A\in {\mathcal Z}(Y)$, using the notations of Theorem
\ref{IUUG},  we have $Y\backslash A\subseteq M^*$, where $M=\bigcup_{i\in
J}X_i$, and $J\subseteq I$ is countable. Now  $M$ is
$\sigma$-compact (see \ref{FPG}.C of [7]) and $\beta M\backslash M\simeq
M^*$ is an $F$-space, (see 1.62 of [16]) thus  $A\cap M^*$ and $B$,
being disjoint cozero-sets in $M^*$, are completely separated in
$M^*$. But $M^*$ itself is clopen in $Y$, thus $A$ and $B$ are
completely separated in $Y$. Suppose that $\Omega$ does not belong
to any of $A$ and $B$. Then since $A$ and $B$ are cozero-sets in
$Y$, they are $\sigma$-compact, and therefore as $A,B\subseteq
\sigma X\backslash X$, they are cozero-sets in $P^*$, where
$P=\bigcup_{i\in K}X_i$, for some countable $K\subseteq I$. But as
above $P^*$ is an $F$-space, and $A$, $B$ being completely separated
in $P^*$, are completely separated in $Y$.

Next we show that $Y$ is zero-dimensional of weight $\aleph_1$. For
each countable $L\subseteq I$, let $Q_L=\bigcup_{i\in L}X_i$. Then
since $Q_L$ is separable, $\mbox {cl}_{\beta X}Q_L$ has weight at
most $\aleph_1$. Now since $Q_L$ is strongly zero-dimensional,  (see
 Theorem 6.\ref{LKG}0 of [7]) by Theorem \ref{IUUG}.15 of [7], we can choose a base
${\mathcal C}_L$ consisting of clopen subsets of $Q_L^*$ (and
therefore clopen in $Y$) such that $|{\mathcal C}_L|\leq\aleph _1$.
Let
\[ {\mathcal D}=\bigcup \{{\mathcal C}_L:L\subseteq I \mbox {  is
countable}\}\cup \{Y\backslash \mbox {cl}_{\beta X}Q_L:L\subseteq I
\mbox { is countable}\}.\]
Then clearly ${\mathcal D}$ forms a base consisting of
clopen subsets of $Y$, and therefore  $Y$ is zero-dimensional of
weight $w(Y)\leq \aleph_1$. But $\{X_i^* \}_{i\in I}$ is a set
consisting of disjoint non-empty open sets of $Y$, which shows that
$w(Y)= \aleph_1$.

Finally, let $G$ be a non-empty $G_\delta$-set in
$Y$. First suppose that $\Omega\notin G$, and let $M=\bigcup_{i\in
J}X_i$ be such that $G\cap M^*\neq\emptyset$, for some countable
$J\subseteq I$. Now $G\cap M^*$ is a non-empty $G_\delta$-set in
$M^*\simeq \beta M\backslash M$. Theorem 1.2.5 of [15] states that
each non-empty $G_\delta$-set in the Stone-\v{C}ech remainder of a
locally compact $\sigma$-compact space has infinite interior.
Therefore since  $M$ is locally compact $\sigma$-compact, $G\cap
M^*$  has non-empty interior in $M^*$. But $M^*$ itself is open in
$Y$, which implies that  $\mbox {int} _Y G$ is infinite. Suppose
that $\Omega\in G$. Then we can write $G= Y\backslash
\bigcup_{n<\omega}C_n$, where each $C_n$ is a compact subset of
$\sigma X\backslash X$. Let $P=\bigcup_{i\in J}X_i$ be such that
$\bigcup_{n<\omega}C_n\subseteq \mbox {cl}_{\beta X}P$, where
$J\subseteq I$ is countable. Choose a countable $K\subseteq
I\backslash J$, and let $Q=\bigcup_{i\in K}X_i$. Then $Q^*\subseteq
Y\backslash \mbox {cl}_{\beta X}P\subseteq G$. But $Q$ contains a
copy $S$ of $\omega$ as a closed subset, and since $Q^*$ is open in
$Y$, we have $ \omega^* \simeq S^*\subseteq Q^*\subseteq \mbox {int}
_Y G$. Now Parovi\v{c}enko Theorem completes the
proof.
\end{proof}

As it is noted in  Theorem  \ref{EWG} of [9], for a zero-dimensional
locally compact  non-compact  separable  metrizable space $X$, the
answer to the  question of whether or not ${\mathcal E}_K (X)$ and
${\mathcal E}_K (\omega)$ are order-isomorphic depends on which
model of set theory is being assumed. In the following we show that
assuming the Continuum Hypothesis, if $w(X)=\aleph_1$, then
${\mathcal E}_K (X)$ and ${\mathcal E}_K (D(\aleph_1))$  are
order-isomorphic (here $D(\aleph_1)$ denotes the discrete space of
cardinality $\aleph_1$).

\begin{theorem}\label{FGEF}
{\em [CH]} Let $X$ and $Y$ be
zero-dimensional locally compact metrizable spaces of weights
$\aleph_1$. Then ${\mathcal E}_K (X)$ and ${\mathcal E}_K (Y)$  are
order-isomorphic.
\end{theorem}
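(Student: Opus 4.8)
The plan is to reduce the statement to matching two distinguished points under a homeomorphism, and then to invoke the Parovi\v{c}enko description of Theorem \ref{FGGF} together with a homogeneity property of $\omega^*$. Throughout I write $\omega\sigma X\backslash X=(\sigma X\backslash X)\cup\{\Omega\}$ and $\omega\sigma Y\backslash Y=(\sigma Y\backslash Y)\cup\{\Omega'\}$. As established within the proof of Theorem \ref{GGH} (using Theorem \ref{LDKG}, Lemma \ref{FFG}, Lemma \ref{FFYG} and Lemma 6.6 of [9]), $\lambda_X({\mathcal E}_K(X))$ consists precisely of the clopen subsets of $\omega\sigma X\backslash X$ that omit $\Omega$, equivalently the compact open subsets of $\sigma X\backslash X$; since $\lambda_X$ is an order-anti-isomorphism onto its image, ${\mathcal E}_K(X)$ is order-anti-isomorphic to this poset ordered by inclusion, and likewise for $Y$. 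Consequently it suffices to exhibit a homeomorphism $f\colon\omega\sigma X\backslash X\to\omega\sigma Y\backslash Y$ with $f(\Omega)=\Omega'$: such an $f$ restricts to a homeomorphism $\sigma X\backslash X\to\sigma Y\backslash Y$, and then $F(Z)=f(Z)$ is an order-isomorphism of $\lambda_X({\mathcal E}_K(X))$ onto $\lambda_Y({\mathcal E}_K(Y))$ exactly as in the second half of the proof of Theorem \ref{FFFGH}; composing with the two anti-isomorphisms $\lambda_X$ and $\lambda_Y^{-1}$ yields the desired order-isomorphism. Note that the weight hypothesis of Theorem \ref{FFFGH} fails here, since $w(X)=\aleph_1=2^{\aleph_0}$ under [CH], so the matching $f(\Omega)=\Omega'$ must be produced by other means.

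The key point will be that $\Omega$ is a P-point of $\omega\sigma X\backslash X$. Assuming the notations of Theorem \ref{IUUG}, so $X=\bigoplus_{i\in I}X_i$, recall that $\sigma X\backslash X=\bigcup\{(\bigcup_{i\in J}X_i)^*:J\subseteq I\text{ countable}\}$, each $(\bigcup_{i\in J}X_i)^*$ being clopen and compact. These sets form an open cover of $\sigma X\backslash X$, so every compact subset of $\sigma X\backslash X$ lies in one of them; and since a countable union of countable subsets of $I$ is countable, a countable union of compact subsets of $\sigma X\backslash X$ again lies in a single $(\bigcup_{i\in J}X_i)^*$ with $J$ countable. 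Now if $G=\bigcap_{n<\omega}O_n$ is a $G_\delta$-set containing $\Omega$, I would choose for each $n$ a compact $C_n\subseteq\sigma X\backslash X$ with $(\omega\sigma X\backslash X)\backslash C_n\subseteq O_n$; then $\bigcup_{n<\omega}C_n\subseteq(\bigcup_{i\in J}X_i)^*$ for some countable $J$, whence $(\omega\sigma X\backslash X)\backslash(\bigcup_{i\in J}X_i)^*$ is an open neighborhood of $\Omega$ contained in $G$. Thus every $G_\delta$ containing $\Omega$ is a neighborhood of $\Omega$, i.e. $\Omega$ is a P-point; by the same argument $\Omega'$ is a P-point of $\omega\sigma Y\backslash Y$.

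Finally I would combine this with Theorem \ref{FGGF}. Fix homeomorphisms $\phi_X\colon\omega\sigma X\backslash X\to\omega^*$ and $\phi_Y\colon\omega\sigma Y\backslash Y\to\omega^*$ given by Theorem \ref{FGGF}. Being a P-point is a topological property, so $\phi_X(\Omega)$ and $\phi_Y(\Omega')$ are P-points of $\omega^*$. Under [CH] the group of autohomeomorphisms of $\omega^*$ acts transitively on its P-points (W.~Rudin; see [15]), so there is a homeomorphism $\psi\colon\omega^*\to\omega^*$ with $\psi(\phi_X(\Omega))=\phi_Y(\Omega')$. Then $f=\phi_Y^{-1}\psi\phi_X$ is a homeomorphism of $\omega\sigma X\backslash X$ onto $\omega\sigma Y\backslash Y$ with $f(\Omega)=\Omega'$, which by the first paragraph completes the proof. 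I expect the main obstacle to be precisely the production of $f$ with $f(\Omega)=\Omega'$: the cardinality argument of Theorem \ref{FFFGH} is unavailable under [CH], and its role is taken over by the topological identification of $\Omega$ as a P-point together with Rudin's homogeneity theorem, which is where the Continuum Hypothesis enters a second time (beyond its use through Parovi\v{c}enko's theorem in \ref{FGGF}).
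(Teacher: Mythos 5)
Your proposal is correct and follows essentially the same route as the paper's proof: identify $(\sigma X\backslash X)\cup\{\Omega\}$ and $(\sigma Y\backslash Y)\cup\{\Omega'\}$ with $\omega^*$ via Theorem \ref{FGGF}, show $\Omega$ and $\Omega'$ are $P$-points, apply W.~Rudin's [CH] homogeneity theorem to match them by a homeomorphism, and then transfer the order structure as in the proof of Theorem \ref{FFFGH}. The only cosmetic difference is that you verify the $P$-point property for arbitrary $G_\delta$-sets using the one-point-compactification description of neighborhoods of $\Omega$, whereas the paper verifies it for zero-sets via $\sigma$-compactness of the complementary cozero-set; these are the same argument in substance.
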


\begin{proof}
By the above theorem, $S=(\sigma X\backslash X)\cup
\{\Omega\}\simeq \omega ^*$. We claim that $\Omega$ is in fact a
$P$-point of $S$. So suppose that $\Omega\in Z\in {\mathcal Z}(S)$.
Then $S\backslash Z\subseteq\sigma X\backslash X$ being a cozero-set
in $S$ is $\sigma$-compact, and therefore, assuming  the notations
of Theorem \ref{IUUG}, there exists an $M=\bigcup_{i\in J} X_i$, where
$J\subseteq I$ is countable, such that $S\backslash Z\subseteq M^*$.
Therefore $S\backslash \mbox {cl}_{\beta X}M$ is an open
neighborhood of $\Omega$ contained in $Z$. Similarly, $\Omega '$ is
a $P$-point of $T=(\sigma Y\backslash Y)\cup \{\Omega '\}\simeq
\omega ^*$. By  W. Rudin's Theorem, under  [CH], for any two
$P$-points of $\omega^*$, there is a homeomorphism of $\omega^*$
onto itself which maps one point to another (see Theorem 7.11 of
[16]). Let $f:S\rightarrow T$ be a homeomorphism such that
$f(\Omega)=\Omega '$. Now arguing as in the proof of Theorem \ref{FFFGH}, we
can show that ${\mathcal E}_K (X)$ and ${\mathcal E}_K (Y)$ are
order-isomorphic.
\end{proof}

Clearly since $\omega\subseteq D(\aleph _1)$, ${\mathcal E}_K
(D(\aleph_1))$ contains an order-isomorphic copy of ${\mathcal E}_K
(\omega)$. What is more interesting is the converse to this which is
the subject of the next result.

\begin{corollary}\label{GGF}
{\em [CH]} There is an order
isomorphism from ${\mathcal E}_K (D(\aleph_1))$ onto a subset of
${\mathcal E}_K (\omega)$. Such an order isomorphism is never onto
${\mathcal E}_K (\omega)$.
\end{corollary}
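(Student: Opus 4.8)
The plan is to establish the two assertions separately. For the existence of an order-isomorphism from $\mathcal{E}_K(D(\aleph_1))$ onto a subset of $\mathcal{E}_K(\omega)$, I would combine Theorem~\ref{FGEF} with the trivial inclusion already noted. Since $D(\aleph_1)$ is a zero-dimensional locally compact metrizable space of weight $\aleph_1$, Theorem~\ref{FGEF} (under [CH]) gives an order-isomorphism $\mathcal{E}_K(D(\aleph_1))\cong\mathcal{E}_K(Y)$ for any other such space $Y$. The relevant comparison space is $\omega$: although $\omega$ has weight $\aleph_0$ rather than $\aleph_1$, the set $\mathcal{E}_K(\omega)$ can be embedded using the observation $\omega\subseteq D(\aleph_1)$ as a clopen subspace. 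I would make this precise by noting that restriction to a clopen copy of $\omega$ inside $D(\aleph_1)$ induces an order-embedding $\mathcal{E}_K(\omega)\hookrightarrow\mathcal{E}_K(D(\aleph_1))$, and then invert the relevant part; alternatively, I would produce the embedding directly through the zero-set characterization of $\lambda(\mathcal{E}_K)$ given in Theorem~\ref{LDKG}, identifying $\mathcal{E}_K(\omega)$ with the non-empty clopen subsets of $\omega^*$ and $\mathcal{E}_K(D(\aleph_1))$ with those of $(\sigma D(\aleph_1)\backslash D(\aleph_1))\cup\{\Omega\}\simeq\omega^*$ via Theorem~\ref{FGGF}.

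For the second (and more substantive) claim, that no such order-isomorphism can be onto, the strategy is to compare the cardinalities or the order-structural complexity of the two posets. The natural line of attack is to show that $\mathcal{E}_K(\omega)$ is strictly larger, or has richer structure, than any order-isomorphic image of $\mathcal{E}_K(D(\aleph_1))$. I would translate both sides through $\lambda$ into the Boolean algebras of clopen subsets of the respective Stone-\v{C}ech remainders. By Theorem~\ref{LDKG}, $\lambda(\mathcal{E}_K(\omega))$ consists of the non-empty clopen subsets of $\omega^*$, whereas $\lambda(\mathcal{E}_K(D(\aleph_1)))$ corresponds to the non-empty clopen subsets of the compact space $(\sigma D(\aleph_1)\backslash D(\aleph_1))\cup\{\Omega\}$, which by Theorem~\ref{FGGF} is also homeomorphic to $\omega^*$ under [CH]. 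The key point must therefore be that the isomorphism in the first part identifies $\mathcal{E}_K(D(\aleph_1))$ with only those clopen sets of $\omega^*$ avoiding a distinguished $P$-point (the image of $\Omega$), since $\Omega$ plays the role of a point-at-infinity in the one-point compactification and must be handled asymmetrically.

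Concretely, I expect the argument to run as follows. The space $D(\aleph_1)$ is non-separable, so by Theorem~\ref{IPOG} its $\mathcal{E}_K$ has no minimum, and more to the point, every element of $\lambda(\mathcal{E}_K(D(\aleph_1)))$ is a clopen subset of $\sigma D(\aleph_1)\backslash D(\aleph_1)$, i.e.\ a clopen subset of $\omega^*\backslash\{\Omega\}$ that remains clopen in $\omega^*$ (equivalently, does not have $\Omega$ in its closure). By contrast, $\mathcal{E}_K(\omega)$ realizes \emph{all} non-empty clopen subsets of $\omega^*$, including those clopen neighborhoods of the point corresponding to $\Omega$. Thus any order-isomorphism onto its image omits precisely the clopen sets containing that $P$-point, so it cannot be surjective; I would exhibit an explicit element of $\mathcal{E}_K(\omega)$, namely one whose $\lambda$-image is a clopen set meeting every neighborhood-base of $\Omega$, that lies outside the image.

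The hard part will be pinning down the asymmetry that forces non-surjectivity: I must argue that the $P$-point $\Omega$ arising in $(\sigma D(\aleph_1)\backslash D(\aleph_1))\cup\{\Omega\}$ genuinely fails to correspond to any element of $\mathcal{E}_K(D(\aleph_1))$, while its counterpart in $\omega^*$ is witnessed by a clopen set coming from a genuine locally compact one-point extension of $\omega$. This rests on the fact that, for $\omega$, every clopen subset of $\omega^*$ lies in $\lambda(\mathcal{E}_K(\omega))$ (the separable case of Theorems~\ref{TUUG} and~\ref{WG} of~[9]), whereas the one-point compactification's added point $\Omega$ is precisely the obstruction distinguishing the two remainder representations. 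Making the counting or cofinality obstruction rigorous, rather than merely plausible from the homeomorphism $(\sigma X\backslash X)\cup\{\Omega\}\simeq\omega^*$, is where the real work lies, and I would anchor it on the $P$-point property of $\Omega$ established in the proof of Theorem~\ref{FGEF} together with W.~Rudin's theorem on $P$-points under~[CH].
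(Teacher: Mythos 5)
Your first half, in its ``alternative'' form, is exactly the paper's construction: by Theorem~\ref{FGGF} there is a homeomorphism $f:(\sigma X\backslash X)\cup\{\Omega\}\rightarrow\omega^*$ for $X=D(\aleph_1)$; every $Z\in\lambda_X({\mathcal E}_K(X))$ lies in $\sigma X\backslash X$ (Lemma 6.6 of [9]), hence in $M^*$ for some countable $M\subseteq X$, hence is clopen in $(\sigma X\backslash X)\cup\{\Omega\}$, so $F(Z)=f(Z)$ is a non-empty clopen subset of $\omega^*$, i.e.\ an element of $\lambda_\omega({\mathcal E}_K(\omega))$, and $F$ is an order-isomorphism onto its image. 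Your first suggestion for this half, however, does not work and should be discarded: Theorem~\ref{FGEF} requires \emph{both} spaces to have weight $\aleph_1$, and an order-embedding ${\mathcal E}_K(\omega)\hookrightarrow{\mathcal E}_K(D(\aleph_1))$ coming from a clopen copy of $\omega$ cannot be ``inverted'' to produce an embedding of all of ${\mathcal E}_K(D(\aleph_1))$ into ${\mathcal E}_K(\omega)$ --- obtaining that reverse direction is the entire content of the corollary.

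The genuine gap is in the second half. What you argue shows only that the \emph{particular} isomorphism $F$ is not surjective: its image consists of the clopen sets missing the $P$-point $f(\Omega)$, so clopen neighborhoods of $f(\Omega)$ are omitted. But the corollary asserts that \emph{no} order-isomorphism from ${\mathcal E}_K(D(\aleph_1))$ into ${\mathcal E}_K(\omega)$ is onto, i.e.\ that the two posets are not order-isomorphic; an arbitrary such isomorphism need not be induced by $f$, or by any homeomorphism of remainders, so nothing in your argument constrains its image. Your fallback ideas do not close this: cardinality cannot distinguish the posets (both have size $2^{\aleph_0}$, cf.\ Theorem~\ref{UYG}), and ``the image must avoid a $P$-point'' is precisely what you would need to prove for \emph{every} isomorphism, not just for $F$. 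The clean order-invariant --- which you actually had in hand when you cited Theorem~\ref{IPOG} and then set it aside as beside the point --- is the existence of a minimum: ${\mathcal E}_K(\omega)$ has a minimum, namely the one-point compactification of $\omega$ (whose $\lambda$-image is all of $\omega^*$), while by Theorem~\ref{IPOG} the set ${\mathcal E}_K(D(\aleph_1))$ has no minimum, since $D(\aleph_1)$ is locally compact, non-separable and metrizable. An order-isomorphism carries a minimum to a minimum, so no order-isomorphism of ${\mathcal E}_K(D(\aleph_1))$ onto ${\mathcal E}_K(\omega)$ can exist. This one-line argument is how the paper concludes.
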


\begin{proof}
Let $X=D(\aleph _1)$. By Theorem  \ref{FGGF}, we have
$(\sigma X\backslash X)\cup \{\Omega\}\simeq \omega ^*$. Let
$f:(\sigma X\backslash X)\cup \{\Omega\}\rightarrow \omega ^*$ be a
homeomorphism. Let $Z\in \lambda_X ({\mathcal E}_K (X))$. Then by
Lemma  6.6 of [9], we have $Z\subseteq \sigma X\backslash X$, and
therefore $Z\subseteq M^*$, for some countable $M\subseteq X$. Now
$Z$ being clopen in $X^*$ is clopen in $M^*$, and therefore  it is
clopen in $(\sigma X\backslash X)\cup \{\Omega\}$. Thus $f(Z)$ is a
clopen subset of $\omega ^*$, i.e., $f(Z)\in \lambda_{\omega}
({\mathcal E}_K (\omega))$. Let $F:\lambda_X ({\mathcal E}_K
(X))\rightarrow\lambda_{\omega} ({\mathcal E}_K (\omega))$ be
defined by $F(Z)=f(Z)$. Then  $F$ is clearly  an order-isomorphism
of $\lambda_X ({\mathcal E}_K (X))$ onto its image.

To show the second part of the theorem, we note that by Theorem
\ref{IPOG}, ${\mathcal E}_K (X)$ has no minimum whereas ${\mathcal E}_K
(\omega)$ does.
\end{proof}

We summarize the above results in the following
theorem.

\begin{theorem}\label{KHG}
Let  $X$ and $Y$ be
zero-dimensional locally compact non-separable  metrizable spaces.
Then condition (1) implies the others.
\begin{itemize}
\item[\rm(1)] ${\mathcal E}_K(X)$ and ${\mathcal E}_K(Y)$ are order-isomorphic;
\item[\rm(2)] ${\mathcal Z}(\omega\sigma X\backslash X)$ and ${\mathcal Z}(\omega\sigma Y\backslash Y)$ are order-isomorphic;
\item[\rm(3)] The Boolean algebras of clopen sets ${\mathcal B}(\omega\sigma X\backslash X)$ and ${\mathcal B}(\omega\sigma Y\backslash Y)$ are order-isomorphic;
\item[\rm(4)] $\omega\sigma X\backslash X$ and $\omega\sigma Y\backslash Y$ are homeomorphic.
\end{itemize}
Furthermore, if at least one of $X$ and $Y$ has weight
greater than $2^{\aleph_0}$, then the above conditions  are
equivalent. If we assume {\em [CH]}, then the above conditions
are all equivalent.
\end{theorem}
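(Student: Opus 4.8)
The plan is to assemble the equivalences from the three substantial theorems already proved, treating conditions (2) and (3) as purely order-theoretic shadows of condition (4). First I would record that $\omega\sigma X\backslash X$ and $\omega\sigma Y\backslash Y$ are compact zero-dimensional Hausdorff spaces — this was verified inside the proof of Theorem \ref{GGH} — so they are Boolean spaces to which Stone duality applies. The implication (1) $\Rightarrow$ (4) is then exactly Theorem \ref{GGH}. For (4) $\Rightarrow$ (3) and (4) $\Rightarrow$ (2) I would simply transport structure along a homeomorphism $h\colon\omega\sigma X\backslash X\to\omega\sigma Y\backslash Y$: the assignment $C\mapsto h(C)$ carries clopen sets to clopen sets and zero-sets to zero-sets (since $Z(g)\mapsto Z(g\circ h^{-1})$), and it is inclusion-preserving, hence an order-isomorphism of the respective Boolean algebras and zero-set lattices. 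This already shows that (1) implies each of (2), (3) and (4).

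For the equivalence under the weight hypothesis I would close the cycle $(2)\Rightarrow(3)\Rightarrow(4)\Rightarrow(1)$. The key observation for $(2)\Rightarrow(3)$ is that in these spaces a zero-set is clopen precisely when it is a \emph{complemented} element of the lattice ${\mathcal Z}$: a clopen set is a zero-set via its characteristic function, and a zero-set $Z$ admitting a zero-set $Z'$ with $Z\cap Z'=\emptyset$ and $Z\cup Z'$ the whole space is the complement of the closed set $Z'$, hence open. Since an order-isomorphism of lattices preserves least and greatest elements together with finite meets and joins, it preserves the property of being complemented; therefore an order-isomorphism of ${\mathcal Z}(\omega\sigma X\backslash X)$ onto ${\mathcal Z}(\omega\sigma Y\backslash Y)$ restricts to an isomorphism of their clopen algebras, giving (3). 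The implication $(3)\Rightarrow(4)$ is Stone duality, an order-isomorphism of Boolean algebras being automatically a Boolean isomorphism and the two spaces being Boolean. Finally $(4)\Rightarrow(1)$ is precisely Theorem \ref{FFFGH}, whose hypothesis is that at least one of $X,Y$ has weight exceeding $2^{\aleph_0}$. Combined with $(1)\Rightarrow(4)\Rightarrow(2),(3)$ this yields the full equivalence under the weight restriction.

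For the [CH] clause I would split on weight. Every non-separable metrizable space has weight at least $\aleph_1$, and under [CH] we have $2^{\aleph_0}=\aleph_1$. If at least one of $X,Y$ has weight exceeding $2^{\aleph_0}=\aleph_1$, the weight-restricted case just proved applies verbatim. In the remaining case both $X$ and $Y$ have weight exactly $\aleph_1$; then by Theorem \ref{FGGF} we have $\omega\sigma X\backslash X\simeq\omega^{*}\simeq\omega\sigma Y\backslash Y$, so (4) holds outright, while (1) holds by Theorem \ref{FGEF}, and (2), (3) follow from (4) as above. Thus in either case all four conditions are simultaneously valid, hence equivalent.

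The transport-of-structure implications out of (4) are routine; the one step deserving care is $(2)\Rightarrow(3)$, namely the order-theoretic recovery of the clopen algebra from the zero-set lattice as its sublattice of complemented elements, since this is what lets a mere zero-set order-isomorphism feed into Stone duality. I expect no genuine obstacle, the content of the theorem residing entirely in the earlier Theorems \ref{GGH}, \ref{FFFGH}, \ref{FGGF} and \ref{FGEF}.
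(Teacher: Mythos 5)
Your proof is correct and takes essentially the same route as the paper, which presents Theorem \ref{KHG} as a summary assembled from Theorems \ref{GGH} ((1) $\Rightarrow$ (4)), \ref{FFFGH} ((4) $\Rightarrow$ (1) under the weight hypothesis), and \ref{FGGF} together with \ref{FGEF} for the [CH] case split into weight $>2^{\aleph_0}$ versus both weights equal to $\aleph_1$. The bridges you supply explicitly --- transporting clopen sets and zero-sets along a homeomorphism for (4) $\Rightarrow$ (2),(3), recovering the clopen algebra as the complemented elements of the zero-set lattice for (2) $\Rightarrow$ (3), and Stone duality (legitimate here since the zero-dimensionality of $\omega\sigma X\backslash X$ and $\omega\sigma Y\backslash Y$ was verified in the proof of Theorem \ref{GGH}) for (3) $\Rightarrow$ (4) --- are precisely the routine links the paper leaves implicit.
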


\section{On a subset ${\mathcal E}_S(X)$ of   ${\mathcal E}(X)$}

In this section we introduce a subset  ${\mathcal E}_S(X)$ of
${\mathcal E}(X)$ and investigate its properties and its relation to
the sets  ${\mathcal E}_K(X)$  and  ${\mathcal E}(X)$ introduced
before.

\begin{definition}\label{SGGF}
For a locally compact non-separable metrizable space $X$, let
\[{\mathcal E}_S(X)=\big\{Y=X\cup \{p\}\in {\mathcal E}(X):p \mbox { has a separable neighborhood in }Y\big\}.\]
\end{definition}

\begin{theorem}\label{WHG}
For a  locally compact non-separable metrizable space $X$ we have
\[{\mathcal E}_S(X)=\big\{Y=X\cup \{p\}\in{\mathcal E}(X):p\mbox{ has a $\sigma$-compact  neighborhood in }Y\big\}.\]
\end{theorem}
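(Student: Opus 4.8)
The plan is to prove the two descriptions of $\mathcal{E}_S(X)$ coincide by a double inclusion, where the nontrivial direction is showing that a separable neighborhood of $p$ yields a $\sigma$-compact one. First I would observe that the reverse inclusion is essentially immediate: if $p$ has a $\sigma$-compact neighborhood $N$ in $Y$, then $N=\bigcup_{n<\omega}K_n$ with each $K_n$ compact, and each $K_n$, being a compact subset of the metrizable space $Y$, is separable; hence $N$ is a countable union of separable sets and therefore separable, so $Y\in\mathcal{E}_S(X)$ under the original Definition~\ref{SGGF}. This uses only that $Y$ is metrizable, which holds since $Y\in\mathcal{E}(X)$.

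For the forward inclusion, suppose $p$ has a separable neighborhood in $Y$; I would pick an extension trace $\{U_n\}_{n<\omega}$ in $X$ generating $Y$, so that $\{U_n\cup\{p\}\}_{n<\omega}$ is an open base at $p$. Choosing $m$ with $U_m\cup\{p\}$ contained in the given separable neighborhood, I get that $U_m\cup\{p\}$, and hence $U_m$, is separable. The goal is then to replace the trace from index $m$ onward so that the successive differences $\mathrm{cl}_X U_n\backslash U_{n+1}$ become $\sigma$-compact, which by the argument structure of Lemma~\ref{LYFG} would force $p$ to have a $\sigma$-compact neighborhood. The key point is that a separable subspace of the locally compact metrizable space $X$ has $\sigma$-compact closure: this follows from the decomposition $X=\bigoplus_{i\in I}X_i$ of Theorem~\ref{IUUG}, since a separable subset meets only countably many summands $X_i$, say those indexed by a countable $J\subseteq I$, and is therefore contained in $M=\bigcup_{i\in J}X_i$, which is separable and locally compact metrizable, hence Lindel\"of, hence $\sigma$-compact (see \ref{FPG}.C of [7]).

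The step I expect to be the main obstacle is passing from ``$U_m$ separable'' to an \emph{extension trace} whose tails live in a $\sigma$-compact clopen piece and whose consecutive differences are compact. The clean way around this is to note that $\mathrm{cl}_X U_m$, being closed in the separable set $U_m$'s closure, sits inside the $\sigma$-compact clopen set $M=\bigcup_{i\in J}X_i$ described above. Then $\{U_n\}_{n\geq m}$ is an extension trace in $M$, and since $M$ is separable and locally compact, I can invoke the separable theory: the corresponding one-point extension of $M$ is generated by this trace, and because $M$ is $\sigma$-compact the tail neighborhood $U_m\cup\{p\}$ has $\sigma$-compact closure in $Y$. Concretely, $\mathrm{cl}_Y(U_m\cup\{p\})=\mathrm{cl}_X U_m\cup\{p\}$ is the one-point compactification-type extension of a $\sigma$-compact set together with $p$, and writing $\mathrm{cl}_X U_m=\bigcup_{k<\omega}L_k$ with each $L_k$ compact gives $\mathrm{cl}_Y(U_m\cup\{p\})=\{p\}\cup\bigcup_{k<\omega}L_k$, a $\sigma$-compact neighborhood of $p$ in $Y$. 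This establishes $Y\in\mathcal{E}_S(X)$ under the $\sigma$-compact description and completes the equality.
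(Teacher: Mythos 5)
Your proposal is correct and takes essentially the same route as the paper: the easy direction is identical (compact subsets of the metrizable $Y$ are separable), and the forward direction likewise picks a basic neighborhood $U_m\cup\{p\}$ from the extension trace inside the given separable neighborhood and then invokes the fact that a separable, locally compact, metrizable space is $\sigma$-compact (\ref{FPG}.C of [7]). The only difference is an inessential detour: the paper applies that fact directly to the open (hence locally compact) set $U_m$, whereas you first embed $U_m$ in the clopen $\sigma$-compact sum $M=\bigcup_{i\in J}X_i$ furnished by Theorem \ref{IUUG} and then pass to the closed subset $\mathrm{cl}_X U_m$ --- a longer path to the same conclusion.
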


\begin{proof}
Suppose that $p$ has a $\sigma$-compact neighborhood $W$ in
$Y=X\cup \{p\}\in {\mathcal E}(X)$. Then $W$ being a union of
countably many compact (and therefore separable) subsets is
separable, and so $Y\in {\mathcal E}_S(X)$.

To show the converse,
let $\{U_n\}_{n<\omega}$ be the extension trace in $X$ corresponding
to $Y=X\cup \{p\}\in {\mathcal E}_S(X)$. Then there exists a
$k<\omega$ such that $V=U_k\cup\{p\}$ is separable. Now since $U_k$
is locally compact and  separable, it is $\sigma$-compact (see 3.8.C of [7]).
\end{proof}

In the following we first characterize the elements of $ {\mathcal E}_S(X)$ in terms of their corresponding extension traces.

\begin{lemma}\label{OOHG}
Let $X$ be a locally compact
non-separable  metrizable space and let $Y=X\cup\{p\}\in {\mathcal
E}(X)$.  Then the following conditions are equivalent.
\begin{itemize}
\item[\rm(1)] $Y\in {\mathcal E}_S(X)$;
\item[\rm(2)] For every extension trace ${\mathcal U}= \{U_n\}_{n<\omega}$ in $X$ generating  $Y$, there exists a $k<\omega$ such that for every $n\geq k$, ${\mbox {\em cl}}_X U_n\backslash U_{n+1}$ is $\sigma$-compact;
\item[\rm(3)] There exists an extension trace ${\mathcal U}= \{U_n\}_{n<\omega}$ in $X$ generating  $Y$, such that for every $n<\omega$, ${\mbox {\em cl}}_X U_n\backslash U_{n+1}$ is $\sigma$-compact.
\end{itemize}
\end{lemma}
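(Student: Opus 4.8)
The plan is to prove the three conditions equivalent via the cycle $(1)\Rightarrow(2)\Rightarrow(3)\Rightarrow(1)$, following the template of Lemma \ref{LYFG} but systematically replacing ``compact'' by ``$\sigma$-compact'' and ``locally compact neighborhood'' by ``separable neighborhood''. The organizing idea, shared by the two nontrivial implications, is that since $\{U_n\cup\{p\}\}_{n<\omega}$ is an open base at $p$, the size of a separable neighborhood of $p$ is governed entirely by the ``shells'' $\mathrm{cl}_X U_n\backslash U_{n+1}$; Theorem \ref{WHG} is what lets me treat ``separable neighborhood'' and ``$\sigma$-compact neighborhood'' of $p$ as interchangeable, so that condition (1) can be matched against the $\sigma$-compactness statements in (2) and (3).

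For $(1)\Rightarrow(2)$ I would begin with an arbitrary extension trace $\mathcal{U}=\{U_n\}_{n<\omega}$ generating $Y$. Since $Y\in\mathcal{E}_S(X)$, the point $p$ has a separable neighborhood in $Y$, and because $\{U_n\cup\{p\}\}_{n<\omega}$ is an open base at $p$ there is a $k<\omega$ with $U_k\cup\{p\}$ contained in it, hence separable; thus $U_k$ is separable and so is $\mathrm{cl}_X U_k$. Being a closed subspace of the locally compact space $X$, the set $\mathrm{cl}_X U_k$ is locally compact, and a separable locally compact metrizable space is $\sigma$-compact (see 3.8.C of [7]). For $n\geq k$ the trace is decreasing, so $\mathrm{cl}_X U_n\backslash U_{n+1}$ is closed in $X$ and contained in $\mathrm{cl}_X U_k$; as a closed subset of a $\sigma$-compact space it is again $\sigma$-compact, which is exactly condition (2).

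That $(2)\Rightarrow(3)$ is immediate: apply (2) to any trace generating $Y$, obtain $k$, and pass to the tail $\{U_{n+k}\}_{n<\omega}$, which is an extension trace generating the same $Y$ and for which all shells are $\sigma$-compact. For $(3)\Rightarrow(1)$ I would use that an extension trace is decreasing with $\bigcap_{n<\omega}U_n=\emptyset$, whence $\mathrm{cl}_X U_1=\bigcup_{n<\omega}(\mathrm{cl}_X U_n\backslash U_{n+1})$ (the identity used in the proof of Theorem \ref{LDKG}). By hypothesis each shell is $\sigma$-compact, so $\mathrm{cl}_X U_1$ is a countable union of $\sigma$-compact sets, hence $\sigma$-compact and therefore separable. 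Then $U_1\cup\{p\}$ is a separable neighborhood of $p$ in $Y$, so $Y\in\mathcal{E}_S(X)$ by definition (equivalently, $U_1\cup\{p\}$ is $\sigma$-compact by Theorem \ref{WHG}).

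The argument is essentially routine once the parallel with Lemma \ref{LYFG} is fixed; the one place demanding care is $(1)\Rightarrow(2)$, where the abstract hypothesis ``$p$ has a separable neighborhood'' must be converted into $\sigma$-compactness of each individual shell. The key observations are that separability is inherited by $\mathrm{cl}_X U_k$ and, crucially, that local compactness of $X$ upgrades this separability to $\sigma$-compactness, after which closedness of the shells inside $\mathrm{cl}_X U_k$ finishes the job. The only minor subtlety elsewhere is checking that the tail-passing in $(2)\Rightarrow(3)$ preserves the generated extension, which holds because a cofinal subsequence of a trace yields the same one-point extension.
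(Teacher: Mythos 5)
Your proof is correct and follows essentially the same route as the paper's: the cycle $(1)\Rightarrow(2)\Rightarrow(3)\Rightarrow(1)$, with the key implication $(1)\Rightarrow(2)$ obtained by trapping the shells $\mbox{cl}_X U_n\backslash U_{n+1}$ as closed subsets of a $\sigma$-compact set around $p$ (the paper invokes Theorem \ref{WHG} directly, while you inline its content via separability of $\mbox{cl}_X U_k$ plus local compactness and 3.8.C of [7]). The remaining implications, which the paper dismisses as trivial, are exactly the tail-passing and shell-union arguments you spell out.
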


\begin{proof}
{\em (1) implies (2).} Suppose that $Y=X\cup\{p\}\in
{\mathcal E}_S(X)$, and let  $\{U_n\}_{n<\omega}$ be an extension
trace in $X$ which generates $Y$. Since $p$ has a $\sigma$-compact neighborhood in $Y$, there exist a $k<\omega$ such
 that $\mbox {cl}_X U_k\cup\{p\}$ is $\sigma$-compact, and therefore  $\mbox {cl}_X
U_n\backslash U_{n+1}$ is $\sigma$-compact, for every $n\geq k$.
That (2) implies (3), and (3) implies (1) are
trivial.
\end{proof}

The next result shows how ${\mathcal E}_S(X)$ is related to ${\mathcal E}_K(X)$.

\begin{theorem}\label{TTG}
Let $X$ be a locally compact
non-separable metrizable space. Then we have
\[{\mathcal E}_S(X)=\big\{Y\in{\mathcal E}(X):Y\geq S\mbox { for some  }S\in {\mathcal E}_K(X)\big\}.\]
\end{theorem}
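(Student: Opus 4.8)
The plan is to establish the two set-inclusions separately, in each case passing from the one-point extension to a generating extension trace and invoking the trace characterizations of ${\mathcal E}_K(X)$ and ${\mathcal E}_S(X)$ supplied by Lemmas \ref{LYFG} and \ref{OOHG}, together with the basic fact that for extension traces ${\mathcal U},{\mathcal V}$ generating $Y,S$ one has $Y\geq S$ if and only if ${\mathcal U}$ is finer than ${\mathcal V}$.

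For the inclusion $\{Y:Y\geq S\text{ for some }S\in{\mathcal E}_K(X)\}\subseteq{\mathcal E}_S(X)$, I would begin with $Y=X\cup\{p\}\geq S$ where $S\in{\mathcal E}_K(X)$, choose by Lemma \ref{LYFG} a trace ${\mathcal V}=\{V_n\}_{n<\omega}$ generating $S$ with each $\mbox{cl}_XV_n\backslash V_{n+1}$ compact, and let ${\mathcal U}=\{U_n\}_{n<\omega}$ be any trace generating $Y$. Fineness then produces a $k<\omega$ with $U_k\subseteq V_1$, so that $V_1\cup\{p\}$ is an open neighbourhood of $p$ in $Y$. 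The key observation is that $V_1$ is separable: since $\bigcap_{n<\omega}V_n=\emptyset$ and $\mbox{cl}_XV_{n+1}\subseteq V_n$, one gets $\bigcap_{n<\omega}\mbox{cl}_XV_n=\emptyset$ and hence the telescoping identity $V_1=\bigcup_{n<\omega}(V_n\backslash V_{n+1})$, which exhibits $V_1$ as a subset of the $\sigma$-compact (hence, $X$ being metrizable, separable) set $\bigcup_{n<\omega}(\mbox{cl}_XV_n\backslash V_{n+1})$. Thus $V_1\cup\{p\}$ is a separable neighbourhood of $p$, and $Y\in{\mathcal E}_S(X)$ by Definition \ref{SGGF}.

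For the reverse inclusion I would take $Y=X\cup\{p\}\in{\mathcal E}_S(X)$ and, by Lemma \ref{OOHG}, fix a generating trace ${\mathcal U}=\{U_n\}_{n<\omega}$ with every $\mbox{cl}_XU_n\backslash U_{n+1}$ $\sigma$-compact. The same telescoping argument as above now yields $\mbox{cl}_XU_1=\bigcup_{n<\omega}(\mbox{cl}_XU_n\backslash U_{n+1})$, which is $\sigma$-compact and hence separable; consequently, writing $X=\bigoplus_{i\in I}X_i$ as in Theorem \ref{IUUG}, the set $\mbox{cl}_XU_1$ meets only countably many of the (clopen, pairwise-far) summands $X_i$, so $\mbox{cl}_XU_1\subseteq M:=\bigcup_{i\in J}X_i$ for some countable $J\subseteq I$. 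Since each $U_n\subseteq U_1\subseteq M$, it follows that $\lambda(Y)=\bigcap_{n<\omega}U_n^*\subseteq U_1^*\subseteq M^*$. Because $M$ is clopen in $X$, $M^*$ is clopen in $X^*$, and by Lemma \ref{FFG} we have $M^*\in\lambda({\mathcal E}(X))$; hence Theorem \ref{LDKG} gives $M^*\in\lambda({\mathcal E}_K(X))$. Choosing $S\in{\mathcal E}_K(X)$ with $\lambda(S)=M^*$, the containment $\lambda(Y)\subseteq\lambda(S)$ and the order-anti-isomorphism property of $\lambda$ yield $Y\geq S$, as required.

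The routine ingredients are the two telescoping identities and the standard metric facts that a $\sigma$-compact subset of $X$ is separable and meets only countably many summands $X_i$. I expect the genuinely delicate point to be the first inclusion: it is cleanest to argue at the level of extension traces rather than through $\lambda$, since the containment $\lambda(Y)\subseteq\lambda(S)$ does not transparently encode the existence of a small (separable) neighbourhood of $p$, and it is really the fineness of the traces — forcing some $U_k$ to sit inside the separable set $V_1$ — rather than the mere containment of their $\lambda$-images, that makes this step go through cleanly.
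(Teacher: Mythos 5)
Your proof is correct and follows essentially the same route as the paper: both directions are argued through extension traces and the fineness relation, with the same telescoping identities producing a $\sigma$-compact (equivalently separable) neighborhood of $p$ in one direction, and in the other direction the containment $\lambda(Y)\subseteq M^*$ for a clopen $M^*=(\bigcup_{i\in J}X_i)^*$ yielding the locally compact lower bound $S$ via Lemma \ref{FFG} and Theorem \ref{LDKG}. The only cosmetic difference is that you verify separability of $V_1$ directly against Definition \ref{SGGF}, whereas the paper checks $\sigma$-compactness of $\mbox{cl}_XU_k$ and uses Theorem \ref{WHG}; these are interchangeable.
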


\begin{proof}
Suppose that $Y\in {\mathcal E}(X)$ and let $S\in
{\mathcal E}_K(X)$ be such that $Y\geq S $. Let ${\mathcal U}= \{U_n\}_{n<\omega}$ and ${\mathcal V}= \{V_n\}_{n<\omega}$ be extension traces in $X$, corresponding to $Y=X\cup\{p\}$ and $S=X\cup\{q\}$, respectively. Since $S\in {\mathcal E}_K(X)$, there exists an $n<\omega$ such that  $\mbox {cl}_X V_n\cup\{q\}$ is compact. Since $Y\geq S$,  ${\mathcal U}$ is finer than ${\mathcal V}$, and therefore there exists a $k<\omega$ such that $U_k\subseteq V_n$. Now since
\[\mbox {cl}_X U_k=\bigcup_{i\geq n}\big(\mbox {cl}_X U_k\cap(\mbox {cl}_X V_i\backslash V_{i+1})\big)\]
$\mbox {cl}_X U_k\cup\{p\}$ is a $\sigma$-compact neighborhood of $p$ in $Y$, i.e., $Y\in {\mathcal E}_S(X)$.

For the converse, let $Y\in {\mathcal E}_S(X)$ and let ${\mathcal
U}= \{U_n\}_{n<\omega}$ be an extension traces in $X$ generating
$Y$, with $\mbox {cl}_X U_n\backslash U_{n+1}$ being
$\sigma$-compact for all $n<\omega$. Since $\mbox {cl}_X U_1=\bigcup_{n<\omega} (\mbox {cl}_X U_n\backslash U_{n+1})$, we have $\lambda (Y)\subseteq \mbox {cl}_{\beta X} U_1\subseteq \sigma X$, and thus $Y\geq S$ for some $S\in {\mathcal E}_K(X)$.
\end{proof}

The following, under [CH], describes ${\mathcal E}_S(X)$ order-theoretically as a
subset of ${\mathcal E}(X)$.

\begin{theorem}\label{TUUG}
{\em [CH]} Let $X$ be a locally compact non-separable metrizable space and let $S\in {\mathcal
E}(X)$. Then $S\in {\mathcal E}_S(X)$ if and only if
\[\big|\big\{Y\in {\mathcal E}(X):Y\geq S\big\}\big|\leq \aleph_1.\]
\end{theorem}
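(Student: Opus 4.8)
The plan is to reduce the statement to the cardinality computations already performed in the proof of Theorem \ref{RPG}, after recording a single reformulation of membership in $\mathcal{E}_S(X)$. First I would prove that $S\in\mathcal{E}_S(X)$ if and only if $\lambda(S)\subseteq\sigma X$. Indeed, if $S\in\mathcal{E}_S(X)$ then by Theorem \ref{TTG} we have $S\geq S'$ for some $S'\in\mathcal{E}_K(X)$, whence $\lambda(S)\subseteq\lambda(S')\subseteq\sigma X$, the last inclusion by Lemma 6.6 of [9]. Conversely, if $\lambda(S)\subseteq\sigma X$ then, since $\lambda(S)$ is compact, $\lambda(S)\subseteq M^*$ for $M=\bigcup_{i\in J}X_i$ with $J\subseteq I$ countable (notation of Theorem \ref{IUUG}); as $M$ is clopen in $X$, the set $M^*$ is clopen in $X^*$, so $M^*=\lambda(S')$ for some $S'\in\mathcal{E}_K(X)$ by Theorem \ref{LDKG}, giving $S\geq S'$ and hence $S\in\mathcal{E}_S(X)$ by Theorem \ref{TTG}. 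I would also note that $\{Y\in\mathcal{E}(X):Y\geq S\}$ and $\{Y\in\mathcal{E}(X):Y>S\}$ differ by the single element $S$, so one has cardinality at most $\aleph_1$ exactly when the other does. Thus it suffices to show that $\lambda(S)\subseteq\sigma X$ if and only if $|\{Y\in\mathcal{E}(X):Y>S\}|\leq\aleph_1$.

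Since $\lambda$ is an order-anti-isomorphism onto its image and, by Lemma \ref{FFYG}, every non-empty zero-set of $X^*$ contained in $\lambda(S)$ lies in $\lambda(\mathcal{E}(X))$, the set $\{Y:Y\geq S\}$ is carried bijectively by $\lambda$ onto $\{Z\in\mathcal{Z}(X^*):\emptyset\neq Z\subseteq\lambda(S)\}$. For the forward implication, assuming $\lambda(S)\subseteq M^*$ with $M$ separable and non-compact (each $X_i$ is non-compact by Theorem \ref{IUUG}), I would identify these zero-subsets with non-empty zero-sets of the subspace $M^*=\mbox{cl}_{\beta X}M\backslash X=\beta M\backslash M$, which is legitimate because $M^*$ is clopen in $X^*$. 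By the separable theory recalled in the introduction these correspond to the elements of $\mathcal{E}(M)$; and since $M$ is second countable its extension traces are $\omega$-sequences of open sets, so under [CH] one has $|\mathcal{E}(M)|\leq(2^{\aleph_0})^{\aleph_0}=2^{\aleph_0}=\aleph_1$. Hence $|\{Y:Y>S\}|\leq\aleph_1$; this is precisely the estimate used to verify condition (1) in the proof of Theorem \ref{RPG}.

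For the converse I would argue by contraposition: if $\lambda(S)\not\subseteq\sigma X$, then the construction given in the verification of condition (2) in the proof of Theorem \ref{RPG} applies verbatim. Fixing an extension trace $\{V_n\}_{n<\omega}$ generating $S$ and setting $H_n=\{i\in I:V_n\cap X_i\neq\emptyset\}$, the hypothesis $\lambda(S)\backslash\sigma X\neq\emptyset$ forces each $H_n$ to be uncountable, and then, splitting into the cases where $\bigcap_{n<\omega}H_n$ is uncountable or countable, one produces more than $\aleph_1$ pairwise non-equivalent extension traces finer than $\{V_n\}_{n<\omega}$, that is, more than $\aleph_1$ elements $Y\in\mathcal{E}(X)$ with $Y>S$. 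Thus $|\{Y:Y>S\}|>\aleph_1$, which completes the contrapositive and hence the proof.

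The main obstacle is the converse direction, where the genuine combinatorial content resides; fortunately this is exactly the two-case construction already carried out for Theorem \ref{RPG}, so the task here is to recognize that it yields the implication $\lambda(S)\not\subseteq\sigma X\Rightarrow|\{Y>S\}|>\aleph_1$ in the form needed. The only other points demanding care are the essential use of [CH] in the forward counting and the reduction of the zero-subsets of $\lambda(S)$ to zero-sets of the separable remainder $M^*$, both of which mirror the corresponding steps in Theorem \ref{RPG}.
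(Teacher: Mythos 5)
Your proposal is correct and follows essentially the same route as the paper: the paper's own proof is exactly a two-way reduction to the arguments in the proof of Theorem \ref{RPG} (via Theorem \ref{TTG}), namely the separable counting argument under [CH] for one direction and the two-case construction of more than $\aleph_1$ finer extension traces when $\lambda(S)\not\subseteq\sigma X$ for the other. Your write-up merely makes explicit what the paper leaves as citations (the equivalence $S\in{\mathcal E}_S(X)\Leftrightarrow\lambda(S)\subseteq\sigma X$, and the bijection via Lemma \ref{FFYG} onto the non-empty zero-subsets of $\lambda(S)$), with the counting routed through $|{\mathcal E}(M)|$ instead of $|{\mathcal Z}(\mbox{cl}_{\beta X}M)|$, which is an immaterial difference.
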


\begin{proof}
Suppose that $S\in {\mathcal E}(X)$ is such that
$|\{Y\in {\mathcal E}(X):Y\geq S\}|\leq \aleph_1$. Then arguing as
in the proof of Theorem  \ref{RPG} we have $\lambda (S)\subseteq \sigma
X$, and therefore $S\geq T$, for some $T\in {\mathcal E}_K(X)$.

Conversely, if $S\in {\mathcal E}_S(X)$, then there exists a $T\in
{\mathcal E}_K(X)$ such that $S\geq T$. Now the result follows, as by
the proof of Theorem  \ref{RPG}  we have $|\{Y\in {\mathcal E}(X):Y\geq
T\}|\leq \aleph_1$.
\end{proof}

Next we find the image of ${\mathcal E}_S(X)$ under $\lambda$. This will be used in the subsequent results.

\begin{theorem}\label{WG}
Let $X$ be a locally compact
non-separable metrizable space. Then we have
\[\lambda\big({\mathcal E}_S(X)\big)=\big\{Z\in {\mathcal Z}(\omega\sigma X\backslash X):\Omega\notin Z\big\}\backslash\{\emptyset\}.\]
\end{theorem}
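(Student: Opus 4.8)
The plan is to prove the set equality by establishing both inclusions, using the characterization of $\lambda({\mathcal E}_S(X))$ provided by Theorem \ref{TTG} together with the earlier structural results about $\sigma X$. Recall that $\omega\sigma X = \sigma X\cup\{\Omega\}$ is the one-point compactification of the open subspace $\sigma X\subseteq\beta X$, and that $\Omega$ is the point compactifying the complement $c\sigma X = \beta X\backslash\sigma X$; thus a zero-set $Z$ of $\omega\sigma X\backslash X$ avoids $\Omega$ precisely when $Z$ is a compact subset of $\sigma X\backslash X$ (equivalently, $Z$ is closed-and-bounded away from the ideal point $\Omega$). This observation is the bridge between the two descriptions.

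For the forward inclusion, I would take $Y\in{\mathcal E}_S(X)$ and set $Z=\lambda(Y)$. By Theorem \ref{TTG} there is an $S\in{\mathcal E}_K(X)$ with $Y\geq S$, so $Z=\lambda(Y)\subseteq\lambda(S)$, and by Lemma 6.6 of [9] we have $\lambda(S)\subseteq\sigma X\backslash X$; hence $Z\subseteq\sigma X\backslash X$, which already gives $\Omega\notin Z$. The remaining content is to verify that $Z$ is a genuine zero-set of $\omega\sigma X\backslash X$ and not merely of $X^*$. Here I would use the proof of Theorem \ref{TTG}: an extension trace ${\mathcal U}=\{U_n\}_{n<\omega}$ generating $Y$ may be chosen so that $\mbox{cl}_X U_n\backslash U_{n+1}$ is $\sigma$-compact, whence $\mbox{cl}_X U_1$ is $\sigma$-compact and $Z\subseteq\mbox{cl}_{\beta X}U_1\subseteq\sigma X$. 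Since $Z\subseteq\mbox{cl}_{\beta X}M\backslash M$ for a separable clopen $M=\bigcup_{i\in J}X_i$ (with $J$ countable), and $\mbox{cl}_{\beta X}M\simeq\beta M$ is $C$-embedded in $\beta X$, one checks that a zero-set function witnessing $Z$ in $X^*$ restricts appropriately to exhibit $Z$ as a zero-set of $\sigma X\backslash X$ whose closure in $\omega\sigma X\backslash X$ still misses $\Omega$.

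For the reverse inclusion, I would take $Z\in{\mathcal Z}(\omega\sigma X\backslash X)$ with $\Omega\notin Z$ and $Z\neq\emptyset$. Since $Z$ is a zero-set of the compact space $\omega\sigma X\backslash X$ that avoids $\Omega$, it is a compact subset of $\sigma X\backslash X$, and by the structure of $\sigma X$ there is a countable $J\subseteq I$ with $Z\subseteq\mbox{cl}_{\beta X}(\bigcup_{i\in J}X_i)\backslash X = M^*$ where $M=\bigcup_{i\in J}X_i$ is separable. The task is then to recognize $Z$ as an element of $\lambda({\mathcal E}_S(X))$: first $Z\in{\mathcal Z}(X^*)$ (its being a zero-set in the compactification $\omega\sigma X\backslash X$ of $\sigma X\backslash X$, combined with compactness and the clopenness of $M^*$ in $X^*$, forces it to be a zero-set of $X^*$), and then $Z\in\lambda({\mathcal E}(X))$ by Theorem \ref{LFKG} or by Lemma \ref{FFYG} comparing against $M^*\in\lambda({\mathcal E}(X))$ from Lemma \ref{FFG}. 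Finally, since $Z\subseteq M^*\subseteq\sigma X$, Theorem \ref{TTG} gives that the corresponding $Y$ with $\lambda(Y)=Z$ dominates some element of ${\mathcal E}_K(X)$, so $Y\in{\mathcal E}_S(X)$.

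The main obstacle I anticipate is the careful handling of the two distinct ambient spaces: $Z$ is presented as a zero-set of $\omega\sigma X\backslash X$, but membership in $\lambda({\mathcal E}(X))$ requires $Z\in{\mathcal Z}(X^*)$, and these two zero-set structures must be reconciled. The condition $\Omega\notin Z$ is exactly what localizes $Z$ inside a separable clopen piece $M^*$, and on $M^*\simeq\beta M\backslash M$ the subspace topologies inherited from $\omega\sigma X\backslash X$ and from $X^*$ agree (both because $M^*$ is clopen in each and because $\mbox{cl}_{\beta X}M$ realizes $\beta M$). Verifying rigorously that a zero-set of $\omega\sigma X\backslash X$ missing $\Omega$ pulls back to a zero-set of $X^*$, and conversely, is where the genuine work lies; once that identification is secured, Theorem \ref{TTG} mechanically converts the containment $Z\subseteq\sigma X\backslash X$ into the desired membership in ${\mathcal E}_S(X)$ in both directions.
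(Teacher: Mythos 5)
Your proposal is correct and takes essentially the same route as the paper: both directions rest on Theorem \ref{TTG} together with localizing the zero-set inside a compact clopen subset of $\sigma X\backslash X$ (the paper uses $\lambda(S)$ for $S\in{\mathcal E}_K(X)$ directly, while you use $M^*$ for a separable clopen $M$, which is the same set for a suitable $S$), with clopenness in both $X^*$ and $\omega\sigma X\backslash X$ doing the transfer of zero-set structure. Your write-up is simply a more detailed version of the paper's argument, making explicit the compactness and Lemma \ref{FFYG}/\ref{FFG} steps that the paper leaves implicit.
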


\begin{proof}
Let $Y\in {\mathcal E}_S(X)$. Then there exists an
$S\in {\mathcal E}_K(X)$ such that $Y\geq S$, and thus $\lambda
(Y)\subseteq\lambda (S)\subseteq\sigma X$. Now since $\lambda (S)$
is clopen in $\omega\sigma X\backslash X$, we have  $\lambda
(Y)\in{\mathcal Z}(\omega\sigma X\backslash X)$.

Conversely, suppose that $\emptyset\neq Z\in{\mathcal Z}(\omega\sigma X\backslash X)$
and $\Omega\notin Z$.  Then since $Z\subseteq\sigma X\backslash X$,
we have  $Z\subseteq\lambda (S)$, for some $S\in {\mathcal E}_K(X)$.
Thus $Z=\lambda (Y)$ for some $Y\geq S$, i.e.,  $Y\in {\mathcal
E}_S(X)$.
\end{proof}

Combined with Theorem \ref{TTRPG}, the following theorem shows that whenever
 $w(X)\geq 2^{\aleph_0}$, the sets  ${\mathcal E}(X)$, ${\mathcal E}_K(X)$ and
${\mathcal E}_S(X)$ have three distinct order structures.

\begin{theorem}\label{EWG}
Let  $X$ be a locally compact
non-separable  metrizable space. Then $ {\mathcal E}_K(X) $ and $
{\mathcal E}_S(X) $ are never order-isomorphic. If moreover
$w(X)\geq 2^{\aleph_0}$, then  $ {\mathcal E}(X)$ and $ {\mathcal
E}_S(X) $ are never order-isomorphic.
\end{theorem}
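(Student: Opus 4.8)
The plan is to prove the two non-isomorphism statements separately, in each case exhibiting an order-theoretic invariant that distinguishes the two partially ordered sets. For the first claim, that $\mathcal{E}_K(X)$ and $\mathcal{E}_S(X)$ are never order-isomorphic, the natural strategy is to locate some structural feature that is present in one set but absent in the other. The key observation should be that $\mathcal{E}_S(X)$ contains elements that are \emph{not} locally compact (indeed $\mathcal{E}_K(X)\subsetneq\mathcal{E}_S(X)$ by Theorem~\ref{TTG}), and that these extra elements sit above members of $\mathcal{E}_K(X)$ in a way that changes the order-theoretic behavior of suprema. Concretely, I would exploit Theorem~\ref{IPY}(3) together with its counterpart Theorem~\ref{YIOPY}(2): in $\mathcal{E}_K(X)$ an increasing sequence $Y_1<Y_2<\cdots$ with an upper bound need \emph{not} have a least upper bound, whereas the analogous completeness behavior inside $\mathcal{E}_S(X)$ (where suprema are computed as in $\mathcal{E}(X)$ via intersections of zero-sets, using Theorem~\ref{WG} to see that the image is a sublattice closed under the relevant operations) forces such a supremum to exist. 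An order-isomorphism would have to preserve ``bounded increasing sequence has a least upper bound,'' so producing a bounded increasing sequence in $\mathcal{E}_K(X)$ with no supremum, together with the fact that every bounded increasing sequence in $\mathcal{E}_S(X)$ does have one, yields the contradiction.

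For the second claim, that $\mathcal{E}(X)$ and $\mathcal{E}_S(X)$ are never order-isomorphic when $w(X)\geq 2^{\aleph_0}$, the plan is to use the weight hypothesis to separate the two via a cardinality-of-lower-bounds invariant. Here the relevant tool is Theorem~\ref{YIOPY}(4): when $w(X)\geq 2^{\aleph_0}$ there exists a sequence $\{Y_n\}_{n<\omega}$ in $\mathcal{E}(X)$ with \emph{no} lower bound at all in $\mathcal{E}(X)$. I would contrast this with the behavior inside $\mathcal{E}_S(X)$, showing that the image $\lambda(\mathcal{E}_S(X))=\{Z\in\mathcal{Z}(\omega\sigma X\backslash X):\Omega\notin Z\}\setminus\{\emptyset\}$ (Theorem~\ref{WG}) behaves like the nonempty zero-sets of the compact space $\omega\sigma X\backslash X$ avoiding the single point $\Omega$, so that any countable subfamily has the finite intersection property refined suitably and hence a common lower bound. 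The crux is to show that every sequence in $\mathcal{E}_S(X)$ has a lower bound in $\mathcal{E}_S(X)$, which should follow by an argument parallel to Theorem~\ref{IPY}(5): each $\lambda(Y_n)\subseteq\sigma X\backslash X$ lands inside $M^*$ for a common countable $M=\bigcup_{i\in J}X_i$, and then $\lambda(Y)=M^*$ gives a common lower bound lying in $\mathcal{E}_S(X)$ because $M^*$ is a zero-set of $\omega\sigma X\backslash X$ not containing $\Omega$.

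With these two observations in hand, the second non-isomorphism is immediate: an order-isomorphism from $\mathcal{E}(X)$ to $\mathcal{E}_S(X)$ would carry the lower-bound-free sequence $\{Y_n\}$ to a sequence in $\mathcal{E}_S(X)$, which by the preceding paragraph \emph{must} have a lower bound, contradicting the order-preservation of ``has no lower bound.'' I would phrase this carefully, noting that ``a given countable subset has a lower bound'' is a purely order-theoretic property preserved under order-isomorphisms, so the discrepancy between $\mathcal{E}(X)$ (which has a countable subset with no lower bound) and $\mathcal{E}_S(X)$ (in which every countable subset has a lower bound) is genuinely an obstruction.

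The main obstacle I anticipate is verifying the completeness-type closure properties of $\mathcal{E}_S(X)$ cleanly, rather than the headline contradictions themselves. Specifically, I expect the delicate step to be confirming that in $\mathcal{E}_S(X)$ the suprema of bounded increasing sequences and the lower bounds of arbitrary sequences genuinely remain inside $\mathcal{E}_S(X)$ and not merely in the larger set $\mathcal{E}(X)$; this requires tracking, through $\lambda$, that the resulting zero-set of $\omega\sigma X\backslash X$ again misses the point $\Omega$ (equivalently, stays within $\sigma X\backslash X$ and is $\sigma$-compactly generated), which is exactly the content of the characterizations in Theorems~\ref{WHG}, \ref{TTG} and \ref{WG}. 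Once the behavior of $\lambda(\mathcal{E}_S(X))$ as a sublattice of $\mathcal{Z}(\omega\sigma X\backslash X)$ avoiding $\Omega$ is pinned down, both contradictions drop out by transporting the relevant order-theoretic invariants across the hypothetical isomorphisms.
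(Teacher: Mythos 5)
Your second claim is handled essentially as the paper handles it, and correctly: you transport the lower-bound-free sequence supplied by Theorem \ref{YIOPY}(4) across the hypothetical isomorphism and contradict the fact that every countable subset of ${\mathcal E}_S(X)$ has a lower bound in ${\mathcal E}_S(X)$ (all the sets $\lambda(Y_n)$ lie in $M^*$ for a single countable subsum $M$, and the element with $\lambda$-image $M^*$ lies in ${\mathcal E}_K(X)\subseteq{\mathcal E}_S(X)$ by Lemma \ref{FFG} and Theorem \ref{TTG}). The paper routes this through Theorems \ref{TTG} and \ref{IPY}(5), but it is the same argument.

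The first claim, however, has a genuine gap. Your distinguishing invariant, ``every upper-bounded increasing sequence has a least upper bound,'' is taken from Theorem \ref{IPY}(3), but the strong form of that statement (and its proof, which opens with ``Suppose that $X$ is zero-dimensional\dots'') requires zero-dimensionality: one must find non-empty \emph{clopen} sets inside the zero-sets $\bigcap_n\lambda(Y_n)$ and $\bigcap_n\bigl(\lambda(Y_n)\backslash\lambda(Y)\bigr)$, and that is precisely what strong zero-dimensionality of $X$ provides. Theorem \ref{EWG} carries no such hypothesis, and the invariant is in fact false in general. Take $X=\bigoplus_{i<\omega_1}[0,\infty)$. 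For a countable subsum $M=\bigoplus_{n<\omega}[0,\infty)_n$, the fibers of the natural map $M^*\rightarrow\omega^*$ are continua (an ultracoproduct of continua is a continuum), so every clopen subset of $M^*$ either contains or misses each fiber; one then checks that the clopen subsets of $M^*$ are exactly the sets $\bigl(\bigcup_{n\in T}[0,\infty)_n\bigr)^*$ with $T\subseteq\omega$, so the clopen algebra is a copy of ${\mathcal P}(\omega)$ rather than of ${\mathcal P}(\omega)/\mathrm{fin}$. A decreasing sequence of such sets, indexed by $T_1\supseteq T_2\supseteq\cdots$, then \emph{does} have a largest clopen subset of its intersection, namely the one given by $\bigcap_k T_k$; consequently every upper-bounded increasing sequence in ${\mathcal E}_K(X)$ has a supremum in ${\mathcal E}_K(X)$, and your invariant cannot separate ${\mathcal E}_K(X)$ from ${\mathcal E}_S(X)$ for this $X$.

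This is why the paper instead runs the argument of Theorem \ref{TTRPG}: assuming $\phi:{\mathcal E}_K(X)\rightarrow{\mathcal E}_S(X)$ is an order-isomorphism, pull back an increasing sequence together with its supremum from ${\mathcal E}_S(X)$ to get $S_1<S_2<\cdots<S=\bigvee_n S_n$ in ${\mathcal E}_K(X)$, form the differences $\lambda(T_n)=\lambda(S_n)\backslash\lambda(S)$ and later the union $\lambda(A)=\lambda(S)\cup\lambda(T)$ --- Boolean operations that stay inside $\lambda({\mathcal E}_K(X))$ because these particular sets are clopen, so no zero-dimensionality is needed --- and deduce $\lambda(T)\subseteq\lambda(S)\cap\lambda(T_1)=\emptyset$, a contradiction. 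The only completeness fact this uses is that increasing sequences in ${\mathcal E}_S(X)$ have suprema, which is the direction you can prove. Your plan for the first claim works if one adds zero-dimensionality as a hypothesis, but as written it does not prove the stated theorem.
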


\begin{proof}
The first part of the theorem follows from an
argument similar to that of Theorem \ref{TTRPG}.

To show the second part, suppose that  $\phi:{\mathcal
E}(X)\rightarrow{\mathcal E}_S(X)$ is  an order-isomorphism. By part
(4) of Theorem  \ref{YIOPY}, there exists a sequence $\{Y_n\}_{n<\omega}$
in $ {\mathcal E}(X)$ with no lower bound in $ {\mathcal E}(X)$. Let
for each $n<\omega$, $S_n\in {\mathcal E}_K(X) $ be such that
$S_n\leq \phi(Y_n)$ (see Theorem \ref{TTG}). Then by  part (5) of Theorem
\ref{YIOPY}, the sequence $\{S_n\}_{n<\omega}$ and therefore the sequence
$\{\phi(Y_n)\}_{n<\omega}$ has a  lower bound in ${\mathcal E}_K(X)
$, contradicting to our
assumptions.
\end{proof}

For a point $x$ in a space $X$, we denote by  $w(x,X)$, the smallest weight of
an open  neighborhood of $x$ in $X$. In the following lemma, under [CH],
we characterize the points of  $\sigma
X\backslash X$ in $X^*$.

\begin{lemma}\label{OTG}
{\em [CH]} Let $X$  be a locally compact non-separable metrizable space. Then the set $\sigma
X\backslash X$ consists of exactly those elements $x\in X^*$ for
which $w(x, X^*)\leq\aleph_1$.
\end{lemma}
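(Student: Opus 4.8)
The plan is to prove the two inclusions separately, keeping the decomposition $X=\bigoplus_{i\in I}X_i$ of Theorem \ref{IUUG} fixed throughout and recalling that $X\subseteq\sigma X$ and $\sigma X=\bigcup\{\mbox{cl}_{\beta X}(\bigcup_{i\in J}X_i):J\subseteq I\mbox{ countable}\}$; in particular, for $x\in X^*$ the condition $x\in\sigma X\backslash X$ is just $x\in\sigma X$. First I would show that every $x\in\sigma X\backslash X$ has $w(x,X^*)\le\aleph_1$. By definition of $\sigma X$ there is a countable $J\subseteq I$ with $x\in\mbox{cl}_{\beta X}M$, where $M=\bigcup_{i\in J}X_i$. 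Then $M$ is a separable metrizable space, and being a union of factors it is clopen in $X$; hence $\mbox{cl}_{\beta X}M$ is clopen in $\beta X$, $\mbox{cl}_{\beta X}M\simeq\beta M$, and $M^*=\mbox{cl}_{\beta X}M\backslash X$ is a clopen neighbourhood of $x$ in $X^*$. Since $M$ is separable metrizable, $|C(M,\mathbf{I})|\le 2^{\aleph_0}$, so $w(\beta M)\le 2^{\aleph_0}$. This is exactly where {\em [CH]} is used: it gives $w(x,X^*)\le w(M^*)\le w(\beta M)\le 2^{\aleph_0}=\aleph_1$.

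For the reverse inclusion I would argue by contraposition, proving that if $x\in X^*\backslash\sigma X$ then $w(x,X^*)>\aleph_1$; in fact the argument yields the much stronger $w(x,X^*)\ge 2^{\aleph_1}$, so no set-theoretic hypothesis is needed here. Let $W$ be an arbitrary neighbourhood of $x$ in $X^*$, say $W=V\backslash X$ with $V$ open in $\beta X$ and $x\in V$. Using normality of $\beta X$, first shrink to an open $V'$ with $x\in V'\subseteq\mbox{cl}_{\beta X}V'\subseteq V$. The key observation is that $V'$ must meet uncountably many of the $X_i$: if $\{i:V'\cap X_i\neq\emptyset\}$ were countable, then $V'\cap X$ (dense in $V'$) would lie in $\bigcup_{i\in J}X_i$ for a countable $J$, forcing $x\in\mbox{cl}_{\beta X}(\bigcup_{i\in J}X_i)\subseteq\sigma X$, a contradiction.

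Next I would pick an index set $K$ with $|K|=\aleph_1$ and points $t_i\in V'\cap X_i$ for $i\in K$, and set $T=\{t_i:i\in K\}$. Since the factors $X_i$ are clopen and pairwise disjoint, $T$ is a closed discrete subspace of $X$ homeomorphic to $D(\aleph_1)$. Being closed in the normal space $X$, $T$ is $C^*$-embedded, so $\mbox{cl}_{\beta X}T\simeq\beta T\simeq\beta D(\aleph_1)$, and as $T$ is closed discrete one has $\mbox{cl}_{\beta X}T\cap X=T$, whence $T^*:=\mbox{cl}_{\beta X}T\backslash X\simeq\beta D(\aleph_1)\backslash D(\aleph_1)$. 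The preliminary shrinking pays off here: $T\subseteq V'$ gives $\mbox{cl}_{\beta X}T\subseteq\mbox{cl}_{\beta X}V'\subseteq V$, so $T^*\subseteq V\backslash X=W$. By Pospi\v{s}il's theorem $|T^*|=2^{2^{\aleph_1}}$, and since $|Z|\le 2^{w(Z)}$ for every Hausdorff space $Z$, this forces $w(T^*)\ge 2^{\aleph_1}>\aleph_1$. As $T^*$ is a subspace of $W$, we get $w(W)\ge w(T^*)>\aleph_1$, and since $W$ was arbitrary, $w(x,X^*)>\aleph_1$, completing the contrapositive.

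The two points I expect to require the most care are, in the first inclusion, the separable-metric weight bound $w(\beta M)\le 2^{\aleph_0}$ and the precise role of {\em [CH]} in collapsing it to $\aleph_1$; and, in the second inclusion, the main obstacle, namely manufacturing a closed copy of $D(\aleph_1)$ inside $V'$ \emph{whose $\beta X$-closure still lies in $V$}. The latter is what makes the weight blow up uniformly over all neighbourhoods of $x$, and it is handled entirely by the normality shrinking $\mbox{cl}_{\beta X}V'\subseteq V$ together with the observation that $x\notin\sigma X$ guarantees uncountably many factors are met; the weight lower bound is then a purely cardinal-arithmetic consequence.
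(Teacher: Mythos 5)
Your proof is correct, and your forward inclusion is exactly the paper's: both pass from $x\in\sigma X\backslash X$ to the clopen neighbourhood $M^*$ of $x$ in $X^*$, where $M=\bigcup_{i\in J}X_i$ is separable, and invoke [CH] through $w(M^*)\leq w(\beta M)\leq 2^{\aleph_0}=\aleph_1$. Where you genuinely diverge is the reverse inclusion. The paper argues by contradiction inside its own machinery: given $x\notin\sigma X$ with $w(x,X^*)\leq\aleph_1$, it fixes a neighbourhood $V$ of $x$ with $w(\mbox{cl}_{X^*}V)\leq\aleph_1$ and $f\in C(X^*,\mathbf{I})$ vanishing at $x$ and equal to $1$ off $V$, constructs (via a two-case analysis on whether $L=\{i\in I:X_i\cap\bigcap_{n<\omega}U_n\neq\emptyset\}$ is countable) a $Z\in\lambda({\mathcal E}(X))$ with $Z\subseteq Z(f)$ and $Z\backslash\sigma X\neq\emptyset$, and then plays Theorem \ref{WG} and the [CH]-theorem \ref{TUUG} (the extension $A$ with $\lambda(A)=Z$ is not in ${\mathcal E}_S(X)$, hence has more than $\aleph_1$ extensions above it) against a [CH]-based count $|{\mathcal Z}(\mbox{cl}_{X^*}V)|\leq\aleph_1$. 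You instead prove the contrapositive outright: any neighbourhood of $x\notin\sigma X$ meets uncountably many summands, so after the normality shrinking $\mbox{cl}_{\beta X}V'\subseteq V$ it contains $T^*$ for a closed copy $T$ of $D(\aleph_1)$ with $\mbox{cl}_{\beta X}T\simeq\beta T$, and Pospi\v{s}il's theorem drives the weight up. Your route is more elementary --- it bypasses ${\mathcal E}_S(X)$, Theorems \ref{WG} and \ref{TUUG}, and the extension-trace construction entirely --- and it localizes the set-theoretic hypothesis: it shows that $\{x\in X^*:w(x,X^*)\leq\aleph_1\}\subseteq\sigma X\backslash X$ is a theorem of ZFC, with [CH] needed only for the opposite containment. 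The paper's approach, by contrast, recycles machinery it has already built and needs anyway.

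One overstatement to correct: from $|T^*|=2^{2^{\aleph_1}}$ and $|Z|\leq 2^{w(Z)}$ you cannot conclude $w(T^*)\geq 2^{\aleph_1}$ in ZFC, since monotonicity of exponentiation is not strict; it is consistent that $2^{\mu}=2^{2^{\aleph_1}}$ for some $\mu<2^{\aleph_1}$. What does follow --- and is all the lemma needs --- is $w(T^*)>\aleph_1$: if $w(T^*)\leq\aleph_1$ then $|T^*|\leq 2^{\aleph_1}<2^{2^{\aleph_1}}$, a contradiction. With that sentence repaired, your argument stands.
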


\begin{proof}
Assume the notations of Theorem \ref{IUUG}. Let $x\in \sigma
X\backslash X$. Then $x\in M^*$, where $M=\bigcup_{i\in J} X_i$, for
some countable $J\subseteq I$. Since $M$ is separable, we have
$w(M^*)\leq\aleph_1$.

Conversely, suppose that $x\in X^*$ is such that $w(x,
X^*)\leq\aleph_1$. Suppose that $x\notin \sigma X$. Let $V$ be an
open neighborhood of $x$ in $X^*$  such that $w(\mbox
{cl}_{X^*}V)\leq\aleph_1$. Let $f\in C(X^* , \mathbf{I})$ be such
that $f(x)=0$ and $f(X^*\backslash V)\subseteq \{1\}$.  We show that
there exists a $Z\in \lambda ({\mathcal E}(X))$ such that
$Z\subseteq S=Z(f)$ and $Z\backslash\sigma X\neq\emptyset$. Since
$S\in{\mathcal Z}(X^*)$, by Lemma \ref{FFFGH} of [9], there exists a
regular sequence of open sets ${\mathcal U}=\{U_n\}_{n<\omega}$ in
$X$ such that $S=\bigcap_{n<\omega}U_n^*$. Let
\[ L=\Big\{i\in I: X_i\cap\bigcap_{n<\omega}U_n\neq\emptyset\Big\}.\]
We consider the following two cases.

{\em Case 1)} Suppose that $L$ is countable.  Let for each
$n<\omega$, $ V_n=U_n\backslash\bigcup_{i\in L}X_i$. Then since
\[\bigcap_{n<\omega}V_n=\bigcap_{n<\omega}U_n\backslash\bigcup_{i\in
L}X_i=\emptyset\]
${\mathcal V}=\{V_n\}_{n<\omega}$ is an extension
trace in $X$. Now for each $n<\omega$, we have
\[U_n^*=\Big(U_n\cap\bigcup_{i\in
L}X_i\Big)^*\cup\Big(U_n\backslash\bigcup_{i\in L}X_i\Big)^*\subseteq (\sigma
X\backslash X)\cup V_n^*\]
and therefore since $x\in
S=\bigcap_{n<\omega}U_n^*\backslash \sigma X$, we have $x\in
\bigcap_{n<\omega}V_n^*$. Let $Z=\bigcap_{n<\omega}V_n^*\in\lambda
({\mathcal E}(X))$. Then clearly $Z\subseteq S$ and
$Z\backslash\sigma X\neq\emptyset$.

{\em Case 2)} Suppose that $L$ is uncountable. Let  $\{ L_n \}_{ n
<\omega }$ be a partition of $L$ into  mutually disjoint uncountable subsets. Let for each $n<\omega$
\[V_n=U_n\cap\bigcup\{X_i:i\in L_n\cup L_{n+1}\cup
\cdots\}.\]
Then ${\mathcal V}=\{V_n\}_{n<\omega}$ is an extension
trace in $X$. We verify that for each  $n<\omega$, $\mbox
{cl}_{\beta X}V_n\backslash\sigma X\neq\emptyset$. For otherwise, if
for some $n<\omega$,  $\mbox {cl}_{\beta X}V_n\subseteq\sigma X$,
then $\mbox {cl}_{\beta X}V_n\subseteq \mbox {cl}_{\beta
X}(\bigcup_{i\in H}X_i)$, for some countable $H\subseteq I$, and
therefore $V_n\subseteq\bigcup_{i\in H}X_i$, which is a
contradiction, as each $L_n$ is  chosen to be uncountable. By
compactness of $\beta X$, we have $\bigcap_{n<\omega}(\mbox
{cl}_{\beta X}V_n\backslash\sigma X)\neq\emptyset$. Let in this case
$Z=\bigcap_{n<\omega}V_n^*\in\lambda ({\mathcal E}(X))$. Then
clearly $Z\subseteq S$ and $Z\backslash\sigma X\neq\emptyset$.

Let $A\in{\mathcal E}(X)$ be such that $Z=\lambda(A)$. By Theorem
\ref{WG}, $A\notin {\mathcal E}_S(X)$ and thus by Theorem \ref{TUUG}, $|\{Y\in
{\mathcal E}(X):Y\geq A\}|> \aleph_1$. Lemma 1\ref{SGGF}9 of [6] states
that for a $\sigma$-compact space $T$  with $w(T)\leq 2^{\aleph_0}$,
we have $|C(T)| \leq 2^{\aleph_0}$. Now applying this to $\mbox
{cl}_{X^*}V$, we obtain $|{\mathcal Z}(\mbox {cl}_{X^*}V)|\leq
\aleph_1$, which is a contradiction. This proves our lemma.
\end{proof}

\begin{theorem}\label{EYYG}
{\em [CH]} Let $X$ and $Y$ be locally
compact non-separable metrizable spaces.  If $X^*$ and $Y^*$ are
homeomorphic, then ${\mathcal E}_S (X)$ and ${\mathcal E}_S (Y)$ are
order-isomorphic.
\end{theorem}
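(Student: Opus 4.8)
The plan is to transport a homeomorphism $h\colon X^*\to Y^*$ into an order-isomorphism between ${\mathcal E}_S(X)$ and ${\mathcal E}_S(Y)$ by passing through the maps $\lambda_X$ and $\lambda_Y$. Since each of $\lambda_X$ and $\lambda_Y$ is an order-anti-isomorphism onto its image, it suffices to produce an inclusion-preserving bijection $F\colon\lambda_X({\mathcal E}_S(X))\to\lambda_Y({\mathcal E}_S(Y))$; then $\Phi=\lambda_Y^{-1}F\lambda_X$ will be the desired order-isomorphism, since an anti-isomorphism followed by an isomorphism followed by an anti-isomorphism is an isomorphism. So the first task is to describe $\lambda_X({\mathcal E}_S(X))$ intrinsically in terms of the space $\sigma X\backslash X$ alone.

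By Theorem \ref{WG} we have $\lambda_X({\mathcal E}_S(X))=\{Z\in{\mathcal Z}(\omega\sigma X\backslash X):\Omega\notin Z\}\backslash\{\emptyset\}$. I would first note that, as $X$ is locally compact and hence open in $\omega\sigma X$, the space $\omega\sigma X\backslash X$ is compact and $\sigma X\backslash X$ is an open, locally compact subspace of it. A zero-set of $\omega\sigma X\backslash X$ missing $\Omega$ is then exactly a non-empty compact $G_\delta$-subset of $\sigma X\backslash X$: if $\Omega\notin Z\in{\mathcal Z}(\omega\sigma X\backslash X)$ then $Z$ is closed in the compact space, hence compact and contained in $\sigma X\backslash X$, and being a $G_\delta$ of $\omega\sigma X\backslash X$ lying in the open set $\sigma X\backslash X$ it is a $G_\delta$ of $\sigma X\backslash X$; conversely, a compact $G_\delta$-subset $K$ of $\sigma X\backslash X$ is a closed $G_\delta$ of the compact (normal) space $\omega\sigma X\backslash X$, hence a zero-set missing $\Omega$. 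Thus $\lambda_X({\mathcal E}_S(X))$ is precisely the collection of non-empty compact $G_\delta$-subsets of $\sigma X\backslash X$, a description internal to $\sigma X\backslash X$ and so preserved by any homeomorphism.

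The key step, and the point where [CH] enters, is to show that $h$ carries $\sigma X\backslash X$ onto $\sigma Y\backslash Y$. Here I would invoke Lemma \ref{OTG}: under [CH], $\sigma X\backslash X=\{x\in X^*:w(x,X^*)\leq\aleph_1\}$, and likewise for $Y$. Since the local weight $w(x,X^*)$ is a topological invariant of $(X^*,x)$, a homeomorphism mapping open neighborhoods of $x$ homeomorphically onto open neighborhoods of $h(x)$, we get $w(h(x),Y^*)=w(x,X^*)$ for every $x\in X^*$, whence $h(\sigma X\backslash X)=\sigma Y\backslash Y$. Therefore $h$ restricts to a homeomorphism $h|\colon\sigma X\backslash X\to\sigma Y\backslash Y$.

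Finally, I would set $F(Z)=h(Z)$. As $h|$ is a homeomorphism, it sends non-empty compact $G_\delta$-subsets of $\sigma X\backslash X$ bijectively onto those of $\sigma Y\backslash Y$ and preserves inclusion in both directions, so by the description above $F$ is an inclusion-preserving bijection $\lambda_X({\mathcal E}_S(X))\to\lambda_Y({\mathcal E}_S(Y))$; composing as in the first paragraph produces the order-isomorphism $\Phi\colon{\mathcal E}_S(X)\to{\mathcal E}_S(Y)$. I expect the main obstacle to be the content of the second paragraph, namely correctly matching the zero-sets of $\omega\sigma X\backslash X$ that miss $\Omega$ with the compact $G_\delta$-subsets of $\sigma X\backslash X$, so that $\lambda_X({\mathcal E}_S(X))$ becomes a genuinely internal invariant of $\sigma X\backslash X$; once that reformulation is secured, Lemma \ref{OTG} does the real work of exhibiting $\sigma X\backslash X$ as a topological invariant of $X^*$, and the rest is bookkeeping with the anti-isomorphisms.
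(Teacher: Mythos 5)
Your proposal is correct and follows essentially the same route as the paper: the paper likewise invokes Lemma \ref{OTG} (under [CH]) to turn the homeomorphism $X^*\rightarrow Y^*$ into a homeomorphism between $\sigma X\backslash X$ and $\sigma Y\backslash Y$, and then transports $\lambda_X({\mathcal E}_S(X))$ onto $\lambda_Y({\mathcal E}_S(Y))$ through the order-anti-isomorphisms $\lambda_X,\lambda_Y$, citing the last part of the proof of Theorem \ref{FFFGH} for this step. Your identification, via Theorem \ref{WG}, of $\lambda_X({\mathcal E}_S(X))$ with the non-empty compact $G_\delta$-subsets of $\sigma X\backslash X$ is a clean and correct way of carrying out the ``slight modification'' that the paper leaves implicit.
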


\begin{proof}
By Lemma \ref{OTG} any homeomorphism between $X^*$ and
$Y^*$ induces a homeomorphism between $\sigma X\backslash X$ and
$\sigma Y\backslash Y$. Now the proof is completed by  a slight
modification of the last part of the proof of Theorem
\ref{FFFGH}.
\end{proof}

The following is analogous to Theorems \ref{GGH} and \ref{FFFGH}. It shows how the order structure of ${\mathcal E}_S(X)$ and the topology of $\sigma X\backslash X$ are related to each other.

\begin{theorem}\label{ERYG}
Let $X$ and $Y$ be locally
compact non-separable  metrizable spaces and let $\omega\sigma
X=\sigma X \cup\{\Omega\}$ and $\omega\sigma Y=\sigma Y \cup\{\Omega
'\}$ be the one-point compactifications of $\sigma X$ and $\sigma
Y$, respectively. If ${\mathcal E}_S(X)$ and ${\mathcal E}_S(Y)$ are
order-isomorphic, then $\omega\sigma X\backslash X$ and
$\omega\sigma Y\backslash Y$ are homeomorphic.
\end{theorem}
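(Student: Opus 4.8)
The plan is to follow the template of the proof of Theorem~\ref{GGH}, replacing the Boolean algebra of clopen sets and Stone duality by the lattice of compact zero-sets and a Wallman-type reconstruction. Write $L=\sigma X\backslash X$ and $L'=\sigma Y\backslash Y$; these are locally compact Hausdorff spaces, and since $X$ is open in $\omega\sigma X$, the set $\omega\sigma X\backslash X=L\cup\{\Omega\}$ is closed in the compact space $\omega\sigma X$, hence compact, and is in fact the one-point compactification of $L$ with $\Omega$ as point at infinity (a neighborhood base at $\Omega$ being the complements of the compact subsets of $L$). A zero-set of $\omega\sigma X\backslash X$ misses $\Omega$ exactly when it is a compact zero-set of $L$. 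Thus, given an order-isomorphism $\phi\colon{\mathcal E}_S(X)\to{\mathcal E}_S(Y)$, Theorem~\ref{WG} shows that $g=\lambda_Y\phi\lambda_X^{-1}$ is an inclusion-preserving bijection between the posets $P_X$ and $P_Y$ of non-empty compact zero-sets of $L$ and $L'$; adjoining $\emptyset$ and noting that in these posets the join is union and the partial meet is intersection, $g$ extends to an isomorphism $\hat g$ of the bounded distributive lattices $\hat P_X=P_X\cup\{\emptyset\}$ and $\hat P_Y$.

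The key step is to reconstruct the topology of $L$ from $\hat P_X$ one compact piece at a time. Fix $M\in P_X$. The principal ideal ${\downarrow}M=\{C\in\hat P_X:C\subseteq M\}$ is a bounded sublattice consisting of the compact zero-sets of $L$ lying in $M$; using only that $L$ is Tychonoff one checks that this family separates points from closed sets in the compact Hausdorff space $M$ (given $p\notin F$ with $F$ closed in $M$, a Urysohn function on $L$ separating $p$ from $F$ produces, after intersecting its zero-set with $M$, a member of ${\downarrow}M$ containing $F$ and omitting $p$), so it is a base for the closed sets of $M$. Since $\hat g({\downarrow}M)={\downarrow}g(M)$, the main technical lemma I would prove is a refined Wallman reconstruction: \emph{if two compact Hausdorff spaces carry bounded sublattices of zero-sets that are bases for their closed sets, then any lattice isomorphism between these sublattices is induced by a unique homeomorphism.} I would prove this by identifying points with maximal proper filters of the sublattice — each point $p$ giving $\{A:p\in A\}$, and each maximal filter having one-point intersection by compactness together with the separation property — and checking that the isomorphism sends $A$ to its image as a set of points, whence it comes from a homeomorphism because the sublattices are bases for the closed sets. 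Applied to $M$ and $g(M)$ this yields a homeomorphism $h_M\colon M\to g(M)$ induced by $\hat g$ on maximal filters; these $h_M$ agree on overlaps, since for $M\subseteq M'$ both $h_{M'}|_M$ and $h_M$ are induced by the same restriction of $\hat g$.

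Next I would glue. Every point of $L$ lies in the interior of some compact zero-set, hence in some $M\in P_X$, so $h=\bigcup_M h_M\colon L\to L'$ is a well-defined bijection. Because $L$ is locally compact it is a $k$-space, and every compact $K\subseteq L$ is contained in some $M\in P_X$; as $h|_K=h_M|_K$ is continuous, $h$ is continuous, and the same argument applied to $g^{-1}$ shows $h^{-1}$ is continuous. Thus $h\colon L\to L'$ is a homeomorphism. Being a homeomorphism of locally compact Hausdorff spaces it is proper, so it extends to a homeomorphism of one-point compactifications sending $\Omega$ to $\Omega'$; that is, $\omega\sigma X\backslash X$ and $\omega\sigma Y\backslash Y$ are homeomorphic.

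The main obstacle is the refined Wallman reconstruction lemma, and in particular verifying that the maximal filters of the merely separating base sublattice ${\downarrow}M$ correspond bijectively to the points of $M$. The delicate point, exactly as in the parenthetical separation argument, is that a zero-set of the compact piece $M$ need not itself be a zero-set of the (possibly non-normal) space $L$, so one must work with the sublattice ${\downarrow}M$ of genuine compact zero-sets of $L$ rather than with all of ${\mathcal Z}(M)$. Handling the distinguished point $\Omega$ is then painless: it is disposed of uniformly by passing to the one-point compactification, instead of, as in Theorem~\ref{GGH}, by a separate complementation clause.
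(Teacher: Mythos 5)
Your proposal is correct, but it follows a genuinely different route from the paper's. The paper never leaves the zero-set lattices: starting from $\phi:\lambda_X({\mathcal E}_S(X))\rightarrow\lambda_Y({\mathcal E}_S(Y))$ and Theorem \ref{WG}, it extends $\phi$ to a map $\psi:{\mathcal Z}(\omega\sigma X\backslash X)\rightarrow{\mathcal Z}(\omega\sigma Y\backslash Y)$ by writing, for $Z\ni\Omega$, the cozero complement as $\bigcup_{n<\omega}Z_n$ with $\Omega\notin Z_n$ and setting $\psi(Z)=(\omega\sigma Y\backslash Y)\backslash\bigcup_{n<\omega}\phi(Z_n)$; the technical work there is showing that this union is a cozero-set and that $\psi$ is independent of the chosen representation, after which the conclusion follows from the known fact that compact Hausdorff spaces with order-isomorphic zero-set lattices are homeomorphic. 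You instead reconstruct the locally compact remainder $L=\sigma X\backslash X$ itself from the lattice of non-empty compact zero-sets, via a Wallman-type maximal-filter representation of each principal ideal $\mathord{\downarrow}M$, glue the resulting homeomorphisms $h_M$ using uniqueness and the $k$-space property, and handle $\Omega$ only at the end through functoriality of one-point compactifications. Your key lemma is in effect a localized, self-contained proof of the very lattice-determines-space principle that the paper invokes as a black box, and it does go through: the disjointness property needed to show that $\mathcal{L}_p=\{A\in\mathord{\downarrow}M:p\in A\}$ is a maximal filter follows from compactness together with the base-for-closed-sets property (given $A\in\mathord{\downarrow}M$ with $p\notin A$, cover $A$ by complements of members containing $p$ and intersect a finite subfamily), or directly for zero-sets via sets of the form $\{x:f(x)\geq c/2\}$ intersected with $M$. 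What the paper's route buys is brevity (granted the cited characterization) and an argument carried out entirely inside zero-set lattices; what yours buys is self-containment and an explicit homeomorphism $\sigma X\backslash X\rightarrow\sigma Y\backslash Y$ carrying $\lambda_X(A)$ to $\lambda_Y(\phi(A))$. One cosmetic slip: $\hat P_X=P_X\cup\{\emptyset\}$ is not a bounded lattice, since $\sigma X\backslash X$ is non-compact and so $P_X$ has no top element; boundedness holds, and is only needed, for the principal ideals $\mathord{\downarrow}M$.
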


\begin{proof}
Let $\phi: \lambda_X ({\mathcal
E}_S(X))\rightarrow\lambda _Y({\mathcal E}_S(Y))$ be  an
order-isomorphism. We extend $\phi$ by letting
$\phi(\emptyset)=\emptyset$. We define a function $ \psi :{\mathcal
Z}(\omega\sigma X\backslash X)\rightarrow{\mathcal Z}(\omega\sigma
Y\backslash Y)$ and verify that it is an order-isomorphism.

For a $Z\in {\mathcal Z}(\omega\sigma X\backslash X)$, with
$\Omega\notin Z$, let $\psi(Z)=\phi(Z)$.

Now suppose that $Z\in {\mathcal
Z}(\omega\sigma X\backslash X)$ and $\Omega\in Z$. Then
$(\omega\sigma X\backslash X)\backslash Z$, being a cozero-set in
$\omega\sigma X\backslash X$, can be written as  $(\omega\sigma
X\backslash X)\backslash Z=\bigcup_{n<\omega}Z_n$, where for each
$n<\omega$, $ Z_n\in {\mathcal Z}(\omega\sigma X\backslash X)$ and
$\Omega\notin Z_n$, and thus by Theorem \ref{WG}, $Z_n\in\lambda_X
({\mathcal E}_S(X))$.  We claim that $ \bigcup_{n<\omega}\phi (Z_n)$
is a cozero-set in $\omega\sigma Y\backslash Y$. To show this, let
$Y=\bigoplus_{i\in J} Y_i$, with each $Y_i$ being a separable
non-compact subspace. Since for each $n<\omega$,
$\phi(Z_n)\subseteq\sigma Y\backslash Y$, there exists a countable
$L\subseteq J$ such that $\bigcup_{n<\omega}\phi (Z_n)\subseteq
(\bigcup_{i\in L}Y_i)^*=\phi (A)$, for some $A\in \lambda_X
({\mathcal E}_S(X))$. We show that $\phi(A\cap
Z)=\phi(A)\backslash\bigcup_{n<\omega}\phi (Z_n)$. Since for each
$n<\omega$, $A\cap Z\cap Z_n=\emptyset$, we have $\phi(A\cap Z)\cap
\phi(Z_n)=\emptyset$, and therefore $\phi(A\cap
Z)\subseteq\phi(A)\backslash\bigcup_{n<\omega}\phi (Z_n)$. To show
the converse, let $x\in\phi(A)\backslash\bigcup_{n<\omega}\phi
(Z_n)$. Since for each $n<\omega$, $x\notin \phi (Z_n)$, there
exists a $B\in {\mathcal Z}(\omega\sigma Y\backslash Y)$ such that
$x\in B$, and for each $n<\omega$, $B\cap\phi (Z_n)=\emptyset$. If
$x\notin\phi(A\cap Z)$, then there exists a $C\in {\mathcal
Z}(\omega\sigma Y\backslash Y)$ such that $x\in C$ and
$C\cap\phi(A\cap Z)=\emptyset$.  Consider $D=\phi(A)\cap B\cap
C\in\lambda_Y ({\mathcal E}_S(Y))$, and let $E\in\lambda_X
({\mathcal E}_S(X))$ be such that $\phi (E)=D$. Then since for each
$n<\omega$, $\phi(E)\cap \phi(Z_n)=\emptyset$, we have $E\cap
Z_n=\emptyset$, and therefore $ E\subseteq Z$. On the other hand
since $\phi(E)\subseteq\phi(A)$, we have  $E\subseteq A$ and thus
$E\subseteq A\cap Z$. Therefore, $\phi (E)\subseteq \phi (A\cap Z)$,
which implies that  $\phi(E)=\emptyset$, as $\phi(E)\subseteq C$. This
contradiction shows that $x\in\phi (A\cap Z)$, and therefore
$\phi(A\cap Z)=\phi(A)\backslash\bigcup_{n<\omega}\phi (Z_n)$. Now
since  $\phi(A)$ is clopen in $\sigma Y\backslash Y$, we have
\begin{eqnarray*}
(\omega\sigma Y\backslash Y)\backslash\bigcup_{n<\omega}\phi
(Z_n) &=& \big(\phi (A)\backslash\bigcup_{n<\omega}\phi (Z_n)\big)\cup
\big((\omega\sigma Y\backslash Y)\backslash \phi (A)\big)\\ &=&\phi (A\cap Z)\cup\big((\omega\sigma Y\backslash Y)\backslash
\phi(A)\big)\in{\mathcal Z}(\omega\sigma Y\backslash Y)
\end{eqnarray*}
and our claim is verified. In this case we define $ \psi
(Z)=(\omega\sigma Y\backslash Y)\backslash\bigcup_{n<\omega}\phi
(Z_n)$.

Next we show that $\psi$
is well-defined. So assume another representation for $Z$, i.e.,
suppose that  $Z=(\omega\sigma X\backslash
X)\backslash\bigcup_{n<\omega}S_n$, with $S_n\in \lambda_X
({\mathcal E}_S(X))\cup\{\emptyset\}$, for all $n<\omega$. Suppose
that  $\bigcup_{n<\omega}\phi (Z_n)\neq\bigcup_{n<\omega}\phi
(S_n)$. Without any loss of generality we may assume that
$\bigcup_{n<\omega}\phi (Z_n)\backslash\bigcup_{n<\omega}\phi
(S_n)\neq\emptyset$. Let $x\in \bigcup_{n<\omega}\phi
(Z_n)\backslash\bigcup_{n<\omega}\phi (S_n)$. Let $ m<\omega$ be
such that $x\in \phi(Z_m)$. Then since $x\notin
\bigcup_{n<\omega}\phi (S_n)$, there exists an $A\in {\mathcal
Z}(\omega\sigma Y\backslash Y)$ such that $x\in A$ and
$A\cap\bigcup_{n<\omega}\phi (S_n)=\emptyset$. Consider $A\cap\phi
(Z_m)\in \lambda_Y ({\mathcal E}_S(Y))$. Let $B\in \lambda_X
({\mathcal E}_S(X))$ be such that $\phi(B)=A\cap\phi(Z_m)$.  Since
$\phi (B)\subseteq A$, we have $B\cap S_n=\emptyset$, for all
$n<\omega$. But  $B\subseteq Z_m\subseteq\bigcup_{n<\omega}
Z_n=\bigcup_{n<\omega} S_n$, which implies that $B=\emptyset$, which is a
contradiction. Therefore $\bigcup_{n<\omega}\phi
(Z_n)=\bigcup_{n<\omega}\phi (S_n)$, and thus  $\psi$ is well
defined.

To prove that $\psi$ is an order-isomorphism, let $S,Z\in{\mathcal
Z}(\omega\sigma X\backslash X)$ with $S\subseteq Z$. Assume that
$S\neq\emptyset$. We consider the following three cases.

{\em Case 1)} Suppose that $\Omega\notin Z$. Then
$\psi(S)=\phi(S)\subseteq\phi(Z)=\psi(Z)$.

{\em Case 2)} Suppose that  $\Omega\notin S$ and $\Omega\in Z$.  Let
$Z=(\omega\sigma X\backslash X)\backslash\bigcup_{n<\omega}Z_n$,
with $Z_n\in \lambda_X ({\mathcal E}_S(X))\cup\{\emptyset\}$, for
all $n<\omega$.  Then since $S\subseteq Z$, for each $n<\omega$, we
have  $S\cap Z_n=\emptyset$, and therefore  $\phi (S)\cap
\phi(Z_n)=\emptyset$. We have
\[\psi(S)=\phi(S)\subseteq(\omega\sigma Y\backslash
Y)\backslash\bigcup_{n<\omega}\phi (Z_n)=\psi(Z).\]

{\em Case 3)} Suppose that  $\Omega\in S$ and let
\[Z=(\omega\sigma X\backslash X)\backslash\bigcup_{n<\omega}Z_n
\mbox{ and }S=(\omega\sigma X\backslash
X)\backslash\bigcup_{n<\omega}S_n\]
where for each $n<\omega$, $S_n, Z_n\in \lambda_X
({\mathcal E}_S(X))\cup\{\emptyset\}$. Since $S\subseteq Z$ we have
$\bigcup_{n<\omega}Z_n\subseteq\bigcup_{n<\omega}S_n$, and so
$S=(\omega\sigma X\backslash X)\backslash\bigcup_{n<\omega}(S_n\cup
Z_n)$. Therefore
\[\psi(S)=(\omega\sigma Y\backslash
Y)\backslash\bigcup_{n<\omega}\big(\phi (S_n)\cup\phi
(Z_n)\big)\subseteq(\omega\sigma Y\backslash
Y)\backslash\bigcup_{n<\omega}\phi(Z_n)=\psi(Z)\]
and thus $\psi$ is an order-homomorphism.

To show that $\psi$ is an order-isomorphism, we note that
$\phi^{-1}: \lambda_Y ({\mathcal E}_S(Y))\rightarrow\lambda
_X({\mathcal E}_S(X))$ is an order-isomorphism. Let
$\gamma:{\mathcal Z}(\omega\sigma Y\backslash Y)\rightarrow{\mathcal
Z}(\omega\sigma X\backslash X)$ be its induced order-homomorphism
defined as above. Then it is straightforward to see that
$\gamma=\psi ^{-1}$, i.e., $\psi$ is an order-isomorphism and thus
${\mathcal Z}(\omega\sigma X\backslash X)$ and ${\mathcal
Z}(\omega\sigma Y\backslash Y)$ are order-isomorphic, which implies
that $\omega\sigma X\backslash X$ and $\omega\sigma Y\backslash Y$
are homeomorphic.
\end{proof}

The next result is the converse of the above theorem under some
weight restrictions.

\begin{theorem}\label{EOYG}
Let $X$ and $Y$ be locally
compact non-separable  metrizable spaces. Suppose moreover that at
least one of $X$ and $Y$ has weight greater than $2^{\aleph_0}$. Let
$\omega\sigma X$ and $\omega\sigma Y$ be as in the above theorem.
Then if $\omega\sigma X\backslash X$ and $\omega\sigma Y\backslash
Y$ are homeomorphic, ${\mathcal E}_S(X)$ and ${\mathcal E}_S(Y)$ are
order-isomorphic.
\end{theorem}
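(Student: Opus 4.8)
The plan is to establish a homeomorphism between $\omega\sigma X\backslash X$ and $\omega\sigma Y\backslash Y$ and then transport the order structure of the extension sets through the characterization of $\lambda({\mathcal E}_S(X))$ provided by Theorem \ref{WG}. First I would fix a homeomorphism $f:\omega\sigma X\backslash X\rightarrow\omega\sigma Y\backslash Y$. The crucial preliminary step is to show that $f(\Omega)=\Omega'$. Assuming without loss of generality that $w(X)>2^{\aleph_0}$, I would argue exactly as in the first part of the proof of Theorem \ref{FFFGH}: if $f(\Omega)=p\in\sigma Y\backslash Y$, then $p$ lies in $(\bigcup_{i\in K}Y_i)^*$ for some countable $K\subseteq J$, which is an open neighborhood of $p$ of weight at most $2^{\aleph_0}$; pulling back, a neighborhood $W$ of $\Omega$ of the form $(\omega\sigma X\backslash X)\backslash\mbox{cl}_{\beta X}(\bigcup_{i\in L}X_i)$ (with $L$ countable) would have $w(W)\geq|I|=w(X)>2^{\aleph_0}$, contradicting $w(f(W))\leq 2^{\aleph_0}$. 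Hence $f(\Omega)=\Omega'$, and consequently $f$ restricts to a homeomorphism of $\sigma X\backslash X$ onto $\sigma Y\backslash Y$.

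Once $f(\Omega)=\Omega'$ is secured, I would build the order-isomorphism between ${\mathcal E}_S(X)$ and ${\mathcal E}_S(Y)$ directly at the level of zero-sets. By Theorem \ref{WG} we have
\[\lambda\big({\mathcal E}_S(X)\big)=\big\{Z\in{\mathcal Z}(\omega\sigma X\backslash X):\Omega\notin Z\big\}\backslash\{\emptyset\},\]
and likewise for $Y$. The plan is to show that $f$ carries this collection onto the corresponding collection for $Y$. Let $\emptyset\neq Z\in\lambda_X({\mathcal E}_S(X))$. Since $f$ is a homeomorphism, $f(Z)$ is a non-empty zero-set of $\omega\sigma Y\backslash Y$; and because $\Omega\notin Z$ and $f(\Omega)=\Omega'$, we get $\Omega'\notin f(Z)$, so $f(Z)\in\lambda_Y({\mathcal E}_S(Y))$. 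Thus $F:\lambda_X({\mathcal E}_S(X))\rightarrow\lambda_Y({\mathcal E}_S(Y))$ given by $F(Z)=f(Z)$ is well-defined, and it is an order-homomorphism since $f$ preserves inclusion. Applying the same reasoning to the homeomorphism $f^{-1}$, which sends $\Omega'$ to $\Omega$, yields an order-homomorphism $G(Z)=f^{-1}(Z)$ in the reverse direction, and clearly $G=F^{-1}$. Hence $F$ is an order-isomorphism between the images of ${\mathcal E}_S(X)$ and ${\mathcal E}_S(Y)$ under $\lambda$, and since $\lambda_X$ and $\lambda_Y$ are order-anti-isomorphisms onto their images, composing appropriately transfers this to an order-isomorphism ${\mathcal E}_S(X)\rightarrow{\mathcal E}_S(Y)$.

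The main obstacle is the determination of $f(\Omega)$; everything downstream is essentially formal once that point is pinned. The weight hypothesis $w(X)>2^{\aleph_0}$ is exactly what forces $\Omega$ to be distinguished from the points of $\sigma X\backslash X$, since the latter all sit inside closures of separable (hence weight $\leq 2^{\aleph_0}$) pieces, whereas every neighborhood basis of $\Omega$ must reach across uncountably (indeed $>2^{\aleph_0}$) many of the disjoint open sets $X_i^*$. This is precisely the cardinal-counting argument already carried out in Theorem \ref{FFFGH}, so I would explicitly invoke that proof rather than reproduce it. After that, the verification that $F$ and $G$ are mutually inverse order-homomorphisms is routine and parallels the closing paragraph of Theorem \ref{FFFGH}; I would note only that the extra care needed there for points near $\Omega$ is unnecessary here because the membership condition $\Omega\notin Z$ is preserved verbatim by $f$.
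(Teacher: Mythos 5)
Your proposal is correct and is essentially the paper's own argument: the paper proves Theorem \ref{EOYG} by citing "a slight modification of the proof of Theorem \ref{FFFGH}," and your write-up is exactly that modification — the same weight-counting argument forcing $f(\Omega)=\Omega'$, followed by transporting zero-sets via Theorem \ref{WG} and composing with the order-anti-isomorphisms $\lambda_X$, $\lambda_Y$. Your closing observation that the clopen-in-$Y^*$ verification needed in Theorem \ref{FFFGH} becomes unnecessary here (since Theorem \ref{WG} characterizes $\lambda({\mathcal E}_S(X))$ intrinsically inside $\omega\sigma X\backslash X$) is precisely the content of the "slight modification."
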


\begin{proof}
This follows by  a slight modification of the proof of Theorem \ref{FFFGH}.
\end{proof}

We summarize the above theorems as follows.

\begin{theorem}\label{WEOYG}
Let $X$ and $Y$ be locally compact non-separable  metrizable spaces. Suppose moreover that at
least one of $X$ and $Y$ has weight greater than $2^{\aleph_0}$.
Then the following conditions are equivalent.
\begin{itemize}
\item[\rm(1)] ${\mathcal E}_S(X)$ and ${\mathcal E}_S(Y)$ are order-isomorphic;
\item[\rm(2)] ${\mathcal Z}(\omega\sigma X\backslash X)$ and ${\mathcal Z}(\omega\sigma Y\backslash Y)$ are order-isomorphic;
\item[\rm(3)] $\omega\sigma X\backslash X$  and $\omega\sigma Y\backslash Y$ are homeomorphic.
\end{itemize}
\end{theorem}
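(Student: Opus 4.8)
The plan is to close the cycle of implications $(1)\Rightarrow(2)\Rightarrow(3)\Rightarrow(1)$, since the substantive work has already been done in Theorems \ref{ERYG} and \ref{EOYG}; only one classical lattice-theoretic fact remains to be invoked. I would begin by recording that each of $\omega\sigma X\backslash X$ and $\omega\sigma Y\backslash Y$ is compact: since $X$ is locally compact it is open in $\beta X$, hence open in the open set $\sigma X$, hence open in its one-point compactification $\omega\sigma X$; therefore $\omega\sigma X\backslash X$ is closed in the compact space $\omega\sigma X$. This compactness is what makes the passage between the zero-set order structure and the topology available.

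For $(1)\Rightarrow(2)$ I would extract the statement from the proof of Theorem \ref{ERYG}. There an order-isomorphism of ${\mathcal E}_S(X)$ and ${\mathcal E}_S(Y)$ is transported through $\lambda_X,\lambda_Y$ (these are order-\emph{anti}-isomorphisms, so the composite is again an order-isomorphism) to an order-isomorphism $\phi$ of $\lambda_X({\mathcal E}_S(X))$ onto $\lambda_Y({\mathcal E}_S(Y))$, and $\phi$ is then extended to a full order-isomorphism $\psi\colon{\mathcal Z}(\omega\sigma X\backslash X)\to{\mathcal Z}(\omega\sigma Y\backslash Y)$. That extension, together with the description of $\lambda({\mathcal E}_S(\cdot))$ furnished by Theorem \ref{WG}, is exactly statement $(2)$.

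For $(2)\Rightarrow(3)$ I would invoke the standard fact that the inclusion-ordered family of zero-sets of a compact Hausdorff (hence Tychonoff) space is a closed base whose order determines the space up to homeomorphism: the meet of two zero-sets is their intersection and the finite join is their union, so any order-isomorphism is automatically a lattice isomorphism, and the points of the space are recovered as the $z$-ultrafilters on this lattice, each of which by compactness converges to a unique point. Thus an order-isomorphism of ${\mathcal Z}(\omega\sigma X\backslash X)$ and ${\mathcal Z}(\omega\sigma Y\backslash Y)$ produces a homeomorphism $\omega\sigma X\backslash X\simeq\omega\sigma Y\backslash Y$. This is precisely the implication used without comment at the end of the proof of Theorem \ref{ERYG}. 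Finally, $(3)\Rightarrow(1)$ is Theorem \ref{EOYG}, and this is the only step that consumes the hypothesis $w(X)>2^{\aleph_0}$ or $w(Y)>2^{\aleph_0}$.

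The main obstacle, such as it is, lies entirely in $(2)\Rightarrow(3)$: one must be confident that the purely order-theoretic data of the zero-set poset suffice to reconstruct the compact space, with no additional lattice or topological structure required. Everything else is bookkeeping resting on results already established, and the weight hypothesis enters only to secure the converse direction $(3)\Rightarrow(1)$ through Theorem \ref{EOYG}.
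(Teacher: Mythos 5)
Your proposal is correct and takes essentially the same route as the paper: Theorem \ref{WEOYG} appears there only as a summary of Theorems \ref{ERYG} and \ref{EOYG}, whose proofs supply exactly your cycle $(1)\Rightarrow(2)\Rightarrow(3)\Rightarrow(1)$ — the proof of Theorem \ref{ERYG} constructs the order-isomorphism $\psi$ of the zero-set lattices (your step $(1)\Rightarrow(2)$) and then invokes, without comment, the fact that the zero-set lattice of a compact Hausdorff space determines it up to homeomorphism (your step $(2)\Rightarrow(3)$), while Theorem \ref{EOYG} gives $(3)\Rightarrow(1)$ and is indeed the only place the weight hypothesis is used. Your explicit justification of $(2)\Rightarrow(3)$ via $z$-ultrafilters correctly fills in the fact the paper leaves implicit.
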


Comparing Theorems \ref{KHG} and \ref{WEOYG}, we deduce that for
zero-dimensional locally compact non-separable metrizable spaces $X$
and $Y$, such that at least one of them has weight  greater than
$2^{\aleph_0}$, ${\mathcal E}_S(X)$ and ${\mathcal E}_S(Y)$ are
order-isomorphic if and only if ${\mathcal E}_K(X)$ and ${\mathcal
E}_K(Y)$ are. It turns out that even more is true.

\begin{theorem}\label{QYG}
Let $X$ and $Y$ be
zero-dimensional locally compact non-separable  metrizable spaces
and let $f:{\mathcal E}_K(X)\rightarrow{\mathcal E}_K(Y)$ be an
order-isomorphism. Then there exists an order-isomorphism
$F:{\mathcal E}_S(X)\rightarrow{\mathcal E}_S(Y)$ such that
$F|{\mathcal E}_K(X)=f$.
\end{theorem}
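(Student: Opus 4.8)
The plan is to extract from the proof of Theorem~\ref{GGH} not merely the existence of a homeomorphism between $\omega\sigma X\backslash X$ and $\omega\sigma Y\backslash Y$, but an explicit one that is compatible with $f$ at the level of clopen sets, and then to transport zero-sets along it to build $F$. First I would put $g=\lambda_Y f\lambda_X^{-1}$ and let $G:{\mathcal B}(\omega\sigma X\backslash X)\to{\mathcal B}(\omega\sigma Y\backslash Y)$ be the order-isomorphism of Boolean algebras constructed there. Recall that $G$ agrees with $g$ on the clopen sets missing $\Omega$ (these being precisely $\lambda_X({\mathcal E}_K(X))\cup\{\emptyset\}$), while on a clopen $U\ni\Omega$ one has $G(U)=(\omega\sigma Y\backslash Y)\backslash g((\omega\sigma X\backslash X)\backslash U)$. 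Since both spaces are compact and zero-dimensional, Stone duality yields a unique homeomorphism $h:\omega\sigma X\backslash X\to\omega\sigma Y\backslash Y$ with $h(U)=G(U)$ for every clopen $U$. From the description of $G$ one reads off that $\Omega'\in G(U)$ if and only if $\Omega\in U$ for every clopen $U$; since clopen sets separate points in a Boolean space and $G$ is onto, this forces $h(\Omega)=\Omega'$, so $h$ restricts to a homeomorphism $\sigma X\backslash X\to\sigma Y\backslash Y$. No weight hypothesis is needed here: that $\Omega\mapsto\Omega'$ is a formal consequence of the shape of $G$, not of the cardinality argument used in Theorem~\ref{FFFGH}.

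Next I would define $F$ through $\lambda$. By Theorem~\ref{WG}, $\lambda_X({\mathcal E}_S(X))$ is exactly the set of nonempty $Z\in{\mathcal Z}(\omega\sigma X\backslash X)$ with $\Omega\notin Z$, and similarly for $Y$. Because $h$ is a homeomorphism carrying $\Omega$ to $\Omega'$, it carries zero-sets to zero-sets and hence maps $\lambda_X({\mathcal E}_S(X))$ bijectively onto $\lambda_Y({\mathcal E}_S(Y))$. I therefore set $\lambda_Y(F(W))=h(\lambda_X(W))$ for $W\in{\mathcal E}_S(X)$. Since $h$ preserves inclusion while $\lambda_X$ and $\lambda_Y$ reverse order, $F$ is order-preserving, and applying the same construction to $h^{-1}$ produces its inverse; thus $F:{\mathcal E}_S(X)\to{\mathcal E}_S(Y)$ is an order-isomorphism.

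Finally, to see $F|{\mathcal E}_K(X)=f$, recall that ${\mathcal E}_K(X)\subseteq{\mathcal E}_S(X)$ (Theorem~\ref{TTG}), so $F$ is defined on ${\mathcal E}_K(X)$. For $A\in{\mathcal E}_K(X)$ the zero-set $\lambda_X(A)$ is clopen and misses $\Omega$, whence $h(\lambda_X(A))=G(\lambda_X(A))=g(\lambda_X(A))=\lambda_Y(f(A))$; injectivity of $\lambda_Y$ then gives $F(A)=f(A)$.

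The one delicate point — and the only real obstacle — is the compatibility asserted in the first step: that the Stone-dual homeomorphism $h$ genuinely acts as the set map $G$ on clopen sets, so that it both agrees with $g$ on $\lambda_X({\mathcal E}_K(X))$ and sends $\Omega$ to $\Omega'$. Once this is verified, the well-definedness of $F$ on ${\mathcal E}_S(X)$ and the restriction property $F|{\mathcal E}_K(X)=f$ follow formally, and the theorem is established.
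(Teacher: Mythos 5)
Your proposal is correct and follows essentially the same route as the paper: both take $g=\lambda_Y f\lambda_X^{-1}$, invoke the Boolean-algebra isomorphism $G$ from Theorem~\ref{GGH}, realize it by a homeomorphism of $\omega\sigma X\backslash X$ onto $\omega\sigma Y\backslash Y$ via Stone duality, show this homeomorphism sends $\Omega$ to $\Omega'$ (the paper argues by contradiction using an explicit clopen neighborhood $(\bigcup_{i\in L}X_i)^*$, which is the same observation as your separation argument), and then transport zero-sets through $\lambda$, using Theorem~\ref{WG} for well-definedness and the agreement $G=g$ on clopen sets missing $\Omega$ for the restriction property $F|{\mathcal E}_K(X)=f$.
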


\begin{proof}
Let $g=\lambda_Y f \lambda_X^{-1}:\lambda_X ({\mathcal
E}_K(X))\rightarrow\lambda_Y ({\mathcal E}_K(Y))$ and  $G:{\mathcal
B}(\omega\sigma X\backslash X)\rightarrow{\mathcal B}(\omega\sigma
Y\backslash Y)$ be as defined in the proof of Theorem  \ref{GGH}. Then as
it is shown there,  $G$ is an order-isomorphism,  and since
$\omega\sigma X\backslash X$ and $\omega\sigma Y\backslash Y$ are
zero-dimensional,  there exists a homeomorphism $\phi :\omega\sigma
X\backslash X\rightarrow\omega\sigma Y\backslash Y$ such that $\phi
(U)=G(U)$, for any $U\in{\mathcal B}(\omega\sigma X\backslash X)$.
Let $H :\lambda_X ({\mathcal E}_S(X))\rightarrow\lambda_Y ({\mathcal
E}_S(Y))$ be defined by $H (Z)=\phi(Z)$. We verify that $H$ is a
well-defined order-isomorphism.

First we note that $ \phi (\Omega)=\Omega '$. For otherwise, if $
\phi (x)=\Omega '$, for some $x\neq\Omega $, then since $x\in \sigma
X\backslash X$,  assuming  the notations of Theorem \ref{IUUG}, we have
$x\in (\bigcup _{i\in L}X_i)^*=U$, for some countable $L\subseteq
I$. Now since $U$ is clopen in $\omega\sigma X\backslash X$, we have
$\phi (U)=G(U)$, and by the way we defined $G$, $G(U)=g(U)\in
\lambda_Y ({\mathcal E}_K(Y))$. But this implies that  $\Omega '=\phi
(x)\in \phi(U)\in \lambda_Y ({\mathcal E}_K(Y))$, which is a
contradiction. Therefore $ \phi (\Omega)=\Omega '$.

Now suppose that  $Z\in\lambda_X ({\mathcal
E}_S(X))$. Then by Theorem  \ref{WG}, $Z\in {\mathcal Z}(\omega\sigma
X\backslash X)$ and $\Omega\notin Z$. Therefore $\phi (Z)$ is a
zero-set in $\omega\sigma Y\backslash Y$ such that $\Omega'\notin
\phi(Z)$. This shows that $H$  is well-defined. By the way we
defined $H$, it is clearly an order-isomorphism. Now let
$U\in\lambda_X ({\mathcal E}_K(X))$. Then since $U\in {\mathcal
B}(\omega\sigma X\backslash X)$, we have $H(U)=\phi(U)=G(U)$. But by
definition of $G$, since $\Omega\notin U$, $G(U)=g(U)$, and
therefore $H(U)=g(U)$, i.e, $H|\lambda_X ({\mathcal E}_K(X))=g$. Now
let $F=\lambda_Y^{-1}H \lambda_X:{\mathcal
E}_S(X)\rightarrow{\mathcal E}_S(Y)$. Clearly $F$ is an
order-isomorphism and by definition of $g$, for any $A\in {\mathcal
E}_K(X)$, we have $ F(A)=\lambda_Y^{-1}H \lambda_X
(A)=\lambda_Y^{-1}g \lambda_X (A)=f(A)$, i.e., $F|{\mathcal
E}_K(X)=f$ and the proof is
complete.
\end{proof}

The next result gives an order-theoretic  characterization of
${\mathcal E}_K(X)$ as a subset of ${\mathcal E}_S(X)$.

\begin{theorem}\label{WQYG}
Let $X$  be a locally compact
non-separable  metrizable spaces.  For a set ${\mathcal F}\subseteq
{\mathcal E}_S(X)$ consider the following conditions.
\begin{itemize}
\item[\rm(1)] For each $A\in{\mathcal E}_S(X)$, there exists a $B\in {\mathcal F}$ such that $B\leq A$;
\item[\rm(2)] For each $A,B\in\mathcal{F}$ such that $A<B$, there exists a $C\in\mathcal{F}$ such that $B\wedge C=A$, and $B$ and $C$ have no common upper bound in ${\mathcal E}_S(X)$.
\end{itemize}
Then the set ${\mathcal E}_K(X)$ is the largest (with
respect to set-theoretic inclusion) subset of ${\mathcal E}_S(X)$
satisfying the above conditions.
\end{theorem}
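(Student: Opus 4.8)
The plan is to prove the two inclusions separately, transferring everything through the order-anti-isomorphism $\lambda$. Recall from Theorem~\ref{WG} that $\lambda({\mathcal E}_S(X))$ is exactly the family of non-empty zero-sets of the compact space $\omega\sigma X\backslash X$ missing $\Omega$ (equivalently, the non-empty compact zero-sets of $\sigma X\backslash X$), while by Theorem~\ref{LDKG} the members of $\lambda({\mathcal E}_K(X))$ are precisely the clopen ones among these. Under $\lambda$ the order reverses, meets become joins, and the existence of a common upper bound in ${\mathcal E}_S(X)$ becomes a non-empty intersection that is itself realized in $\lambda({\mathcal E}_S(X))$.

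First I would check that ${\mathcal E}_K(X)$ satisfies (1) and (2). Condition (1) is immediate from Theorem~\ref{TTG}: each $A\in{\mathcal E}_S(X)$ dominates some $S\in{\mathcal E}_K(X)$, so $S\leq A$ with $S\in{\mathcal E}_K(X)$. For (2), given $A<B$ in ${\mathcal E}_K(X)$ I take $C\in{\mathcal E}_K(X)$ with $\lambda(C)=\lambda(A)\backslash\lambda(B)$, a non-empty clopen set; then $\lambda(B)\cup\lambda(C)=\lambda(A)$ gives $B\wedge C=A$, while $\lambda(B)\cap\lambda(C)=\emptyset$ forces, via Theorem~\ref{WG}, that $B$ and $C$ have no common upper bound in ${\mathcal E}_S(X)$ (any such bound $Y$ would need $\emptyset\neq\lambda(Y)\subseteq\lambda(B)\cap\lambda(C)$). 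This mirrors the verification of condition (3) in Theorem~\ref{RPG}.

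For the reverse inclusion, fix ${\mathcal F}\subseteq{\mathcal E}_S(X)$ satisfying (1)--(2) and $A\in{\mathcal F}$; the goal is that $\lambda(A)$ is clopen in $X^*$. As $\lambda(A)$ is a compact subset of $\sigma X\backslash X$, there is, in the notation of Theorem~\ref{IUUG}, a countable $J\subseteq I$ with $\lambda(A)\subseteq Q^*$, $Q=\bigcup_{i\in J}X_i$; adjoining one further factor I arrange $\lambda(A)\subsetneq Q^*$. Since $Q$ is clopen in $X$, $Q^*$ is a non-empty clopen subset of $X^*$, so by Theorem~\ref{LDKG} there is $B\in{\mathcal E}_K(X)\subseteq{\mathcal E}_S(X)$ with $\lambda(B)=Q^*$ and $A>B$. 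Applying (1) to $B$ gives $C\in{\mathcal F}$ with $C\leq B$, so $\lambda(C)\supseteq Q^*\supsetneq\lambda(A)$ and $C<A$; applying (2) to $C<A$ produces $D\in{\mathcal F}$ with $A\wedge D=C$ and with $A,D$ having no common upper bound in ${\mathcal E}_S(X)$, the latter forcing $\lambda(A)\cap\lambda(D)=\emptyset$. Now, exactly as in Theorem~\ref{RPG}, I would show $\lambda(B)\subseteq\lambda(A)\cup\lambda(D)$: were there $x\in\lambda(B)\backslash(\lambda(A)\cup\lambda(D))$, choose $f\in C(\beta X,\mathbf{I})$ with $f(x)=1$ and $f\equiv 0$ on $\lambda(A)\cup\lambda(D)$, and put $S=Z(f)\cap\lambda(C)=\lambda(E)$ (non-empty since $\lambda(A)\subseteq S$, and in $\lambda({\mathcal E}(X))$ by Lemma~\ref{FFYG}); then $\lambda(A)\subseteq S$ and $\lambda(D)\subseteq S$ yield $E\leq A$ and $E\leq D$, hence $E\leq A\wedge D=C$, i.e. $\lambda(C)\subseteq Z(f)$, contradicting $x\in\lambda(B)\subseteq\lambda(C)$. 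Thus $\lambda(B)\backslash\lambda(A)\subseteq\lambda(D)$, which with $\lambda(A)\subseteq\lambda(B)$ and $\lambda(A)\cap\lambda(D)=\emptyset$ gives $\lambda(A)=Q^*\backslash\lambda(D)$, open in $X^*$; being a zero-set it is also closed, so $\lambda(A)$ is clopen and $A\in{\mathcal E}_K(X)$.

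The step I expect to be delicate is deducing $E\leq C$ from $E\leq A$ and $E\leq D$, since this uses that the meet $A\wedge D=C$ furnished by (2) dominates the witness $E$ within the poset in which that meet is computed. The safe remedy is to observe that $E$ actually lies in ${\mathcal E}_S(X)$: the set $S=\lambda(E)$ is closed in the compact set $\lambda(C)$, hence compact, and it is a zero-set of $\omega\sigma X\backslash X$ because it is a $G_\delta$ contained in the open subset $\sigma X\backslash X$, so $S\in\lambda({\mathcal E}_S(X))$ by Theorem~\ref{WG}. Then $E$ is a common lower bound of $A$ and $D$ in ${\mathcal E}_S(X)$ and therefore $E\leq C$, and the remaining manipulations are the routine set-theoretic translations already carried out in Theorem~\ref{RPG}. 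Together the two inclusions establish that ${\mathcal E}_K(X)$ is the largest subset of ${\mathcal E}_S(X)$ satisfying (1)--(2).
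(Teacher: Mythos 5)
Your proof is correct, and your verification that ${\mathcal E}_K(X)$ satisfies (1) and (2) is the same as the paper's (Theorem \ref{TTG} for (1), the complement construction $\lambda(C)=\lambda(A)\backslash\lambda(B)$ for (2)); but for the maximality half you take a genuinely different route. The paper's own argument there is purely order/set-theoretic: in your notation, having produced $B$, $C$ and $D$, it introduces the auxiliary element $D'\in{\mathcal E}_S(X)$ with $\lambda(D')=\lambda(A)\cup\lambda(D)$ --- legitimate because, by Theorem \ref{WG}, $\lambda({\mathcal E}_S(X))$ is closed under finite unions --- and observes that $D'\leq A$ and $D'\leq D$ give $D'\leq A\wedge D=C$, while $A\geq C$ and $D\geq C$ give $\lambda(D')\subseteq\lambda(C)$, i.e.\ $D'\geq C$; hence $\lambda(C)=\lambda(A)\cup\lambda(D)$, and since $Q^*\subseteq\lambda(C)$ the identity $\lambda(A)=Q^*\backslash\lambda(D)$ drops out with no function-theoretic input. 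You instead transplant the Urysohn-separation argument of Theorem \ref{RPG}, and that transplant is exactly where the danger lies: the meet $A\wedge D=C$ supplied by condition (2) only dominates lower bounds lying in ${\mathcal E}_S(X)$, not arbitrary ones in ${\mathcal E}(X)$. You correctly identify and repair this; your check that $\lambda(E)$ is a compact $G_\delta$ in the compact space $\omega\sigma X\backslash X$, hence a zero-set of it missing $\Omega$, hence in $\lambda({\mathcal E}_S(X))$ by Theorem \ref{WG}, is sound. (A one-line alternative: $\lambda(E)\subseteq\lambda(C)\subseteq\lambda(T)$ for some $T\in{\mathcal E}_K(X)$ by Theorem \ref{TTG}, so $E\geq T$ and thus $E\in{\mathcal E}_S(X)$.) The trade-off is clear: the paper's proof is shorter and stays inside the Boolean-style calculus of $\lambda$-images, exploiting closure of $\lambda({\mathcal E}_S(X))$ under unions, which is special to this setting; your proof shows that the Theorem \ref{RPG} template survives the passage from ${\mathcal E}(X)$ to ${\mathcal E}_S(X)$ once the membership of the witness $E$ is attended to, at the cost of invoking the separating function on $\beta X$.
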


\begin{proof}
We first check that ${\mathcal E}_K(X)$ satisfies the
above conditions.  Condition (1) follows from Theorem \ref{TTG}. To show that ${\mathcal E}_K(X)$ satisfies  condition  (2), suppose  that $A,
B\in{\mathcal E}_K(X)$, with $A< B$. Let $\lambda(C)=\lambda
(A)\backslash\lambda (B)$,  for some $C\in{\mathcal E}_K(X)$.
Clearly $C\geq A$, and if $Y\in{\mathcal E}_S(X)$ is such that
$C\geq Y$ and $B\geq Y$, then $A\geq Y$. Thus $A=B\wedge C$. It is
clear that $B$ and $C$ have no common upper bound in ${\mathcal
E}_S(X)$.

Now suppose that  ${\mathcal F}\subseteq {\mathcal E}_S(X)$
satisfies conditions (1) and (2). Let $Y\in {\mathcal F}$. Then by
Theorem \ref{WG}, we have $\lambda(Y)\subseteq \sigma X\backslash X$.
Assume the notations of Theorem \ref{IUUG}. Then
$\lambda(Y)\subseteq(\bigcup_{i\in J}X_i)^*=\lambda(A)$ (properly),
for some countable $J\subseteq I$. Using condition (1), let $B\in
{\mathcal F}$ be such that $B\leq A$. Then since $Y>B$, by condition
(2), there exists a $C\in {\mathcal F}$ such that $Y\wedge C=B$ and
$Y, C$ have no common upper bound in ${\mathcal E}_S(X)$. Let $D\in
{\mathcal E}_S(X)$ be such that
$\lambda(D)=\lambda(Y)\cup\lambda(C)$. Then since $D\leq Y $ and
$D\leq C $ we have $D\leq B $. Also since $Y\geq B$ and $C\geq B$,
we have  $\lambda (B)\supseteq\lambda (D)$, and therefore $B=D$. Now
as $Y$ and $C$ have no upper bound in common $\lambda (Y)\cap\lambda
(C)=\emptyset$, and therefore as $\lambda (Y)=\lambda (A)\backslash
(\lambda (A)\cap\lambda (C))$, $\lambda (Y)$  is a clopen subset of
$X^*$. Thus $Y\in {\mathcal E}_K(X)$ and  therefore ${\mathcal
F}\subseteq {\mathcal E}_K(X)$.
\end{proof}

\begin{theorem}\label{PQYG}
Let $X$ and $Y$ be  locally compact non-separable  metrizable spaces. Then for any order-isomorphism $\phi:{\mathcal E}_S(X)\rightarrow{\mathcal E}_S(Y)$ we have $\phi({\mathcal E}_K(X))={\mathcal E}_K(Y)$.
\end{theorem}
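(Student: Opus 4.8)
The plan is to exploit the purely order-theoretic characterization of $\mathcal{E}_K$ furnished by Theorem \ref{WQYG}, which presents $\mathcal{E}_K(X)$ as the largest subset of the poset $\mathcal{E}_S(X)$ satisfying conditions (1) and (2) of that theorem. Both of those conditions are phrased entirely in terms of the order relation, the meet operation $\wedge$, and the existence of common upper bounds, all of which are invariants of any order-isomorphism. Consequently the image $\phi(\mathcal{E}_K(X))$ ought to satisfy the corresponding conditions inside $\mathcal{E}_S(Y)$, and the maximality clause of Theorem \ref{WQYG} will then pin down the desired equality. No topological input beyond Theorem \ref{WQYG} should be required.

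First I would record the elementary fact that the order-isomorphism $\phi:\mathcal{E}_S(X)\rightarrow\mathcal{E}_S(Y)$ preserves all order-theoretic data: it carries $\leq$ to $\leq$, sends any meet $A\wedge B$ that exists in $\mathcal{E}_S(X)$ to $\phi(A)\wedge\phi(B)$ in $\mathcal{E}_S(Y)$, and turns a pair possessing (respectively lacking) a common upper bound into a pair possessing (respectively lacking) a common upper bound. These are standard properties of order-isomorphisms between partially ordered sets.

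Next I would verify that $\mathcal{F}:=\phi(\mathcal{E}_K(X))$ satisfies conditions (1) and (2) of Theorem \ref{WQYG} as a subset of $\mathcal{E}_S(Y)$. For condition (1), given $A'\in\mathcal{E}_S(Y)$, I would apply condition (1) for $\mathcal{E}_K(X)$ to $\phi^{-1}(A')\in\mathcal{E}_S(X)$ to obtain $B\in\mathcal{E}_K(X)$ with $B\leq\phi^{-1}(A')$; then $\phi(B)\in\mathcal{F}$ and $\phi(B)\leq A'$. For condition (2), given $A'<B'$ in $\mathcal{F}$, I would write $A'=\phi(A)$ and $B'=\phi(B)$ with $A<B$ in $\mathcal{E}_K(X)$; condition (2) for $\mathcal{E}_K(X)$ yields $C\in\mathcal{E}_K(X)$ with $B\wedge C=A$ and no common upper bound for $B$ and $C$, and transporting by $\phi$ produces $C'=\phi(C)\in\mathcal{F}$ with $B'\wedge C'=A'$ and no common upper bound for $B'$ and $C'$ in $\mathcal{E}_S(Y)$.

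Having established that $\mathcal{F}=\phi(\mathcal{E}_K(X))$ satisfies (1) and (2) in $\mathcal{E}_S(Y)$, the maximality clause of Theorem \ref{WQYG} forces $\phi(\mathcal{E}_K(X))\subseteq\mathcal{E}_K(Y)$. Running the identical argument with the inverse order-isomorphism $\phi^{-1}$ gives $\phi^{-1}(\mathcal{E}_K(Y))\subseteq\mathcal{E}_K(X)$, equivalently $\mathcal{E}_K(Y)\subseteq\phi(\mathcal{E}_K(X))$, and the two inclusions combine to the asserted equality. The only step demanding genuine care, and the nearest thing to an obstacle, is the transport of condition (2): one must confirm that the meet $B\wedge C=A$ in $\mathcal{E}_S(X)$ really maps to the meet $\phi(B)\wedge\phi(C)=\phi(A)$ in $\mathcal{E}_S(Y)$, and that the absence of a common upper bound in $\mathcal{E}_S(X)$ transports to its absence in $\mathcal{E}_S(Y)$. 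Both follow formally from $\phi$ being an order-isomorphism, so the argument reduces to bookkeeping once the order-theoretic preservation properties are in place.
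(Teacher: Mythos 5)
Your proposal is correct and is essentially identical to the paper's own proof: the paper also sets ${\mathcal F}=\phi({\mathcal E}_K(X))$, observes that ${\mathcal F}$ satisfies the two order-theoretic conditions of Theorem \ref{WQYG} (the paper leaves this as "easy to see," which you spell out via transport along $\phi$), concludes ${\mathcal F}\subseteq{\mathcal E}_K(Y)$ by maximality, and obtains the reverse inclusion by symmetry using $\phi^{-1}$. No gaps; your explicit verification of the preservation of meets and of the nonexistence of common upper bounds is exactly the bookkeeping the paper suppresses.
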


\begin{proof}
Let ${\mathcal F}=\phi({\mathcal E}_K(X))$. Then it is easy to see that ${\mathcal F}$ satisfies the conditions of Theorem \ref{WQYG}, and thus by maximality ${\mathcal F}\subseteq {\mathcal E}_K(Y)$. The reverse inclusion holds by symmetry.
\end{proof}

The following result is analogous to Theorem \ref{FGEF}, replacing ${\mathcal E}_K(X)$ and ${\mathcal E}_K(Y)$ by ${\mathcal E}_S(X)$ and ${\mathcal E}_S(Y)$, respectively.

\begin{theorem}\label{UQYG}
{\em [CH]} Let $X$ and $Y$ be
zero-dimensional  locally compact   metrizable spaces of weights
$\aleph_1$. Then ${\mathcal E}_S(X)$ and ${\mathcal E}_S(Y)$ are
order-isomorphic.
\end{theorem}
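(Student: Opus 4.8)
The plan is to obtain the desired order-isomorphism as a two-step consequence of the earlier results on $\mathcal{E}_K$ and on the passage from $\mathcal{E}_K$ to $\mathcal{E}_S$. The first thing I would do is read off from the weight hypothesis that we are squarely inside that framework: a separable metrizable space has weight at most $\aleph_0$, so $w(X)=w(Y)=\aleph_1$ forces both $X$ and $Y$ to be non-separable. Hence $X$ and $Y$ are zero-dimensional locally compact non-separable metrizable spaces, and in particular $\mathcal{E}_S(X)$ and $\mathcal{E}_S(Y)$ are defined in the sense of Definition \ref{SGGF}.

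The core of the argument is then the following reduction. Since $X$ and $Y$ are zero-dimensional locally compact metrizable of weight $\aleph_1$, Theorem \ref{FGEF} applies under [CH] and produces an order-isomorphism $f:\mathcal{E}_K(X)\rightarrow\mathcal{E}_K(Y)$. (Internally this rests on Theorem \ref{FGGF}, which identifies each of $(\sigma X\backslash X)\cup\{\Omega\}$ and $(\sigma Y\backslash Y)\cup\{\Omega'\}$ with $\omega^*$, together with the fact that $\Omega$ and $\Omega'$ are $P$-points and W. Rudin's theorem that under [CH] any two $P$-points of $\omega^*$ are interchangeable by a self-homeomorphism.) With $f$ in hand, I would invoke Theorem \ref{QYG}: because $X$ and $Y$ are zero-dimensional locally compact non-separable metrizable and $f$ is an order-isomorphism between $\mathcal{E}_K(X)$ and $\mathcal{E}_K(Y)$, there exists an order-isomorphism $F:\mathcal{E}_S(X)\rightarrow\mathcal{E}_S(Y)$ extending $f$. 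This $F$ is exactly the map required, which finishes the proof.

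At this level there is essentially no residual technical obstacle, since all the genuine work has been packaged into Theorems \ref{FGEF} and \ref{QYG}; the only point requiring care is checking that the hypotheses of Theorem \ref{QYG} (zero-dimensionality, local compactness, and especially non-separability) all descend from the weight-$\aleph_1$ assumption, which is the observation made in the first paragraph. The hardest part would arise only if one insisted on a self-contained argument: one would then have to reproduce the mechanism of Theorem \ref{QYG}, namely to promote the homeomorphism $\phi:\omega\sigma X\backslash X\rightarrow\omega\sigma Y\backslash Y$ induced by $f$ (via the clopen-algebra isomorphism $G$ of Theorem \ref{GGH} and Stone duality) to a map on zero-sets that sends $\Omega$ to $\Omega'$, and to verify that it respects the description of $\lambda(\mathcal{E}_S)$ given in Theorem \ref{WG}. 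Since those verifications are already available, the clean route is simply to cite Theorems \ref{FGEF} and \ref{QYG} in succession.
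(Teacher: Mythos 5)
Your proposal is correct and follows exactly the paper's own route: Theorem \ref{FGEF} gives the order-isomorphism of ${\mathcal E}_K(X)$ and ${\mathcal E}_K(Y)$, and Theorem \ref{QYG} extends it to ${\mathcal E}_S(X)$ and ${\mathcal E}_S(Y)$. Your additional observation that the weight-$\aleph_1$ hypothesis forces non-separability (so that the hypotheses of Theorem \ref{QYG} and Definition \ref{SGGF} are met) is a worthwhile detail that the paper leaves implicit.
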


\begin{proof}
By  Theorem \ref{FGEF}, ${\mathcal E}_K(X)$ and ${\mathcal
E}_K(Y)$ are order-isomorphic. The result now follows as by Theorem
\ref{QYG} every such order-isomorphism can be extended to an
order-isomorphism of ${\mathcal E}_S(X)$ onto ${\mathcal
E}_S(Y)$.
\end{proof}

The following example shows that zero-dimensionality cannot be
omitted from  Theorems \ref{UQYG} and \ref{FGEF}.

\begin{example}\label{OYG}
Let $X=D(\aleph_1)$ and
$Y=\bigoplus_{i<\omega_1} Y_i$, where for each $i<\omega_1$, $Y_i=
\mathbf{R}$. Suppose  that ${\mathcal E}_S(X)$  and ${\mathcal
E}_S(Y)$ are order-isomorphic, and let $\phi:{\mathcal
E}_S(X)\rightarrow{\mathcal E}_S(Y)$ denote an order-isomorphism. By
Theorem
 \ref{PQYG}, $\phi|{\mathcal E}_K(X):{\mathcal E}_K(X)\rightarrow{\mathcal
E}_K(Y)$ is also an order-isomorphism. Let $j<\omega_1$ and let
$T\in {\mathcal E}_K(X)$ be such that $\lambda(T)=Y_{j}^*$. Let
$S\in {\mathcal E}_K(X)$ be such that $\phi(S)=T$. Then since the
number of clopen subsets of $Y_{j}^*=\mathbf{R}^*$ is finite, there
are only finitely many $A\in{\mathcal E}_K(X)$ such that $A\geq S$,
which is a contradiction, as $\lambda_X(S) \subseteq D^*\simeq
\omega^*$, for some countable $D\subseteq X$, and there are
infinitely many clopen subsets  of $\lambda_X(S)$ each corresponding
to an element $A\in{\mathcal E}_K(X)$  with $A\geq
S$.
\end{example}

In Theorems \ref{RPG} and \ref{WQYG}, we characterized ${\mathcal E}_K(X)$ among the subsets of ${\mathcal E}(X)$ and ${\mathcal E}_S(X)$, respectively. In the following we give a characterization of ${\mathcal E}_S(X)$ among the subsets of ${\mathcal E}(X)$ which contain ${\mathcal E}_K(X)$.

\begin{theorem}\label{TQYG}
Let $X$ be a zero-dimensional
locally compact non-separable  metrizable space. Then the set
${\mathcal E}_S(X)$ is the smallest (with respect to set-theoretic
inclusion) subset of ${\mathcal E}(X)$ containing ${\mathcal
E}_K(X)$, such that for every upper bounded (in ${\mathcal E}(X)$)
 sequence in ${\mathcal E}_K(X)$, it contains its least upper bound (in ${\mathcal E}(X)$). 
\end{theorem}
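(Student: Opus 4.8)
The plan is to transport everything through the order-anti-isomorphism $\lambda$, using the two descriptions $\lambda({\mathcal E}_S(X))=\{Z\in{\mathcal Z}(\omega\sigma X\backslash X):\Omega\notin Z\}\backslash\{\emptyset\}$ (Theorem~\ref{WG}) and $\lambda({\mathcal E}_K(X))=\{Z\in\lambda({\mathcal E}(X)):Z\text{ clopen in }X^*\}$ (Theorem~\ref{LDKG}). The statement splits into two claims: that ${\mathcal E}_S(X)$ itself has the stated closure property, and that any ${\mathcal F}\subseteq{\mathcal E}(X)$ with that property must contain ${\mathcal E}_S(X)$. Throughout I would freely use that least upper bounds of upper-bounded sequences exist in ${\mathcal E}(X)$ (Theorem~\ref{YIOPY}(2)), that such a least upper bound corresponds under $\lambda$ to the set-theoretic intersection of the images (which lies in $\lambda({\mathcal E}(X))$ by Lemma~\ref{FFYG} and is the greatest lower bound inside $\lambda({\mathcal E}(X))$), and the fact that $\sigma X\backslash X=\sigma X\cap X^*$ is open in $X^*$, so that a compact set contained in it is clopen in $X^*$ precisely when it is clopen in $\omega\sigma X\backslash X$ and omits $\Omega$.

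For the first claim, the containment ${\mathcal E}_K(X)\subseteq{\mathcal E}_S(X)$ is immediate from Theorem~\ref{TTG} (take $S=Y$). Given an upper-bounded sequence $\{Y_n\}_{n<\omega}$ in ${\mathcal E}_K(X)$, each $\lambda(Y_n)$ is clopen in $X^*$ and, by Lemma~6.6 of [9], contained in $\sigma X$; hence each $\lambda(Y_n)$ is a clopen subset of $\omega\sigma X\backslash X$ omitting $\Omega$. Their intersection $Z=\bigcap_{n<\omega}\lambda(Y_n)$ is then a countable intersection of clopen sets, hence a closed $G_\delta$ and therefore a zero-set of the compact (normal) space $\omega\sigma X\backslash X$, with $\Omega\notin Z$; it is non-empty because an upper bound $W$ of $\{Y_n\}$ gives $\emptyset\neq\lambda(W)\subseteq Z$. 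By Theorem~\ref{WG}, $Z=\lambda(Y)$ for some $Y\in{\mathcal E}_S(X)$, and since $Z$ is the greatest lower bound of $\{\lambda(Y_n)\}$ inside $\lambda({\mathcal E}(X))$, the element $Y$ is exactly $\bigvee_{n<\omega}Y_n$. Thus ${\mathcal E}_S(X)$ contains the least upper bound of every such sequence.

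For the second (minimality) claim, let ${\mathcal F}$ satisfy the hypotheses and fix $Y\in{\mathcal E}_S(X)$; I would produce an increasing sequence in ${\mathcal E}_K(X)$ whose least upper bound is $Y$. By Theorem~\ref{WG}, $\lambda(Y)$ is a non-empty zero-set of $\omega\sigma X\backslash X$ with $\Omega\notin\lambda(Y)$, so $\lambda(Y)\subseteq\sigma X\backslash X$. Here I invoke zero-dimensionality: as established in the proof of Theorem~\ref{GGH}, $\omega\sigma X\backslash X$ is compact and zero-dimensional, so the zero-set $\lambda(Y)$ can be written as a decreasing intersection $\bigcap_{n<\omega}V_n$ of clopen subsets of $\omega\sigma X\backslash X$, each chosen inside the open set $\sigma X\backslash X$ (so that $\Omega\notin V_n$). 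Each $V_n$ is then a non-empty clopen subset of $X^*$ contained in $\sigma X$, whence $V_n\in\lambda({\mathcal E}(X))$ (Lemmas~\ref{FFG} and \ref{FFYG}) and, being clopen in $X^*$, $V_n\in\lambda({\mathcal E}_K(X))$ by Theorem~\ref{LDKG}. Choosing $Y_n\in{\mathcal E}_K(X)$ with $\lambda(Y_n)=V_n$ yields an increasing sequence bounded above by $Y$, and exactly as in the first claim $\bigvee_{n<\omega}Y_n=Y$. Since $\{Y_n\}$ is an upper-bounded sequence in ${\mathcal E}_K(X)$, the defining property of ${\mathcal F}$ forces $Y\in{\mathcal F}$; hence ${\mathcal E}_S(X)\subseteq{\mathcal F}$.

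I expect the main obstacle to be the clopen-approximation step in the minimality argument, namely representing the zero-set $\lambda(Y)$ as a countable decreasing intersection of clopen sets that both omit $\Omega$ and are clopen in $X^*$ (and not merely in $\omega\sigma X\backslash X$). This is where the zero-dimensionality of $X$, through that of $\omega\sigma X\backslash X$, is essential, and where the interplay between the three relevant topologies on $X^*$, $\sigma X\backslash X$, and $\omega\sigma X\backslash X$ must be tracked carefully; once the approximating sets $V_n$ are in hand, the remainder is a routine translation through $\lambda$ together with Theorem~\ref{YIOPY}(2).
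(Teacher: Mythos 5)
Your proof is correct, and in the half that carries the real content (minimality) it takes a genuinely different route from the paper's. The paper works inside $X$ itself: by Lemma \ref{FGRLG} (strong zero-dimensionality of $X$) it writes $\lambda(Y)=\bigcap_{n<\omega}U_n^*$ for an extension trace $\{U_n\}_{n<\omega}$ consisting of clopen subsets of $X$, picks $S\in{\mathcal E}_K(X)$ with $S\leq Y$ via Theorem \ref{TTG}, and sets $\lambda(Y_n)=\lambda(S)\cap U_n^*$; the intersection with the compact clopen set $\lambda(S)$ is precisely what forces each $\lambda(Y_n)$ into $\sigma X$ and hence into $\lambda({\mathcal E}_K(X))$. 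You instead work entirely in the remainder: using compactness and zero-dimensionality of $\omega\sigma X\backslash X$ (established in the proof of Theorem \ref{GGH}) you squeeze clopen sets $V_n\subseteq\sigma X\backslash X$ between the zero-set $\lambda(Y)$ (given by Theorem \ref{WG}) and its cozero-set neighborhoods, so containment in $\sigma X$ is built into the choice of the $V_n$ and no auxiliary element $S$ is needed; the identification $V_n\in\lambda({\mathcal E}_K(X))$ then follows from Lemmas \ref{FFG}, \ref{FFYG} and Theorem \ref{LDKG}, as you indicate, and the interplay of the three topologies that you flag as the delicate point does work out, since $\sigma X\backslash X$ is open in both $X^*$ and $\omega\sigma X\backslash X$ while the $V_n$ are compact. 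Both constructions ultimately rest on the same fact (zero-dimensionality of $\beta X$) and both finish by the same translation through the order-anti-isomorphism $\lambda$. For the first half, the paper is terser than you are: the least upper bound exists by Theorem \ref{YIOPY}(2) and dominates an element of ${\mathcal E}_K(X)$, hence lies in ${\mathcal E}_S(X)$ by Theorem \ref{TTG}; your explicit realization of the supremum as $\lambda^{-1}\bigl(\bigcap_{n<\omega}\lambda(Y_n)\bigr)$ via Theorem \ref{WG} proves existence along the way and so does not even need Theorem \ref{YIOPY}(2). The trade-off is that the paper's route recycles the already-developed extension-trace machinery, while yours is more self-contained within the compact zero-dimensional space $\omega\sigma X\backslash X$ and makes the role of zero-dimensionality more transparent.
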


\begin{proof}
Using Theorem  \ref{TTG}, it can be seen that the set
${\mathcal E}_S(X)$  satisfies the above requirements.

Now suppose that  ${\mathcal E}_K(X)\subseteq {\mathcal
F}\subseteq{\mathcal E}_S(X)$ satisfies the conditions of the
theorem. Let $Y\in {\mathcal E}_S(X)$. By Lemma \ref{FGRLG}, we have
$\lambda(Y)=\bigcap_{n<\omega}U_n^*$, for some extension trace
$\{U_n\}_{n<\omega}$ in $X$  consisting of clopen subsets of $X$.
Since $Y\in {\mathcal E}_S(X)$, by Theorem  \ref{TTG}, $Y\geq S$ for some
$S \in {\mathcal E}_K(X)$. Now for any $n<\omega$, $\lambda(S)\cap
U_n^*$ is a clopen subset of $X^*$, and therefore $\lambda(S)\cap
U_n^*=\lambda(Y_n)$, for some $Y_n\in {\mathcal E}_K(X)$. Clearly
 for any $n<\omega$, $\lambda(Y)\subseteq\lambda(Y_n)$, and thus
$Y\geq Y_n $. Therefore $Y=\bigvee_{n<\omega}Y_n$, and thus by
assumption $Y\in{\mathcal F}$, which shows that ${\mathcal
E}_S(X)\subseteq {\mathcal F}$.
\end{proof}

\section{Some  cardinality theorems}

In this section we obtain some theorems on the cardinality of the
sets ${\mathcal E}_K(X)$ and ${\mathcal E}(X)$. By modifying the
proofs, similar results can be obtained on the cardinality of  the
set ${\mathcal E}_S(X)$.

\begin{theorem}\label{PQYG}
Let $X$ be a locally compact
non-compact metrizable space. Then we have
\[\big|{\mathcal E}(X)\big|= 2^{w(X)}.\]
\end{theorem}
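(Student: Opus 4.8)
The plan is to transport the counting problem to zero-sets via the order-anti-isomorphism $\lambda$ together with the characterization in Theorem \ref{LFKG}. Since $\lambda$ is a bijection of $\mathcal{E}(X)$ onto $\lambda(\mathcal{E}(X))$, and by Theorem \ref{LFKG} the latter is exactly the set of non-empty zero-sets of $\beta X$ missing $X$, it suffices to show that this collection has cardinality $2^{w(X)}$. For the upper bound, every zero-set of $\beta X$ is $Z(f)$ for some $f\in C(\beta X,\mathbf{I})$, and by the universal property of $\beta X$ the restriction map $C(\beta X,\mathbf{R})\to C(X,\mathbf{R})$ is a bijection, so $|\mathcal{Z}(\beta X)|\le|C(X,\mathbf{R})|$. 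As $X$ is metrizable, $d(X)=w(X)$; fixing a dense $D\subseteq X$ with $|D|=w(X)$ and noting that a continuous real function is determined by its values on $D$ gives $|C(X,\mathbf{R})|\le\mathfrak{c}^{\,w(X)}=2^{w(X)}$ (using $w(X)\ge\aleph_0$). Hence $|\mathcal{E}(X)|\le 2^{w(X)}$.

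The lower bound is the substantive part, and I would split it according to whether $X$ is separable; both cases rest on building functions $g\in C(X,(0,1])$, so that $Z(\beta g)$ automatically misses $X$, whose Stone-\v{C}ech extensions vanish on prescribed, pairwise-separated pieces of $X^*$. When $X$ is non-separable, write $X=\bigoplus_{i\in I}X_i$ as in Theorem \ref{IUUG}, so that $w(X)=|I|\ge\aleph_1$. For each $i$ fix a strictly positive $h_i\in C(X_i,(0,1])$ whose extension vanishes somewhere on $X_i^*$ (possible since $X_i$ is non-compact), and fix a point $p_i\in Z(\beta h_i)\cap X_i^*$. For $J\subseteq I$ define $g_J\in C(X,(0,1])$ by $g_J|X_i=h_i$ if $i\in J$ and $g_J|X_i\equiv 1$ otherwise; this is continuous because the $X_i$ are clopen summands. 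Then $p_i\in Z(\beta g_J)$ if and only if $i\in J$, so the non-empty zero-sets $Z(\beta g_J)\setminus X$ (for $J\neq\emptyset$) are pairwise distinct, yielding $2^{|I|}=2^{w(X)}$ elements of $\lambda(\mathcal{E}(X))$ by Theorem \ref{LFKG}.

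When $X$ is separable, $w(X)=\aleph_0$, and I would instead realize $\mathfrak{c}=2^{\aleph_0}$ distinct extensions inside a single ``end.'' Pick a closed discrete sequence $\{x_n\}$ going to infinity in $X$ together with a discrete family of relatively compact open sets $V_n\ni x_n$, and fix an almost disjoint family $\{A_t\}_{t<\mathfrak{c}}$ of infinite subsets of $\omega$. Using Urysohn functions supported in the $V_n$, define for each $t$ a function $g_t\in C(X,(0,1])$ with $g_t(x_n)=1/(n+1)$ for $n\in A_t$ and $g_t\equiv 1$ off $\bigcup_{n\in A_t}V_n$; local finiteness of $\{V_n\}$ guarantees continuity. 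Then $Z(\beta g_t)$ is a non-empty zero-set of $\beta X$ missing $X$ and contained in $\mathrm{cl}_{\beta X}(\bigcup_{n\in A_t}V_n)$; since the $\mathrm{cl}_{\beta X}V_n$ are compact and the $A_t$ are almost disjoint, the remainders $Z(\beta g_t)\setminus X$ are pairwise disjoint and non-empty, producing $\mathfrak{c}=2^{w(X)}$ distinct elements of $\lambda(\mathcal{E}(X))$.

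Combining the two bounds gives $|\mathcal{E}(X)|=2^{w(X)}$. The main obstacle is the lower bound, and within it the distinctness (injectivity) arguments: in the non-separable case one must fix the witnessing points $p_i$ \emph{before} choosing $J$, so that membership in $Z(\beta g_J)$ faithfully records $J$; in the separable case one must verify that almost-disjointness of the index sets, together with compactness of the $\mathrm{cl}_{\beta X}V_n$, forces the remainders apart. The continuity checks for $g_J$ and $g_t$, and the appeal to $Z(\beta g)\cap X=\emptyset$ from strict positivity, are routine.
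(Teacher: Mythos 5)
Your proposal is correct, but it reaches the result by a noticeably different route than the paper. You funnel everything through Theorem \ref{LFKG}: since $\lambda$ is injective, you count non-empty zero-sets of $\beta X$ missing $X$, and you manufacture these as $Z(\beta g)$ for strictly positive $g\in C(X,(0,1])$ --- in the non-separable case summand-wise functions $g_J$ with witness points $p_i\in X_i^*$ recording $J$, and in the separable case functions $g_t$ tied to an almost disjoint family, with discreteness of $\{V_n\}$, normality of $X$ (disjoint closed sets are completely separated, hence have disjoint closures in $\beta X$), and compactness of the finitely many shared $\mbox{cl}_X V_n$ forcing the remainders apart. The paper instead never invokes Theorem \ref{LFKG} here: for non-separable $X$ it exhibits $2^{|I|}$ pairwise non-equivalent extension traces $\mathcal{V}_J=\{\bigcup_{i\in J}U_n^i\}_{n<\omega}$ directly, and for separable $X$ it cites the order-anti-isomorphism of ${\mathcal E}(X)$ with ${\mathcal Z}(X^*)\setminus\{\emptyset\}$ from [9] together with a $z$-embedded copy of $\omega^*$ in $X^*$, so that $|{\mathcal Z}(X^*)|\geq|{\mathcal Z}(\omega^*)|=2^{\aleph_0}$; its upper bound counts sequences of subfamilies of a fixed base rather than, as you do, continuous functions via a dense set. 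Your approach buys explicitness (injective maps $J\mapsto Z(\beta g_J)$ and $t\mapsto Z(\beta g_t)$ with concrete separating witnesses) and displays the cardinality theorem as an application of the paper's own characterization of $\lambda({\mathcal E}(X))$; the paper's argument is lighter on constructions in $\beta X$ and leans more on the machinery already imported from [9]. One slip to fix: the restriction map $C(\beta X,\mathbf{R})\to C(X,\mathbf{R})$ is \emph{not} a bijection (its image is the set of bounded functions); it is injective because $X$ is dense in $\beta X$, and injectivity is all your upper bound needs --- alternatively, work with $C(\beta X,\mathbf{I})\to C(X,\mathbf{I})$, which genuinely is a bijection. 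Also, in the non-separable case the existence of $h_i$ with $\inf h_i=0$ uses that a non-compact metrizable space is not pseudocompact, not merely that it is non-compact; this is routine but worth saying.
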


\begin{proof}
First suppose that  $X$ is separable. By Theorem \ref{OOHG}
of [9], ${\mathcal E}(X)$ and ${\mathcal Z}(X^*)\backslash
\{\emptyset\}$ are order-anti-isomorphic. Since $X$ is non-compact,
it  contains a copy $M$ of $\omega$ as a closed subset, and
therefore since $ \omega^*\simeq \mbox {cl}_{\beta X}M\backslash M
$, we may assume that $\omega^*\subseteq X^*$. But $\omega^*$ is
$z$-embedded in $X^*$ and therefore
\[\big|{\mathcal E}(X)\big|=\big|{\mathcal Z}(X^*)\big|\geq \big|{\mathcal Z}(\omega^*)\big|= 2^{\aleph_0}=2^{w(X)}.\]

Now suppose that  $X$ is non-separable and assume
the notations  of Theorem \ref{IUUG}.  Clearly $|I|=w(X)$. For each $i\in
I$, let $\{U^i_n\}_{n<\omega}$ be an extension trace in $X_i$. For
each non-empty $J\subseteq I$ and each $n<\omega$, let
$V^n_J=\bigcup_{i\in J} U^i_n$. Then it is  easy to see that
${\mathcal V}_J=\{V^n_J\}_{n<\omega}$ is an extension trace in $X$
and ${\mathcal V}_{J_1}$ and ${\mathcal V}_{J_2}$ are non-equivalent
for  $J_1\neq J_2$. Thus in this case $|{\mathcal E}(X)|\geq
|{\mathcal P}(I)|= 2^{w(X)}$.

Finally, we note that to every extension trace $\{U_n\}_{n<\omega}$
in $X$, there corresponds a sequence $\{{\mathcal B}_n\}_{n<\omega}$
of subsets of ${\mathcal B}$, where ${\mathcal B}$ is a base for $X$
of cardinality $w(X)$, in such a way that $U_n=\bigcup{\mathcal
B}_n$, for all $n<\omega$. Since the number of such sequences does
not exceed $|{\mathcal P}({\mathcal B})|^{\aleph_0}=2^{w(X)}$, it
follows that $2^{w(X)}\geq |{\mathcal E}(X)|$, and thus combined
with above, this implies that  equality
holds.
\end{proof}

By a known result of Tarski (Tarski Theorem) for any infinite set $E$, there is a
collection $\mathcal{A}$ of subsets of $E$ such that $|{\mathcal
A}|=|E|^{\aleph_0}$, $|A|=\aleph_0$ for any $A\in \mathcal{A}$ and
the intersection of any two distinct elements of $\mathcal{A}$ is
finite (see Theorem \ref{LKG} of [10]). We use this in the following
theorem.

\begin{theorem}\label{UYG}
Let  $X$ be a locally compact
non-compact metrizable space. Then we have
\[\big|{\mathcal E}_K(X)\big|\leq w(X)^{\aleph_0}.\]
Furthermore, if  $X$ is non-separable or zero-dimensional,
then equality holds.
\end{theorem}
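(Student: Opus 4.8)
The plan is to prove the inequality $|{\mathcal E}_K(X)|\le w(X)^{\aleph_0}$ first and then, in each of the two cases named in the statement, to exhibit a family of size $w(X)^{\aleph_0}$ inside ${\mathcal E}_K(X)$. Throughout I would pass to $\lambda({\mathcal E}_K(X))$ in place of ${\mathcal E}_K(X)$, since $\lambda$ is injective (being an order-anti-isomorphism onto its image), so the two sets have equal cardinality, and I would repeatedly use the clopen characterization of Theorem \ref{LDKG} to recognise membership in $\lambda({\mathcal E}_K(X))$.

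For the upper bound I would first dispose of the separable case, where $w(X)=\aleph_0$: here $|{\mathcal E}_K(X)|\le|{\mathcal E}(X)|=2^{w(X)}=2^{\aleph_0}=\aleph_0^{\aleph_0}=w(X)^{\aleph_0}$ by the preceding cardinality theorem. In the non-separable case write $X=\bigoplus_{i\in I}X_i$ as in Theorem \ref{IUUG}, with $|I|=w(X)=:\kappa$. Given $Y\in{\mathcal E}_K(X)$, Lemma 6.6 of [9] gives $\lambda(Y)\subseteq\sigma X$, and since $\lambda(Y)$ is compact (being clopen in the compact space $X^*$) it is contained in $M_J^*=(\bigcup_{i\in J}X_i)^*$ for some countable $J\subseteq I$, exactly as in the proof of Theorem \ref{LDKG}. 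As $M_J$ is clopen, $\mbox{cl}_{\beta X}M_J$ is equivalent to $\beta M_J$ and $\lambda(Y)$ is a clopen subset of $M_J^*$; by the separable theory (Theorems \ref{TUUG} and \ref{WG} of [9]) the non-empty clopen subsets of $M_J^*$ are precisely $\lambda_{M_J}({\mathcal E}_K(M_J))$, of which there are at most $|{\mathcal E}(M_J)|=2^{w(M_J)}=2^{\aleph_0}$. Summing over the $\kappa^{\aleph_0}$ countable subsets $J$ gives
\[ |{\mathcal E}_K(X)|=|\lambda({\mathcal E}_K(X))|\le\kappa^{\aleph_0}\cdot 2^{\aleph_0}=\kappa^{\aleph_0}=w(X)^{\aleph_0}. \]

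For the lower bound when $X$ is non-separable I would assign to each countable $A\subseteq I$ the set $M_A^*=(\bigcup_{i\in A}X_i)^*$. By Lemma \ref{FFG} this lies in $\lambda({\mathcal E}(X))$, and being clopen in $X^*$ it lies in $\lambda({\mathcal E}_K(X))$ by Theorem \ref{LDKG}. The map $A\mapsto M_A^*$ is injective: if $i_0\in A\setminus B$, then $X_{i_0}$ and $M_B$ are disjoint clopen (hence completely separated) subsets of $X$, so $\mbox{cl}_{\beta X}X_{i_0}\cap\mbox{cl}_{\beta X}M_B=\emptyset$, while $X_{i_0}$ is non-compact so $X_{i_0}^*\ne\emptyset$; thus $\emptyset\ne X_{i_0}^*\subseteq M_A^*\setminus M_B^*$. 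As there are $\kappa^{\aleph_0}$ countable subsets of $I$, this gives $|{\mathcal E}_K(X)|\ge w(X)^{\aleph_0}$, and notably no appeal to Tarski's theorem is needed here, precisely because the factors $X_i$ are non-compact and so have non-empty trace.

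The remaining case is $X$ separable and zero-dimensional, where $w(X)^{\aleph_0}=\aleph_0^{\aleph_0}=2^{\aleph_0}$. Since $X$ is non-compact metrizable it contains a closed discrete sequence, and using local compactness and zero-dimensionality I would shrink this to a discrete family $\{C_n\}_{n<\omega}$ of non-empty compact open subsets of $X$; for infinite $A\subseteq\omega$ the set $\bigcup_{n\in A}C_n$ is then clopen and non-compact, so $(\bigcup_{n\in A}C_n)^*$ is a non-empty clopen subset of $X^*$ and hence lies in $\lambda({\mathcal E}_K(X))$. This is exactly where Tarski's theorem enters, and it is the main obstacle: because each $C_n$ is compact, $C_n^*=\emptyset$, so two index sets differing in only finitely many coordinates produce the same trace, and arbitrary distinct subsets of $\omega$ do \emph{not} yield distinct remainders. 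To obtain $2^{\aleph_0}$ distinct traces I would take an almost disjoint family ${\mathcal A}$ of infinite subsets of $\omega$ with $|{\mathcal A}|=\aleph_0^{\aleph_0}=2^{\aleph_0}$, as furnished by Tarski's theorem; for distinct $A,B\in{\mathcal A}$ the set $A\setminus B$ is infinite, so $(\bigcup_{n\in A\setminus B}C_n)^*$ is a non-empty clopen set contained in $(\bigcup_{n\in A}C_n)^*\setminus(\bigcup_{n\in B}C_n)^*$, establishing distinctness and hence $|{\mathcal E}_K(X)|\ge 2^{\aleph_0}=w(X)^{\aleph_0}$. Combining the two lower bounds with the upper bound yields equality whenever $X$ is non-separable or zero-dimensional.
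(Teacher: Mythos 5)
Your proposal is correct, but it diverges from the paper's proof in two substantive ways, one of which is an actual simplification. For the upper bound the paper argues uniformly, with no separable/non-separable case split: starting from an extension trace with compact differences (Lemma \ref{LYFG}), it sandwiches each compact set $\mbox{cl}_X U_n\backslash U_{n+1}$ between a finite union of members of a fixed base ${\mathcal B}$ and the open set $U_{n-1}\backslash \mbox{cl}_X U_{n+2}$, replaces the trace by an equivalent one whose terms are tail-unions of these finite subfamilies, and then counts sequences of finite subsets of ${\mathcal B}$, obtaining $w(X)^{\aleph_0}$ directly; this is more elementary and self-contained than your route, which localizes $\lambda(Y)$ into a separable clopen summand $M_J^*$ (via Lemma 6.6 of [9] plus compactness of the clopen set $\lambda(Y)$) and then invokes the separable classification of $\lambda({\mathcal E}_K)$ together with $|{\mathcal E}(M_J)|=2^{\aleph_0}$ --- valid, but resting on noticeably more machinery. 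For the lower bounds, the paper's separable zero-dimensional case is essentially your argument: its compact clopen pieces $D_n=V_n\backslash V_{n+1}$ come from a $\sigma$-compact exhaustion rather than from your discrete family $\{C_n\}_{n<\omega}$, and it then feeds an almost disjoint family of infinite subsets of $\omega$ into tail-unions; the need for almost disjointness there is exactly the phenomenon you identify, namely that the pieces have empty remainder. In the non-separable case, however, the paper really does invoke Tarski's theorem, taking an almost disjoint family ${\mathcal J}$ of countable subsets of $I$ with $|{\mathcal J}|=w(X)^{\aleph_0}$ and the elements $Y_J$ with $\lambda(Y_J)=(\bigcup_{i\in J}X_i)^*$; your observation that the map $J\mapsto (\bigcup_{i\in J}X_i)^*$ is already injective on \emph{all} countable subsets of $I$ --- because each $X_i$ is non-compact, so $X_{i_0}^*\neq\emptyset$ separates the images of $A$ and $B$ whenever $i_0\in A\backslash B$ --- is correct and eliminates Tarski from that case entirely, since the family of countable subsets of $I$ has cardinality $w(X)^{\aleph_0}$ by ordinary cardinal arithmetic. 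In short, your proof buys a cleaner non-separable lower bound at the cost of a heavier upper bound.
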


\begin{proof}
Let $Y\in {\mathcal E}_K(X)$. Then by Lemma \ref{LYFG}, there
exists an extension trace ${\mathcal U}= \{U_n\}_{n<\omega}$ in $X$
which generates $Y$, and $\mbox {cl}_X U_n\backslash U_{n+1}$ is
compact for all $n<\omega$. Let ${\mathcal B}$ be a base in $X$ with
$|{\mathcal B}|=w(X)$, and let $U_0=X$. Then since for all
$n<\omega$, $\mbox {cl}_X U_n\backslash U_{n+1}$ is a compact subset
of the open set $ U_{n-1}\backslash \mbox {cl}_X U_{n+2}$, it
follows that there exists a $k_n<\omega$ and $C^n_1,\ldots,C^n_{k_n}\in
{\mathcal B}$ such that
\[\mbox {cl}_X U_n\backslash U_{n+1}\subseteq C^n_1\cup\cdots\cup
C^n_{k_n}\subseteq U_{n-1}\backslash \mbox {cl}_X U_{n+2}.\]
Let
\[ V_n=\bigcup_{i=1}^{k_n}C^n
_i\cup\bigcup_{i=1}^{k_{n+1}}C^{n+1} _i\cup\cdots.\]
Then clearly $V_n\subseteq U_{n-1}$. On the other hand
since $\bigcap_{n<\omega}U_n=\emptyset$, it follows from $\mbox
{cl}_X U_j\backslash U_{j+1}\subseteq \bigcup_{i=1}^{k_j}C^j _i$,
$j=n, n+1,\ldots$  that $\mbox {cl}_X U_n\subseteq V_{n}$. Now
${\mathcal V}= \{V_n\}_{n<\omega}$ is an extension trace in $X$
equivalent to ${\mathcal U}$, and therefore $Y$ is also generated by
 ${\mathcal V}$. So to each $Y\in {\mathcal E}_K(X)$, there
corresponds a sequence $\{\{C_i^n\}_{i=1}^{k_n}\}_{n<\omega}$ which
consists of finite subsets of ${\mathcal B}$. Since the number of
such sequences is not greater than  $w (X)^{\aleph_0}$, we have
$|{\mathcal E}_K(X)|\leq w(X)^{\aleph_0}$.

For the second part of the theorem we consider the following two
cases.

{\em Case 1)} Suppose that $X$ is separable and zero-dimensional. By
Proposition \ref{SGGF} of [9], $X=\bigcup _{n<\omega} C_n$, where each
$C_n$ is open with compact closure in $X$, and $\mbox {cl}_X
C_n\subseteq C_{n+1}$ for all $n<\omega$. Let $U_n=X\backslash \mbox
{cl}_X C_n$. Then clearly ${\mathcal U}=\{U_n\}_{n<\omega}$ is an
extension trace in $X$. By Lemma \ref{FGRLG} we can choose an extension
trace ${\mathcal V}=\{V_n\}_{n<\omega}$ consisting of clopen subsets
of $X$ equivalent to ${\mathcal U}$. For each $n<\omega$, there
exists a $k_n$ such that $U_{k_n}\subseteq V_n$, and so $X\backslash
V_n\subseteq X\backslash U_{k_n}=\mbox {cl}_X C_{k_n}$. Therefore
$X\backslash V_n$ and thus $V_n\backslash V_{n+1}$ is compact. Let
$D_1, D_2,\ldots$ be distinct non-empty sets of the form $
V_n\backslash V_{n+1}$. Now let $\{N_t\}_{t<2^{\aleph_0}}$ be a
partition of $\omega$ into  infinite almost disjoint subsets.  For
$t<2^{\aleph_0}$ let $N_t= \{n^t_1, n^t_2,\ldots\}$, where $n_i^t\neq
n_j^t$, for distinct $i, j<\omega$, and let
\[{\mathcal
V}_t=\{D_{n^t_k}\cup D_{n^t_{k+1}}\cup\cdots\}_{k<\omega}\]
which is
an extension trace of clopen subsets of $X$. Clearly each ${\mathcal
V}_t$ is corresponding to an elements of ${\mathcal E}_K(X)$, and
since the corresponding members of  ${\mathcal E}_K(X)$ are distinct
we have $|{\mathcal E}_K(X)|\geq 2^{\aleph_0}$.

{\em Case 2)} Suppose that $X$ is not separable and assume the
notations of Theorem \ref{IUUG}. By Tarski Theorem, there exists a
collection ${\mathcal J}$ of subsets of $I$, with $| {\mathcal
J}|=|I|^{\aleph_0}=w(X)^{\aleph_0}$  and $| J|=\aleph_0$, for every
$J\in {\mathcal J}$, such that the intersection of any two distinct
elements of ${\mathcal J}$ is finite. Let for each $J\in {\mathcal
J}$, $Y_J\in{\mathcal E}_K(X)$ be such  that
$\lambda(Y_J)=(\bigcup_{i\in J} X_i)^*$. Then $\{Y_J:J\in {\mathcal
J}\}$ is a collection of distinct elements of ${\mathcal E}_K(X)$
and therefore $|{\mathcal E}_K(X)|\geq
w(X)^{\aleph_0}$.
\end{proof}

\section{Some  questions}

Assume [CH]. Let $X$ and $Y$ be zero-dimensional locally compact
non-separable metrizable spaces. Suppose that $\phi: {\mathcal
E}(X)\rightarrow {\mathcal E}(Y)$ is an order-isomorphism. Then,
since $\phi( {\mathcal E}_K(X))$ satisfies the conditions of Theorem
\ref{RPG}, by maximality of ${\mathcal E}_K(Y)$, we have $\phi( {\mathcal
E}_K(X))\subseteq{\mathcal E}_K(Y)$, and therefore by symmetry
$\phi( {\mathcal E}_K(X))={\mathcal E}_K(Y)$. Thus $ {\mathcal
E}_K(X)$ and ${\mathcal E}_K(Y)$ are order-isomorphic. Note that in
proof of Theorem \ref{GGH}, using its notations,  we could define a
homeomorphism $f:\omega\sigma X\backslash X\rightarrow\omega\sigma
Y\backslash Y$ such that $f(U)=G(U)$, for any $U\in {\mathcal
B}(\omega\sigma X\backslash X)$. Therefore since for any countable
$J\subseteq I$, $\Omega '\notin g(Q_J)=G(Q_J)=f(Q_J)$, we have
$f(\Omega)=\Omega '$ and thus $f| \sigma X\backslash X: \sigma
X\backslash X\rightarrow \sigma Y\backslash Y$ is a homeomorphism.
In other words, having ${\mathcal E}(X)$ and ${\mathcal E}(Y)$
order-isomorphic implies that $\sigma X\backslash X$  and $\sigma
Y\backslash Y$ are homeomorphic. We don't know if the converse also
holds. More precisely

\begin{question}
Let $X$ be a locally compact
non-separable  metrizable space. Is there a subspace of $X^*$ (in
particular $X^*$ itself) whose topology determines and is determined
by the order structure of the set ${\mathcal E}(X)$?
\end{question}

\begin{question}
Let  $X$ and $Y$ be
zero-dimensional  locally compact non-separable  metrizable spaces.
 Is every order-isomorphism  $\psi: {\mathcal
E}_K(X)\rightarrow {\mathcal E}_K(Y)$ extendable to one from
${\mathcal E}(X)$ onto ${\mathcal E}(Y)$? Are at least ${\mathcal
E}(X)$ and ${\mathcal E}(Y)$ order-isomorphic?
\end{question}

It turns out that the above two questions are related in the
following way. Suppose that for every zero-dimensional  locally
compact non-separable  metrizable spaces $X$ and $Y$,  any
order-isomorphism $\phi: {\mathcal E}(X)\rightarrow {\mathcal E}(Y)
$, induces a homeomorphism $f: X^*\rightarrow Y^* $,  in such a way
that for every $T\in \lambda_X({\mathcal E}(X))$,
$f(T)=\lambda_Y\phi\lambda_X^{-1}(T)$. Let $X=D(\aleph_1)$ and
$Y=\bigoplus_{i<\omega_1}Y_i$, where for each $i<\omega_1$, $Y_i$ is
the one-point compactification of $\omega$. Then by  Theorem \ref{FGEF},
under [CH], ${\mathcal E}_K(X)$ and  ${\mathcal E}_K(Y)$ are
order-isomorphic. Suppose that   ${\mathcal E}(X)$ and ${\mathcal
E}(Y)$ are order-isomorphic, and let $\phi$ and $f$ be as defined
above. Then as the proof of Theorem \ref{LKG} shows, there exists a
non-empty zero-set $Z\in  {\mathcal
Z}(Y^*)\backslash\lambda_Y({\mathcal E}(Y))$ such that $\mbox
{int}_{c\sigma Y}(Z\backslash \sigma Y)=\emptyset$. By Lemma \ref{OTG}, we
have  $f (X^*\backslash \sigma X)=Y^*\backslash \sigma Y$ and
therefore $\mbox {int}_{c\sigma X}(f^{-1}(Z)\backslash \sigma
X)=\emptyset$. By Theorem  6.8 of [9], this implies that
$f^{-1}(Z)\in\lambda_X({\mathcal E}(X))$, which clearly contradicts
our assumptions. Therefore, in some ways the answer to the above two
questions cannot  both be positive.

\medskip

\noindent {\bf  Acknowledgments.} The author would like to express
his sincere gratitude to Professor R. Grant Woods for his invaluable
comments during this work. The author also thanks the
referee for his careful reading and his comments which improved the
paper.

\end{document}